\theoremstyle{definition}
\newtheorem{mydef}{Definition}[section]
\newtheorem*{myass}{Assumption}
\newtheorem*{myack}{Acknowledgments}
\theoremstyle{remark}
\newtheorem{mybem}[mydef]{Remark}
\theoremstyle{plain}
\newtheorem{mysen}[mydef]{Theorem}
\newtheorem{mylem}[mydef]{Lemma}
\newtheorem{myfact}[mydef]{Fact}
\newtheorem{myclaim}{Claim}
\newtheorem*{mysenx}{Theorem}
\numberwithin{mydef}{section}
\DeclareMathOperator{\cof}{cof}
\DeclareMathOperator{\dom}{dom}
\DeclareMathOperator{\im}{im}
\DeclareMathOperator{\otp}{otp}
\DeclareMathOperator{\cf}{cf}
\DeclareMathOperator{\Coll}{Coll}
\DeclareMathOperator{\stem}{stem}
\DeclareMathOperator{\osucc}{osucc}
\DeclareMathOperator{\On}{On}
\newcommand{\dA}{\mathbb{A}}
\newcommand{\dI}{\mathbb{I}}
\newcommand{\dL}{\mathbb{L}}
\newcommand{\dP}{\mathbb{P}}
\newcommand{\dQ}{\mathbb{Q}}
\newcommand{\dR}{\mathbb{R}}
\newcommand{\dT}{\mathbb{T}}
\newcommand{\uhr}{\upharpoonright}
\newcommand{\ZFC}{\mathsf{ZFC}}
\newcommand{\GCH}{\mathsf{GCH}}
\newcommand{\AP}{\mathsf{AP}}
\newcommand{\LIP}{\mathsf{LIP}}
\newcommand{\CH}{\mathsf{CH}}
\title[Failure of Approachability]{Total Failure of Approachability at Successors of Singulars of Countable Cofinality}
\author{Hannes Jakob}
\address{225 E Mulberry St, Denton, TX 76201}
\email{hannes.jakob@unt.edu}
\subjclass[2020]{Primary: 03E05, Secondary: 03E35, 03E55}
\date{\today}
\begin{document}
	
	
	\keywords{Approachability Property, Singular Cardinals, Namba Forcing}
	
	
	\begin{abstract}
		Relative to class many supercompact cardinals, we construct a model of $\ZFC+\GCH$ where for every singular cardinal $\delta$ of countable cofinality and every regular uncountable $\mu<\delta$ there are stationarily many non-approachable points of cofinality $\mu$ in $\delta^+$. This answers a question of Mitchell and provides a decisive answer to a question of Foreman and Shelah.
	\end{abstract}
		
	\maketitle
	
	The notion of independence is fundamental to set-theoretic research. It refers to the phenomenon that many set-theoretic statements are neither proven nor refuted by $\ZFC$, the most commonly accepted axioms of set theory. One particular theme in the study of independence statements is the fact that successors of singular cardinals are often much less malleable than successors of regular cardinals -- recall that a cardinal $\kappa$ is \emph{singular} if we can write $\kappa=\bigcup_{i\in x}y_i$, where $x$ and every $y_i$ has size ${<}\,\kappa$ and \emph{regular} otherwise. Additionally, many independence results for regular cardinals cannot be obtained for singulars. The most famous example of this phenomenon is the combination of two results due to William Easton and Jack Silver: Easton showed that the continuum function -- mapping a cardinal $\kappa$ to the size of its powerset -- when restricted to the class of \emph{regular} cardinals can be equal to any function, provided that function is nondecreasing and maps every cardinal to a cardinal with a larger cofinality. On the other hand, Silver showed that whenever $\kappa$ is a singular cardinal with uncountable cofinality -- meaning that any function from $\omega$ into $\kappa$ is bounded -- and $2^{\nu}=\nu^+$ for every cardinal $\nu<\kappa$, then $2^{\kappa}=\kappa^+$. Thus, Easton's result cannot be replicated for the class of \emph{all} cardinals. And even at $\aleph_{\omega}$, the first singular cardinal, Saharon Shelah demonstrated provable constraints regarding the continuum function: He showed that, assuming $2^{\aleph_n}<\aleph_{\omega}$ for all $n<\omega$, $2^{\aleph_{\omega}}<\aleph_{\omega_4}$.
	
	Another example highlighting the difference between successors of singular cardinals and successors of regular cardinals is Shelah's \emph{approachability ideal}: Shelah introduced the ideal $I[\kappa^+]$ in \cite{ShelahSuccSingCard} in order to study the indestructibility of stationary subsets of $\kappa^+$ under sufficiently closed forcings and did so both for singular and regular cardinals $\kappa$. Despite the definition of $I[\kappa^+]$ being the same in both cases, its behavior differs greatly depending on whether $\kappa$ is regular or singular: When $\kappa$ is a regular cardinal, the set $E_{<\kappa}^{\kappa^+}$, consisting of all ordinals below $\kappa^+$ with cofinality ${<}\,\kappa$, is always a member of $I[\kappa^+]$ and so the behavior of $I[\kappa^+]$ is determined by which subsets of $E_{\kappa}^{\kappa^+}$ it contains. On the other hand, if $\kappa$ is singular, it was only proven that the set $E_{\leq\cf(\kappa)}^{\kappa^+}$ belongs to $I[\kappa^+]$ and so many more options regarding the behavior of $I[\kappa^+]$ seem possible.
	
	Recently, in joint work of Maxwell Levine and the author (see \cite{JakobLevineFailureApproachability}), we solved an almost four-decade-old open problem due to Saharon Shelah by constructing a model -- for an arbitrary $n\in(0,\omega)$ -- where $E_{\aleph_n}^{\aleph_{\omega+1}}\notin I[\aleph_{\omega+1}]$. Previously it had only been known that there could be stationarily many non-approachable points of cofinality $\aleph_1$ in $\aleph_{\omega+1}$. Our result begs the natural question (which was also raised by Mitchell in \cite{MitchellNonstationary} and by Dobrinen in private communication) whether it is consistent for a singular cardinal $\kappa$ of uncountable cofinality that $E_{\mu}^{\kappa^+}\notin I[\kappa^+]$ for \emph{every} regular cardinal $\mu\in(\aleph_0,\kappa)$.
	
	In this paper, we answer the preceding question and even answer it for every cardinal of countable cofinality simultaneously. Since $I[\kappa^+]$ contains a stationary subset of $E_{\mu}^{\kappa^+}$ for every regular $\mu<\kappa$ (see \cite[Theorem 9.2]{CummingsNotes}), the following theorem shows that the greatest possible failure of the approachability property at successors of singular cardinals of countable cofinality is actually consistent:
	
	\begin{mysenx}
		Assume that there is a class of supercompact cardinals. There exists a class forcing extension in which, for every singular cardinal $\delta$ with countable cofinality and every regular uncountable $\mu<\delta$, the set $E_{\mu}^{\delta^+}$ is not a member of $I[\delta^+]$.
	\end{mysenx}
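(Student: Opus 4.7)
My plan is to perform a class-length Easton-support iteration over a ground model containing a class of supercompact cardinals which have been rendered appropriately indestructible by a Laver-style preparation. Enumerate the prepared supercompacts as $\langle\kappa_\alpha:\alpha\in\On\rangle$. At stage $\alpha$ the iteration will force with a local poset $\mathbb{Q}_\alpha$ that (a) turns $\kappa_\alpha$ into the target singular cardinal $\delta_\alpha$ of cofinality $\omega$ (for instance by a Prikry- or diagonal-Prikry-style factor together with the appropriate collapses) and (b) simultaneously arranges $E_\mu^{\delta_\alpha^+}\notin I[\delta_\alpha^+]$ for every uncountable regular $\mu<\delta_\alpha$. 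Between supercompact stages I arrange auxiliary collapses so that, in the final class model, every singular cardinal of countable cofinality is of the form $\delta_\alpha$ for some $\alpha$, and so that $\GCH$ is preserved throughout.

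\textbf{The local forcing.} The local forcing $\mathbb{Q}_\alpha$ is built as an internal product (or carefully stratified iteration) $\prod_\mu\mathbb{N}_{\mu,\alpha}$ indexed by the uncountable regular $\mu<\delta_\alpha$, where each factor $\mathbb{N}_{\mu,\alpha}$ is a Namba-type forcing of the kind used in the joint work with Levine \cite{JakobLevineFailureApproachability}; it shoots cofinal sequences of order type $\mu$ through $\delta_\alpha^+$ in such a way that, for any $\delta_\alpha^+$-enumeration of bounded subsets of $\delta_\alpha^+$, stationarily many limits of those sequences are not approached. To prove that the factor $\mathbb{N}_{\mu,\alpha}$ succeeds in its task in the full extension, I lift a suitable $\delta_\alpha$-supercompact embedding $j\colon V\to M$ (surviving from the preparation) through the generic and reflect the failure of approachability from a large image point down to $\delta_\alpha^+$, exactly as in the single-cofinality construction; the point is that this argument only depends on \emph{the} factor $\mathbb{N}_{\mu,\alpha}$ together with a residue that is absorbed into $M$'s iteration quotient.

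\textbf{Main obstacles.} The genuinely hard part is twofold. First, one must show that distinct factors $\mathbb{N}_{\mu_1,\alpha}$ and $\mathbb{N}_{\mu_2,\alpha}$ do not destroy each other's work: a new cofinal $\mu_1$-sequence could in principle enable new $\delta_\alpha^+$-enumerations that would approach the points meant to witness $E_{\mu_2}^{\delta_\alpha^+}\notin I[\delta_\alpha^+]$. I expect to handle this by decomposing each factor, $\mu$-Prikry-style, into a highly $\mu$-closed part together with a small quotient that is absorbed by the complementary factors; the closure of the other factors then ensures that no threatening enumeration is added from outside. Second, one must verify that the class iteration is well-behaved as class forcing: each $\mathbb{Q}_\alpha$ must have size ${<}\,\kappa_{\alpha+1}$ and the tail iteration must be sufficiently directed-closed for indestructibility of $\kappa_{\alpha+1}$ to bootstrap to the next stage, giving an Easton-style preservation of $\ZFC$ and $\GCH$. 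Once the local work and the preservation arguments are in place, the theorem follows by observing that every singular cardinal of countable cofinality in the final class extension is some $\delta_\alpha$ and hence inherits the desired total failure of approachability.
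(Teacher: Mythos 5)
You have a genuine structural gap, not just a different route: the division of labour in your iteration is the wrong way round. You propose to arrange $E_{\mu}^{\delta_\alpha^+}\notin I[\delta_\alpha^+]$ for \emph{all} uncountable regular $\mu<\delta_\alpha$ locally, at the stage where the supercompact $\kappa_\alpha$ is singularized, via a product $\prod_\mu\mathbb{N}_{\mu,\alpha}$. But before that stage $\kappa_\alpha$ is regular, so $E_{<\kappa_\alpha}^{\kappa_\alpha^+}\in I[\kappa_\alpha^+]$, and approachability of a point whose cofinality is preserved is upward absolute; hence after any forcing that preserves cofinalities below $\kappa_\alpha$, every point of uncountable cofinality $\mu<\kappa_\alpha$ in $\kappa_\alpha^+$ is still approachable. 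So the witnesses to $E_{\mu}^{\delta_\alpha^+}\notin I[\delta_\alpha^+]$ would have to be \emph{new} points of cofinality $\mu$, i.e.\ ordinals whose cofinality is changed to $\mu$ at stage $\alpha$; but changing a cofinality from $\lambda>\mu$ down to $\mu$ collapses the cardinals in $(\mu,\lambda]$, and at stage $\alpha$ every cardinal in $(\mu,\delta_\alpha)$ that is to survive as a regular cardinal of the final model (in particular every other target cofinality $\mu'$, and the cardinals on which the work already done at smaller singulars depends) must be preserved. Thus the factors $\mathbb{N}_{\mu,\alpha}$ cannot do their job without collapsing each other's cardinals; your proposed remedy (closure of the complementary factors) addresses the wrong issue, since the problem is not that other factors add threatening enumerations but that the desired stationary sets cannot be created locally at all. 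Note also that literally shooting cofinal $\mu$-sequences ``through $\delta_\alpha^+$'' would collapse $\delta_\alpha^+$; in the actual technique the Namba-type forcing acts on reflected successors of singulars $\otp(M\cap\delta^+)$ lying \emph{below} a supercompact sitting just above $\mu$, not at $\delta^+$ itself.

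The architecture has to be inverted: for each target cofinality $\mu$ one uses the supercompact $\kappa$ immediately above $\mu$ (which is collapsed to become $\mu^+$), and at that single stage -- using a Laver function to anticipate names for the entire future iteration and $\LIP$-style ideals at the reflected cardinals -- one arranges stationarily many non-approachable points of cofinality $\mu$ in $\delta^+$ for \emph{every} later singular $\delta$ of countable cofinality simultaneously; the singulars $\delta$ of the final model are limits of the former supercompacts, not singularized supercompacts. The genuinely hard part, which your proposal does not engage with, is then preservation: all later stages of the class iteration add clubs of $\delta^+$ and are far from $\delta^+$-closed in their full ordering, so one must show they do not destroy the stationarity of the sets of non-approachable points added earlier. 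This is exactly what the paper's machinery is for -- Prikry-type Magidor iterations whose direct orderings are strongly directed closed, a termspace product projecting onto the direct ordering of the tail, indestructible supercompactness so that this termspace poset forces the needed instances of $\LIP$, master conditions for the Magidor models witnessing stationarity, and almost direct $\delta^+$-decidability of the tails to push $\sup(M\cap\delta^+)$ into every named club. Without this preservation argument (or a substitute for it), the construction does not go through even if the local step were repaired.
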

	
	In particular, whenever $\delta$ is singular with countable cofinality, $\AP_{\delta}$, which states that $\delta^+\notin I[\delta^+]$, does not hold. Since $\AP_{\delta}$ is implied by the \emph{weak square principle} $\square_{\delta}^*$, the use of large cardinals is provably necessary for this result (see e.g. \cite{SargsyanSquareFailure}).
	
	Our main result is made possible by a technical advancement of the main crux of our previous paper (see \cite[Theorem 4.11]{JakobLevineFailureApproachability}). The major obstacle towards obtaining models in which a stationary set of a given cofinality is not in $I[\delta^+]$ is finding posets which can collapse successors of singular cardinals without making them \emph{$d$-approachable}. Whether this was possible without preserving the successor of the singular as a cardinal had been open for around four decades and was a major obstacle both in non-approachable sets concentrating on specific cofinalities and iterating such constructions to obtain the failure of $\AP$ for many cardinals simultaneously. However, we solved that issue in \cite{JakobLevineFailureApproachability} by constructing a poset which, under suitable hypotheses, collapses the successor of a singular cardinal of countable cofinality to any desired cofinality without making it $d$-approachable. That theorem relied on the tension between a weak form of the approximation property (see \cite[Theorem 4.3.]{JakobLevineFailureApproachability}) and our iteration not adding new short functions into small sets (see \cite[Lemma 4.10]{JakobLevineFailureApproachability}). In this work, we achieve a strengthening of \cite[Theorem 4.3]{JakobLevineFailureApproachability} (see Theorem \ref{BetterApproxProp}) which allows us to dispense with \cite[Lemma 4.10]{JakobLevineFailureApproachability}. Using this, we are able to replace the Namba forcing used there by a new forcing notion which, similarly to supercompact Prikry forcing, uses elements of $[\lambda]^{<\kappa}$ as opposed to ordinals. This forcing is constructed using higher variants of the $\LIP$-ideals which we used in \cite{JakobLevineFailureApproachability} and which have previously been considered by Matsubara (see \cite{MatsubaraLIP}). In technical terms, the poset from \cite{JakobLevineFailureApproachability} uses infinitely many generically measurable cardinals while our new poset only uses a single generically supercompact cardinal. Despite the latter being a stronger hypothesis in terms of its consistency strength, it is much easier to obtain in our case.
	
	Using this poset, we first construct a forcing which, given a regular cardinal $\mu$ and a supercompact cardinal $\kappa>\mu$, collapses $\kappa$ to become $\mu^+$ while forcing that for every singular cardinal $\delta>\kappa$ with countable cofinality, $E_{\mu}^{\delta^+}\notin I[\delta^+]$. We then define a class-length iteration of instances of that forcing which gives us the model of the main theorem.
	
	The paper is organized as follows: After giving preliminary definitions and results, we construct the tools we need to carry out the main construction: In Section 2, we prove iteration theorems and other statements regarding Prikry-type forcings. In Section 3, we investigate the Laver-Ideal Property which is used in the definition of the Namba forcing. In Section 4, we define a forcing which collapses the successor of a singular cardinal without making it approachable and derive its properties. In Section 5, we define an iteration of instances of the forcing from Section 4 and show that it forces, for a specific $\mu$, that for every singular $\delta>\mu$ with countable cofinality, there are stationarily many non-approachable points in $\delta^+$ with cofinality $\mu$. Finally, in Section 6, we construct an iteration of instances of the forcing from Section 5 and prove our main theorem.
	
	\begin{myack}
		The author wants to thank the referee for their diligent reading and valuable comments, leading to an improvement of the manuscript.
	\end{myack}
	
	\section{Preliminaries}
		
	We will assume the reader is familiar with the basics of forcing, large cardinals and elementary embeddings. Good introductory material can be found in the textbooks by Jech (see \cite{JechSetTheory}), Kunen (see \cite{KunenSetTheory}) and Kanamori (see \cite{KanamoriHigherInfinite}). More specialized material relevant to this paper appears in the chapters by Cummings (regarding elementary embeddings; see \cite{CummingsHandbook}), Foreman (regarding large ideals on successor cardinals; see \cite{ForemanChapter}), Eisworth (regarding the behavior of successors of singular cardinals; see \cite{EisworthHandbook}) and Gitik (regarding properties and iterations of Prikry-type forcings; see \cite{GitikHandbook}) in the Handbook of Set Theory.
	
	In this section, we give preliminary definitions and results which will be used throughout this paper.
	
	\subsection{Approachability, Colorings and Supercompact Cardinals}\hfill
	
	The approachability ideal was introduced by Shelah in \cite{ShelahSuccSingCard} in order to obtain results regarding the indestructibility of stationary sets under sufficiently closed forcings:
	
	\begin{mydef}
		Let $\mu$ be a cardinal. A set $S\subseteq\mu^+$ is in the \emph{approachability ideal} $I[\mu^+]$ if there is a club $C\subseteq\mu^+$ and a sequence $(x_i)_{i<\mu^+}$ of elements of $[\mu^+]^{<\mu}$ such that for any $\gamma\in S\cap C$, $\gamma$ is \emph{approachable with respect to $(x_i)_{i<\mu^+}$}, i.e. there is $A\subseteq\gamma$ unbounded of minimal order-type such that whenever $\beta\in\gamma$, $A\cap\beta\in\{x_i\;|\;i<\gamma\}$.
		
		The \emph{approachability property $\AP_{\mu}$} states that $I[\mu^+]$ is improper, i.e. $\mu^+\in I[\mu^+]$.
	\end{mydef}
	
	Although not directly relevant to the contents of this manuscript, the approachability ideal $I[\mu^+]$ for singular $\mu$ is intimately related to the concept of \emph{scales}:
	
	\begin{mydef}
		Let $\mu$ be a singular cardinal with countable cofinality. Let $(\mu_n)_{n\in\omega}$ be an increasing sequence of regular cardinals converging to $\mu$. A \emph{$(\mu^+,(\mu_n)_{n\in\omega})$-scale} is a sequence $(f_{\alpha})_{\alpha<\mu^+}$ such that:
		\begin{enumerate}
			\item For all $\alpha<\mu^+$, $f_{\alpha}\in\prod_{n\in\omega}\mu_n$;
			\item For all $\alpha<\beta<\mu^+$, $f_{\alpha}<^*f_{\beta}$, i.e. there is $k\in\omega$ such that $f_{\alpha}(n)<f_{\beta}(n)$ for all $n\geq k$;
			\item For all $g\in\prod_{n\in\omega}\mu_n$ there is $\alpha<\mu^+$ such that $g<^*f_{\alpha}$.
		\end{enumerate}
		
		Given a $(\mu^+,(\mu_n)_{n\in\omega})$-scale $(f_{\alpha})_{\alpha<\mu^+}$, we say that an ordinal $\gamma<\mu^+$ is \emph{good for $(f_{\alpha})_{\alpha<\mu^+}$} if there exists an unbounded $A\subseteq\gamma$ and $k\in\omega$ such that for all $n\geq k$, the sequence $(f_{\alpha}(n))_{\alpha\in A}$ is strictly increasing. $\gamma<\mu^+$ is \emph{bad for $(f_{\alpha})_{\alpha<\mu^+}$} if it is not good for $(f_{\alpha})_{\alpha<\mu^+}$.
	\end{mydef}
	
	Given a $(\mu^+,(\mu_n)_{n\in\omega})$-scale $\vec{f}$ and a set $S\subseteq\mu^+$ in $I[\mu^+]$, Shelah showed that there is a club subset $C\subseteq\mu^+$ such that every $\gamma\in S\cap C$ is good for $\vec{f}$. On the other hand, answering an old question due to Shelah, Maxwell Levine and the author showed in \cite{JakobLevineFailureApproachability} that it is consistent to have a $(\aleph_{\omega+1},(\aleph_n)_{n\in\omega})$-scale $\vec{f}$ such that club many points in $\aleph_{\omega+1}$ are good for $\vec{f}$, but $\aleph_{\omega+1}\notin I[\aleph_{\omega+1}]$. In our model, there will also be many instances of this phenomenon: By a folklore result\footnote{see \url{https://mathoverflow.net/q/296225}}, whenever $\vec{f}$ is a $(\mu^+,(\mu_n)_{n\in\omega})$-scale, there is a club subset $C\subseteq\mu^+$ such that any $\gamma\in C\cap E_{\geq\aleph_3}^{\mu^+}$ is good for $\vec{f}$. Despite this, in our model, this set will clearly be outside of $I[\mu^+]$.
	
	The approachability ideal can be defined for any cardinal $\mu$ but its behavior depends heavily on whether $\mu$ is regular or singular: If $\mu$ is regular, $E_{<\mu}^{\mu^+}\in I[\mu^+]$ (see \cite[Lemma 4.4]{ShelahApproachability}) and so $I[\mu^+]$ is completely determined by the membership of stationary subsets of $E_{\mu}^{\mu^+}$. On the other hand, if $\mu$ is singular, the only known results are that $E_{\leq\cf(\mu)}^{\mu^+}$ is an element of $I[\mu^+]$ and for any regular uncountable cardinal $\delta<\mu$, $I[\mu^+]$ contains a stationary subset of $E_{\delta}^{\mu^+}$ (see \cite[Theorem 9.2]{CummingsNotes}). As we will show, this result is sharp for successors of singular cardinals of countable cofinality.
	
	Another difference is in the way the failure of $\AP_{\mu}$ is commonly forced: For regular $\mu$ there is a natural correspondence between the non-approachability of points and the ``approximation property'' which occurs implicitly in work of Mitchell (see \cite{MitchellTreeProp}) and was explicitly defined by Hamkins (see \cite{HamkinsExtApprox}). Due to this, models where $\AP_{\mu}$ fails are often obtained by employing variants of Mitchell forcing or side condition posets. On the other hand, the failure of $\AP_{\mu}$ for $\mu$ singular is most commonly obtained by taking the ``natural failure'' of $\AP_{\mu}$ above supercompact cardinals and ``moving it down''. The most direct path to this concept comes by considering a characterization of approachability using colorings (this idea is also due to Shelah, see \cite{ShelahSuccSingCard}). We use the presentation from \cite{EisworthHandbook}.
	
	\begin{mydef}
		Let $\mu$ be a singular cardinal and $\lambda=\mu^+$. Let $d\colon[\lambda]^2\to\cf(\mu)$ be a coloring. We view $d$ as a function on the ascending pairs of elements of $\lambda$.
		\begin{enumerate}
			\item $d$ is \emph{subadditive} if for all $\alpha<\beta<\gamma<\lambda$,
			$$d(\alpha,\gamma)\leq\max\{d(\alpha,\beta),d(\beta,\gamma)\};$$
			\item $d$ is \emph{normal} if for all $\alpha\in\lambda$ and $i<\cf(\mu)$,
			$$|\{\beta<\alpha\;|\;d(\beta,\alpha)\leq i\}|<\mu;$$
			\item An ordinal $\alpha\leq\lambda$ is \emph{$d$-approachable} if there exists an unbounded set $A\subseteq\alpha$ such for all $\beta\in A$,
			$$\sup\{d(\beta,\alpha)\;|\;\beta\in A\cap\alpha\}<\cf(\mu);$$
			We let $S(d)$ consist of all $\alpha<\lambda$ which are $d$-approachable.
		\end{enumerate}
	\end{mydef}
	
	As Shelah shows in \cite{ShelahSuccSingCard}, whenever $\mu$ is singular, there is a normal subadditive coloring $d\colon[\mu^+]\to\cf(\mu)$.
	
	It is clear that $\lambda$ itself is never $d$-approachable with respect to any normal subadditive coloring $d\colon[\lambda]^2\to\cf(\mu)$. However, we have chosen to include it in the definition in order to be able to say ``$\lambda$ does not become $d$-approachable in $W$'', where $W$ is some outer model.
	
	$d$-approachability can be simplified as follows (see \cite[Corollary 3.27]{EisworthHandbook}):
	
	\begin{myfact}
		Let $\mu$ be a singular cardinal and $d\colon[\mu^+]^2\to\cf(\mu)$ a normal, subadditive coloring. An ordinal $\alpha<\mu^+$ is $d$-approachable if and only if $\cf(\alpha)\leq\cf(\mu)$ or there is a cofinal set $A\subseteq\alpha$ of minimal order-type and $i<\cf(\mu)$ such that for all $\beta<\gamma$, both in $A$, $d(\beta,\gamma)\leq i$.
	\end{myfact}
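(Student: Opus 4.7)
I would argue both directions in sequence. For $(\Rightarrow)$, suppose $\alpha$ is $d$-approachable with witness $A\subseteq\alpha$ unbounded and $i<\cf(\mu)$ such that $d|_{[A]^2}\leq i$. If $\cf(\alpha)\leq\cf(\mu)$ the first disjunct of the conclusion already holds. Otherwise $\cf(\alpha)>\cf(\mu)$, and since $A$ is unbounded we may thin it to a cofinal subset $A'\subseteq A$ of ordertype $\cf(\alpha)$, which is the minimal possible ordertype. As $[A']^2\subseteq[A]^2$, the same bound $i$ still works, so $A'$ witnesses the second disjunct.

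For $(\Leftarrow)$, the second disjunct is immediate: a cofinal subset of minimal ordertype is in particular unbounded, so $d$-approachability follows directly from the definition. For the first disjunct, I split $\cf(\alpha)\leq\cf(\mu)$ into two sub-cases. If $\cf(\alpha)<\cf(\mu)$, take any cofinal $A\subseteq\alpha$ of ordertype $\cf(\alpha)$; then $|[A]^2|<\cf(\mu)$, and regularity of $\cf(\mu)$ gives $\sup d|_{[A]^2}<\cf(\mu)$, witnessing $d$-approachability.

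The sub-case $\cf(\alpha)=\cf(\mu)$ is the crux. Fix a cofinal $(a_\xi)_{\xi<\cf(\mu)}$ in $\alpha$ and aim to extract a cofinal subsequence on which $d$-values between pairs are bounded by some fixed $i<\cf(\mu)$. The construction invokes both hypotheses on $d$: normality tells us that each sub-level set $\{\beta<\alpha:d(\beta,\alpha)\leq i\}$ has cardinality less than $\mu$ (for $\alpha<\lambda$), while subadditivity allows one to bound $d(a_\eta,a_\zeta)$ in terms of consecutive values $d(a_\xi,a_{\xi+1})$ along any finite chain. A careful recursive selection along the lines of Eisworth's proof of Corollary~3.27 -- combining pigeonhole on the values of $d(\cdot,\alpha)$ along the cofinal sequence with the triangle-inequality-style propagation from subadditivity -- produces the desired cofinal subsequence with bounded pairwise $d$-values.

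The main obstacle is precisely this last sub-case: the extraction of a cofinal sequence of the right ordertype in the direction $(\Rightarrow)$ and the bookkeeping for $\cf(\alpha)<\cf(\mu)$ are routine, but the sub-case $\cf(\alpha)=\cf(\mu)$ requires genuine use of the combinatorial structure of normal subadditive colorings and cannot be handled by pure cofinality arithmetic.
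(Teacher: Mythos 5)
There is a genuine gap, and it runs in both directions of your argument. In $(\Rightarrow)$ you start from ``a witness $A\subseteq\alpha$ unbounded and $i<\cf(\mu)$ such that $d\restriction[A]^2\leq i$''. That is not what $d$-approachability gives you: the definition only provides, for each $\beta$, a bound $\sup\{d(\gamma,\beta)\;|\;\gamma\in A\cap\beta\}<\cf(\mu)$ that may \emph{depend on} $\beta$. Upgrading these local bounds to a single $i$ on a cofinal set of ordertype $\cf(\alpha)$ is exactly the nontrivial content of the Fact, so your $(\Rightarrow)$ is circular. The correct argument in the case $\cf(\alpha)>\cf(\mu)$ is a pigeonhole on the local bounds: enumerate a cofinal $\langle a_\xi\;|\;\xi<\cf(\alpha)\rangle$ from the witness $A$, set $i_\xi:=\sup\{d(\gamma,a_\xi)\;|\;\gamma\in A\cap a_\xi\}<\cf(\mu)$, and use the regularity of $\cf(\alpha)>\cf(\mu)$ to find an unbounded $B$ on which $i_\xi\leq i$; then $d(a_\xi,a_\eta)\leq i_\eta\leq i$ for $\xi<\eta$ in $B$. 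Note that this pigeonhole is precisely what fails when $\cf(\alpha)\leq\cf(\mu)$, which is why that case appears as a separate disjunct in the statement.

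Symmetrically, in $(\Leftarrow)$ you have manufactured a false crux. To handle the first disjunct you only need to verify the \emph{definition} of $d$-approachability, i.e.\ the per-$\beta$ bounds, not a uniform bound on $[A]^2$. If $\cf(\alpha)=\cf(\mu)$, take any cofinal $A\subseteq\alpha$ of ordertype $\cf(\mu)$; since $\cf(\mu)$ is regular, every proper initial segment $A\cap\beta$ has size ${<}\,\cf(\mu)$, so $\sup\{d(\gamma,\beta)\;|\;\gamma\in A\cap\beta\}<\cf(\mu)$ for every $\beta$, and $\alpha$ is $d$-approachable -- no normality, subadditivity, or recursive extraction is needed, exactly as in your $\cf(\alpha)<\cf(\mu)$ sub-case. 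The construction you sketch instead (pigeonhole on $d(\cdot,\alpha)$ along a sequence of length $\cf(\mu)$ taking values in $\cf(\mu)$) cannot work: with as many colors as terms there need be no unbounded subsequence with constant, or even bounded, value, and in general a cofinal set of ordertype $\cf(\mu)$ on which $d$ is uniformly bounded need not exist at such $\alpha$ -- which is the very reason the Fact is stated as a disjunction rather than asserting the uniform bound for all approachable points. (For reference, the paper does not prove this Fact at all; it is quoted from Eisworth's Handbook chapter, Corollary 3.27, whose proof follows the pigeonhole argument indicated above.)
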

	
	In the case where the singular cardinal $\mu$ under consideration is a strong limit, the previous concepts are connected as follows:
	
	\begin{enumerate}
		\item The approachability ideal $I[\mu^+]$ is equal to the \emph{weak approachability ideal} $I[\mu^+,\mu]$. This ideal is defined almost the same way as $I[\mu^+]$ but instead of requiring $A\cap\beta\in\{x_i\;|\;i<\gamma\}$ for every $\beta<\gamma$ we merely require each $A\cap\beta$ to be a subset of $x_i$ for some $i<\gamma$.
		\item The approachability ideal is generated by a single set modulo the nonstationary ideal, i.e. there is $A\in I[\mu^+]$ such that for any $S\subseteq\mu^+$, $S\in I[\mu^+]$ if and only if $S\smallsetminus A$ is nonstationary.
	\end{enumerate}
	
	Combining this with the connection between $I[\mu^+,\mu]$ and $d$-approachability, we have the following:
	
	\begin{myfact}[{\cite[Corollary 3.35]{EisworthHandbook}}]\label{ColoringCanonical}
		Suppose that $\mu$ is a strong limit singular cardinal and $d\colon[\mu^+]^2\to\cf(\mu)$ is a normal, subadditive coloring. Then $S(d)\cup E_{\leq\cf(\mu)}^{\mu^+}$ generates $I[\mu^+]$ modulo the nonstationary ideal.
	\end{myfact}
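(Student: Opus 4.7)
Let $\chi:=\cf(\mu)$. My plan is to establish the fact by verifying both (a) $S(d)\cup E_{\leq\chi}^{\mu^+}\in I[\mu^+]$ and (b) every stationary $S\in I[\mu^+]$ has $S\setminus(S(d)\cup E_{\leq\chi}^{\mu^+})$ nonstationary. The strong-limit hypothesis makes $I[\mu^+]=I[\mu^+,\mu]$, so weak approachability suffices throughout.

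For (a), the inclusion $E_{\leq\chi}^{\mu^+}\in I[\mu^+]$ is Shelah's classical result, and I would combine its witnessing sequence with a separate one for $S(d)$. For $S(d)$ itself, normality gives $B_\beta^k:=\{\gamma<\beta:d(\gamma,\beta)\leq k\}\in[\beta]^{<\mu}$; I would enumerate the family $\{B_\beta^k:\beta<\mu^+,\,k<\chi\}$ as $(x_i)_{i<\mu^+}$ with the bookkeeping that each $B_\beta^k$ appears at some index below $\beta+\chi$. For $\alpha\in S(d)$ with $\cf(\alpha)>\chi$ closed under $\beta\mapsto\beta+\chi$, if $(A,i_0)$ witnesses $d$-approachability then for every $\beta\in A$ we have $A\cap\beta\subseteq B_\beta^{i_0}=x_{j(\beta)}$ with $j(\beta)<\alpha$, certifying weak approachability.

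For (b), let $(x_i)_{i<\mu^+}$ and a club $C$ witness $S\in I[\mu^+]$. It suffices to show $S':=S\cap C\cap E_{>\chi}^{\mu^+}\setminus S(d)$ is nonstationary. Assuming otherwise, for each $\alpha\in S'$ I take the min-ordertype cofinal approachability witness $A_\alpha$. For any $\beta\in A_\alpha$ that is a limit point of $A_\alpha$ with $\cf(\beta)>\chi$, the decomposition $A_\alpha\cap\beta=\bigcup_{k<\chi}(A_\alpha\cap B_\beta^k)$ combined with regularity of $\cf(\beta)$ produces a least $f_\alpha(\beta)<\chi$ for which $A_\alpha\cap B_\beta^{f_\alpha(\beta)}$ is cofinal in $\beta$. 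Counting values of $f_\alpha$ against $\cf(\alpha)>\chi$ yields some $i_\alpha^*<\chi$ with fiber $F_\alpha:=f_\alpha^{-1}(i_\alpha^*)$ cofinal in $\alpha$; pressing down $\alpha\mapsto i_\alpha^*$ on $S'$ fixes a single $i^*<\chi$ on a stationary refinement $S''$. For each $\alpha\in S''$, I would build recursively a cofinal sequence $(\beta_\xi)_{\xi<\cf(\alpha)}$ in $F_\alpha$ with $d(\beta_\eta,\beta_\xi)\leq i^*$ for $\eta<\xi$, contradicting $\alpha\notin S(d)$.

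The technical crux lies in the successor step of this recursion: given $\beta_\xi$, locate $\beta_{\xi+1}\in F_\alpha$ above $\beta_\xi$ with $d(\beta_\xi,\beta_{\xi+1})\leq i^*$. Subadditivity then extends control to all pairs. For any candidate $\beta\in F_\alpha$ above $\beta_\xi$ the set $A_\alpha\cap B_\beta^{i^*}$ is cofinal in $\beta$ and furnishes intermediate points $\gamma$ with $d(\gamma,\beta)\leq i^*$; what remains is to select such $\gamma$ with $d(\beta_\xi,\gamma)\leq i^*$ already secured, so that subadditivity $d(\beta_\xi,\beta)\leq\max\{d(\beta_\xi,\gamma),d(\gamma,\beta)\}$ closes the loop. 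Engineering this selection is where iterated applications of subadditivity and the strong-limit bound $2^\chi<\mu$ are expected to enter, and it is the step that will require the most care; I would consult \cite{EisworthHandbook} for the precise refinement.
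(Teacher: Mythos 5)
The paper does not prove this statement at all --- it is quoted as a black-box Fact from Eisworth's Handbook chapter --- so the only question is whether your argument stands on its own. Direction (a) does: the sets $B_\beta^k=\{\gamma<\beta: d(\gamma,\beta)\le k\}$ have size $<\mu$ by normality, the bookkeeping enumeration is routine, and since $\mu$ is a strong limit weak approachability suffices, so $S(d)\in I[\mu^+]$; combined with Shelah's $E_{\le\chi}^{\mu^+}\in I[\mu^+]$ this gives one half of the generation statement.

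Direction (b), however, has a genuine gap, and it is exactly the step you defer. Notice that after naming the witness $(x_i)_{i<\mu^+}$ and taking $A_\alpha$, your argument never again uses the defining property of $A_\alpha$ (that its initial segments are captured by sets $x_i$ with $i<\alpha$): the decomposition $A_\alpha\cap\beta=\bigcup_{k<\chi}(A_\alpha\cap B_\beta^k)$, the pigeonhole on $\cf(\beta)>\chi$, the pressing down, and the intended recursion use only normality, subadditivity and cofinality arithmetic, and would apply verbatim to an arbitrary cofinal $A_\alpha\subseteq\alpha$. If the successor step of your recursion could be completed from this data alone, you would have a ZFC proof that every $\alpha\in E^{\mu^+}_{>\chi}$ is $d$-approachable --- contradicting the very consistency results this paper establishes (stationarily many non-$d$-approachable points of each uncountable cofinality $\mu<\delta$). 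So the ``technical crux'' you flag is not a refinement to be looked up; it is the place where the hypothesis $S\in I[\mu^+]$ must do real work, and your setup gives it no way to enter: knowing $A_\alpha\cap B_\beta^{i^*}$ is cofinal in $\beta$ yields $\gamma$ with $d(\gamma,\beta)\le i^*$, but nothing controls $d(\beta_\xi,\gamma)$, and no amount of iterating subadditivity fixes this without bringing in the small sets $x_i$ (together with a club chosen to close off under $d$-derived operations --- this is also where the strong-limit hypothesis genuinely enters, not merely via $2^\chi<\mu$). There is also a structural defect earlier: $f_\alpha$ is defined only at limit points of $A_\alpha$ of cofinality $>\chi$, and when $\cf(\alpha)=\chi^+$ the set $A_\alpha$ has ordertype $\chi^+$, so all its proper limit points have cofinality $\le\chi$; the domain of $f_\alpha$ is empty, $F_\alpha$ cannot be cofinal, and the scheme never starts in the case of the critical-plus-one cofinality. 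To repair (b) you need the actual Shelah--Eisworth argument, which works directly with the sets $x_i$ covering initial segments of $A_\alpha$ and extracts the bounded-color cofinal set from them.
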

	
	We finish this small exposition with the following helpful fact which states that whenever $\alpha$ is $d$-approachable, any cofinal subset of $\alpha$ can be refined to another cofinal subset on which $d$ is bounded:
	
	\begin{myfact}[{\cite[Remark 28]{ShelahSuccSingCard}}]\label{FactRefinement}
		Let $\mu$ be a singular cardinal and $d\colon[\mu^+]^2\to\cf(\mu)$ a subadditive coloring. Let $\alpha\in\mu^+$ with $\cf(\alpha)>\cf(\mu)$ and suppose that $\alpha$ is $d$-approachable. Then whenever $A\subseteq\alpha$ is unbounded, there is $B\subseteq\alpha$ unbounded and $i<\cf(\mu)$ such that whenever $\alpha<\beta$, both in $B$, $d(\alpha,\beta)\leq i$.
	\end{myfact}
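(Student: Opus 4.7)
The plan is to interleave the given unbounded $A$ with a cofinal bounded-color witness to the $d$-approachability of $\alpha$, and then combine pigeonhole with iterated subadditivity along a short chain. First, invoking $d$-approachability of $\alpha$ together with $\cf(\alpha)>\cf(\mu)$ (and a standard pigeonhole to convert the cofinality-dependent bound in the definition into a uniform one), I obtain a cofinal $A_0\subseteq\alpha$ of order type $\cf(\alpha)$ and $i_0<\cf(\mu)$ with $d(\gamma,\gamma')\leq i_0$ for every $\gamma<\gamma'$ in $A_0$; this is essentially the bounded-on-pairs form of the preceding fact, applicable under our cofinality hypothesis.

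By transfinite recursion of length $\cf(\alpha)$ I interleave $A_0$ and $A$ to produce strictly increasing sequences $(\gamma_\xi)_{\xi<\cf(\alpha)}\subseteq A_0$ and $(\beta_\xi)_{\xi<\cf(\alpha)}\subseteq A$ with $\gamma_\xi<\beta_\xi<\gamma_{\xi+1}$ at every stage. Successor stages use only unboundedness of $A$ and $A_0$; at limits the regularity of $\cf(\alpha)$ keeps the running supremum below $\alpha$, and a small bookkeeping step (say, choosing $\gamma_\xi$ above the $\xi$-th element of a fixed enumeration of $A_0$) guarantees that both sequences are cofinal in $\alpha$.

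Now I apply pigeonhole twice: since the maps $\xi\mapsto d(\gamma_\xi,\beta_\xi)$ and $\xi\mapsto d(\beta_\xi,\gamma_{\xi+1})$ send the regular cardinal $\cf(\alpha)$ into $\cf(\mu)<\cf(\alpha)$, there is a cofinal $S\subseteq\cf(\alpha)$ and $i_1,i_2<\cf(\mu)$ with $d(\gamma_\xi,\beta_\xi)\leq i_1$ and $d(\beta_\xi,\gamma_{\xi+1})\leq i_2$ for all $\xi\in S$. Setting $B:=\{\beta_\xi:\xi\in S\}\subseteq A$ and $i:=\max\{i_0,i_1,i_2\}<\cf(\mu)$, for any $\xi<\xi'$ in $S$ the chain $\beta_\xi<\gamma_{\xi+1}\leq\gamma_{\xi'}<\beta_{\xi'}$ together with iterated subadditivity yields $d(\beta_\xi,\beta_{\xi'})\leq\max\{d(\beta_\xi,\gamma_{\xi+1}),\,d(\gamma_{\xi+1},\gamma_{\xi'}),\,d(\gamma_{\xi'},\beta_{\xi'})\}\leq i$, with the middle term omitted when $\xi'=\xi+1$. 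The one non-routine idea is the interleaving itself: subadditivity transports bounds only along increasing chains, so one must ensure that a pair of $A_0$-ordinals sits strictly between consecutive elements of $B$ in order to bridge the $A$-pair through the $A_0$-witness; neither $A$ alone nor $A_0$ alone carries enough information to do so.
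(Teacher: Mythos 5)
Your argument is correct, and since the paper states this fact without proof (citing Shelah's Remark 28), there is nothing to compare it against beyond the standard argument, which is exactly what you give: extract a cofinal $A_0$ with $d$ bounded by some $i_0$ on pairs, interleave it with $A$, apply pigeonhole (using $\cf(\alpha)>\cf(\mu)$ and regularity of $\cf(\alpha)$) to the two "bridge" colors, and transport the bound along the chain $\beta_\xi<\gamma_{\xi+1}\leq\gamma_{\xi'}<\beta_{\xi'}$ by subadditivity. One small point in your favor: deriving $A_0$ directly from the definition of $d$-approachability by pigeonhole, as in your parenthetical remark, is the right move here, since the preceding fact in the paper (the bounded-on-pairs characterization) is stated for normal subadditive colorings, whereas the present statement assumes only subadditivity; your route needs no normality at all, and your $B$ is moreover a subset of $A$, which is the refinement actually intended and used later in the paper.
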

	
	We now turn to supercompact cardinals. We use the following idea of Magidor (see \cite{MagidorSuperCompact}; the terminology is borrowed from \cite{MohamVelicGuessingModelsApproach}):
	
	\begin{mydef}
		Let $\kappa\leq\Theta$ be regular cardinals. $M\prec H(\Theta)$ is a \emph{$\kappa$-Magidor model} if the following holds:
		\begin{enumerate}
			\item $M\cap\kappa\in\kappa$,
			\item whenever $B\subseteq M$ is such that $B\subseteq C\in M$, there is $A\in M$ such that $A\cap M=B$.
		\end{enumerate}
	\end{mydef}
	
	Magidor showed that the supercompactness of a cardinal $\kappa$ is equivalent to the existence of sufficiently many $\kappa$-Magidor models. We only use one direction which is as follows:
	
	\begin{mysen}[Magidor]\label{MagModExists}
		Let $\kappa$ be a supercompact cardinal and $\Theta\geq\kappa$ regular. Then whenever $x\in H(\Theta)$ there is a $\kappa$-Magidor model $M\prec H(\Theta)$ with $x\in M$.
	\end{mysen}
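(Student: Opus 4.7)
The plan is to derive the existence of a $\kappa$-Magidor model directly from the defining property of supercompactness via an elementary embedding, and then to transfer the conclusion back to $V$ by elementarity. Setting $\lambda=|H(\Theta)|$, I will fix an elementary embedding $j\colon V\to N$ witnessing $\lambda$-supercompactness of $\kappa$, so $\crit(j)=\kappa$, $j(\kappa)>\lambda$, and $N^\lambda\subseteq N$. The candidate model is $M:=j[H(\Theta)^V]$. Since $j\uhr H(\Theta)^V$ is a set of size $\lambda$ lying in $V$ and $N$ is $\lambda$-closed, $M\in N$; and $j\uhr H(\Theta)^V$ is an elementary embedding from $H(\Theta)^V$ onto $M$, while $H(j(\Theta))^N=j(H(\Theta)^V)$, so $N$ sees $M\prec H(j(\Theta))^N$ with $j(x)\in M$.

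The bulk of the argument is to verify, working inside $N$, that $M$ satisfies the two clauses of being a $j(\kappa)$-Magidor submodel of $H(j(\Theta))^N$. Condition (1) is immediate: by elementarity, $j(y)\in M$ lies below $j(\kappa)$ iff $y<\kappa$, and since $j$ is the identity on $\kappa$ we get $M\cap j(\kappa)=\kappa$, which is an ordinal below $j(\kappa)$. Condition (2) is the key step. Given $B\in N$ with $B\subseteq M$ and $B\subseteq C$ for some $C=j(C_0)\in M$, I will set $A_0:=\{y\in C_0:j(y)\in B\}$. Since $B$ and $j\uhr H(\Theta)^V$ both lie in $N$, so does $A_0$, and because $N$ is a transitive inner model of $V$, we have $A_0\in V$ as well. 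The inclusion $A_0\subseteq C_0\in H(\Theta)^V$ then forces $A_0\in H(\Theta)^V$, so $A:=j(A_0)$ is an element of $M$ and
\[ A\cap M=\{j(y):y\in A_0\}=B. \]

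Once $N$ is seen to satisfy the first-order statement that there exists a $j(\kappa)$-Magidor submodel of $H(j(\Theta))$ containing $j(x)$, elementarity of $j$ yields the analogous statement in $V$ for $\kappa$, $\Theta$, and $x$, which is the theorem. The main delicate point is condition (2): the preimage set $A_0$, which is naturally defined inside $N$, must actually be an element of $H(\Theta)^V$ so that $j$ can legitimately be applied to it and the resulting $A$ lies in $M$. This is precisely where the combination $N\subseteq V$ together with the hereditary nature of $H(\Theta)^V$ under subsets of its elements does the essential work; the only large-cardinal input beyond this is the closure of $N$ under $\lambda$-sequences, used exactly once, to secure $M\in N$.
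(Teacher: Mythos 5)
Your proof is correct and follows essentially the same route as the paper: take a $|H(\Theta)|$-supercompact embedding $j$, verify that the pointwise image $j[H^V(\Theta)]$ is a $j(\kappa)$-Magidor elementary submodel of $H(j(\Theta))$ of the target model containing $j(x)$, and pull the statement back to $V$ by elementarity. Your added care in checking $M\in N$ via $\lambda$-closure and in locating the preimage set $A_0$ first in $N$, then in $V$, then in $H(\Theta)^V$ only makes explicit what the paper leaves implicit.
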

	
	\begin{proof}
		Let $\Theta$ and $x$ be given. Let $j\colon V\to M$ be a $|H^V(\Theta)|$-supercompact embedding with critical point $\kappa$. Then $j(x)\in j[H^V(\Theta)]\prec H^M(j(\Theta))$. We verify that $j[H^V(\Theta)]$ is a $j(\kappa)$-Magidor model. Clearly $j[H^V(\Theta)]\cap j(\kappa)=\kappa\in j(\kappa)$. Let $B\subseteq j[H^V(\Theta)]$, $B\subseteq C\in j[H^V(\Theta)]$. Then $B=j[A]$ where $A\subseteq H^V(\Theta)$. Furthermore, $A\in H^V(\Theta)$, since $B$ is a subset of an element of $j[H^V(\Theta)]$. It follows that $B=j(A)\cap j[H^V(\Theta)]$.
		
		It follows by elementarity that there is a $\kappa$-Magidor model $M\prec H(\Theta)$ with $x\in M$.
	\end{proof}
	
	We will also make use of Laver functions in order to obtain indestructibility results:
	
	\begin{mydef}
		Let $\kappa$ be a supercompact cardinal. $l\colon\kappa\to V_{\kappa}$ is a \emph{Laver function} if for any $x$ and any sufficiently large $\lambda$ there is a $\lambda$-supercompact embedding $j\colon V\to M$ with critical point $\kappa$ such that $j(l)(\kappa)=x$.
	\end{mydef}
	
	Laver showed (see \cite{LaverIndestruct}) that any supercompact cardinal carries a Laver function:
	
	\begin{mysen}
		Let $\kappa$ be supercompact. Then there is a Laver function $j\colon V\to M$.
	\end{mysen}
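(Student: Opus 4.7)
The plan is to run Laver's classical reflection-diagonalization argument. Assume, for a contradiction, that no Laver function exists. Then for every candidate $f\colon \kappa\to V_{\kappa}$ there is a ``counterexample pair'' $(\lambda,x)$ with $\lambda$ a cardinal $\geq\kappa$ and $x\in H(\lambda^+)$ such that no $\lambda$-supercompact embedding $j\colon V\to M$ with $\crit(j)=\kappa$ satisfies $j(f)(\kappa)=x$. Using this, I build a specific $f$ by recursion: fix a wellordering $\lhd$ of $V_{\kappa}$, and at stage $\alpha<\kappa$, given $f\uhr\alpha$, check whether there is a pair $(\lambda,x)\in\kappa\times V_{\kappa}$ such that no $\lambda$-supercompact embedding $k\colon V\to N$ with $\crit(k)=\alpha$ has $k(f\uhr\alpha)(\alpha)=x$. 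If so, let $(\lambda_{\alpha},x_{\alpha})$ be the $\lhd$-least such pair and set $f(\alpha)=x_{\alpha}$; otherwise set $f(\alpha)=\emptyset$.

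Since $f$ is, by assumption, not a Laver function, there exists a counterexample pair at $\kappa$; let $(\lambda^{*},x^{*})$ be the least such pair in the natural extension of $\lhd$. Now pick $\lambda$ enormously large — well above $\lambda^{*}$ and any rank needed to code all the failed-to-be-counterexample embeddings witnessing the minimality of $(\lambda^{*},x^{*})$ — and let $j\colon V\to M$ be a $\lambda$-supercompact embedding with $\crit(j)=\kappa$. Because $M^{\lambda}\subseteq M$, the model $M$ agrees with $V$ on what a $\mu$-supercompact embedding with critical point $\kappa$ into a $\mu$-closed inner model is, for every $\mu\leq\lambda^{*}$. This is the crucial reflection step.

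Now evaluate $j(f)(\kappa)$ in $M$. By elementarity, $j(f)\uhr\kappa=f$, and $j(f)(\kappa)$ is computed in $M$ exactly by the recursion above applied to $f$ at stage $\kappa$, using the $\lhd$-least pair $(\lambda',x')$ such that in $M$ there is no $\lambda'$-supercompact embedding $k$ with $\crit(k)=\kappa$ and $k(f)(\kappa)=x'$. By the closure of $M$, the set of such pairs computed in $M$ agrees with the set of such pairs computed in $V$, so the $\lhd$-least pair is $(\lambda^{*},x^{*})$, giving $j(f)(\kappa)=x^{*}$. But $j$ itself is a $\lambda$-supercompact embedding with $\crit(j)=\kappa$, hence in particular $\lambda^{*}$-supercompact, and $j(f)(\kappa)=x^{*}$; this contradicts the choice of $(\lambda^{*},x^{*})$ as a counterexample pair for $f$.

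The main obstacle is the reflection step: making sure that the least counterexample pair as seen in $V$ is still the least counterexample pair as seen in $M$. For the direction ``counterexample in $V$ implies counterexample in $M$'', one uses that any hypothetical witnessing embedding in $M$ would also witness in $V$. For the other direction, one has to guarantee that if $(\lambda',x')\lhd(\lambda^{*},x^{*})$ and a $\lambda'$-supercompact witnessing embedding exists in $V$, then such an embedding also exists in $M$; this is exactly what the $\lambda$-closure of $M$ delivers, provided $\lambda$ was chosen large enough (any $\lambda\geq 2^{\lambda^{*}}$ suffices, since such embeddings are coded by normal fine ultrafilters on $\mathcal{P}_{\kappa}\lambda^{*}$, which live in $H(\lambda)$).
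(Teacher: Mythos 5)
The paper itself gives no proof of this statement; it is quoted from Laver's paper \cite{LaverIndestruct} and used as a black box, so the right benchmark is Laver's original diagonalization argument, which is exactly what you are reconstructing: diagonalize against least counterexample pairs along a recursion of length $\kappa$, then reflect with one highly supercompact embedding. That overall strategy is correct, and your final contradiction (that $j$ itself is a $\lambda^{*}$-supercompact embedding with $j(f)(\kappa)=x^{*}$) is the right punchline.

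However, the reflection step as you state it has a genuine gap. The claim ``by the closure of $M$, the set of such pairs computed in $M$ agrees with the set of such pairs computed in $V$'' is false as written: by elementarity the recursion at stage $\kappa$ inside $M$ ranges over pairs $(\lambda,x)$ with $\lambda<j(\kappa)$ and $x\in V_{j(\kappa)}^{M}$, and $\lambda$-closure of $M$ controls nothing about pairs whose first coordinate exceeds $\lambda$ or whose second coordinate has transitive closure of size above $\lambda$; so the $j(\lhd)$-least counterexample pair in $M$ need not be $(\lambda^{*},x^{*})$ (your ``natural extension of $\lhd$'' does not even match $j(\lhd)$), and if the pair $M$ picks has large first coordinate you can neither pull its counterexample-ness back to $V$ nor contradict it using $j$, which is only $\lambda$-supercompact. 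There is also a definability problem: ``there is no $\lambda$-supercompact embedding $k\colon V\to N$ with $\crit(k)=\alpha$ and $k(f\uhr\alpha)(\alpha)=x$'' quantifies over proper classes, so you cannot apply elementarity to it directly. Both issues are repaired in the standard way, which your last parenthetical only gestures at: build the restriction $x\in H(\lambda^{+})$ into the recursion itself, order pairs with the $\lambda$-coordinate first, and formulate the counterexample condition as ``there is no normal fine $\kappa$-complete ultrafilter $U$ on $\mathcal{P}_{\kappa}\lambda$ with $j_{U}(f)(\kappa)=x$''. The equivalence of this with the arbitrary-embedding formulation uses the factor-map argument (the factor embedding fixes $H(\lambda^{+})^{M_{U}}$, which is exactly where $x\in H(\lambda^{+})$ is needed), and it is this ultrafilter form that transfers between $V$ and $M$: with $\lambda\geq 2^{(\lambda^{*})^{<\kappa}}$ (your bound $2^{\lambda^{*}}$ is not enough in general) every candidate measure and every relevant function lies in $M$, so the pair $(\lambda',x')$ chosen in $M$ satisfies $\lambda'\leq\lambda^{*}$, is a counterexample in $V$, and $j(f)(\kappa)=x'$ then yields the contradiction. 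So the approach is the classical one and is salvageable, but the agreement-of-least-pairs step you rely on does not hold without these adjustments.
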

	
	Since elementary embeddings and transitive collapses are ``dual'' to each other, the following lemma is not surprising. Recall that for any extensional well-founded structure $(M,E)$, the Mostowski-collapse $\pi_M\colon(M,E)\to(N,\in)$ is defined inductively by $\pi_M(x):=\{\pi_M(y)\;|\;y\in M\wedge yEx\}$.
	
	\begin{mylem}\label{SmallLaver}
		Let $\kappa$ be a supercompact cardinal and $\Theta\geq\kappa$ regular. Let $l\colon\kappa\to V_{\kappa}$ be a Laver function. Then whenever $x,y\in H(\Theta)$ there is a $\kappa$-Magidor model $M\prec H(\Theta)$ such that $x,y\in M$ and $l(M\cap\kappa)=\pi_M(y)$.
	\end{mylem}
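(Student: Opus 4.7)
The plan is to lift the construction of Theorem \ref{MagModExists} by additionally exploiting the defining property of the Laver function $l$. The crux is that inside the ultrapower $M$ of a suitable supercompact embedding, the canonically constructed Magidor model is exactly $j[H^V(\Theta)]$, whose Mostowski collapse is the identity on objects named by $j$. Hence if we choose the embedding to satisfy $j(l)(\kappa) = y$, the desired equation will hold in $M$ automatically, and we then pull it back by elementarity.

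In more detail, first I would invoke the Laver property: choose a $\lambda$-supercompact embedding $j\colon V \to M$ with critical point $\kappa$, where $\lambda \geq |H^V(\Theta)|$, such that $j(l)(\kappa) = y$. (This is legitimate since $y \in H(\Theta) \subseteq V_\lambda$ for $\lambda$ large enough.) Next, as in the proof of Theorem \ref{MagModExists}, set $N := j[H^V(\Theta)]$; then $N$ is a $j(\kappa)$-Magidor submodel of $H^M(j(\Theta))$ containing $j(x)$ and $j(y)$, and $N \cap j(\kappa) = \kappa$.

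Now I would observe that, because $j\uhr H^V(\Theta)\colon H^V(\Theta) \to N$ is an $\in$-isomorphism and $H^V(\Theta)$ is transitive, the Mostowski collapse $\pi_N$ satisfies $\pi_N(j(z)) = z$ for every $z \in H^V(\Theta)$. In particular $\pi_N(j(y)) = y$. Since $\lambda \geq |H^V(\Theta)|$, the set $N$ and the collapsing map $\pi_N$ both lie in $M$, so this equation is internal to $M$. Combining this with our choice of $j$ yields, inside $M$,
\[
j(l)(N \cap j(\kappa)) \;=\; j(l)(\kappa) \;=\; y \;=\; \pi_N(j(y)).
\]
Thus $M$ satisfies the statement ``there is a $j(\kappa)$-Magidor model $N \prec H(j(\Theta))$ with $j(x), j(y) \in N$ and $j(l)(N \cap j(\kappa)) = \pi_N(j(y))$''. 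Applying elementarity of $j$ then produces the desired $\kappa$-Magidor model $M \prec H(\Theta)$ in $V$.

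There is no deep obstacle here; the only care needed is ensuring that $|H^V(\Theta)|$-supercompactness is enough to make $N$, $\pi_N$, and $y$ visible in $M$ (so that the key equation is meaningful there), and recalling that Mostowski collapse is absolute between transitive structures containing the required parameters. Both are standard.
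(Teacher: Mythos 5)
Your proposal is correct and follows essentially the same route as the paper: pick a sufficiently supercompact embedding with $j(l)(\kappa)=y$, verify that $N=j[H^V(\Theta)]$ is a $j(\kappa)$-Magidor model whose collapse sends $j(z)$ to $z$ (so $j(l)(N\cap j(\kappa))=\pi_N(j(y))$ holds in $M$ by the closure of $M$), and pull the statement back by elementarity. The closure/absoluteness points you flag are exactly the ones the paper relies on, so there is nothing to add.
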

	
	\begin{proof}
		Let $\Theta$ and $x,y$ be given. Let $j\colon V\to M$ be a sufficiently supercompact embedding with critical point $\kappa$ such that $j(l)(\kappa)=y$. As before, the pointwise image $j[H^V(\Theta)]\prec H^M(j(\Theta))$ is a $j(\kappa)$-Magidor model. One shows by induction that $\pi_{j[H^V(\Theta)]}(j(a))=a$ whenever $a\in H^V(\Theta)$, which shows the statement (for more details, see \cite[Lemma 5.4.14]{JakobPhD})
	\end{proof}
	
	Lastly, there is the following connection between Magidor models and normal, subadditive colorings:
	
	\begin{mylem}\label{InducedColoring}
		Let $\kappa$ be a supercompact cardinal, let $\delta\geq\kappa$ be a singular cardinal such that $\cf(\delta)<\kappa$ and let $\Theta>\delta$ be regular. Let $d\colon[\delta^+]^2\to\cf(\delta)$ be normal and subadditive and let $M\prec H(\Theta)$ be a $\kappa$-Magidor model with $d,\delta\in M$. Then the following holds:
		\begin{enumerate}
			\item Letting $\rho:=\sup(M\cap\delta^+)$, $\cf(\rho)$ is the successor of a singular cardinal $\mho$ with cofinality $\cf(\delta)$.
			\item There is an unbounded subset $A\subseteq\rho$ (we can choose $M\cap\delta^+$) such that whenever $\alpha\in A$ and $i<\cf(\delta)$,
			$$|\{\beta\in A\cap\alpha\;|\;d(\beta,\alpha)\leq i\}|<\mho.$$
		\end{enumerate}
	\end{mylem}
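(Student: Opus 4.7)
The plan is to route everything through the Mostowski collapse $\pi_M\colon M\to N$, translating the Magidor property into a strong absoluteness statement about $N$ to which both (1) and (2) then reduce. The key claim I would prove first is the following: whenever $C\in M$ and $S\in V$ satisfies $S\subseteq\pi_M(C)$, then $S\in N$. Indeed, setting $B:=\pi_M^{-1}[S]\subseteq M$ yields $B\subseteq C\cap M\subseteq C\in M$, so the Magidor property furnishes some $E\in M$ with $E\cap M=B$, and by definition of the collapse $\pi_M(E)=\{\pi_M(y)\colon y\in E\cap M\}=\pi_M[B]=S$, whence $S\in N$. From this, $N$ computes powersets (and therefore cardinalities and cardinal successors) of its elements correctly in $V$. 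This is the main obstacle of the proof; everything that follows is a direct calculation using the Mostowski collapse together with elementarity.

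For (1), I would observe that $\cf(\delta)\in M$ (it is definable from $\delta\in M$) and $\cf(\delta)<\kappa$, so $\cf(\delta)\in M\cap\kappa$; since the latter is an ordinal, $\cf(\delta)\subseteq M$ and $\pi_M$ fixes $\cf(\delta)$ pointwise. Fixing any cofinal sequence $(\delta_n)_{n<\cf(\delta)}\in M$ in $\delta$, each $\delta_n$ lies in $M$, and $(\pi_M(\delta_n))_{n<\cf(\delta)}$ witnesses $\cf(\mho)=\cf(\delta)$, where $\mho:=\pi_M(\delta)$. By the absoluteness step above, $\mho$ is a cardinal in $V$ (necessarily singular, of cofinality $\cf(\delta)$), and $\mho^{+N}=\mho^{+V}$. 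Since the order type of $M\cap\delta^+$ equals $\pi_M(\delta^+)=\mho^{+N}=\mho^+$, we conclude $\cf(\rho)=\mho^+$, as required.

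For (2), let $A:=M\cap\delta^+$ and fix $\alpha\in A$ and $i<\cf(\delta)\subseteq M$, so that $\alpha,i,d\in M$ and the set $T:=\{\beta<\alpha\colon d(\beta,\alpha)\leq i\}$ lies in $M$. By normality of $d$, $|T|=:\sigma<\delta$, and $\sigma\in M$ by elementarity. Choosing a bijection $f\colon\sigma\to T$ with $f\in M$, the fact that $f,f^{-1}\in M$ gives $T\cap M=f[\sigma\cap M]$, whence $|A\cap T|=|T\cap M|=|\sigma\cap M|=\pi_M(\sigma)<\pi_M(\delta)=\mho$, which is exactly the desired bound.
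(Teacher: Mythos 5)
Your proof is correct, and it is organized around a different key lemma than the paper's, even though the underlying mechanism (the Magidor property applied to subsets of elements of $M$) is the same. You first establish that the transitive collapse $N$ of $M$ absorbs every $V$-subset of each of its elements, and hence computes ``is a cardinal'' and cardinal successors correctly in $V$; this is a sharpened form of Lemma \ref{MagidorModelElementary}, which the paper only states later and in the weaker form $H(\pi(\delta))\subseteq N$. Both parts then become routine computations with $\pi_M$ and elementarity, using the identity $\pi_M(\mu)=\otp(M\cap\mu)$. The paper instead argues directly with order types: it shows by a well-order coding argument that $\otp(M\cap\mu)$ is a cardinal for every cardinal $\mu\in M$ (which is exactly your cardinal-correctness claim in disguise), and for (2) it invokes the Magidor property once more to identify the set $\{\beta<\alpha\;|\;d(\beta,\alpha)\leq i\}$ inside $M$ before restricting a surjection, whereas you simply note this set lies in $M$ by definability and restrict a bijection --- slightly cleaner. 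Your route also makes explicit the successor computation $\pi_M(\delta^+)=\mho^{+N}=\mho^{+V}$, a step the paper elides when asserting $\otp(M\cap\delta^+)=\otp(M\cap\delta)^+$; the only cosmetic imprecision on your side is the phrase ``computes powersets correctly'' (the powerset need not be an element of $N$), but the cardinality and successor absoluteness you actually use does follow from your subset-absorption claim, and the two omitted trivia ($\mho>\cf(\delta)$, so $\mho$ is genuinely singular, and that the cofinal sequence can be taken increasing) are immediate.
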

	
	\begin{proof}
		For (1), we first note that whenever $\mu\in M$ is a cardinal, $\otp(M\cap\mu)$ is a cardinal as well: Otherwise, there is $\alpha<\otp(M\cap\mu)$ and a well-order $X\subseteq\alpha\times\alpha$ with order-type $\otp(M\cap\mu)$. It follows that $\alpha=\otp(M\cap\nu)$ for some $\nu<\mu$, $\nu\in M$ and there exists $B\subseteq M\cap\nu\times M\cap\nu$ with order-type $\otp(M\cap\mu)$. Let $A\in M$ be such that $A\cap M=B$. Then by elementarity $A\subseteq\nu\times\nu$ and $\otp(M)=\mu$. But this contradicts the fact that $\mu$ is a cardinal.
		
		Furthermore, we know that $\sup(M\cap\delta)=\delta$, since $\cf(\delta)<\kappa$. It follows that $\otp(M\cap\delta)$ is a cardinal and has cofinality $\cf(\delta)$ (since, whenever $f\in M$ is a cofinal function from $\cf(\delta)$ into $\delta$, $f\uhr (M\cap\cf(\delta))=f$ is a cofinal function from $\cf(\delta)$ into $M\cap\delta$). It follows that $\otp(M\cap\delta^+)=\otp(M\cap\delta)^+$, so $\otp(M\cap\delta^+)$ (which equals $\cf(\sup(M\cap\delta^+))$) is the successor of a singular cardinal with cofinality $\cf(\delta)$. We let $\mho:=\otp(M\cap\delta)$.
		
		For (2), as claimed, we choose $A:=M\cap\delta^+$. Let $\alpha\in A$ and $i<\cf(\delta)$. Then $A_{\alpha}:=\{\beta\in A\cap\alpha\;|\;d(\beta,\alpha)\leq i\}\subseteq M\cap\alpha$ and so there is $B\in M$ with $B\cap M=A_{\alpha}$. It follows by elementarity that $B=\{\beta\in\alpha\;|\;d(\beta,\alpha)\leq i\}$. Again by elementarity, $|B|<\delta$, so there is (in $M$) some $\mu<\delta$ and a surjection $f\colon\mu\to B$. Then $f\uhr(\mu\cap M)$ is a surjection from $(\mu\cap M)$ onto $B\cap M=A_{\alpha}$. Since $\mu<\delta$ and $\otp(M\cap\delta)$ is a cardinal, it follows that $\otp(M\cap\mu)<\otp(M\cap\delta)=\mho$ and we are done.
	\end{proof}
	
	It follows from (2) that $d$ induces a normal, subadditive coloring on $\mho^+$ in the following way: Let $f\colon\mho^+\to M\cap\delta^+$ be order-preserving. We let $e\colon[\mho^+]^2\to\cf(\mho)=\cf(\delta)$ be defined by $e(\alpha,\beta):=d(f(\alpha),f(\beta))$. Then $e$ is a normal, subadditive coloring on $\mho^+$. Additionally, in any outer model, $\rho$ is $d$-approachable if and only if $\mho^+$ is $e$-approachable (by Fact \ref{FactRefinement}). This shows that the problem of obtaining non-approachable points of arbitrary cofinality reduces to the question of whether or not, given a singular cardinal $\delta$ and a regular cardinal $\mu<\delta$, it is possible to collapse $|\delta^+|=\mu$ without making $\delta^+$ $d$-approachable with respect to any normal subadditive coloring $d\colon[\delta^+]^2\to\cf(\delta)$. This problem was answered by the author and Levine in \cite{JakobLevineFailureApproachability}. In Section 4, we will introduce a poset with the same crucial property which can be defined using (in practice) much weaker hypotheses which allows us to iterate the construction and prove our main theorem.
	
	So in summary, we have shown:
	
	\begin{mylem}
		Let $\kappa$ be a supercompact cardinal. let $\mu>\kappa$ be singular with $\cf(\mu)<\kappa$. Then whenever $d\colon[\mu^+]^2\to\cf(\mu)$ is a normal, subadditive coloring, there are stationarily many $\alpha\in\mu^+$ which are not $d$-approachable. In particular, $\mu^+\notin I[\mu^+]$.
	\end{mylem}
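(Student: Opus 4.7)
The plan is to assemble Theorem \ref{MagModExists}, Lemma \ref{InducedColoring} and Fact \ref{FactRefinement}. To witness stationarity, I fix an arbitrary club $C\subseteq\mu^+$ and exhibit a non-$d$-approachable $\rho\in C$. Choose a sufficiently large regular $\Theta$ and, via Theorem \ref{MagModExists}, pick a $\kappa$-Magidor model $M\prec H(\Theta)$ with $d,\mu,C\in M$. Put $\rho:=\sup(M\cap\mu^+)$: since $C\in M$ is unbounded in $\mu^+$, elementarity gives that $M\cap C$ is unbounded in $\rho$, and closedness of $C$ yields $\rho\in C$.

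Next I feed $M$ into Lemma \ref{InducedColoring} with $\delta=\mu$. This hands me a singular cardinal $\mho=\otp(M\cap\mu)$ of cofinality $\cf(\mu)$ with $\cf(\rho)=\mho^+$, and the unbounded set $A:=M\cap\mu^+\subseteq\rho$ satisfies
$$|\{\beta\in A\cap\alpha\;|\;d(\beta,\alpha)\leq i\}|<\mho$$
for every $\alpha\in A$ and every $i<\cf(\mu)$.

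Assume towards a contradiction that $\rho$ is $d$-approachable. Since $\cf(\rho)=\mho^+>\cf(\mu)$, Fact \ref{FactRefinement} applied to $A\subseteq\rho$ produces an unbounded $B\subseteq A$ together with some $i<\cf(\mu)$ such that $d(\beta,\alpha)\leq i$ whenever $\beta<\alpha$ both lie in $B$. Because $B$ is cofinal in $\rho$, we have $\otp(B)\geq\cf(\rho)=\mho^+$, so I can pick the $\mho$-th element $\alpha$ of $B$; then $B\cap\alpha$ has order type and cardinality $\mho$, yet by the choice of $B$ and the displayed bound, $|B\cap\alpha|\leq|\{\beta\in A\cap\alpha\;|\;d(\beta,\alpha)\leq i\}|<\mho$, a contradiction.

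Since $C$ was arbitrary, the set of non-$d$-approachable ordinals of cofinality exceeding $\cf(\mu)$ is stationary in $\mu^+$. The ``in particular'' clause is then immediate from Fact \ref{ColoringCanonical}: this stationary set is disjoint from $S(d)\cup E_{\leq\cf(\mu)}^{\mu^+}$, so $\mu^+\notin I[\mu^+]$. No step presents a real obstacle — the structural work has already been absorbed into Lemma \ref{InducedColoring}, and what remains is essentially a pigeonhole on the refined set $B$.
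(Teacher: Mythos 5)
Your argument is correct and follows the same route the paper intends (it states this lemma as a summary of the preceding discussion): a $\kappa$-Magidor model containing the club via Theorem \ref{MagModExists}, the structure of $\rho=\sup(M\cap\mu^+)$ and the set $A=M\cap\delta^+$ from Lemma \ref{InducedColoring}, and Fact \ref{FactRefinement} to refine inside $A$; your pigeonhole at the $\mho$-th element of $B$ is exactly the reason the induced coloring $e$ on $\mho^+$ can never make its top point approachable, so you have merely unwound that step rather than changed the proof. The concluding appeal to Fact \ref{ColoringCanonical} for $\mu^+\notin I[\mu^+]$ is likewise the paper's intended use of that fact.
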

	
	Lastly, we prove that Magidor models have transitive collapses which reflect many set-theoretical statements. This mirrors the fact that for all ``bounded'' statements $\phi$, one can find a sufficiently supercompact embedding $j\colon V\to M$ such that $V\models\phi\leftrightarrow M\models\phi$ due to the closure of $M$:
	
	\begin{mylem}\label{MagidorModelElementary}
		Let $\kappa$ be a supercompact cardinal, let $\Theta>\kappa$ be regular and let $M\prec H(\Theta)$ be a $\kappa$-Magidor model. Let $\pi\colon M\to N$ be the transitive collapse of $M$. Whenever $\delta\in M$ is regular, $H(\pi(\delta))\subseteq N$.
	\end{mylem}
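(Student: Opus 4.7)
My plan is to first establish a strong closure property of $N$, which is essentially a reformulation of the Magidor-model condition (2) through the transitive collapse: any $X\in V$ with $X\subseteq Y$ for some $Y\in N$ already belongs to $N$. This is the main technical ingredient; from it, $H(\pi(\delta))\subseteq N$ will follow by coding each $x\in H(\pi(\delta))$ as a well-founded extensional relation on an ordinal below $\pi(\delta)$ and invoking absoluteness of the Mostowski collapse.

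To prove the closure property, I would take $X\in V$ with $X\subseteq Y\in N$ and pick $C\in M$ with $\pi(C)=Y$. Since $\pi(C)=\{\pi(y)\mid y\in C\cap M\}$ and $\pi$ is bijective, the pointwise preimage satisfies $\pi^{-1}[X]\subseteq C\cap M\subseteq M$ and $\pi^{-1}[X]\subseteq C\in M$; property (2) of the Magidor model $M$ then supplies $A\in M$ with $A\cap M=\pi^{-1}[X]$, whence $\pi(A)=\pi[A\cap M]=X$ and so $X\in N$.

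Now let $x\in H(\pi(\delta))$ and set $\lambda:=|\mathrm{tc}(\{x\})|^V<\pi(\delta)$. Fix a bijection $h\colon\lambda\to\mathrm{tc}(\{x\})$ in $V$ and define $E:=\{(\alpha,\beta)\in\lambda\times\lambda\mid h(\alpha)\in h(\beta)\}$. Because $N$ is a transitive model of $\ZFC$ (being isomorphic to $M\prec H(\Theta)$) containing $\pi(\delta)$, both $\lambda$ and $\lambda\times\lambda$ lie in $N$, and the closure lemma then gives $E\in N$. The structure $(\lambda,E)$ is well-founded and extensional, and these properties are absolute between $V$ and the transitive class $N$; hence the Mostowski collapse of $(\lambda,E)$ computed inside $N$ agrees with its collapse in $V$, namely $(\mathrm{tc}(\{x\}),\in)$. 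Therefore $\mathrm{tc}(\{x\})\in N$, and by transitivity of $N$ we conclude $x\in N$. The main work is in the closure lemma, which uses the Magidor-model property in an essential way; the subsequent coding and absoluteness argument is standard.
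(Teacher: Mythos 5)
Your proof is correct and follows essentially the same route as the paper: the key step in both is to push a subset of a member of $N$ lying in $V$ into $N$ via the Magidor property (2) applied to $\pi^{-1}[X]$, and then to reduce $H(\pi(\delta))\subseteq N$ to this by coding and absoluteness. The only difference is presentational — the paper codes elements of $H(\pi(\delta))$ as subsets of $\pi(\delta)$ and leaves the decoding implicit, while you spell it out via a well-founded extensional relation on $\lambda\times\lambda$ and the absoluteness of the Mostowski collapse.
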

	
	\begin{proof}
		Since every element of $H(\pi(\delta))$ can be coded as a subset of $\pi(\delta)$, it suffices to show that $\mathcal{P}(\pi(\delta))\subseteq N$. To this end, let $x\subseteq\pi(\delta)$. Then $\pi^{-1}[x]\subseteq\delta\cap M$ and so there is $A\in M$ with $A\cap M=\pi^{-1}[x]$. Then $\pi(A)=\pi[A\cap M]=x\in N$.
	\end{proof}
	
	So this means that whenever $M\prec H(\Theta)$ is a $\kappa$-Magidor model, the transitive collapse $N$ is elementary in $V$ for every statement which is verifiable by some large enough $H(\pi(\Theta'))$ for $\Theta'\in H(\Theta)$.
	
	\subsection{Forcing}\hfill
	
	In this subsection, we introduce some lesser known definitions and facts regarding the technique of forcing. Our notation is standard. To reduce confusion, we write ${<}\,\kappa$-closed (directed closed; distributive; etc.). We follow the convention that filters are upwards closed, so that if $p\leq q$, $p$ forces more than $q$. Given a poset $\dP$, we let $\Gamma_{\dP}$ be the canonical $\dP$-name for the $\dP$-generic filter, i.e. $\Gamma_{\dP}:=\{(p,\check{p})\;|\;p\in\dP\}$.
	
	We start with the separative quotient:
	
	\begin{mydef}
		Let $\dP$ be a poset. On $\dP$, define the equivalence relation $\sim$ by $p\sim q$ if and only if $\forall z(z\perp p\leftrightarrow z\perp q)$. Define $\preceq$ on $\dP/\sim$ by $[p]\preceq[q]$ if and only if $\forall r\leq p(r||q)$. Then $(\dP/\sim,\preceq)$ is called the \emph{separative quotient} of $\dP$ and $h\colon\dP\to(\dP/\sim)$, $h(p)=[p]$ is the quotient mapping.
	\end{mydef}
	
	It is easy to verify that $p\leq q\to h(p)\preceq h(q)$ and $p,q$ are compatible in $\dP$ if and only if they are compatible in $\dP/\sim$. Additionally, one sees easily that $\dP$ and $\dP/\sim$ are forcing equivalent.
	
	We will also make use of projection analyses. In the literature, there are two common ways of defining projections, one due to Abraham (see \cite[Page 335]{AbrahamHandbook}) and a slightly more general one due to Cummings (see \cite[Definition 5.2]{CummingsHandbook}). In this work, we will use the definition due to Cummings, since the results it provides are sufficient for us.\footnote{See \url{https://mathoverflow.net/q/466836} for a discussion regarding the discrepancy.}
	
	\begin{mydef}
		Let $\dP$ and $\dQ$ be posets. A function $\pi\colon\dP\to\dQ$ is a \emph{projection} if the following holds:
		\begin{enumerate}
			\item $\pi(1_{\dP})=1_{\dQ}$.
			\item Whenever $p\in\dP$ and $q\leq\pi(p)$, there is $p'\leq p$ such that $\pi(p')\leq q$.
		\end{enumerate}
	\end{mydef}
	
	The existence of a projection from $\dP$ onto $\dQ$ implies that, after forcing with $\dQ$, the resulting model can once again be extended by forcing to a model which is a forcing extension by $\dP$. This is done using the \emph{quotient forcing}:
	
	\begin{mydef}
		Let $\dP$ and $\dQ$ be poset, $\pi\colon\dP\to\dQ$ a projection. Let $G$ be a $\dQ$-generic filter. In $V[G]$, we let $\dP/G$ consist of all those $p\in\dP$ such that $\pi(p)\in G$, ordered as a suborder of $\dP$. We call $\dP/G$ the \emph{quotient forcing} of $\dP$ and $G$. We also let $\dP/\dQ$ be a $\dQ$-name for $\dP/\Gamma_{\dQ}$ and refer to $\dP/\dQ$ as the \emph{quotient forcing} of $\dP$ and $\dQ$.
	\end{mydef}
	
	The quotient forcing actually works as intended:
	
	\begin{mylem}
		Let $\dP$ and $\dQ$ be posets, $\pi\colon\dP\to\dQ$ a projection. Let $G$ be a $\dQ$-generic filter and let $H$ be $\dP/G$-generic over $V[G]$. Then $H$ is $\dP$-generic over $V$ and $H=\pi[G]$. In particular, $V[G][H]=V[H]$.
	\end{mylem}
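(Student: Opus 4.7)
The plan is to decompose the argument into three stages: verifying that $H$ is a filter on the ambient poset $\dP$, transferring dense sets from $V$ to $V[G]$ to obtain genericity over $V$, and finally recovering $G$ from $H$ to conclude the equality of models.

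For the filter condition, pairwise compatibility of elements of $H$ in $\dP$ is inherited from $\dP/G$ because the latter is a suborder of $\dP$. Only upward closure in $\dP$ needs attention: if $p \in H$ and $p \leq q$ in $\dP$, then (assuming, as is standard, that $\pi$ is monotone; if not, one replaces $\pi$ by an equivalent monotone projection) $\pi(p) \leq \pi(q)$, so $\pi(q) \in G$ by upward closure of $G$, giving $q \in \dP/G$; upward closure of $H$ inside $\dP/G$ then yields $q \in H$.

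For genericity, I would fix $D \in V$ dense in $\dP$ and show that in $V[G]$ the set $D \cap (\dP/G)$ is dense in $\dP/G$. Given $p \in \dP/G$, consider $D_p := \{\pi(q) : q \leq p,\ q \in D\}$, which lies in $V$. I claim $D_p$ is predense below $\pi(p)$ in $\dQ$: for any $r \leq \pi(p)$, projection property (2) produces $p' \leq p$ with $\pi(p') \leq r$; density of $D$ in $\dP$ then yields $q \leq p'$ with $q \in D$; and $\pi(q) \leq \pi(p') \leq r$ witnesses compatibility of $\pi(q) \in D_p$ with $r$. Since $\pi(p) \in G$, genericity of $G$ over $V$ produces $q \in D$ with $q \leq p$ and $\pi(q) \in G$, i.e., $q \in D \cap (\dP/G)$. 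This dual use of property (2) --- lifting a refinement in $\dQ$ back to $\dP$ and then pushing a density witness forward through $\pi$ --- is the central technical step and the main obstacle.

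To deduce $V[G][H] = V[H]$, I would show that $G$ is definable from $H$. For each $q \in \dQ$ let $E_q := \{p \in \dP : \pi(p) \leq q \text{ or } \pi(p) \perp q\}$; this lies in $V$ and is dense in $\dP$, since given $p_0 \in \dP$ with $\pi(p_0)$ compatible with $q$, picking $r \leq \pi(p_0), q$ and applying (2) yields $p' \leq p_0$ with $\pi(p') \leq r \leq q$. By the previous step, $H \cap E_q \neq \emptyset$ for every $q$, and for $q \in G$ any witness $p \in H \cap E_q$ satisfies $\pi(p) \in G$, which is compatible with $q$, ruling out the second disjunct and forcing $\pi(p) \leq q$. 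Hence $G = \{q \in \dQ : \exists p \in H,\ \pi(p) \leq q\}$, so $G \in V[H]$ and $V[G][H] = V[H]$ follows immediately, with the reverse inclusion being trivial.
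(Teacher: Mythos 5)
Your argument is correct, and it is the standard proof; the paper in fact states this lemma without proof (treating it as folklore), so there is no in-paper argument to compare against. Your three-step decomposition (filter on $\dP$, transfer of dense sets via the predense-below-$\pi(p)$ set $D_p$ and genericity of $G$, then recovery of $G$ from $H$ via the dense sets $E_q$) is exactly the usual route, and each step is carried out correctly. Two small remarks. First, you are right that monotonicity of $\pi$ is needed (both for upward closure of $H$ in $\dP$ and for $\pi(q)\leq\pi(p')$ in the predensity argument); the paper's definition of projection omits order-preservation, which is an oversight rather than a deliberate weakening, so assuming it is the correct reading --- though your parenthetical claim that a non-monotone projection can always be replaced by an ``equivalent'' monotone one is not justified and is best dropped. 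Second, the clause ``$H=\pi[G]$'' in the statement is a typo (it is not even type-correct, since $G\subseteq\dQ$); the intended assertion is that $G$ is the upward closure of $\pi[H]$, which is precisely what your identity $G=\{q\in\dQ\;|\;\exists p\in H,\ \pi(p)\leq q\}$ establishes, and that is all that is needed for $V[G][H]=V[H]$.
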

	
	It follows that, after forcing with $\dQ$, we can perform further forcing to reach a forcing extension by $\dP$. This allows us to obtain regularity properties for a poset $\dQ$ by finding a well-behaved poset $\dP$ and a projection $\pi\colon\dP\to\dQ$. It is clear that in this situation, if $\dP$ does not collapse certain cardinals or add sequences of ordinals of a certain length, neither does $\dQ$, since any forcing extension by $\dQ$ is contained in a forcing extension by $\dP$.
	
	For iterations, one can easily find projections by using the \emph{termspace forcing}, an idea due to Laver:
	
	\begin{mydef}
		Let $\dP$ be a poset and $\dot{\dQ}$ a $\dP$-name for a poset. Then the termspace forcing $(\dT(\dP,\dot{\dQ}),\preceq)$ is defined as follows: Conditions are $\dP$-names $\dot{q}$ for elements of $\dot{\dQ}$ (i.e. $1_{\dP}\Vdash\dot{q}\in\dot{\dQ}$), ordered by $\dot{q}'\preceq\dot{q}$ if and only if $1_{\dP}\Vdash\dot{q}'\leq_{\dot{\dQ}}\dot{q}$.
	\end{mydef}
	
	Using standard arguments on names, one shows:
	
	\begin{mylem}\label{Projection}
		Let $\dP$ be a poset and $\dot{\dQ}$ a $\dP$-name for a poset. The identity function is a projection from $\dP\times\dT(\dP,\dot{\dQ})$ onto $\dP*\dot{\dQ}$.
	\end{mylem}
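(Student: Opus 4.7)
The plan is to verify the two defining conditions of a projection for the identity map $\pi\colon\dP\times\dT(\dP,\dot{\dQ})\to\dP*\dot{\dQ}$. The first condition, $\pi(1)=1$, is immediate since both posets share the top condition $(1_{\dP},\dot{1}_{\dot{\dQ}})$ for a canonical name of the maximum of $\dot{\dQ}$. The substantive task is the lifting condition: given $(p,\dot{q})\in\dP\times\dT(\dP,\dot{\dQ})$ and a condition $(p',\dot{q}')\leq(p,\dot{q})$ in $\dP*\dot{\dQ}$ (so $p'\leq_{\dP}p$ and $p'\Vdash\dot{q}'\leq_{\dot{\dQ}}\dot{q}$), one must find $(p'',\dot{q}'')$ in the product that refines $(p,\dot{q})$ there and whose image in $\dP*\dot{\dQ}$ is below $(p',\dot{q}')$.

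The natural choice is $p'':=p'$, and for the second coordinate one applies the standard mixing construction. Concretely, fix a maximal antichain $A\subseteq\dP$ containing $p'$ and otherwise consisting only of conditions incompatible with $p'$ (take $\{p'\}$ together with a maximal antichain below the complement in the separative quotient). Define $\dot{q}''$ to be the mixed name such that $p'\Vdash\dot{q}''=\dot{q}'$ and $a\Vdash\dot{q}''=\dot{q}$ for every $a\in A\smallsetminus\{p'\}$.

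Then the verifications are routine. First, $\dot{q}''\in\dT(\dP,\dot{\dQ})$: every element of $A$ forces $\dot{q}''$ to equal one of $\dot{q}',\dot{q}$, each of which is forced by $1_{\dP}$ to lie in $\dot{\dQ}$, so maximality of $A$ yields $1_{\dP}\Vdash\dot{q}''\in\dot{\dQ}$. Second, $(p',\dot{q}'')\leq(p,\dot{q})$ in the product: $p'$ forces $\dot{q}''=\dot{q}'\leq\dot{q}$ (using $p'\Vdash\dot{q}'\leq\dot{q}$), while each $a\in A\smallsetminus\{p'\}$ forces $\dot{q}''=\dot{q}$, so by reflexivity and maximality $1_{\dP}\Vdash\dot{q}''\leq\dot{q}$. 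Third, $(p',\dot{q}'')\leq(p',\dot{q}')$ in $\dP*\dot{\dQ}$, since $p'\Vdash\dot{q}''=\dot{q}'$.

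There is essentially no obstacle beyond being careful that the mixing argument produces a name that genuinely lies in the termspace forcing, i.e. is forced by $1_{\dP}$ (and not merely by $p'$) to name an element of $\dot{\dQ}$; this is secured by invoking the mixing lemma on a \emph{maximal} antichain rather than just below $p'$.
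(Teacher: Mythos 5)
Your proof is correct and follows exactly the ``standard arguments on names'' the paper alludes to: keep the first coordinate $p'$ and mix a name $\dot{q}''$ that equals $\dot{q}'$ below $p'$ and $\dot{q}$ on conditions incompatible with $p'$, which is the same construction the paper spells out in its proof of Lemma~\ref{TermspaceIteration}. No gaps; your care that the mixed name is forced by $1_{\dP}$ (not merely $p'$) to lie in $\dot{\dQ}$ is precisely the point that makes it a legitimate element of the termspace forcing.
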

	
	We will actually use an extension of the previous result (which is nonetheless shown using almost the same arguments):
	
	\begin{mylem}[Foreman \cite{ForemanSatIdeal}]\label{TermspaceIteration}
		Let $(\dP_{\alpha},\dot{\dQ}_{\alpha})_{\alpha<\kappa}$ be a full support (Easton support) iteration. Let $\dT:=\prod_{\alpha<\kappa}\dT(\dP_{\alpha},\dot{\dQ}_{\alpha})$, where the product is taken with full support (Easton support). Then the identity is a projection from $\dT$ onto the inverse (Easton) limit of $(\dP_{\alpha},\dot{\dQ}_{\alpha})_{\alpha<\kappa}$.
	\end{mylem}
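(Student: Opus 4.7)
The plan is to generalize the argument behind Lemma~\ref{Projection}. Given $p=(\dot{q}_\alpha)_{\alpha<\kappa}\in\dT$ and $q\leq p$ in the iteration $\dP_\kappa$, we need to produce $p'\in\dT$ with $p'\preceq p$ in $\dT$ (the termspace product order) and $p'\leq q$ in the iteration. We define $p'=(\dot{q}'_\alpha)_{\alpha<\kappa}$ coordinate-wise via mixing: let $\dot{q}'_\alpha$ be a $\dP_\alpha$-name such that below any condition extending $q\uhr\alpha$ the name $\dot{q}'_\alpha$ equals $q(\alpha)$, and below any condition incompatible with $q\uhr\alpha$ it equals $\dot{q}_\alpha$.

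First, we verify $p'\in\dT$. Each $\dot{q}'_\alpha$ is, in both branches, forced to lie in $\dot{\dQ}_\alpha$, so $1_{\dP_\alpha}\Vdash\dot{q}'_\alpha\in\dot{\dQ}_\alpha$. Also, the support (full or Easton) of $p'$ is contained in the union of the supports of $p$ and $q$, so it remains in the product. Next, we check $p'\preceq p$ in $\dT$, i.e. $1_{\dP_\alpha}\Vdash\dot{q}'_\alpha\leq_{\dot{\dQ}_\alpha}\dot{q}_\alpha$ for every $\alpha$: below conditions incompatible with $q\uhr\alpha$ this is immediate since $\dot{q}'_\alpha=\dot{q}_\alpha$, while below conditions extending $q\uhr\alpha$ we have $\dot{q}'_\alpha=q(\alpha)$, and the hypothesis $q\leq p$ in the iteration gives $q\uhr\alpha\Vdash q(\alpha)\leq_{\dot{\dQ}_\alpha}\dot{q}_\alpha$.

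Finally, we show $p'\leq q$ in $\dP_\kappa$ by induction on $\alpha\leq\kappa$, proving the slightly stronger statement $p'\uhr\alpha\leq q\uhr\alpha$ in $\dP_\alpha$. The case $\alpha=0$ is trivial. At a successor $\alpha+1$, from $p'\uhr\alpha\leq q\uhr\alpha$ we see $p'\uhr\alpha$ forces $q\uhr\alpha\in\Gamma_{\dP_\alpha}$, so by the definition of $\dot{q}'_\alpha$ we get $p'\uhr\alpha\Vdash\dot{q}'_\alpha=q(\alpha)$, hence $p'\uhr\alpha\Vdash\dot{q}'_\alpha\leq q(\alpha)$, which together with the inductive hypothesis yields $p'\uhr(\alpha+1)\leq q\uhr(\alpha+1)$. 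The one step that requires genuine care is the limit case, where we exploit that in both inverse limits and Easton limits the order is determined level-wise: since the inductive hypothesis gives $p'\uhr\beta\leq q\uhr\beta$ in $\dP_\beta$ for every $\beta<\alpha$, the coherence of the limit yields $p'\uhr\alpha\leq q\uhr\alpha$ in $\dP_\alpha$. Taking $\alpha=\kappa$ concludes the proof that identity is a projection.
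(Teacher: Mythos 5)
Your proof is correct and follows essentially the same route as the paper: the witnessing condition $p'$ is built coordinatewise by mixing names, forced to equal the target coordinate $q(\alpha)$ on the relevant part of the generic and to equal the original name $\dot{q}_\alpha$ elsewhere, and then one checks $p'\preceq p$ in $\dT$ and $p'\leq q$ in the iteration. The only cosmetic difference is that you mix over extensions of $q\uhr\alpha$ and push the induction into the verification of $p'\leq q$, whereas the paper mixes over the inductively constructed $p'\uhr\alpha$ itself; both produce the same kind of witness.
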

	
	\begin{proof}
		We do the proof in the full support case, but the proof in the Easton support case is entirely the same.
		
		Let $\dP:=(\dP,\leq_{\dP})$ be the inverse limit of $(\dP_{\alpha},\dot{\dQ}_{\alpha})_{\alpha<\kappa}$. Denote by $\leq_{\dT}$ the ordering on $\dT$. It is clear that the identity is a function from $\dT$ onto $\dP$ and preserves the ordering. Let $p\in\dP$ and let $t\leq_{\dP}p$. We want to find $p'\leq_{\dT}p$ with $p'\leq_{\dP}t$. By induction on $\alpha$, let $p'(\alpha)$ be a $\dP_{\alpha}$-name for an element of $\dot{\dQ}_{\alpha}$ forced by $p'\uhr\alpha$ to be equal to $t(\alpha)$ and by conditions incompatible with $p'\uhr\alpha$ to be equal to $p(\alpha)$. Since $p'(\alpha)$ is in any case forced to be below $p(\alpha)$, $p'\leq_{\dT}p$. Since $p'(\alpha)$ is always forced by $p'\uhr\alpha$ to be equal to $t(\alpha)$, $p'\leq_{\dP}t$.
	\end{proof}
	
	The proof of the main theorem is done by successively forcing, for a regular cardinal $\mu$, that for every singular $\delta>\mu$ the set $E_{\mu}^{\delta^+}$ is not in $I[\delta^+]$. In order to prove that the desired conclusion holds, we must therefore show that the tail of the iteration does not destroy the stationarity of the already added sets that lie outside of $I[\delta^+]$. We will do so by proving that a certain elementary embedding \emph{lifts through the forcing extension}, which requires us to show that the tail forcing is sufficiently directed-closed:
	
	\begin{mydef}
		Let $\dP$ be a poset. $X\subseteq\dP$ is \emph{directed} if whenever $p,q\in X$, there is $r\in X$ such that $r\leq p,q$. If $\mu$ is a cardinal, $\dP$ is \emph{${<}\,\mu$-directed closed} if whenever $X\subseteq\dP$ is directed and $|X|<\mu$, there exists $p\in\dP$ such that $p\leq q$ for every $q\in X$.
	\end{mydef}
	
	In the proof of Lemma \ref{LIPForcingLemma}, there is a single point where the established notion of directed closure seems to be insufficient, unless the poset under consideration is separative. Since the closure of a poset might fail to be inherited by its separative quotient\footnote{see \url{https://mathoverflow.net/q/11505}}, we are defining the following strengthening of directed closure instead and using it to prove that the separative quotient of our considered poset is directed-closed.
	
	\begin{mydef}
		Let $\dP$ be a poset. $X\subseteq\dP$ is \emph{weakly directed} if whenever $p,q\in X$, there is $r\in X$ such that $r\Vdash\check{p}\in\Gamma_{\dP}\wedge\check{q}\in\Gamma_{\dP}$. If $\mu$ is a cardinal, $\dP$ is \emph{strongly ${<}\,\mu$-directed closed} if whenever $X\subseteq\dP$ is weakly directed and $|X|<\mu$, there exists $p\in\dP$ such that $p\leq q$ for every $q\in X$.
	\end{mydef}
	
	We note that $r\Vdash\check{q}\in\Gamma_{\dP}$ if and only if $q$ is compatible with every extension of $r$: On one hand, if there is $s\leq r$ which is incompatible with $q$, any generic filter containing $s$ contains $r$ but not $q$. On the other hand, if $q$ is compatible with every extension of $r$ and $G$ is $\dP$-generic containing $r$, the upward closure of $G\cup\{q\}$ is a filter and thus equal to $G$ by genericity. Hence, $q\in G$ and since $G$ was arbitrary, $r\Vdash\check{q}\in\Gamma_{\dP}$.
	
	Clearly, for separative forcings $\dP$, $X\subseteq\dP$ is weakly directed if and only if it is directed, since $p\Vdash\check{q}\in\Gamma_{\dP}$ if and only if $p\leq q$. One might think that the notion of strong directed closure could be made redundant by simply working with the separative closures of our partial orders but in general the separative closure of a partial order must not share the same closure properties as the original poset.
	
	Strong directed closure however is inherited by the separative quotient:
	
	\begin{mylem}
		Let $\dP$ be a poset and $\mu$ a cardinal. If $\dP$ is strongly ${<}\,\mu$-directed closed, then the separative quotient of $\dP$ is ${<}\,\mu$-directed closed.
	\end{mylem}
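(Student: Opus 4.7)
The plan is to show the contrapositive-flavored direct statement: given a directed subset $Y \subseteq \dP/\sim$ of size $< \mu$, lift it to a weakly directed subset $X \subseteq \dP$ of the same size, apply strong ${<}\,\mu$-directed closure of $\dP$ to find a lower bound $s \in \dP$, and then observe that $[s]$ is a lower bound for $Y$ in $\dP/\sim$ via the order-preserving quotient map $h$.

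More precisely, first I would choose, for each equivalence class $[p] \in Y$, a representative in $\dP$, and let $X \subseteq \dP$ be the resulting set of representatives, so $|X| \leq |Y| < \mu$. The key step is to verify that $X$ is weakly directed. Given $p,q \in X$, by directedness of $Y$ there exists $[r] \in Y$ with $[r] \preceq [p]$ and $[r] \preceq [q]$. Let $r^* \in X$ be the representative of $[r]$. By definition of $\preceq$, every extension of $r^*$ is compatible with $p$, and similarly with $q$. By the remark the author makes immediately after the definition of strong directed closure (namely, $r^* \Vdash \check{p} \in \Gamma_{\dP}$ iff $p$ is compatible with every extension of $r^*$), this gives $r^* \Vdash \check{p} \in \Gamma_{\dP} \wedge \check{q} \in \Gamma_{\dP}$, which is exactly weak directedness of $X$.

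Now apply the hypothesis that $\dP$ is strongly ${<}\,\mu$-directed closed to obtain some $s \in \dP$ with $s \leq p$ for every $p \in X$. Using the already-noted fact that $h$ preserves the order (i.e.\ $p \leq q \Rightarrow h(p) \preceq h(q)$), we get $[s] \preceq [p]$ for each $p \in X$, hence $[s]$ is a lower bound for $Y$ in $\dP/\sim$. Since $Y$ was an arbitrary directed set of size $< \mu$, the separative quotient $\dP/\sim$ is ${<}\,\mu$-directed closed.

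I do not expect any serious obstacle here; the only mildly delicate point is checking that the relation $r \Vdash \check{p} \in \Gamma_{\dP}$ is independent of the representative of $[r]$ (so that it matches the separativity relation $\preceq$ correctly), which is immediate since $\sim$-equivalent conditions force the same statements. Everything else is a bookkeeping translation between the ordering on $\dP$ and the ordering on $\dP/\sim$ via the quotient map.
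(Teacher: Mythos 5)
Your argument is correct and is essentially the paper's own proof: lift the directed set in $\dP/\sim$ to a set of representatives in $\dP$, verify weak directedness via the characterization that $r\Vdash\check{p}\in\Gamma_{\dP}$ exactly when $p$ is compatible with every extension of $r$, apply strong ${<}\,\mu$-directed closure in $\dP$, and push the lower bound through the order-preserving quotient map. The representative-independence point you flag is the only subtlety, and your justification of it is fine.
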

	
	\begin{proof}
		Let $h\colon\dP\to\dP/\sim$ be the quotient mapping and let $X\subseteq\dP/\sim$ be directed, $|X|<\mu$. Let $\hat{X}$ be such that $X=\{[p]_{\sim}\;|\;p\in\hat{X}\}$. Then $|\hat{X}|=|X|$ and $\hat{X}$ is weakly directed: Let $p,q\in\hat{X}$. Since $X$ is directed, there is $r\in\hat{X}$ such that $[r]_{\sim}\preceq[p]_{\sim},[q]_{\sim}$. It follows that any extension of $r$ is compatible with $p$ and $q$. Ergo, $[r]_{\sim}\Vdash[\check{p}]_{\sim}\in\Gamma_{\dP/\sim}\wedge[\check{q}]_{\sim}\in\Gamma_{\dP/\sim}$. Since $\dP$ is strongly ${<}\,\mu$-directed closed, there exists $p\in \dP$ such that $p\leq q$ for every $q\in\hat{X}$. Then whenever $[q]_{\sim}\in X$, $q\in\hat{X}$, so $h(p)\leq h(q)=[q]_{\sim}$. Ergo $h(p)$ is a lower bound of $X$.
	\end{proof}
	
	It is easy to see that strong directed closure is productive. Additionally, the usual proof of the iterability of directed closure goes through for strong directed closure as well. Note that, in general, this does not automatically follow from the iterability of directed closure, since two-step iterations typically fail to be separative.
	
	\begin{mylem}[{\cite[Lemma 21.7]{JechSetTheory}}]\label{StrongDirIter}
		Let $\mu$ be a regular cardinal.
		\begin{enumerate}
			\item If $\dP$ is strongly ${<}\,\mu$-directed closed and $\Vdash_{\dP}$``$\dot{\dQ}$ is strongly ${<}\,\check{\mu}$-directed closed'', then $\dP*\dot{\dQ}$ is strongly ${<}\,\mu$-directed closed.
			\item If $\cf(\alpha)>\mu$ and $\dP_{\alpha}$ is a direct limit of $(\dP_{\beta})_{\beta<\alpha}$ such that each $\dP_{\beta}$ is strongly ${<}\,\mu$-directed closed, then $\dP_{\alpha}$ is ${<}\,\mu$-directed closed.
			\item If $\dP_{\alpha}$ is the limit of a forcing iteration $(\dP_{\beta},\dot{\dQ}_{\beta})_{\beta<\alpha}$ such that for each limit ordinal $\beta\leq\alpha$, $\dP_{\beta}$ is either a direct limit or an inverse limit and it is an inverse limit if $\cf(\beta)\leq\mu$, then $\dP_{\alpha}$ is $\mu$-directed closed.
		\end{enumerate}
	\end{mylem}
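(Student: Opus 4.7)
The strategy is to adapt the standard three-part argument for the iterability of ${<}\,\mu$-directed closure (as in \cite[Lemma 21.7]{JechSetTheory}), with the main additional bookkeeping being that ``weakly directed'' is defined via forcing statements rather than compatibility, so one must repeatedly translate between the two. I expect (1) to contain the only real content; (2) is essentially an absorption argument, and (3) follows by transfinite induction combining the previous two parts with a coordinate-by-coordinate construction at inverse limits.

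\smallskip

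\emph{Part (1).} Let $X=\{(p_i,\dot q_i)\mid i<\nu\}$ be weakly directed in $\dP*\dot\dQ$ with $\nu<\mu$. I would first verify that $Y:=\{p_i\mid i<\nu\}$ is weakly directed in $\dP$: given $i,j<\nu$, pick $(p_k,\dot q_k)\in X$ that forces both $(\check p_i,\check{\dot q_i})$ and $(\check p_j,\check{\dot q_j})$ into $\Gamma_{\dP*\dot\dQ}$; projecting onto the first coordinate gives $p_k\Vdash_{\dP}\check p_i,\check p_j\in\Gamma_{\dP}$. Strong ${<}\,\mu$-directed closure of $\dP$ then produces $p\in\dP$ with $p\leq p_i$ for every $i$. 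Working below $p$, the set $\{\dot q_i\mid i<\nu\}$ is forced to be weakly directed in $\dot\dQ$: given $i,j$, the witness $(p_k,\dot q_k)$ from before yields $p\Vdash\dot q_k\Vdash_{\dot\dQ}\check{\dot q_i},\check{\dot q_j}\in\Gamma_{\dot\dQ}$, after unpacking the definition of forcing in the two-step iteration. By strong ${<}\,\mu$-directed closure of $\dot\dQ$ in $V^{\dP}$ and the mixing lemma, there is a $\dP$-name $\dot q$ with $p\Vdash\dot q\leq\dot q_i$ for every $i<\nu$. Then $(p,\dot q)$ is a lower bound of $X$. The subtle point here is the translation between weak directedness at the two-step level and at each coordinate; this is the step I expect to require the most care.

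\smallskip

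\emph{Part (2).} Let $X\subseteq\dP_\alpha$ be directed of size ${<}\,\mu$. Since $\cf(\alpha)>\mu$ and $\dP_\alpha$ is a direct limit, there is $\beta<\alpha$ with $X\subseteq\dP_\beta$. Because $\dP_\beta$ completely embeds into $\dP_\alpha$, any extension $r'\leq_{\dP_\beta}r$ of a lower-bound witness $r\in X$ remains compatible in $\dP_\beta$ with every $p\in X$ (truncate a common $\dP_\alpha$-extension at $\beta$), so $X$ remains weakly directed in $\dP_\beta$. Strong ${<}\,\mu$-directed closure of $\dP_\beta$ then yields a lower bound, which remains a lower bound in $\dP_\alpha$. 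The same argument in fact shows that $\dP_\alpha$ inherits strong ${<}\,\mu$-directed closure, which is what (3) will need.

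\smallskip

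\emph{Part (3).} Proceed by induction on $\alpha$. The successor case is (1). For a limit $\alpha$ with $\cf(\alpha)>\mu$, the iteration is a direct limit and (the strengthened) (2) applies. For a limit $\alpha$ with $\cf(\alpha)\leq\mu$, $\dP_\alpha$ is an inverse limit; given a weakly directed $X$ of size ${<}\,\mu$, recursively define a lower bound $p$ by specifying $p\uhr\beta$ at stage $\beta$: the restrictions $\{q\uhr\beta\mid q\in X\}$ are weakly directed in $\dP_\beta$ (by the same truncation argument as in (2)), and the inductive hypothesis hands us a lower bound in $\dP_\beta$, which can be extended to $\beta+1$ by applying (1) to the next coordinate. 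Taking the union (possible because $\dP_\alpha$ is an inverse limit) produces $p\in\dP_\alpha$ with $p\leq q$ for all $q\in X$.
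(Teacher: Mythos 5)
Your proof of (1) is essentially identical to the paper's: project to the first coordinates to get a weakly directed set in $\dP$, take a lower bound $p$, observe that below $p$ the second coordinates are (forced to be) weakly directed in $\dot{\dQ}$, and combine strong directed closure of $\dot{\dQ}$ with the maximum principle to produce $(p,\dot q)$. The paper likewise only proves (1) in detail and defers (2) and (3) to the standard argument in the cited source, which is exactly the route your sketches of (2) and (3) follow (the only detail left implicit in your (3) is the usual support bookkeeping ensuring the coordinatewise-built condition restricts to a member of $\dP_\beta$ at intermediate direct-limit stages, as in the original proof).
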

	
	\begin{proof}
		We show just the first statement, the other ones follow just as in the original source. So let $\dP$ and $\dot{\dQ}$ be as given and let $X\subseteq\dP*\dot{\dQ}$ be weakly directed, $X=\{(p_{\alpha},\dot{q}_{\alpha})\;|\;\alpha<\gamma\}$ ($\gamma<\mu$). It follows that $\{p_{\alpha}\;|\;\alpha<\gamma\}$ is weakly directed (if $(p_0,\dot{q}_0)\Vdash(p_1,\dot{q}_1)\in\Gamma_{\dP*\dot{\dQ}}$, then in particular $p_0\Vdash\check{p}_1\in\Gamma_{\dP}$) and so there is $p$ such that $p\leq p_{\alpha}$ for all $\alpha<\gamma$. Let $G$ be $\dP$-generic over $V$ with $p\in G$. Then $\{\dot{q}_{\alpha}^G\;|\;\alpha<\gamma\}$ is a weakly directed subset of $\dot{\dQ}^G$ (if $(p_0,\dot{q}_0)\Vdash(p_1,\dot{q}_1)\in\Gamma_{\dP*\dot{\dQ}}$, then $p_0\Vdash\dot{q}_0\Vdash\dot{q}_1\in\Gamma_{\dot{\dQ}}$ and so $\dot{q}_0^G\Vdash\dot{q}_1^G\in\Gamma_{\dot{\dQ}^G}$). Ergo there is $\dot{q}^G\in\dot{\dQ}^G$ such that $\dot{q}^G\leq\dot{q}_{\alpha}^G$ for all $\alpha<\gamma$. By the maximum principle we can fix $\dot{q}$ such that $p\Vdash\dot{q}\leq\dot{q}_{\alpha}$ for all $\alpha<\gamma$. Then $(p,\dot{q})\leq(p_{\alpha},\dot{q}_{\alpha})$ for all $\alpha<\gamma$.
	\end{proof}
	
	Another proof which adapts to this new definition is that strong directed closure is inherited by the termspace forcing. Note that, again, this is not automatic, since in general the termspace forcing is not separative.
	
	\begin{mylem}\label{TermspaceStrongDirect}
		Let $\mu$ be a regular cardinal, let $\dP$ be a poset and $\dot{\dQ}$ a $\dP$-name for a poset such that $\Vdash_{\dP}$ `` $\dot{\dQ}$ is strongly ${<}\,\check{\mu}$-directed closed''. Then $\dT(\dP,\dot{\dQ})$ is strongly ${<}\,\mu$-directed closed.
	\end{mylem}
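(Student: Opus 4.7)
The plan is to pass to an arbitrary $\dP$-generic extension $V[G]$, show that the interpretations $\{\dot{q}_\alpha^G\mid\alpha<\gamma\}$ of a given weakly directed set $X=\{\dot{q}_\alpha\mid\alpha<\gamma\}\subseteq\dT(\dP,\dot{\dQ})$ of size $\gamma<\mu$ form a weakly directed subset of $\dot{\dQ}^G$ of size $<\mu$, invoke the hypothesised strong ${<}\check{\mu}$-directed closure of $\dot{\dQ}$ in $V[G]$ to extract a lower bound, and then use the maximum principle to produce a single $\dP$-name for such a bound.

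The heart of the argument is showing that weak directedness survives interpretation. Given $\alpha,\beta<\gamma$, weak directedness of $X$ in $\dT$ furnishes $\rho<\gamma$ with $\dot{q}_\rho\Vdash_{\dT}\check{\dot{q}}_\alpha,\check{\dot{q}}_\beta\in\Gamma_{\dT}$; by the characterisation of forcing membership in the generic recalled right before the lemma, this just says that every extension of $\dot{q}_\rho$ in $\dT$ is $\dT$-compatible with each of $\dot{q}_\alpha,\dot{q}_\beta$. I will argue that this implies $1_{\dP}\Vdash_{\dP}$ ``$\dot{q}_\rho\Vdash_{\dot{\dQ}}\dot{q}_\alpha\in\Gamma_{\dot{\dQ}}$'' (and symmetrically for $\beta$), which immediately yields weak directedness of $\{\dot{q}_\alpha^G\mid\alpha<\gamma\}$ in $\dot{\dQ}^G$. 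Assume otherwise and pick $p\in\dP$ together with a $\dP$-name $\dot{s}$ such that $p\Vdash\dot{s}\leq_{\dot{\dQ}}\dot{q}_\rho$ while $p$ forces $\dot{s}$ to be incompatible with $\dot{q}_\alpha$ in $\dot{\dQ}$. The trick is to splice $\dot{s}$ into a legitimate $\dT$-condition: let $\dot{s}'$ be a $\dP$-name for the element of $\dot{\dQ}$ equal to $\dot{s}$ below $p$ and equal to $\dot{q}_\rho$ on extensions of conditions incompatible with $p$. Then $1_{\dP}\Vdash\dot{s}'\leq_{\dot{\dQ}}\dot{q}_\rho$, so $\dot{s}'\in\dT$ and $\dot{s}'\preceq\dot{q}_\rho$; by assumption some $\dot{t}$ satisfies $1_{\dP}\Vdash\dot{t}\leq_{\dot{\dQ}}\dot{s}'$ and $1_{\dP}\Vdash\dot{t}\leq_{\dot{\dQ}}\dot{q}_\alpha$, and restricting below $p$ gives $p\Vdash\dot{t}\leq_{\dot{\dQ}}\dot{s},\dot{q}_\alpha$, contradicting the choice of $p,\dot{s}$.

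With weak directedness of $\{\dot{q}_\alpha^G\mid\alpha<\gamma\}$ in $\dot{\dQ}^G$ secured, the strong ${<}\check{\mu}$-directed closure assumption supplies a lower bound in $V[G]$, and since $G$ was arbitrary, $1_{\dP}$ forces the existence of such a bound. The maximum principle then yields a single $\dP$-name $\dot{q}$ with $1_{\dP}\Vdash\dot{q}\in\dot{\dQ}$ and $1_{\dP}\Vdash\dot{q}\leq_{\dot{\dQ}}\dot{q}_\alpha$ for every $\alpha<\gamma$; this $\dot{q}$ lies in $\dT$ and satisfies $\dot{q}\preceq\dot{q}_\alpha$ in $\dT$ for all $\alpha<\gamma$, giving the required lower bound. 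The only nontrivial point is the transfer of weak directedness into $V[G]$ carried out in the second paragraph; everything else is a straightforward application of the hypothesis and the maximum principle in the style of the proof of Lemma \ref{StrongDirIter}.
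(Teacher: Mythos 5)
Your proof is correct and follows essentially the same route as the paper: both arguments reduce to showing that weak directedness survives interpretation by mixing a locally defined $\dP$-name with $\dot{q}_\rho$ to produce a genuine $\dT$-condition below $\dot{q}_\rho$, then invoke the forced strong directed closure of $\dot{\dQ}$ and the maximum principle. The only cosmetic difference is that you argue by contradiction in the ground model (showing $1_{\dP}$ forces $\dot{q}_\rho\Vdash\dot{q}_\alpha\in\Gamma_{\dot{\dQ}}$) while the paper works inside a fixed generic extension and lifts an arbitrary extension of $\dot{q}_{\alpha_2}^G$ to a name, which is the same splicing idea.
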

	
	\begin{proof}
		Let $\{\dot{q}_{\alpha}\;|\;\alpha<\gamma\}$ (where $\gamma<\mu$) be a weakly directed subset of $\dT(\dP,\dot{\dQ})$. We want to show that whenever $G$ is $\dP$-generic, $\{\dot{q}_{\alpha}^G\;|\;\alpha<\gamma\}$ is a weakly directed subset of $\dot{\dQ}^G$. To this end, let $G$ be $\dP$-generic and let $\alpha_0,\alpha_1<\gamma$. There exists $\alpha_2<\gamma$ such that $\dot{q}_{\alpha_2}\Vdash\dot{q}_{\alpha_i}\in\Gamma_{\dT(\dP,\dot{\dQ})}$ for $i=0,1$. In particular, any extension of $\dot{q}_{\alpha_2}$ (in $\dT(\dP,\dot{\dQ})$) is compatible with $\dot{q}_{\alpha_i}$ for $i=0,1$. We will show that any extension of $\dot{q}_{\alpha_2}^G$ (in $\dot{\dQ}^G$) is compatible with $\dot{q}_{\alpha_i}^G$ for $i=0,1$.
		
		To this end, let $\dot{q}^G\leq_{\dot{\dQ}^G}\dot{q}_{\alpha_2}^G$. Then there is $p\in G$ which forces $\dot{q}\leq_{\dot{\dQ}}\dot{q}_{\alpha_2}$. Let $\dot{q}'$ be such that $p\Vdash\dot{q}=\dot{q}'$ and conditions incompatible with $p$ force $\dot{q}=\dot{q}_{\alpha_2}$. Then $\dot{q}'\preceq\dot{q}_{\alpha_2}$ (in $\dT(\dP,\dot{\dQ})$) and so $\dot{q}'$ is compatible with $\dot{q}_{\alpha_i}$ (again in $\dT(\dP,\dot{\dQ})$) for $i=0,1$. Let $\dot{q}^i\preceq\dot{q}',\dot{q}_{\alpha_i}$. Then $(\dot{q}^i)^G\leq_{\dot{\dQ}^G}\dot{q}_{\alpha_i}^G,(\dot{q}')^G$ but $(\dot{q}')^G=\dot{q}^G$, so $\dot{q}^G$ and $\dot{q}_{\alpha_i}^G$ are compatible for $i=0,1$.
		
		Now since $\dot{\dQ}$ is forced to be strongly ${<}\,\mu$-directed closed, $\dP$ forces that there is a lower bound $\dot{q}$ of $\{\dot{q}_{\alpha}\;|\;\alpha<\gamma\}$ in $\dot{\dQ}$. Therefore $\dot{q}$ is a lower bound of $\{\dot{q}_{\alpha}\;|\;\alpha<\gamma\}$ in $\dT(\dP,\dot{\dQ})$.
	\end{proof}
	
	\section{Iteration Theorems for Prikry-Type forcings}
	
	Due to technical reasons, most of our results will be obtained using Prikry-type forcings. In this section, we give definitions and prove results regarding iterations of these posets.
	
	\begin{mydef}
		Let $(\dP,\leq)$ be a partial order and let $\leq_0$ be another partial order on $\dP$ such that $\leq$ refines $\leq_0$. We say that $(\dP,\leq,\leq_0)$ is a \emph{Prikry-type forcing} if for every $p\in\dP$ and every statement $\sigma$ in the forcing language there is $q\leq_0p$ which decides $\sigma$. The latter statement is known as the \emph{Prikry property}.
	\end{mydef}
	
	For any Prikry-type forcing $(\dP,\leq,\leq_0)$, the order $\leq_0$ is only used to derive regularity properties. As such, we will abuse notation and refer to forcing ``with $\dP$'' as forcing with $(\dP,\leq)$. In the same way, we will let ``$\tau$ is a $\dP$-name'' refer to the fact that $\tau$ is a $(\dP,\leq)$-name.
	
	The first example of a Prikry-type forcing is the original poset used by Karel Prikry to give a measurable cardinal $\kappa$ cofinality $\omega$ in a forcing extension without collapsing any cardinals (see \cite{PrikryOriginal}): For $\kappa$ measurable and $U$ a normal measure over $\kappa$, the forcing $\dP(U)$ adds a sequence $(\nu_n)_{n\in\omega}$ which is cofinal in $\kappa$. In particular, $\dP(U)$ is not countably closed. However, there is an additional order $\leq^*$ on $\dP(U)$ such that $(\dP(U),\leq,\leq^*)$ is a Prikry-type forcing notion and $(\dP(U),\leq^*)$ is ${<}\,\kappa$-closed. This implies that $\dP(U)$ does not add any new bounded subsets of $\kappa$ and thus does not collapse cardinals below and including $\kappa$. Combined with the $\kappa^+$-cc of $\dP(U)$ we can see that no cardinals are collapsed even though the regularity of $\kappa$ is destroyed.
	
	Other examples include variants of Namba forcing. The original Namba forcing (see \cite{BukovskyCoplakovaMinimal}) shares some similarities with Prikry forcing. It adds a countable cofinal sequence to $\aleph_2$ without collapsing $\aleph_1$. If $\CH$ holds, it is even true that no new reals are added. However, in the case of Namba forcing, both of these regularity properties follow from abstract considerations (such as Shelah's \emph{$\dI$-condition}, see \cite[Chapter XI]{ShelahProperImproper}) which introduce complications regarding the preservation of larger cardinals and iterability. To remedy both of these problems, we will be using Namba forcings defined from particularly well-behaved ideals which function more similarly to supercompact Prikry forcing.
	
	We start with the problem of iterating Prikry-type forcings. This was largely solved by Magidor in his famous paper proving that it is consistent that the first strongly compact cardinal can also be the first measurable cardinal. He achieved this by successively singularizing every smaller measurable cardinal (see \cite{MagidorIdentityCrises}). We will be using iterations both with full support and with Easton support. The most important point that makes sure the iterations have the Prikry property is the restriction on where non-direct extensions can be used: For full support, only finitely many non-direct extensions are allowed while for Easton support, arbitrarily many non-direct extensions are allowed outside the previous support and only finitely many non-direct extensions are allowed inside the previous support.
	
	We begin with the definition and properties of full support iterations:
	
	\begin{mydef}
		Let $((\dP_{\alpha},\leq_{\alpha},\leq_{\alpha,0}),(\dot{\dQ}_{\alpha},\dot{\leq}_{\alpha},\dot{\leq}_{\alpha,0}))_{\alpha<\rho}$ be a sequence such that each $(\dP_{\alpha},\leq_{\alpha},\leq_{\alpha,0})$ is a Prikry-type poset and each $(\dot{\dQ}_{\alpha},\dot{\leq}_{\alpha},\dot{\leq}_{\alpha,0})$ is a $\dP_{\alpha}$-name for a Prikry-type poset. We define the statement ``$((\dP_{\alpha},\leq_{\alpha},\leq_{\alpha,0}),(\dot{\dQ}_{\alpha},\dot{\leq}_{\alpha},\dot{\leq}_{\alpha,0}))_{\alpha<\rho}$ is a full support Magidor iteration of Prikry-type forcings of length $\rho$'' by induction on $\rho$.
		
		$((\dP_{\alpha},\leq_{\alpha},\leq_{\alpha,0}),(\dot{\dQ}_{\alpha},\dot{\leq}_{\alpha},\dot{\leq}_{\alpha,0}))_{\alpha<\rho}$ is a full support Magidor iteration of Prikry-type forcings of length $\rho$ if $((\dP_{\alpha},\leq_{\alpha},\leq_{\alpha,0}),(\dot{\dQ}_{\alpha},\dot{\leq}_{\alpha},\dot{\leq}_{\alpha,0}))_{\alpha<\rho'}$ is a full support Magidor iteration of Prikry-type forcings of length $\rho'$ for every $\rho'<\rho$ and moreover:
		\begin{enumerate}
			\item If $\rho=\rho'+1$, then $(\dP_{\rho},\leq_{\rho})=(\dP_{\rho'},\leq_{\rho'})*(\dot{\dQ}_{\rho'},\dot{\leq}_{\rho'})$ and $(p',\dot{q}')\leq_{\rho,0}(p,\dot{q})$ if and only if $p'\leq_{\rho',0}p$ and $p'\Vdash\dot{q}'\dot{\leq}_{\rho',0}\dot{q}$.
			\item If $\rho$ is a limit, then $\dP_{\rho}$ consists of all functions $p$ on $\rho$ such that for all $\alpha<\rho$, $p\uhr\alpha\in\dP_{\alpha}$ and the following holds:
			\begin{enumerate}
				\item[(i)] $p'\leq_{\rho}p$ if and only if $p'\uhr\rho'\leq_{\rho'}p\uhr\rho'$ for every $\rho'<\rho$ and there exists a finite subset $b$ of $\rho$ such that whenever $\rho'\notin b$, then $p'\uhr\rho'\Vdash p'(\rho')\dot{\leq}_{\rho',0} p(\rho')$.
				\item[(ii)] $p'\leq_{\rho,0}p$ if and only if $p'\leq_{\rho}p$ and the set $b$ is empty.
			\end{enumerate} 
		\end{enumerate}
	\end{mydef}
	
	So we take the usual full support iteration with the caveat that we are only allowed to pass to a non-direct extension at merely finitely many coordinates. This is necessary in order to obtain that such iterations have the Prikry property:
	
	\begin{mylem}[{\cite[Lemma 6.2]{GitikHandbook}}]\label{PrikryIterFull}
		Let $((\dP_{\alpha},\leq_{\alpha},\leq_{\alpha,0}),(\dot{\dQ}_{\alpha},\dot{\leq}_{\alpha},\dot{\leq}_{\alpha,0}))_{\alpha<\rho}$ be a full-support Magidor iteration of Prikry-type forcings of length $\rho$. Then each $\dP_{\alpha}$ has the Prikry property.
	\end{mylem}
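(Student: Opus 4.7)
The plan is to prove the lemma by induction on $\rho$, treating the successor and limit cases separately; the case $\rho=0$ is vacuous.

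For the successor stage $\rho=\rho'+1$: given $(p,\dot{q})\in\dP_{\rho'}*\dot{\dQ}_{\rho'}$ and a sentence $\sigma$ in the $\dP_\rho$-forcing language, I would work in $V^{\dP_{\rho'}}$ below $p$ and use the Prikry property of $\dot{\dQ}_{\rho'}$ to find a direct extension of $\dot{q}$ deciding $\sigma$. By the maximum principle this yields a $\dP_{\rho'}$-name $\dot{q}'$ with $p\Vdash\dot{q}'\,\dot{\leq}_{\rho',0}\,\dot{q}$ such that $p$ forces that $\dot{q}'$ decides $\sigma$. Applying the inductive Prikry property of $\dP_{\rho'}$ to the $\dP_{\rho'}$-sentence asserting which way $\dot{q}'$ decides $\sigma$ then produces $p'\leq_{\rho',0}p$ deciding it, and $(p',\dot{q}')\leq_{\rho,0}(p,\dot{q})$ decides $\sigma$.

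For the limit stage $\rho$, which is the main obstacle: given $p\in\dP_\rho$ and $\sigma$, I would argue by contradiction. Assume no $p'\leq_{\rho,0}p$ decides $\sigma$. Choose an extension $q\leq_\rho p$ deciding $\sigma$ with its associated finite non-direct set $b\subseteq\rho$ of minimal size; since no direct extension decides $\sigma$, $b$ is nonempty. Setting $\alpha:=\min b$ gives $q\uhr\alpha\leq_{\alpha,0}p\uhr\alpha$. Working in $V^{\dP_\alpha}$ below $q\uhr\alpha$, the Prikry property of $\dot{\dQ}_\alpha$ allows me to replace $q(\alpha)$ by a direct extension of $p(\alpha)$ which decides, in the tail iteration over $V^{\dP_{\alpha+1}}$, whether $q\uhr(\alpha,\rho)$ forces $\sigma$. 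Via the maximum principle inside $V^{\dP_\alpha}$ and the inductive Prikry property of $\dP_\alpha$, this lifts to a condition $q'\leq_\rho p$ which decides $\sigma$ with non-direct set contained in $b\setminus\{\alpha\}$, contradicting the minimality of $|b|$.

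The core difficulty is the single-coordinate reduction step at $\alpha$ in the limit case: the residual piece $q\uhr(\alpha,\rho)$ must be handled as a name in the $\dot{\dQ}_\alpha$-forcing language, and the sentence to be decided there has to be formulated so that, once decided by a direct extension of $p(\alpha)$, the full assembled condition forces the correct truth value of $\sigma$. Careful bookkeeping of names, supports, and the interaction between the generic filter on $\dP_\alpha$ and the tail is the only subtle point; with this absorption lemma in hand, the outer induction on $|b|$ together with the inductive Prikry property at earlier stages makes the argument go through, and the successor step is then essentially routine.
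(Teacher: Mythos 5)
Your successor step is correct and standard, and note that the paper itself does not prove this lemma at all (it simply cites Gitik's Handbook chapter); the closest in-paper argument is Lemma \ref{DirectDecideCapture}, which proves the analogous covering statement for full-support Magidor iterations. The genuine gap is in your limit step, at exactly the point you flag as ``careful bookkeeping'': the single-coordinate absorption at $\alpha=\min b$ does not go through. Say $q\Vdash\sigma$. Working below $q\uhr\alpha$, the Prikry property of $\dot{\dQ}_\alpha$ gives you a direct extension $r$ of $p(\alpha)$ deciding the sentence $\theta$ asserting that the (suitably sanitized) tail name $q\uhr(\alpha,\rho)$ forces $\sigma$; but nothing guarantees that $r$ decides $\theta$ positively. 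Since $r$ need not be compatible with $q(\alpha)$ (for Prikry-type iterands a direct extension of $p(\alpha)$ will typically be incompatible with the non-direct extension $q(\alpha)$), the fact that $\theta$ holds below $q(\alpha)$ gives no information below $r$. If $r$ forces $\neg\theta$ you obtain neither a condition with smaller non-direct support deciding $\sigma$ nor any contradiction, so the minimality of $|b|$ is never violated. Reformulating $\theta$ existentially (``some tail condition below $p\uhr(\alpha,\rho)$ whose non-direct support is contained in $b\smallsetminus\{\alpha\}$ forces $\sigma$'') has the same defect: the only witness you know of lives below the discarded $q(\alpha)$, so a negative decision by $r$ is perfectly consistent with all your hypotheses, and the minimal-counterexample scheme stalls.

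This is precisely why the standard proof (Gitik's Lemma 6.2, and the template of Lemma \ref{DirectDecideCapture} in this paper) does not fix a particular deciding condition $q$ and peel off non-direct coordinates one at a time. Instead one assumes that no direct extension of $p$ decides $\sigma$ and builds, by recursion on all $\alpha\leq\rho$ simultaneously (possible because the support is full), a single $q\leq_0 p$ such that $q\uhr\alpha\Vdash\neg\sigma_\alpha$ for every $\alpha$, where $\sigma_\alpha$ asserts the existence of some direct extension of $p\uhr[\alpha,\rho)$ in the tail forcing deciding $\sigma$. Here the existential quantifier over tail conditions sits inside the sentence handed to the Prikry property of $\dot{\dQ}_\alpha$, so a negative decision is exactly what you propagate; at limit stages $\alpha$ one uses that any $t\leq q\uhr\alpha$ is a direct extension of $q$ above some $\beta<\alpha$ (finiteness of the non-direct support) to pull a witness for $\sigma_\alpha$ down to a witness for $\sigma_\beta$, contradicting the construction; and the final contradiction comes from $\sigma_\rho$ being trivially true. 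If you rework your limit case along these lines (taking care of the direction of the decision, e.g.\ by splitting $\sigma_\alpha$ into the two statements for $\sigma$ and for $\neg\sigma$), the rest of your outline is fine.
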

	
	It follows just as in Lemma \ref{StrongDirIter} that the following holds (since $\leq_{\alpha,0}$ refines $\leq_{\alpha}$ so that any $\leq_{\alpha,0}$-lower bound is also a $\leq_{\alpha}$-lower bound):
	
	\begin{mylem}\label{FullSupportClosure}
		Let $((\dP_{\alpha},\leq_{\alpha},\leq_{\alpha,0}),(\dot{\dQ}_{\alpha},\dot{\leq}_{\alpha},\dot{\leq}_{\alpha,0}))_{\alpha<\rho}$ be a full-support Magidor iteration of Prikry-type forcings of length $\rho$ and $\mu$ a cardinal. Assume that for each $\alpha<\rho$, $\dP_{\alpha}$ forces that $(\dot{\dQ}_{\alpha},\dot{\leq}_{\alpha,0})$ is strongly ${<}\,\check{\mu}$-directed closed. Then for each $\alpha<\rho$, $(\dP_{\alpha},\leq_{\alpha,0})$ is strongly ${<}\,\mu$-directed closed.
	\end{mylem}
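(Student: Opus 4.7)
The plan is to imitate the proof of Lemma \ref{StrongDirIter} and proceed by induction on $\rho$, using at each stage the observation recorded just before Lemma \ref{StrongDirIter}: a weakly directed set in $\dP_\rho$ projects to a weakly directed set in each $\dP_\beta$, because if $p\Vdash_{\dP_\rho}\check q\in\Gamma_{\dP_\rho}$ then every $\leq_\rho$-extension of $p$ is compatible with $q$, hence so is every $\leq_\beta$-extension of $p\uhr\beta$ with $q\uhr\beta$. The successor case $\rho=\rho'+1$ is essentially just part (1) of Lemma \ref{StrongDirIter} applied to the direct extension order $\leq_{\rho,0}=(\leq_{\rho',0})*(\dot{\leq}_{\rho',0})$: given a weakly directed $X=\{(p_\alpha,\dot q_\alpha)\mid\alpha<\gamma\}\subseteq\dP_{\rho'}*\dot\dQ_{\rho'}$ of size ${<}\mu$, the restrictions $\{p_\alpha\}$ form a weakly directed set in $\dP_{\rho'}$ by the above observation, the inductive hypothesis yields a $\leq_{\rho',0}$-lower bound $p$, and then the forced strong directed closure of $(\dot\dQ_{\rho'},\dot\leq_{\rho',0})$ together with the maximum principle produces $\dot q$ with $p\Vdash\dot q\dot\leq_{\rho',0}\dot q_\alpha$ for all $\alpha<\gamma$.

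For the limit case, let $X=\{p_\alpha\mid\alpha<\gamma\}\subseteq\dP_\rho$ be weakly directed with $\gamma<\mu$. I would construct a $\leq_{\rho,0}$-lower bound $r$ by recursion on the coordinate $\beta<\rho$. At stage $\beta$, I have already built $r\uhr\beta\in\dP_\beta$ with $r\uhr\beta\leq_{\beta,0}p_\alpha\uhr\beta$ for every $\alpha<\gamma$. The projection observation then implies that $r\uhr\beta$ forces $\{p_\alpha(\beta)\mid\alpha<\gamma\}$ to be weakly directed in $\dot\dQ_\beta$: if $p_{\alpha_2}\Vdash_{\dP_\rho}\check p_{\alpha_1}\in\Gamma_{\dP_\rho}$, then every extension of $p_{\alpha_2}\uhr\beta$ in $\dP_\beta$ forces $p_{\alpha_2}(\beta)$ to be compatible in $\dot\dQ_\beta$ with $p_{\alpha_1}(\beta)$, and replacing $p_{\alpha_2}\uhr\beta$ by its common extension $r\uhr\beta$ yields the same fact below $r\uhr\beta$. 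By hypothesis and the maximum principle I may pick a name $r(\beta)$ with $r\uhr\beta\Vdash r(\beta)\,\dot\leq_{\beta,0}\,p_\alpha(\beta)$ for all $\alpha<\gamma$. Taking the full-support concatenation produces $r\in\dP_\rho$; by construction the forbidden set $b$ in clause (2)(ii) of the iteration definition is empty, so $r\leq_{\rho,0}p_\alpha$ for every $\alpha$.

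The main obstacle I expect is ensuring that the coordinate-wise weak directedness really does pass from the $\dP_\rho$-level to the $\dot\dQ_\beta$-level under $r\uhr\beta$, since ``weakly directed'' is not a first-order statement about $\dot\dQ_\beta$ but a statement about forcing. The key is the characterization that $p\Vdash\check q\in\Gamma$ iff every extension of $p$ is compatible with $q$, which lets me cash out weak directedness in terms of a quantifier over extensions and then push it down to each coordinate under the established lower bound $r\uhr\beta$. Once this is in place, nothing else is genuinely new: the bookkeeping is identical to the standard Magidor iteration argument, and one never needs to appeal to the non-direct order $\leq_\rho$ because all lower bounds produced along the way lie in $\leq_{\rho,0}$.
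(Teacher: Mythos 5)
Your overall strategy is the one the paper intends (the paper itself only says the statement ``follows just as in Lemma \ref{StrongDirIter}'' and gives no details), but there is a genuine gap at the step you yourself flag as the main obstacle. What you verify at coordinate $\beta$ is that every extension of $p_{\alpha_2}\uhr\beta$ forces $p_{\alpha_2}(\beta)$ to be \emph{compatible} with $p_{\alpha_1}(\beta)$; this only gives pairwise compatibility of the coordinate names, which is not the hypothesis of strong ${<}\,\mu$-directed closure of $\dot{\dQ}_\beta$. To invoke that hypothesis you need $r\uhr\beta$ to force that $\{p_\alpha(\beta)\}$ is \emph{weakly directed} with respect to $\dot{\leq}_{\beta,0}$, i.e.\ that $p_{\alpha_2}(\beta)$ forces $p_{\alpha_0}(\beta),p_{\alpha_1}(\beta)\in\Gamma_{\dot{\dQ}_\beta}$, and this stronger statement does not follow by the same quantifier-pushing. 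The reason is an order mismatch: a failure of the coordinate statement below $r\uhr\beta$ is witnessed by an arbitrary $\leq_\beta$-extension $u\leq_\beta r\uhr\beta$ together with a name $\dot t$, and the condition of $\dP_\rho$ you would build from $(u,\dot t)$ is only a $\leq_\rho$-extension of $p_{\alpha_2}$, not a $\leq_{\rho,0}$-extension (directness below $\beta$ is lost), so the weak directedness of $X$ — which only constrains \emph{direct} extensions of its witnesses and only yields \emph{direct} compatibility — cannot be applied to it; conversely, mixing $\dot t$ below $u$ while keeping the restriction equal to $p_{\alpha_2}\uhr\beta$ fails because the compatibility witness produced by weak directedness may be incompatible with $u$. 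The same issue is silently present in your successor step, where you assert that the maximum principle ``produces $\dot q$'': that presupposes that $p$ forces $\{\dot q_\alpha\}$ to be weakly directed in $(\dot\dQ_{\rho'},\dot\leq_{\rho',0})$.

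The missing ingredient is precisely what distinguishes this lemma from the plain directed-closure argument of Lemma \ref{StrongDirIter}: one has to use the Prikry-type structure of the initial segments. Since $(\dP_\beta,\leq_\beta,\leq_{\beta,0})$ has the Prikry property (Lemma \ref{PrikryIterFull}) and, by the inductive hypothesis together with Lemma \ref{ClosureDecide}, direct $\gamma$-decidability for $\gamma<\mu$, one can pass to a \emph{direct} extension of the relevant condition which decides the statement ``$\{p_\alpha(\beta)\}$ has a common $\dot\leq_{\beta,0}$-lower bound'' (first deciding the offending pair of indices and the relevant disjunct directly); a negative decision is then refuted by the mixing argument, because now all the data below $\beta$ are direct extensions of $p_{\alpha_2}\uhr\beta$ and weak directedness of $X$ applies. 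Some care is still needed to combine this with your coordinate-by-coordinate recursion, since the decision may require shrinking coordinates below $\beta$ that have already been fixed; this bookkeeping (or an equivalent reformulation of the recursion) is the real content of the limit case and is absent from your write-up.
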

	
	We now introduce the Easton-support Magidor iteration which has the following curious change: In the full-support iteration we are only allowed to take non-direct extensions finitely often. However, in the Easton-support iteration we are allowed to take arbitrarily many non-direct extensions provided that they occur outside of the support (inside the support, only finitely many extensions are allowed). As before, this is necessary in order to obtain the Prikry property.
	
	\begin{mydef}
		Let $((\dP_{\alpha},\leq_{\alpha},\leq_{\alpha,0}),(\dot{\dQ}_{\alpha},\dot{\leq}_{\alpha},\dot{\leq}_{\alpha,0}))_{\alpha<\rho}$ be a sequence such that each $(\dP_{\alpha},\leq_{\alpha},\leq_{\alpha,0})$ is a poset and each $(\dot{\dQ}_{\alpha},\dot{\leq}_{\alpha},\dot{\leq}_{\alpha,0})$ is a $\dP_{\alpha}$-name for a Prikry-type poset. We define the statement ``$((\dP_{\alpha},\leq_{\alpha},\leq_{\alpha,0}),(\dot{\dQ}_{\alpha},\dot{\leq}_{\alpha},\dot{\leq}_{\alpha,0}))_{\alpha<\rho}$ is an Easton-support Magidor iteration of Prikry-type forcings of length $\rho$'' by induction on $\rho$.
		
		$((\dP_{\alpha},\leq_{\alpha},\leq_{\alpha,0}),(\dot{\dQ}_{\alpha},\dot{\leq}_{\alpha},\dot{\leq}_{\alpha,0}))_{\alpha<\rho}$ is an Easton-support Magidor iteration of Prikry-type forcings of length $\rho$ if $((\dP_{\alpha},\leq_{\alpha},\leq_{\alpha,0}),(\dot{\dQ}_{\alpha},\dot{\leq}_{\alpha},\dot{\leq}_{\alpha,0}))_{\alpha<\rho'}$ is an Easton-support Magidor iteration of Prikry-type forcings of length $\rho'$ for every $\rho'<\rho$ and moreover:
		\begin{enumerate}
			\item If $\rho=\rho'+1$, then $(\dP_{\rho},\leq_{\rho})=(\dP_{\rho'},\leq_{\rho'})*(\dot{\dQ}_{\rho'},\dot{\leq}_{\rho'})$ and $(p',\dot{q}')\leq_{\rho,0}(p,\dot{q})$ if and only if $p'\leq_{\rho',0}p$ and $p'\Vdash\dot{q}'\dot{\leq}_{\rho',0}\dot{q}$.
			\item If $\rho$ is a limit, then $\dP_{\rho}$ consists of all functions $p$ on $\rho$ such that
			\begin{enumerate}
				\item For all $\alpha<\rho$, $p\uhr\alpha\in\dP_{\alpha}$,
				\item If $\rho$ is inaccessible and $|\dP_{\alpha}|<\rho$ for every $\alpha<\rho$, then there is some $\beta<\rho$ such that for all $\gamma\in(\beta,\rho)$, $p\uhr\gamma\Vdash p(\gamma)=1_{\dot{\dQ}_{\gamma}}$.
			\end{enumerate}
			and the following holds:
			\begin{enumerate}
				\item[(i)] $p'\leq_{\rho}p$ if and only if $p'\uhr\rho'\leq_{\rho'}p\uhr\rho'$ for every $\rho'<\rho$ and there exists a finite subset $b$ of $\rho$ such that whenever $\rho'\notin b$ and $p\uhr\rho'\not\Vdash p(\rho')=1_{\dot{\dQ}_{\rho'}}$, then $p'\uhr\rho'\Vdash p'(\rho')\dot{\leq}_{\rho',0} p(\rho')$.
				\item[(ii)] $p'\leq_{\rho,0}p$ if and only if $p'\leq_{\rho}p$ and the set $b$ is empty.
			\end{enumerate} 
		\end{enumerate}
	\end{mydef}
	
	And we have the following (see \cite[Section 6.3]{GitikHandbook} or \cite[Lemma 2.4]{JakobLevineFailureApproachability} for a more detailed proof):
	
	\begin{mylem}\label{PrikryIterEaston}
		Let $\rho$ be an ordinal. Assume that $((\dP_{\alpha},\leq_{\alpha},\leq_{\alpha,0}),(\dot{\dQ}_{\alpha},\dot{\leq}_{\alpha},\dot{\leq}_{\alpha,0}))_{\alpha<\rho}$ is an Easton-support Magidor iteration of Prikry-type forcings of length $\rho$. Then each $\dP_{\alpha}$ has the Prikry property.
	\end{mylem}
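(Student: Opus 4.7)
The plan is a transfinite induction on $\rho$, with the inductive hypothesis asserting the Prikry property for every $\dP_{\alpha}$ with $\alpha<\rho$. The base case is trivial and the successor case is fairly mechanical: given $(p,\dot{q})\in\dP_{\rho'}*\dot{\dQ}_{\rho'}$ and a statement $\sigma$, work in $V^{\dP_{\rho'}}$ and use the Prikry property of $\dot{\dQ}_{\rho'}$ to see that $p$ forces the existence of some $\dot{q}'\dot{\leq}_{\rho',0}\dot{q}$ deciding $\sigma$. By the maximum principle choose a name $\dot{q}'$ and a name $\dot{\varepsilon}\in 2$ such that $p$ forces ``$\dot{q}'\dot{\leq}_{\rho',0}\dot{q}$ and $\dot{q}'$ forces $\sigma$ to have truth value $\dot{\varepsilon}$''. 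Apply the induction hypothesis to $\dP_{\rho'}$ and the statement ``$\dot{\varepsilon}=1$'' to find $p'\leq_{\rho',0}p$ deciding it. Then $(p',\dot{q}')\leq_{\rho,0}(p,\dot{q})$ decides $\sigma$.

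For limit $\rho$ the argument splits according to whether $\dP_{\rho}$ is a direct or inverse limit. In the direct-limit case (when $\rho$ is inaccessible with $|\dP_{\alpha}|<\rho$ for every $\alpha<\rho$), any condition $p$ has support bounded by some $\beta<\rho$; outside $\beta$ the value of $p$ is the trivial condition, so deciding $\sigma$ can be done by projecting into $\dP_{\beta}$, where the inductive hypothesis applies directly. Crucially, the ``finite $b$'' clause in the definition becomes harmless there because the extra allowed non-direct extensions outside the support may simply be taken to be trivial, and one recovers a genuine $\leq_{\rho,0}$-extension of $p$.

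The inverse-limit case is the substantive obstacle and follows Magidor's classical strategy. Given $p\in\dP_{\rho}$ and $\sigma$, one inductively constructs a sequence $\langle p_\alpha\mid\alpha<\rho\rangle$ of direct extensions of $p$, where at each stage $p_{\alpha+1}$ is obtained from $p_{\alpha}$ by invoking the inductive Prikry property on $\dP_{\alpha+1}$ to decide an appropriate ``localized'' version of $\sigma$, and at limits one amalgamates using the strong directed closure of the direct orders (Lemmas \ref{StrongDirIter} and \ref{FullSupportClosure} together with their Easton-support analogue). The finite-support restriction on where non-direct extensions may occur inside the support is exactly what allows the amalgamation to terminate in a genuine $\dP_{\rho}$-condition $p^{*}\leq_{\rho,0}p$.

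The key technical difficulty, and the point where one needs the Easton-support definition to be calibrated precisely, is maintaining coherence at these limit amalgamation steps: one must ensure that the sequence of direct extensions agrees on a common tail so that the limit condition still satisfies the Easton bound (clause (b)) and the ``finite $b$'' requirement relative to $p$. Once the limit $p^{*}$ is in hand, a standard homogeneity/density argument shows that $p^{*}$ must already decide $\sigma$, because otherwise the density of conditions deciding $\sigma$ (together with the inductive Prikry property below every coordinate) would contradict the construction. This completes the inductive step and yields the statement for all $\rho$; the fully detailed verification matches the ones in \cite[Section 6.3]{GitikHandbook} and \cite[Lemma 2.4]{JakobLevineFailureApproachability}.
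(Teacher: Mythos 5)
The paper does not prove this lemma itself (it cites \cite[Section 6.3]{GitikHandbook} and \cite[Lemma 2.4]{JakobLevineFailureApproachability}), so I compare your sketch with the standard argument, whose mechanism is displayed inside the paper in the proof of Lemma \ref{DirectDecideCapture}. Your successor step is fine, but your inverse-limit case has a genuine gap: you make the construction rest on amalgamating a $\rho$-sequence of direct extensions at limit stages via strong directed closure (Lemmas \ref{StrongDirIter}, \ref{FullSupportClosure} ``and their Easton analogue''), yet the lemma assumes nothing whatsoever about closure of the direct orderings $\dot{\leq}_{\alpha,0}$ -- that is precisely the remarkable feature of Magidor iterations. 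The correct construction needs no lower bounds at limits: one defines the direct extension coordinate-by-coordinate, choosing $q(\alpha)$ as a $\dP_{\alpha}$-name for a direct extension of $p(\alpha)$ via the Prikry property of $\dot{\dQ}_{\alpha}$ and the factorization $\dP_{\rho}\cong\dP_{\alpha}*\dot{\dP}_{\alpha,\rho}$; at limit $\lambda$ the restriction $q\uhr\lambda$ is automatically a condition and one only verifies an inductive statement (exactly as in Lemma \ref{DirectDecideCapture}). Worse, the step that carries all the content -- why the assembled $p^{*}$ actually decides $\sigma$ -- is replaced by an appeal to a ``standard homogeneity/density argument''. Homogeneity plays no role; the real point is Magidor's finiteness trick: if $r\leq p^{*}$ decides $\sigma$, then $r$ is a direct extension of $p^{*}$ except at the finitely many coordinates in $b$, and the decisions that were built into $p^{*}$ cofinally (at a coordinate beyond $\max b$) allow one to pull the decision back to a direct extension of $p^{*}$, giving the contradiction. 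Without articulating this, the proof of the limit case is missing. (Also, clause (b) of the definition is not in force at inverse-limit stages, so the ``coherence with the Easton bound'' worry is misplaced.)

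Your direct-limit case is incorrect as stated. One cannot ``project $\sigma$ into $\dP_{\beta}$'', and one certainly cannot take the coordinates beyond the support of $p$ ``to be trivial'': $\sigma$ is an arbitrary statement of the $\dP_{\rho}$-forcing language and may concern the generics added by the tail beyond $\beta$, which no condition trivial there can decide. The Easton direct ordering is exploited in the opposite direction: since $p$ is trivial beyond $\beta$, a $\leq_{\rho,0}$-extension of $p$ may be \emph{arbitrary} there. The standard argument is: by the induction hypothesis take $q\leq_{\beta,0}p\uhr\beta$ deciding the $\dP_{\beta}$-statement ``there exists $r\in\dot{\dP}_{\beta,\rho}$ with $r\Vdash\sigma$''. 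In the positive case append a name for such an $r$ (using the $\rho$-cc of $\dP_{\beta}$, available since $|\dP_{\beta}|<\rho$, to keep the resulting condition eventually trivial); in the negative case the conditions forcing $\neg\sigma$ are dense in the tail, so $q$ followed by the trivial tail already forces $\neg\sigma$. Either way one gets a $\leq_{\rho,0}$-extension of $p$ deciding $\sigma$, but only by allowing nontrivial behaviour outside the support -- the very thing your sketch discards.
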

	
	And we have almost the same lemma as in the case of the full-support iteration with the obvious caveat that we are taking direct limits at inaccessible cardinals which are larger than the size of any initial segment of the iteration thus far.
	
	\begin{mylem}\label{DirectClosureEaston}
		Let $((\dP_{\alpha},\leq_{\alpha},\leq_{\alpha,0}),(\dot{\dQ}_{\alpha},\dot{\leq}_{\alpha},\dot{\leq}_{\alpha,0}))_{\alpha<\rho}$ be an Easton-support Magidor iteration of Prikry-type forcings and $\mu$ a cardinal such that $\mu<\delta$ whenever $\delta$ is inaccessible and $|\dP_{\alpha}|<\delta$ for all $\alpha<\delta$. If for all $\alpha<\rho$, $\dP_{\alpha}$ forces $(\dot{\dQ}_{\alpha},\dot{\leq}_{\alpha,0})$ to be strongly ${<}\,\check{\mu}$-directed closed, then for all $\alpha<\rho$, $(\dP_{\alpha},\leq_{\alpha,0})$ is strongly ${<}\,\mu$-directed closed.
	\end{mylem}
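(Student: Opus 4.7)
The plan is to prove this by induction on $\rho$, following the same successor/limit case decomposition as in Lemma \ref{StrongDirIter} and Lemma \ref{FullSupportClosure}. The single genuine novelty relative to the full-support case is the behavior at inaccessible stages $\delta$ with $|\dP_\alpha|<\delta$ for all $\alpha<\delta$, where Easton support can be nontrivial. It is precisely here that the hypothesis $\mu<\delta$ enters to control supports of weakly directed families.

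At the successor stage $\rho=\rho'+1$, take a weakly directed family $X=\{(p_\beta,\dot{q}_\beta)\;|\;\beta<\gamma\}$ in $(\dP_\rho,\leq_{\rho,0})$ with $\gamma<\mu$. The first-coordinate projections form a weakly directed set in $(\dP_{\rho'},\leq_{\rho',0})$, since $(p_0,\dot{q}_0)\Vdash_{(\dP_\rho,\leq_{\rho,0})}(p_1,\dot{q}_1)\in\Gamma$ entails $p_0\Vdash_{(\dP_{\rho'},\leq_{\rho',0})}p_1\in\Gamma$; by induction, there is a common $\leq_{\rho',0}$-lower bound $p^*$. Below $p^*$, the set $\{\dot{q}_\beta^G\;|\;\beta<\gamma\}$ is weakly directed in $(\dot{\dQ}_{\rho'}^G,\dot{\leq}_{\rho',0}^G)$, so strong directed closure in the extension together with the maximum principle produces a $\dP_{\rho'}$-name $\dot{q}^*$ with $p^*\Vdash\dot{q}^*\dot{\leq}_{\rho',0}\dot{q}_\beta$ for every $\beta<\gamma$. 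Then $(p^*,\dot{q}^*)\leq_{\rho,0}(p_\beta,\dot{q}_\beta)$ for all $\beta$.

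At a limit $\rho$, I would build the desired lower bound $p^*$ coordinatewise: at each $\alpha<\rho$, apply the induction hypothesis inside the $\dP_\alpha$-extension to the weakly directed family $\{p_\beta(\alpha)\;|\;\beta<\gamma\}$ in $\dot{\dQ}_\alpha$, and let $p^*(\alpha)$ be a name for a common $\dot{\leq}_{\alpha,0}$-lower bound. The essential check is that $p^*\in\dP_\rho$: when $\rho$ is inaccessible with $|\dP_\alpha|<\rho$ for all $\alpha<\rho$, each $p_\beta$ has Easton support bounded by some $\zeta_\beta<\rho$; since $\gamma<\mu<\rho$ and $\rho$ is regular, $\zeta^*:=\sup_{\beta<\gamma}\zeta_\beta<\rho$, and beyond $\zeta^*$ every $p_\beta$ is trivial, so $p^*$ may be taken trivial there as well. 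In all other limit cases no support restriction applies, so the coordinatewise construction immediately produces an element of $\dP_\rho$. By construction, at every coordinate $p^*(\alpha)$ is a direct extension of each $p_\beta(\alpha)$, so the witnessing set $b$ from the definition is empty and $p^*\leq_{\rho,0}p_\beta$ for all $\beta<\gamma$.

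The main obstacle, and the reason the Easton-support version is not immediate from the full-support one, lies entirely in the support bookkeeping at direct-limit stages: the hypothesis that $\mu$ is strictly below every relevant inaccessible $\delta$ is exactly what prevents a weakly directed family of size $<\mu$ from having supports cofinal in $\rho$, which would place the coordinatewise $p^*$ outside $\dP_\rho$. Everything else is a routine adaptation of the arguments already used in Lemma \ref{StrongDirIter} and Lemma \ref{FullSupportClosure}.
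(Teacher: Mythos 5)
Your proposal is correct and follows exactly the route the paper intends: the paper gives no separate proof of Lemma \ref{DirectClosureEaston}, stating that it goes "almost the same" as the full-support case (Lemma \ref{FullSupportClosure}, itself modelled on Lemma \ref{StrongDirIter}), with the only new point being the support bookkeeping at direct-limit stages, which is precisely the role your hypothesis $\mu<\delta$ plays via boundedness of the $\gamma<\mu$ many Easton supports below a regular $\rho$. The remaining details you gloss (passing weak directedness to coordinates below the already-constructed initial segment, and mixing to obtain the names $p^*(\alpha)$) are routine and are likewise left implicit in the paper's own treatment.
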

	
	It is clear that the Prikry property amounts to being able to decide ordinals ${<}\,2$ using direct extensions. This is generalized for larger cardinals by the following definition:
	
	\begin{mydef}
		Let $(\dP,\leq,\leq_0)$ be a Prikry-type forcing and $\delta,\mu$ be cardinals.
		\begin{enumerate}
			\item We say that $(\dP,\leq,\leq_0)$ has the \emph{direct $(\delta,\mu)$-covering property} if whenever $p\in\dP$ and $\tau$ is a $\dP$-name such that $p\Vdash\tau\in[\check{\mu}]^{<\check{\delta}}$, there is $q\leq_0p$ and $x\in[\mu]^{<\delta}$ such that $p\Vdash\tau\subseteq\check{x}$.
			\item We say that $(\dP,\leq,\leq_0)$ has \emph{direct $\mu$-decidability} if whenever $\tau$ is a $(\dP,\leq)$-name for an ordinal and $p\in\dP$ forces $\tau<\check{\mu}$, there is $\gamma<\mu$ and $p'\leq_0p$ such that $p'\Vdash\tau=\check{\gamma}$.
			\item We say that $(\dP,\leq,\leq_0)$ has \emph{almost direct $\mu$-decidability} if whenever $\tau$ is a $(\dP,\leq)$-name for an ordinal and $p\in\dP$ forces $\tau<\check{\mu}$, there is $\gamma<\mu$ and $p'\leq_0p$ such that $p'\Vdash\tau<\check{\gamma}$.
		\end{enumerate}
	\end{mydef}
	
	Clearly, direct $\mu$-decidability is equivalent to the direct $(2,\mu)$-covering property. Furthermore, whenever $\mu$ is regular, almost direct $\mu$-decidability is equivalent to the direct $(\mu,\mu)$-covering property.
	
	The proof of the iterability of the Prikry property (i.e. direct $2$-decidability; see e.g. \cite[Lemma 2.4]{JakobLevineFailureApproachability}) can easily be generalized to prove the following lemma:
	
	\begin{mylem}\label{PrikryIterEastonMu}
		Let $((\dP_{\alpha},\leq_{\alpha},\leq_{\alpha,0}),(\dot{\dQ}_{\alpha},\dot{\leq}_{\alpha},\dot{\leq}_{\alpha,0}))_{\alpha<\rho}$ be an Easton-support Magidor iteration of Prikry-type forcings and $\mu$ be a cardinal. Suppose that for all $\alpha<\rho$, $(\dP_{\alpha},\leq_{\alpha})$ preserves $\mu$ and forces that $(\dot{\dQ}_{\alpha},\dot{\leq}_{\alpha},\dot{\leq}_{\alpha,0})$ has direct $\mu$-decidability
		
		Then $(\dP_{\rho},\leq_{\rho},\leq_{\rho,0})$ has direct $\mu$-decidability.
	\end{mylem}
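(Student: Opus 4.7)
The plan is to prove this by induction on $\rho$, closely following the proof of the ordinary Prikry property for Magidor iterations (Lemma \ref{PrikryIterEaston}), with binary decisions replaced by decisions among $\mu$ many options. The case $\rho=0$ is vacuous and the case $\rho$ successor is essentially the argument in Lemma \ref{StrongDirIter}(1) transposed to this setting.

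For the successor case $\rho=\rho'+1$, let $(p,\dot{q}) \in \dP_{\rho'+1}$ force $\tau<\check{\mu}$. Pass to a $\dP_{\rho'}$-generic extension $V[G]$ containing $p$; direct $\mu$-decidability of $\dot{\dQ}_{\rho'}$ yields a $\dot{\leq}_{\rho',0}$-extension of $\dot{q}^G$ forcing $\tau^G$ to equal a specific ordinal $<\mu$. By the maximum principle, this reflects to a $\dP_{\rho'}$-name $\dot{q}''$ forced by $p$ to be $\dot{\leq}_{\rho',0}$-below $\dot{q}$ and to force $\tau=\dot{\gamma}$ for some $\dP_{\rho'}$-name $\dot{\gamma}$ for an ordinal below $\mu$ (here I use that $\dP_{\rho'}$ preserves $\mu$, so that $\dot{\gamma}$ really is a name for an ordinal below $\check{\mu}$). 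Applying the induction hypothesis to $p$ and $\dot{\gamma}$ produces $p' \leq_{\rho',0} p$ and $\gamma<\mu$ with $p'\Vdash\dot{\gamma}=\check{\gamma}$; then $(p',\dot{q}'')$ is the desired $\leq_{\rho,0}$-direct extension of $(p,\dot{q})$ deciding $\tau=\check{\gamma}$.

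For the limit case I would adapt the fusion argument used to prove the Prikry property itself. Given $p\in\dP_\rho$ and $\tau$ with $p\Vdash\tau<\check\mu$, the heart of the argument is to produce a direct extension $p^*\leq_{\rho,0}p$ together with a finite set $b\subseteq\rho$ (contained in the nontrivial support of $p^*$) such that for every extension $q\leq p^*$ which is $\leq_0$-direct outside $b$, the question of which ordinal $<\mu$ is forced by $q$ to be the value of $\tau$ is already determined by the behaviour of $q$ on $b$. This is done by a diagonal construction that sweeps through the coordinates of $\rho$, at each stage applying the induction hypothesis to an initial segment in order to decide --- among $<\mu$ many options --- the value of $\tau$ relative to each of the finitely many putative non-direct extensions being tracked; $\leq_0$-lower bounds at limit stages of the construction exist by virtue of direct limits being taken at inaccessibles (where the support of the construction so far is bounded, by the hypothesis on $|\dP_\alpha|$ in the Easton-support definition). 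Once $p^*$ and $b$ are in hand, a finite induction on $|b|$ --- essentially $|b|$ applications of the successor-case argument to the component $\dot{\dQ}_{\beta}$ for $\beta\in b$ --- yields $\bar{p}\leq_{\rho,0}p^*$ forcing $\tau$ to equal a single value $\gamma<\mu$.

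The main obstacle is the limit-case fusion, specifically the bookkeeping that guarantees $\leq_0$-lower bounds exist throughout the diagonal construction and that at each stage only $<\mu$ options need to be tracked. Both issues are controlled by the two hypotheses built into the statement: the structural assumption that $\dP_\rho$ is an Easton-support Magidor iteration (which provides direct limits at inaccessibles and, in conjunction with the coordinatewise directed closure implicit in direct $\mu$-decidability, the required lower bounds) and the $\mu$-preservation assumption (which keeps the number of options for $\tau$ genuinely smaller than $\mu$ at every stage, so that the directed-closure strength inherited from each $\dot{\dQ}_\alpha$ suffices).
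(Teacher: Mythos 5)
Your successor step is correct and is exactly the standard two-step argument (the paper itself offers no proof of this lemma, saying only that the proof of the Prikry property for Easton-support Magidor iterations, \cite[Lemma 2.4]{JakobLevineFailureApproachability} or \cite[Section 6.3]{GitikHandbook}, generalizes from $2$ to $\mu$ options; the successor case of that proof is what you wrote). The limit case, however, is where all the content lies, and there your sketch has a genuine gap: your diagonal construction takes $\leq_0$-lower bounds of descending sequences of conditions at limit stages. The lemma assumes no closure whatsoever of the direct orderings $\dot{\leq}_{\alpha,0}$ -- only direct $\mu$-decidability of each iterand and preservation of $\mu$ -- and the appeal to ``direct limits at inaccessibles'' does not supply such bounds: $\rho$ need not be inaccessible (take $\rho=\omega$ with all iterands nontrivial), the limit stages of your sweep need not be inaccessible, and even where a direct limit is taken, boundedness of supports says nothing about finding a $\leq_0$-lower bound at the coordinates already inside the support. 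In the paper's application the direct orderings of the iterands are only strongly ${<}\,\mu$-directed closed while the iteration is far longer than $\mu$, so this is not a removable convenience. The intended mechanism -- visible in the paper's own Lemma \ref{DirectDecideCapture} and in the cited proofs of Lemma \ref{PrikryIterEaston} -- avoids lower bounds entirely: one builds a single direct extension $q\leq_{\rho,0}p$ coordinate by coordinate, choosing each $q(\alpha)$ once as a $\dP_{\alpha}$-name via the Prikry property/decidability of the single iterand $\dot{\dQ}_{\alpha}$, so as to decide a tail statement $\sigma_{\alpha+1}$ (``some direct extension of the remaining tail of $p$ decides $\tau$''); at limit coordinates one uses that any condition forcing $\sigma_{\alpha}$ is direct beyond some $\beta<\alpha$ (finiteness of the non-direct set $b$, modulo trivial coordinates) and hence already forces $\sigma_{\beta}$, and the contradiction is harvested at the top.

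The other unsupported step is your intermediate target: a single $p^{*}\leq_{\rho,0}p$ and one fixed finite $b$ such that every $q\leq p^{*}$ which is direct outside $b$ has its decision of $\tau$ determined by its behaviour on $b$. Nothing in the sketch argues for this, and it is doubtful as stated: in the Easton-support ordering, non-direct extensions are permitted at \emph{every} coordinate where the condition is trivial (typically unboundedly many), different $q\leq p^{*}$ come with different finite sets $b_{q}$, and at each single coordinate there are far more than ``finitely many putative non-direct extensions'' to track. The known proof never passes through such a reduction, and your concluding ``finite induction on $|b|$ via the successor case'' also does not literally apply, since the coordinates of $b$ sit below a nontrivial tail rather than at the top of the iteration. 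If you want to keep this route, the determination property itself is the real mathematical content and would have to be formulated and proved; otherwise, replace the limit case by the coordinatewise, tail-statement argument sketched above.
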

	
	One criterion for $\mu$-decidability is a sufficiently closed direct ordering (since we can always ``exclude'' ordinals using direct extensions). In the case where $\mu$ is not a strong limit, one even has a ``pumping-up'' phenomenon:
	
	\begin{mylem}[{\cite[Lemma 2.6]{JakobLevineFailureApproachability}}]\label{ClosureDecide}
		Let $(\dP,\leq,\leq_0)$ be a Prikry-type forcing and $\mu$ a regular cardinal such that $(\dP,\leq_0)$ is ${<}\,\mu$-closed. Then:
		\begin{enumerate}
			\item $\dP$ has direct $\gamma$-decidability for every $\gamma<\mu$.
			\item Whenever $\gamma,\gamma'<\mu$, $p\in\dP$ and $\tau$ is a $\dP$-name with $p\Vdash\tau\colon\check{\gamma}\to\check{\gamma}'$, there is $q\leq_0p$ and $f\colon\gamma\to\gamma'$ such that $q\Vdash\tau=\check{f}$.
			\item If $\mu$ is not a strong limit, $\dP$ has direct $\mu$-decidability.
		\end{enumerate}
	\end{mylem}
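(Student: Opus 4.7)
The plan is to prove the three statements in order, since each builds on the previous.

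For (1), I would proceed by induction / transfinite construction. Given $p \Vdash \tau < \check\gamma$ with $\gamma < \mu$, I build a $\leq_0$-decreasing sequence $(p_\delta)_{\delta \leq \gamma}$ with $p_0 = p$, where at successor stages $p_{\delta+1} \leq_0 p_\delta$ is obtained from the Prikry property applied to the statement ``$\tau = \check\delta$'', and at limit stages I use the $<\mu$-closure of $\leq_0$ (legitimate since $\gamma<\mu$) to take a lower bound. The resulting $q := p_\gamma$ decides ``$\tau = \check\delta$'' for every $\delta < \gamma$. Since $q \Vdash \tau < \check\gamma$, a generic $G \ni q$ satisfies $\tau^G = \delta_0$ for some $\delta_0 < \gamma$, whence $q$ cannot force $\tau \neq \check{\delta_0}$, so by the decision it must force $\tau = \check{\delta_0}$, giving the required direct extension.

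For (2), the approach is essentially the same construction repeated coordinate-wise. Given $\tau \colon \check\gamma \to \check{\gamma'}$ with $\gamma,\gamma' < \mu$, for each $\alpha < \gamma$ apply (1) to the name $\tau(\check\alpha)$ (which is forced into $\check{\gamma'}$). Build a $\leq_0$-decreasing sequence $(p_\alpha)_{\alpha \leq \gamma}$, where $p_{\alpha+1} \leq_0 p_\alpha$ decides $\tau(\check\alpha)$ as some value $f(\alpha) < \gamma'$, and limit stages again invoke $<\mu$-closure (valid because $\gamma < \mu$). Then $q := p_\gamma$ and the function $f \colon \gamma \to \gamma'$ so constructed satisfy $q \Vdash \tau = \check f$.

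For (3), the key idea is to reduce decidability of an ordinal $<\mu$ to decidability of a binary function of length $<\mu$, using that $\mu$ is not a strong limit. Choose $\nu < \mu$ with $2^\nu \geq \mu$, and fix an injection $\iota \colon \mu \to {}^\nu 2$. Given $p \Vdash \tau < \check\mu$, the name $\sigma := \iota(\tau)$ is forced to be a function $\check\nu \to \check 2$. Applying (2) with $\gamma = \nu$ and $\gamma' = 2$, we get $q \leq_0 p$ and a specific $g \colon \nu \to 2$ with $q \Vdash \sigma = \check g$. Since $\iota$ is injective (and $\iota \in V$), this pins down $\tau$: if $g \in \iota[\mu]$ with preimage $\delta < \mu$, then $q \Vdash \tau = \check\delta$; otherwise $q$ would force a contradiction, which cannot happen below $p$.

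The main obstacle, if any, is merely bookkeeping in the transfinite constructions of (1) and (2), ensuring that $<\mu$-closure of $\leq_0$ is applicable at every limit stage of length $<\mu$; the hypothesis that $\mu$ is \emph{regular} is what makes the sequence length (always strictly below $\mu$) admissible. Part (3) cleanly isolates the use of the ``non-strong-limit'' hypothesis and shows why it is essential in this argument: we need the exponent $\nu$ bounding the coding to lie below $\mu$.
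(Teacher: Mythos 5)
Your proof is correct and follows the standard argument for this lemma (which the paper only cites from \cite{JakobLevineFailureApproachability} rather than reproving): deciding each possible value along a $\leq_0$-decreasing sequence of length ${<}\,\mu$ using the Prikry property and closure at limits, and, for (3), coding ordinals below $\mu$ by elements of ${}^{\nu}2$ for some $\nu<\mu$ with $2^{\nu}\geq\mu$ and applying (2). No gaps; this is essentially the same approach as the source.
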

	
	Lastly, we have the following result on almost direct decidability for singular cardinals: Note that the corresponding lemma is false for direct decidability since it is possible to have forcings which add $\omega$-sequences to singular cardinals $\sup_n\kappa_n$ without adding any subsets to any $\kappa_n$.
	
	\begin{mylem}\label{SingAlmostDec}
		Let $\delta$ be a singular cardinal with $\cf(\delta)=\mu$. Let $(\dP,\leq,\leq_0)$ be a Prikry-type forcing that has almost direct $\mu$-decidability. Then $(\dP,\leq,\leq_0)$ has almost direct $\delta$-decidability.
	\end{mylem}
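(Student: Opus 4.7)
The plan is to reduce almost direct $\delta$-decidability to almost direct $\mu$-decidability by replacing the given name for an ordinal below $\delta$ with a name for an ordinal below $\mu$ that records where, along a fixed cofinal sequence, the value actually lies.

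First, since $\cf(\delta) = \mu$, I fix (in the ground model) a strictly increasing cofinal sequence $(\delta_i)_{i < \mu}$ in $\delta$. Now suppose $\tau$ is a $\dP$-name for an ordinal and $p \in \dP$ with $p \Vdash \tau < \check{\delta}$. I then let $\sigma$ be a $\dP$-name for the least $i < \mu$ such that $\tau < \check{\delta}_i$; because the sequence is cofinal and $p$ forces $\tau < \check{\delta}$, the condition $p$ forces $\sigma$ to be a well-defined ordinal below $\check{\mu}$.

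Next I invoke almost direct $\mu$-decidability applied to $\sigma$: there exist $q \leq_0 p$ and $j < \mu$ with $q \Vdash \sigma < \check{j}$. By definition of $\sigma$, this forces $\tau < \check{\delta}_j$. Setting $\gamma := \delta_j$, we have $\gamma < \delta$ and $q \Vdash \tau < \check{\gamma}$, which is exactly what almost direct $\delta$-decidability demands.

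There is no real obstacle here; the argument is essentially a one-line reduction. The only subtle point worth noting is why the analogous statement fails for (non-almost) direct $\mu$-decidability: the name $\sigma$ could be directly decided to equal some specific $j$, which would force $\tau$ into the interval $[\delta_{j-1}, \delta_j)$ but not pin down its value, so the ``strict inequality'' formulation is essential. This matches the warning in the lemma statement that direct decidability does not pass from $\mu$ to $\delta$, since some Prikry-type forcings add cofinal $\omega$-sequences through singular $\delta = \sup_n \kappa_n$ without adding subsets of any $\kappa_n$.
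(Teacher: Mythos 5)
Your proof is correct and follows essentially the same route as the paper: fix a cofinal sequence (the paper uses an increasing cofinal $f\colon\mu\to\delta$), introduce a name $\sigma$ for an index below $\mu$ bounding $\tau$ along that sequence, and apply almost direct $\mu$-decidability to $\sigma$. The only cosmetic difference is that you take $\sigma$ to be the \emph{least} such index, which changes nothing in the argument.
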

	
	\begin{proof}
		Let $\tau$ be a $\dP$-name for an ordinal and $p\in\dP$ which forces $\tau<\check{\delta}$. In $V$, let $f\colon\mu\to\delta$ be increasing and cofinal. Let $\sigma$ be a $\dP$-name for an ordinal such that $p$ forces $\tau<\check{f}(\sigma)$. Then by almost direct $\mu$-decidability there is $q\leq_0p$ which forces $\sigma<\check{\gamma}$ for some $\gamma<\mu$. Ergo $q$ forces $\tau<\check{f}(\check{\gamma})$.
	\end{proof}
	
	For full-support iterations, we have a strengthening of Lemma \ref{PrikryIterEastonMu} since only finitely many non-direct extensions are allowed in general:
	
	\begin{mylem}\label{DirectDecideCapture}
		Let $\rho$ be an ordinal. Assume that $((\dP_{\alpha},\leq_{\alpha},\leq_{\alpha,0}),(\dot{\dQ}_{\alpha},\dot{\leq}_{\alpha},\dot{\leq}_{\alpha,0}))$ is a full-support Magidor iteration of Prikry-type forcings of length $\rho$ and $\delta,\mu$ are cardinals. If for all $\alpha<\rho$, $\dP_{\alpha}$ preserves $\delta$ and $\mu$ and forces that $(\dot{\dQ}_{\alpha},\dot{\leq}_{\alpha},\dot{\leq}_{\alpha,0})$ has the $(\check{\delta},\check{\mu})$-covering property, then the full support limit $(\dP_{\rho},\leq_{\rho},\leq_{\rho,0})$ has the $(\delta,\mu)$-covering property.
	\end{mylem}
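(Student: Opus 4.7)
The plan is to prove the lemma by induction on the length $\rho$, following the structure of the proof of the Prikry property in Lemma \ref{PrikryIterFull}. The base case $\rho=0$ is vacuous, so we focus on the successor and limit steps.

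For the successor step $\rho=\rho'+1$, given $(p,\dot{q})\Vdash\tau\in[\check{\mu}]^{<\check{\delta}}$, we pass into a $\dP_{\rho'}$-generic extension containing $p$, in which $\dot{q}$ forces over $\dot{\dQ}_{\rho'}$ that $\tau\in[\check{\mu}]^{<\check{\delta}}$. The assumed $(\check{\delta},\check{\mu})$-covering property of $\dot{\dQ}_{\rho'}$ supplies a direct extension of $\dot{q}$ together with a cover of $\tau$ by an element of $[\mu]^{<\delta}$, and mixing yields $\dP_{\rho'}$-names $\dot{q}'$ and $\dot{x}$ with $p\Vdash\dot{q}'\,\dot{\leq}_{\rho',0}\,\dot{q}$, $p\Vdash\dot{x}\in[\check{\mu}]^{<\check{\delta}}$, and $p\Vdash(\dot{q}'\Vdash_{\dot{\dQ}_{\rho'}}\tau\subseteq\dot{x})$. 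Hence $(p,\dot{q}')\leq_{\rho,0}(p,\dot{q})$ forces $\tau\subseteq\dot{x}$. Applying the inductive hypothesis to the name $\dot{x}$ in $\dP_{\rho'}$ produces $p'\leq_{\rho',0}p$ and $y\in[\mu]^{<\delta}$ with $p'\Vdash\dot{x}\subseteq\check{y}$, so that $(p',\dot{q}')\leq_{\rho,0}(p,\dot{q})$ forces $\tau\subseteq\check{y}$, as desired.

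The main obstacle is the limit case. Given $p\in\dP_\rho$ with $p\Vdash\tau\in[\check{\mu}]^{<\check{\delta}}$, we must produce a single direct extension $q\leq_{\rho,0}p$ and a ground-model cover $x\in[\mu]^{<\delta}$. The essential structural feature is that every extension $r\leq_\rho p$ has a finite non-direct support $b(r)\subseteq\rho$ outside of which $r$ is a direct extension of $p$, together with the closure of $\leq_{\rho,0}$ (Lemma \ref{FullSupportClosure}). Adapting the fusion argument behind Lemma \ref{PrikryIterFull} (see \cite[Lemma 6.2]{GitikHandbook}), we build a $\leq_{\rho,0}$-decreasing sequence of direct extensions of $p$ indexed by the finite subsets of $\rho$: at the stage associated with a finite $b\subseteq\rho$, we apply the Prikry property to decide, via direct extensions, which ordinals below $\mu$ are forced into $\tau$ by conditions whose non-direct support is contained in $b$, and use the factor-wise $(\check{\delta},\check{\mu})$-covering at the coordinates in $b$ to absorb these ordinals into a small set $x_b\in[\mu]^{<\delta}$. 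A common direct lower bound $q$ of the resulting fusion is then obtained using the closure of $\leq_{\rho,0}$.

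The crux of the difficulty lies in controlling the size of the final cover $x=\bigcup_b x_b$. Each extension $r\leq_\rho q$ has only finitely many non-direct coordinates, so any ordinal forced into $\tau$ below $q$ must appear in $x_{b(r)}$ for some finite $b(r)$, and combining this with the preservation of $\delta$ and $\mu$ by $\dP_\rho$ should bound $|x|$ strictly below $\delta$. Making this precise requires careful bookkeeping during the fusion—more intricate than in the Prikry-property argument, since here we must track up to $\delta$-many ordinals simultaneously rather than a single binary decision—and is the portion of the proof where I anticipate the bulk of the technical effort to lie.
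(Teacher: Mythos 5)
Your successor step is essentially the paper's: apply the covering property of $\dot{\dQ}_{\rho'}$ inside the $\dP_{\rho'}$-extension to get names $\dot{q}'$ and $\sigma$ with $(p,\dot{q}')$ forcing $\tau\subseteq\sigma$, then apply the inductive hypothesis to $\sigma$. The limit case, however, has a genuine gap, on two counts. First, your fusion plan rests on closure of $\leq_{\rho,0}$ that the lemma does not provide: Lemma \ref{FullSupportClosure} only transfers \emph{strong directed closure of the iterands} to the iteration, and no closure of the iterands is among the hypotheses here (only preservation of $\delta,\mu$ and the covering property). Even granting $<\mu$-closure of the direct orderings, a $\leq_{\rho,0}$-decreasing sequence indexed by all finite subsets of $\rho$ has length $|\rho|^{<\omega}$, which can vastly exceed $\mu$, so the ``common direct lower bound of the fusion'' need not exist. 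Second, the step you yourself flag as the crux --- bounding $\bigcup_b x_b$ --- does not go through as sketched: there are $|\rho|^{<\omega}$-many finite $b$, each contributing a set of size $<\delta$, and preservation of $\delta$ and $\mu$ as cardinals gives no reason the ground-model union has size $<\delta$ when $\rho\geq\delta$. So the limit case, which is the real content of the lemma, is not established.

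The paper avoids both problems by arguing by contradiction and never taking lower bounds of long sequences. Assuming no direct extension of $p$ admits a ground-model cover in $[\mu]^{<\delta}$, one builds $q\leq_{\rho,0}p$ coordinate-by-coordinate: $q(\alpha)$ is a $\dP_{\alpha}$-name for a condition, obtained from a single application of the Prikry property of $\dot{\dQ}_{\alpha}$, deciding the statement $\sigma_{\alpha+1}$ which asserts that some $\leq_0$-extension of the tail $p\uhr[\alpha+1,\rho)$ covers $\tau$ by a small set of the intermediate extension; the successor-step covering argument (via the inductive hypothesis for $\dP_{\alpha}$) shows the decision must be negative, on pain of contradicting $\neg\sigma_{\alpha}$, and the finiteness of non-direct supports handles limit stages $\alpha\leq\rho$. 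At $\alpha=\rho$ the statement $\sigma_{\rho}$ is trivially true, giving the contradiction. Note that the full-support limit automatically accepts the sequence of names $(q(\alpha))_{\alpha<\rho}$ as a condition, so no closure of the direct ordering is ever needed, and no amalgamated cover has to be sized. If you want to salvage a direct (non-contradiction) argument you would need some such device; as written, your limit case fails.
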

	
	\begin{proof}
		We prove the statement by induction on $\rho$. In case $\rho=1$, it is clear.
		
		Assume the statement holds for $\rho'$ and $\rho=\rho'+1$. Let $p\in\dP_{\rho}$ and $\tau$ be a $\dP_{\rho}$-name for a ${<}\,\delta$-sized subset of $\mu$ (forced by $p$). Then $p\uhr\rho'$ forces that $p(\rho')$ forces that $\tau$ is a ${<}\,\check{\delta}$-sized subset of $\check{\mu}$. Since $(\dot{\dQ}_{\rho'},\dot{\leq}_{\rho'},\dot{\leq}_{\rho',0})$ is forced to have the $(\check{\delta},\check{\mu})$-covering property, by the maximum principle, we can find a $\dP_{\rho'}$-name $\sigma$ and a $\dP_{\rho'}$-name $\dot{q}(\rho')$ such that
		$$p\uhr\rho'\Vdash\sigma\in[\check{\mu}]^{<\check{\delta}}\wedge \dot{q}(\rho')\Vdash\tau\subseteq\check{\sigma}.$$
		By the inductive hypothesis, there is a direct extension $q\uhr\rho'\leq_{\rho',0}p\uhr\rho'$ and $x\in[\mu]^{<\delta}$ such that $q\uhr\rho'\Vdash\sigma\subseteq\check{x}$. It follows that $(q\uhr\rho',q(\rho'))$ and $x$ are as required.
		
		Now assume that $\rho$ is a limit. Let $p$ be a condition in $\dP_{\rho}$ and $\tau$ a $\dP_{\rho}$-name for a ${<}\,\delta$-sized subset of $\mu$. Assume toward a contradiction that there is no direct extension of $p$ which covers $\tau$ by a ground-model set of size ${<}\,\delta$. By induction on $\alpha<\rho$ we define a condition $q\leq_0p$ such that
		$$q\uhr\alpha\Vdash_{\dP_{\alpha}}\neg\sigma_{\alpha}$$
		where (letting $(\dot{\dP}_{\alpha,\rho},\dot{\leq}_{\alpha,\rho},\dot{\leq}_{(\alpha,\rho),0})$ be a $\dP_{\alpha}$-name such that $\dP_{\rho}\cong\dP_{\alpha}*\dot{\dP}_{\alpha,\rho}$)
		$$\sigma_{\alpha}:=\exists r\in\dot{\dP}_{\alpha,\rho}\exists x\in[\check{\mu}]^{<\check{\delta}}(r\leq_{(\alpha,\rho),0}p\uhr[\alpha,\rho)\wedge r\Vdash\tau\subseteq\check{x}).$$
		Assume $q\uhr\alpha$ has been defined. We let $q(\alpha)$ be a $\dP_{\alpha}$-name for a condition such that $q\uhr\alpha$ forces that $q(\alpha)$ decides $\sigma_{\alpha+1}$ (this is possible as $\dot{\dQ}_{\alpha}$ is forced to have the Prikry property). We claim that $q\uhr\alpha\Vdash q(\alpha)\Vdash\neg\sigma_{\alpha+1}$. Otherwise there is $r\leq q\uhr\alpha$ forcing $q(\alpha)\Vdash\sigma_{\alpha+1}$ (since $q(\alpha)$ is forced to decide $\sigma_{\alpha+1}$). But then we can apply the maximum principle to find $\dot{r}\in\dot{\dP}_{\alpha+1,\rho}$ and $\dot{x}\in[\check{\mu}]^{<\check{\delta}}$ such that
		$$q\uhr\alpha\Vdash q(\alpha)\Vdash \dot{r}\Vdash\tau\subseteq\check{x}.$$
		But as in the case of the successor step, this already implies that there is $y\in[\mu]^{<\delta}$ such that $q\uhr\alpha$ forces that $(q(\alpha),\dot{r})$ forces $\tau\subseteq \check{y}$, a contradiction, as $q\uhr\alpha\Vdash\neg\sigma_{\alpha}$.
		
		Now assume that $\alpha\leq\rho$ is a limit and for every $\beta<\alpha$, $q\uhr\beta\Vdash\neg\sigma_{\beta}$. We claim that $q\uhr\alpha\Vdash\neg\sigma_{\alpha}$. Otherwise there is $r\leq q\uhr\alpha$ forcing $\sigma_{\alpha}$. However, then there is $\beta<\alpha$ such that $r\uhr\beta\Vdash r\uhr[\check{\beta},\check{\alpha})\leq_0 q\uhr[\check{\beta},\check{\alpha})$. This easily implies that $r\uhr\beta$ forces $\sigma_{\beta}$, a contradiction.
		
		Now assume $q$ has been defined. By the previous paragraph, we know that $q\Vdash\neg\sigma_{\rho}$. However, $\sigma_{\rho}$ is always true, since $q$ forces $\tau\subseteq\tau$, so we obtain a contradiction.
	\end{proof}
	
	In general, working with the direct extension ordering on a Magidor iteration is quite difficult since it does not quite behave like an iteration. Due to this, we will be using the following idea: Whenever $(\dP,\leq,\leq_0)$ is a Prikry-type poset and $(\dot{\dQ},\dot{\leq},\dot{\leq}_0)$ is a $\dP$-name for a poset, $(\dot{\dQ},\dot{\leq}_0)$ is a $(\dP,\leq)$-name for a poset.\footnote{The ordering $\leq_0$ on $\dP$ is just used to derive some regularity properties} As such, by Lemma \ref{Projection}, the identity function is a projection from $(\dP,\leq)\times\dT((\dP,\leq),(\dot{\dQ},\dot{\leq}_0))$ onto $(\dP,\leq)*(\dot{\dQ},\dot{\leq}_0)$. However, since $\leq$ refines $\leq_0$, we also obtain a projection from $(\dP,\leq_0)\times(\dT((\dP,\leq),(\dot{\dQ},\dot{\leq}_0)))$ onto the direct extension ordering on $\dP*\dot{\dQ}$:
	
	\begin{mylem}\label{TermspaceMagidor}
		Let $\kappa$ be an ordinal and let  $((\dP_{\alpha},\leq_{\alpha},\leq_{\alpha,0}),(\dot{\dQ}_{\alpha},\dot{\leq}_{\alpha},\dot{\leq}_{\alpha,0}))_{\alpha<\kappa}$ be a full support (Easton support) Magidor iteration of Prikry-type forcings.
		
		Let $\dT:=\prod_{\alpha<\kappa}\dT((\dP_{\alpha},\leq_{\alpha}),(\dot{\dQ}_{\alpha},\dot{\leq}_{\alpha,0}))$, where the product is taken with full support (Easton support). Then the identity is a projection from $\dT$ onto the direct extension ordering on the inverse (Easton) Magidor limit of the iteration $((\dP_{\alpha},\leq_{\alpha},\leq_{\alpha,0}),(\dot{\dQ}_{\alpha},\dot{\leq}_{\alpha},\dot{\leq}_{\alpha,0}))_{\alpha<\kappa}$.
	\end{mylem}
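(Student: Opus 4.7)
The plan is to adapt the argument for Lemma \ref{TermspaceIteration} essentially verbatim, replacing the target orderings $\dot{\leq}_{\alpha}$ with their direct extension counterparts $\dot{\leq}_{\alpha, 0}$. Write $(\dP, \leq_{\kappa}, \leq_{\kappa, 0})$ for the Magidor limit. The identity clearly sends $1_{\dT}$ to $1_{\dP}$, so the content is the covering property: given $p \in \dT$ and $t \leq_{\kappa, 0} p$ in $\dP$, I must produce $p' \in \dT$ with $p' \preceq p$ (that is, $1_{\dP_{\alpha}} \Vdash p'(\alpha) \dot{\leq}_{\alpha, 0} p(\alpha)$ for every $\alpha$) and $p' \leq_{\kappa, 0} t$.

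First I define $p'$ by recursion on $\alpha < \kappa$, exactly mirroring the construction in Lemma \ref{TermspaceIteration}: I let $p'(\alpha)$ be a $\dP_{\alpha}$-name for an element of $\dot{\dQ}_{\alpha}$ which is forced by $p'\uhr\alpha$ (with respect to $\leq_{\alpha}$) to equal $t(\alpha)$, and is forced by conditions incompatible in $\leq_{\alpha}$ with $p'\uhr\alpha$ to equal $p(\alpha)$. The inductive invariant to maintain is $p'\uhr\alpha \leq_{\alpha, 0} t\uhr\alpha$. At successor stages this passes to $\alpha+1$ by the inductive hypothesis and the trivial fact that $p'\uhr\alpha$ forces $p'(\alpha) = t(\alpha)$, whence $p'(\alpha) \dot{\leq}_{\alpha, 0} t(\alpha)$. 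At limit stages I use that the direct extension order on the Magidor limit, unlike $\leq_{\alpha}$ itself, requires no finite-exception set and amounts to the coordinatewise conjunction of the $\leq_{\beta, 0}$ relations for $\beta < \alpha$, which is exactly the conjunction of inductive hypotheses.

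The verification of $p' \preceq p$ in $\dT$ splits according to the two branches of the construction. On conditions $\leq_{\alpha}$-incompatible with $p'\uhr\alpha$ we have $p'(\alpha) = p(\alpha)$, so the inequality is trivial. On extensions $s \leq_{\alpha} p'\uhr\alpha$, the inductive invariant combined with the fact that $\leq_{\alpha}$ refines $\leq_{\alpha, 0}$ yields $s \leq_{\alpha} t\uhr\alpha$, so $s$ inherits from $t\uhr\alpha$ the forcing of $t(\alpha) \dot{\leq}_{\alpha, 0} p(\alpha)$; since it also forces $p'(\alpha) = t(\alpha)$, it forces $p'(\alpha) \dot{\leq}_{\alpha, 0} p(\alpha)$ as required. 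I expect the one place this argument departs from Foreman's is checking that $p'$ actually lies in the Easton product in the Easton support case; this will follow because outside the supports of both $p$ and $t$ each of $p(\alpha)$ and $t(\alpha)$ is forced trivial, so $p'(\alpha)$ is too, placing the support of $p'$ inside the union of the supports of $p$ and $t$ and thereby preserving the Easton bounding property. The only real obstacle is the bookkeeping around the direct-extension order at limits, but since that order is coordinatewise the adaptation goes through cleanly.
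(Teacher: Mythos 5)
Your proposal is correct and takes essentially the same route as the paper: the paper also transplants the mixing construction of Lemma \ref{TermspaceIteration}, letting $p'(\alpha)$ be forced equal to $t(\alpha)$ below $p'\uhr\alpha$ and to $p(\alpha)$ on incompatible conditions, and then verifies $p'\leq_{\dT}p$ and $p'\leq_0 t$ by induction, treating only the full-support case explicitly. Your remarks on the limit stages (the direct extension order on the Magidor limit being coordinatewise, with no finite exception set) and on the Easton-support bookkeeping are precisely the details the paper leaves implicit, so there is no gap.
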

	
	\begin{proof}
		As in the proof of Lemma \ref{TermspaceIteration}, we do the proof just for the full support case.
		
		Let $\dP$ be the inverse Magidor limit of $((\dP_{\alpha},\leq_{\alpha},\leq_{\alpha,0}),(\dot{\dQ}_{\alpha},\dot{\leq}_{\alpha},\dot{\leq}_{\alpha,0}))$. Recall that $\dP$ consists of all functions $p$ on $\kappa$ such that $p\uhr\alpha\in\dP_{\alpha}$ for all $\alpha<\kappa$, directly ordered by $p\leq_0 q$ if and only if $p\uhr\alpha\leq_{\alpha,0}q\uhr\alpha$ for all $\alpha<\kappa$. Denote by $\leq_{\dT}$ the ordering on $\dT$.
		
		It is clear that the identity is a function from $\dT$ onto $\dP$ and preserves $\leq_0$: Let $t_0,t_1\in\dT$ with $t_0\leq_{\dT}t_1$. Then $t_0$ is a function on $\kappa$ such that for each $\alpha<\kappa$, $t_0(\alpha)$ is a $(\dP_{\alpha},\leq_{\alpha})$-name such that $1_{\dP_{\alpha}}\Vdash t_0(\alpha)\in\dot{\dQ}_{\alpha}$. It follows by induction on $\alpha$ that $t_0\uhr\alpha\in\dP_{\alpha}$ for every $\alpha<\kappa$ (since the iteration is taken with full support). For any $\alpha<\kappa$, $1_{\dP_{\alpha}}\Vdash t_0(\alpha)\leq_0t_1(\alpha)$. Ergo $t_0\uhr\alpha\Vdash_{(\dP_{\alpha},\leq_{\alpha})} t_0(\alpha)\dot{\leq}_{\alpha,0}t_1(\alpha)$. It follows again by induction that $t_0\uhr\alpha\leq_{\alpha,0}t_1\uhr\alpha$ for every $\alpha<\kappa$.
		
		Now we show that the identity is a projection. Let $p\in\dP$ and $t\leq_{\dP}p$. We want to find $p'\leq_{\dT}p$ with $p'\leq_{\dP}t$. By induction on $\alpha$, let $p'(\alpha)$ be a $(\dP_{\alpha},\leq_{\alpha})$-name for an element of $\dot{\dQ}_{\alpha}$ forced (over $(\dP_{\alpha},\leq_{\alpha})$) by $p'\uhr\alpha$ to be equal to $t(\alpha)$ and by conditions incompatible with $p'\uhr\alpha$ to be equal to $p(\alpha)$. Then clearly $p'\leq_{\dT}p$, since $p'(\alpha)$ is always forced to be below $p(\alpha)$ and $p'\leq_0t$: By induction, $p'\uhr\alpha\leq_0t\uhr\alpha$. Since $t\uhr\alpha\Vdash t(\alpha)\leq_0p(\alpha)$, $p'\uhr\alpha$ forces $p'(\alpha)=t(\alpha)\leq_0p(\alpha)$. It follows that $p'\leq_0p$.
	\end{proof}
	
	\section{The Laver-Ideal Property}
	
	In this section, we introduce the Laver-ideal property and prove a characterization for posets which force it. This expands on \cite[Section 3]{JakobLevineFailureApproachability}. 
	
	\begin{mydef}
		Let $\mu<\nu$ be regular cardinals. We let $\LIP(\mu,\nu,\kappa)$ state that there exists $I$, a ${<}\,\nu$-complete and normal ideal over $[\kappa]^{<\nu}$ such that there is a set $B\subseteq I^+$ which is dense in $I^+$ with respect to $\subseteq$ and strongly ${<}\,\mu$-directed closed.
	\end{mydef}
	
	Classically, by work of Laver (unpublished) and Galvin-Jech-Magidor (see \cite[Theorem 2]{GalvinJechMagidorIdealGame}), whenever $\mu$ is regular and $\nu>\mu$, the Levy collapse $\Coll(\mu,<\nu)$, which forces $\nu=\mu^+$, also forces $\LIP(\mu,\nu,\nu)$. In this section, we prove a characterization for posets which force $\LIP$ (which basically follows from their proof, combined with ideas of Foreman) and show that when starting from a supercompact cardinal instead of a measurable cardinal, we even obtain $\LIP(\mu,\nu,\kappa)$ for all $\kappa\geq\nu$. As will become apparent in the proof of the main theorem, we do this because we later have to show that the direct extension ordering on the tail of our iteration forces enough instances of $\LIP$ to make other parts of the iteration work.
	
	\begin{mydef}\label{LIPForcingDef}
		Let $\mu<\nu$ be regular cardinals such that $\nu$ is measurable. Let $\dP$ be a poset. We say that $\dP$ is a \emph{$\LIP(\mu,\nu)$-forcing} if the following holds:
		\begin{enumerate}
			\item $\dP$ is strongly ${<}\,\mu$-directed closed and $\nu$-cc.
			\item $\dP\subseteq V_{\nu}$.
			\item There is a club $C\subseteq\nu$ such that whenever $\alpha\in C$ is inaccessible, there is a projection $\pi\colon\dP\to(\dP\cap V_{\alpha})$ such that whenever $X\subseteq\dP$ is weakly directed, $|X|<\mu$ and $p\in\dP\cap V_{\alpha}$ satisfies $p\leq\pi(q)$ for every $q\in X$, there is $p^*\in\dP$ such that $\pi(p^*)=p$ and $p\leq q$ for every $q\in X$.
		\end{enumerate}
	\end{mydef}
	
	Condition (3) is a strengthening of the statement that $\dP/(\dP\cap V_{\alpha})$ is strongly ${<}\,\mu$-directed closed but it is in general easier to verify.
	
	\begin{mylem}\label{LIPForcingLemma}
		Let $\mu<\nu$ be regular cardinals such that $\nu$ is supercompact. Let $\dP$ be a $\LIP(\mu,\nu)$-forcing. Then whenever $\kappa\geq\nu$ is any cardinal, $\dP$ forces $\LIP(\mu,\nu,\kappa)$.
	\end{mylem}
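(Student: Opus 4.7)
The plan is to exploit the supercompactness of $\nu$ by lifting $\dP$ through a suitable supercompact embedding; this will produce both the normal ideal on $[\kappa]^{<\nu}$ and its strongly directed closed dense subset in the extension.

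First I would fix a sufficiently supercompact embedding $j\colon V\to M$ with $\crit(j)=\nu$, $M^{\kappa}\subseteq M$ (so $j[\kappa]\in M$) and $j(\nu)>\kappa$. By elementarity, $j(\dP)$ is a $\LIP(\mu,j(\nu))$-forcing in $M$ with witnessing club $j(C)\subseteq j(\nu)$. Since $C$ is a club in $\nu$ and $\crit(j)=\nu$, the point $\nu$ lies in $j(C)$; moreover $\nu$ is inaccessible in $M$ and $V_{\nu}^{M}=V_{\nu}$, so $j(\dP)\cap V_{\nu}^{M}=\dP$. Applying condition (3) of Definition \ref{LIPForcingDef} inside $M$ at $\alpha=\nu$ yields a projection $\pi\colon j(\dP)\to\dP$ with the following lifting property: for every weakly directed $X\subseteq j(\dP)$ of size ${<}\,\mu$ in $M$, any $p\in\dP$ lying below each $\pi(q)$ has some $\bar{p}^{*}\in j(\dP)$ with $\pi(\bar{p}^{*})=p$ and $\bar{p}^{*}\leq q$ for all $q\in X$.

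Let $G$ be $\dP$-generic over $V$ and set $\dR:=j(\dP)/G$ in $V[G]$. Any $\dR$-generic $H$ over $V[G]$ gives a $j(\dP)$-generic $\hat{G}$ over $V$ which, using $j\uhr V_{\nu}=\mathrm{id}$, lifts $j$ to $\hat{\jmath}\colon V[G]\to M[\hat{G}]$ in $V[G][H]$. In $V[G]$ define
\[ I:=\bigl\{A\subseteq[\kappa]^{<\nu}\mid\ \Vdash_{\dR} j[\check{\kappa}]\notin\hat{\jmath}(\check{A})\bigr\}, \]
so that $A\in I^{+}$ iff there is $\bar{p}\in\dR$ with $\bar{p}\Vdash_{\dR}j[\check{\kappa}]\in\hat{\jmath}(\check{A})$. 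Standard arguments confirm $I$ is a ${<}\,\nu$-complete normal ideal on $[\kappa]^{<\nu}$: completeness since $\crit(j)=\nu$ commutes with unions of length $<\nu$, and normality since in $V[G][H]$ the dual filter pulls back the canonical normal fine ultrafilter $\{A\mid j[\kappa]\in\hat{\jmath}(A)\}$ using $<\nu$-cc-style decisions of the regressive witnesses. For the dense subset, associate to each $\bar{p}\in\dR$ a canonical $A_{\bar{p}}\in I^{+}$ with $\bar{p}\Vdash_{\dR}j[\check{\kappa}]\in\hat{\jmath}(\check{A}_{\bar{p}})$, and set $B:=\{A_{\bar{p}}\mid\bar{p}\in\dR\}$. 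Density of $B$ in $I^{+}$ is built into the definition. For strong ${<}\,\mu$-directed closure, given a weakly directed family $\{A_{\bar{p}_{i}}\mid i<\theta\}\subseteq B$ with $\theta<\mu$, the family $\{\bar{p}_{i}\}\subseteq\dR$ is weakly directed, so $\{\pi(\bar{p}_{i})\}\subseteq G$ is weakly directed in $\dP$. Strong ${<}\,\mu$-directed closure of $\dP$ produces $p\in G$ below each $\pi(\bar{p}_{i})$, and the lifting property of $\pi$ furnishes $\bar{p}^{*}\in j(\dP)$ with $\pi(\bar{p}^{*})=p\in G$ (so $\bar{p}^{*}\in\dR$) and $\bar{p}^{*}\leq\bar{p}_{i}$ for every $i$; hence $A_{\bar{p}^{*}}\in B$ is the required $\subseteq$-lower bound.

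The main obstacle is applying condition (3) to the sequence $\{\bar{p}_{i}\}_{i<\theta}$, which is built in $V[G]$ rather than in $M$. To bridge this gap one uses the $\nu$-cc of $\dP$ together with $\theta<\mu<\nu$ to see that the sequence has a $\dP$-name whose potential evaluations lie in a single $V$-set of size ${<}\,\nu$; combined with the closure $M^{\kappa}\subseteq M$, this places the relevant data in $M$ (or, equivalently, allows the argument to be executed in the termspace forcing as in Lemma \ref{TermspaceMagidor}), so that the elementary formulation of condition (3) applies. This parallels the classical Galvin--Jech--Magidor proof of $\LIP(\mu,\nu,\nu)$ for the Levy collapse, with condition (3) replacing the concrete combinatorics of $\Coll(\mu,{<}\nu)$.
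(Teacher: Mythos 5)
Your high-level strategy --- lift $\dP$ through a supercompact embedding with critical point $\nu$, declare $A\in I$ when no quotient-generic puts $j[\kappa]$ into $\hat\jmath(A)$, and extract the dense strongly directed closed family from quotient conditions together with Definition \ref{LIPForcingDef} (3) --- is the paper's strategy (a Foreman-duality argument). But the load-bearing part is exactly what you leave unspecified, and as written there are two genuine gaps. First, the ``canonical $A_{\bar p}$'' is never defined, and both assertions you make about $B=\{A_{\bar p}\mid\bar p\in\dR\}$ depend entirely on that choice. Density is not ``built into the definition'': given $A\in I^+$ you get some $\bar p\in\dR$ forcing $j[\check\kappa]\in\hat\jmath(\check A)$, but even for the natural coding $A_{\bar p}=\{x\mid f_{\bar p}(x)\in G\}$ (with $f_{\bar p}$ a representing function) this only yields $A_{\bar p}\smallsetminus A\in I$, not $A_{\bar p}\subseteq A$; to get honest $\subseteq$-density the set must be defined relative to a name for the target set, which is why the paper's dense sets have the form $B(p,\dot q,\tau,G)\subseteq\tau^G$. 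Likewise, ``the family $\{\bar p_i\}\subseteq\dR$ is weakly directed'' does not follow from weak directedness of $\{A_{\bar p_i}\}$ unless $A_{\bar p}$ is chosen so that $j[\kappa]\in\hat\jmath(A_{\bar p})$ is \emph{equivalent} to a specific condition lying in the generic; with only $\bar p\Vdash_{\dR} j[\check\kappa]\in\hat\jmath(\check A_{\bar p})$ the inference fails, since $j[\kappa]\in\hat\jmath(A_{\bar p_i})$ does not put $\bar p_i$ itself into $H$.

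Second, even with the right coding, what you can extract is weak directedness of $\{\bar p_i\}$ in the quotient $\dR=j(\dP)/G$, a statement of $V[G]$ that genuinely depends on $G$; your single application of Definition \ref{LIPForcingDef} (3) at $\alpha=\nu$ inside $M$ requires the family to be weakly directed as a subset of $j(\dP)$, a $G$-independent statement, and directedness in $\dR$ is not the same thing (extensions and compatibility in $\dR$ and in $j(\dP)$ do not match up). Your proposed bridge ($\nu$-cc plus $M^{\kappa}\subseteq M$, or the termspace forcing) only serves to place the sequence inside $M$ --- which is anyway immediate from the ${<}\,\mu$-distributivity of $\dP$, as the paper notes --- and does not repair this mismatch; in general the lower bound cannot be a single $\bar p^*\in j(\dP)$ chosen once in $M$, it has to be a $\dP$-name for a quotient condition. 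This is precisely why the paper instead reflects the \emph{forced} directedness, via the representing functions and the ultrapower, to $U$-many $x$ with $x\cap\nu\in C$ inaccessible, uses condition (3) there to show that $\dP/\Gamma_{\dP\cap V_{\alpha}}$ is forced to be strongly ${<}\,\mu$-directed closed, and then mixes to obtain $f_{\dot q}$ so that $j(f_{\dot q})(j[\kappa])$ names the required lower bound; that reflection-and-mixing step is the heart of the proof and is absent from your sketch. (Minor: your justification of normality via ``${<}\,\nu$-cc-style decisions of the regressive witnesses'' is off target --- no chain condition on the quotient is available or needed; normality is a direct computation with the lifted diagonal union, as in the paper's first claim.)
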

	
	\begin{proof}
		Let $U$ be a normal ${<}\,\nu$-complete ultrafilter over $[\kappa]^{<\nu}$ and let $j\colon V\to M$ be the associated elementary embedding. Let $C$ be the club witnessing Definition \ref{LIPForcingDef} (3). Then $\nu\in j(C)$ is inaccessible and so there is a projection $j(\dP)\to(j(\dP)\cap V_{\nu}^M)$ and $j(\dP)\cap V_{\nu}^M=\dP$ (this follows from the fact that $V_{\nu}^M=V_{\nu}^V$ and $j\upharpoonright V_{\nu}=id_{V_{\nu}}$). It follows that whenever $G$ is $\dP$-generic and $H$ is any $j(\dP)/G$-generic filter over $V[G]$, $j$ lifts to $j^+\colon V[G]\to M[G][H]$ in $V[G][H]$ (since $j[G]=G$). We can now define our ideal witnessing $\LIP(\mu,\nu,\kappa)$ as follows: Given $X\subseteq[\kappa]^{<\nu}$ in $V[G]$, let $X\in I$ if and only if for every $j(\dP)/G$-generic filter $H$, $j[\kappa]\notin j^+(X)$.
		
		\begin{myclaim}
			$I$ is ${<}\,\nu$-complete and normal.
		\end{myclaim}
		
		\begin{proof}
			Let $(X_i)_{i<\gamma}$ ($\gamma<\nu$) be a sequence of elements of $I$. Let $H$ be any $j(\dP)/G$-generic filter over $V[G]$. Then $j^+((X_i)_{i<\gamma})=(j^+(X_i))_{i<\gamma}$ and so
			$$j[\kappa]\notin j^+\left(\bigcup_{i<\gamma}X_i\right)=\bigcup_{i<\gamma}j^+(X_i)$$
			i.e. $\bigcup_{i<\gamma}X_i\in I$.
			
			For normality, let $A\in I^+$ and let $f$ be a regressive function on $A$. Let $H$ be a $j(\dP)/G$-generic filter over $V[G]$ such that $j[\kappa]\in j^+(A)$. Since $f$ is a regressive function on $A$, $j(f)$ is a regressive function on $j(A)$ and so $j(f)(j[\kappa])\in j[\kappa]$. Let $j(f)(j[\kappa])=j(\alpha)$ for some $\alpha\in\kappa$. Then, clearly,
			$$j[\kappa]\in j^+\left(\{x\in A\;|\;f(x)=\alpha\}\right)$$
			and thus $f$ is constant on the set $\{x\in A\;|\;f(x)=\alpha\}$ which is a member of $I^+$.
		\end{proof}
		
		Now we have to construct the dense strongly ${<}\,\mu$-directed closed subset. This essentially follows from Foreman's duality theorem (see \cite[Theorem 7.14]{ForemanChapter}), which states that $\mathcal{P}([\kappa]^{<\nu})/I$ is forcing equivalent to the quotient $j(\dP)/G$, but we give an explicit argument here. For any $x\in V$, let $f_x\in V$ be a function representing $x$ in the ultrapower (i.e. $j(f_x)(j[\kappa])=x$). Also let $\dot{j}^+$ and $\dot{I}$ be names for $j^+$ and $I$ respectively (in the corresponding poset). By the definition of the lift, it follows that for any $\dP$-name $\tau$, the $\dP*j(\dP)/\Gamma_{\dP}$-name $\dot{j}^+(\tau)$ is equal to the $j(\dP)$-name $j(\tau)$.
		
		We first note the following: If $A,B\in I^+$ and $A\Vdash\check{B}\in\Gamma_{I^+}$, then $A\smallsetminus B\in I$: Otherwise $A\smallsetminus B$ would be a condition in $I^+$ that is below $A$ and incompatible with $B$. We write $A\subseteq^*B$ if $A\smallsetminus B\in I$.
		
		Let $X\in V[G]$ be a subset of $[\kappa]^{<\nu}$, $X=\tau^G$. Let $p\in G$ be such that $p\Vdash\tau\notin\dot{I}$. It follows that $p$ forces the existence of some $q\in j(\dP)/\Gamma_{\dP}$ which forces $j[\check{\kappa}]\in j(\tau)$. By the maximum principle, we can fix a name $\dot{q}$ such that $p\Vdash\dot{q}\Vdash j[\check{\kappa}]\in j(\tau)$.
		
		\begin{myclaim}
			For $U$-many $x\in[\kappa]^{<\nu}$, $\alpha:=x\cap\kappa\in C$, $p\in\dP\cap V_{\alpha}$ and $f_{\dot{q}}(x)$ is a $(\dP\cap V_{\alpha})$-name for a condition in $\dP/(\Gamma_{\dP\cap V_{\alpha}})$ such that $(p,f_{\dot{q}}(x))$ (as a condition in $\dP$) forces $\check{x}\in\tau$.
		\end{myclaim}
		
		\begin{proof}
			In $M$, $\nu=j[\kappa]\cap\nu\in j(C)$, $j(p)=p\in j(\dP)\cap V_{\nu}$ and $j(f_{\dot{q}})(j[\kappa])=\dot{q}$ is a $(j(\dP)\cap V_{\nu})$-name for a condition in $j(\dP)/(\Gamma_{j(\dP)\cap V_{\nu}})$ such that $j(p,f_{\dot{q}}(j[\kappa]))=(j(p),j(f_{\dot{q}})(j[\kappa]))=(p,\dot{q})$ forces $j[\check{\kappa}]\in j(\tau)$. This implies the claim, since $U=\{X\subseteq[\kappa]^{<\nu}\;|\;j[\kappa]\in j(X)\}$.
		\end{proof}
		
		So whenever $\tau$ is a $\dP$-name, $p\in\dP$ forces $\tau\notin\dot{I}$ and $\dot{q}$ is such that $p\Vdash_{\dP}\dot{q}\Vdash_{j(\dP)/\Gamma_{\dP}}j[\check{\kappa}]\in j(\tau)$, let $A(p,\dot{q},\tau)$ be the set of all $x\in[\kappa]^{<\nu}$ such that $\alpha:=x\cap\nu\in C$ and $(p,f_{\dot{q}}(\alpha))\Vdash\check{x}\in\tau$. When $G$ is $\dP$-generic over $V$ and $p\in G$, let $B(p,\dot{q},\tau,G)$ consist of all those $x\in[\kappa]^{<\nu}$ such that $(p,f_{\dot{q}}(x))\in G$. It follows that $B(p,\dot{q},\tau,G)\subseteq\tau^G$.
		
		\begin{myclaim}
			In $V[G]$, $B(p,\dot{q},\tau,G)\in I^+$.
		\end{myclaim}
		
		\begin{proof}
			Let $H$ be $j(\dP)/G$-generic over $V[G]$ with $\dot{q}^G\in H$. It follows that in $M[G*H]$, $j[\kappa]\in B(p,\dot{q},j(\tau),G*H)=j^+(B(p,\dot{q},\tau))$, since $(p,\dot{q}^G)\in G*H=j^+(G)$. Ergo there is a $j(\dP)/G$-generic filter $H$ such that $j[\kappa]\in j^+(B(p,\dot{q},\tau))$ so $B(p,\dot{q},\tau)\notin I$.
		\end{proof}
		
		So we can let $B$ be the collection of $B(p,\dot{q},\tau,G)$ whenever $p\in G$, $\tau$ is a $\dP$-name, $\dot{q}$ is a $\dP$-name for an element of $j(\dP)/\Gamma_{\dP}$ and $(p,\dot{q})$ forces $j[\check{\kappa}]\in j(\tau)$. It follows that $B$ is dense in $I^+$. We will show that $B$ is strongly ${<}\,\mu$-directed closed. To this end, let $D\subseteq B$ with size ${<}\,\mu$ be weakly directed, i.e. for any $B(p_0,\dot{q}_0,\tau_0,G),B(p_1,\dot{q}_1,\tau_1,G)\in D$, there is $B(p^*,\dot{q}^*,\tau^*,G)$ which is almost contained in $B(p_0,\dot{q}_0,\tau_0,G)$ and $B(p_1,\dot{q}_1,\tau_1,G)$. We will show that $\bigcap D$ is in $I^+$.
		
		Write $D=\{B(p_i,\dot{q}_i,\tau_i,G)\;|\;i<\gamma\}$. Since $p_i\in G$ for every $i<\gamma$ there is $p\in G$ such that $p\leq p_i$ for every $i<\gamma$ (since $\dP$ is in particular ${<}\,\mu$-distributive). Again by the distributivity, we can assume that the sequence $(p_i,\dot{q}_i,\tau_i)_{i<\gamma}$ is in $V$. Lastly, assume that $p$ forces that the collection $\{B(\check{p}_i,\check{\dot{q}}_i,\check{\tau}_i,\Gamma_{\dP})\;|\;i<\gamma\}$ is directed (note that we actually mean $\check{\tau}$ and $\check{\dot{q}_i}$, since we work with $\tau$ and $\dot{q}$ as $\dP$-names even in the forcing extension).
		
		\begin{myclaim}\label{LIPLemmaClaim3}
			In $V$, there are $U$-many $x\in[\kappa]^{<\nu}$ such that $x\cap\nu\in C$ is inaccessible and $p$ forces that $\{f_{\dot{q}_i}(x)\;|\;i<\gamma\}$ is a weakly directed subset of $\dP$.
		\end{myclaim}
		
		\begin{proof}
			Let $G'$ be an arbitrary $\dP$-generic filter containing $p$. Let $i<j<\gamma$ and let $k<\gamma$ be such that $B(p_k,\dot{q}_k,\tau_k,G')\subseteq^* B(p_i,\dot{q}_i,\tau_i,G'),B(p_j,\dot{q}_j,\tau_j,G')$. Let $H$ be $j(\dP)/G'$-generic over $V[G']$ containing $\dot{q}_k^{G'}$. It follows that $j[\kappa]\in j^+(B(p_k,\dot{q}_k,\tau_k,G'))$, since $(p,j(f_{\dot{q}_k})(j[\kappa]))\in G'*H$. This implies $j[\kappa]\in j^+(B(p_i,\dot{q}_i,\tau_i,G'))$ and $j[\kappa]\in j^+(B(p_i,\dot{q}_i,\tau_i,G'))$, i.e. $(p,j(f_{\dot{q}_i})(j[\kappa]))\in G'*H$,$(p,j(f_{\dot{q}_j})(j[\kappa]))\in G'*H$ (since $j[\kappa]\notin j^+(X)$ whenever $X\in I$). As $H$ was arbitrary, $j(f_{\dot{q}_k})(j[\kappa])$ forces $j(f_{\dot{q}_i})(j[\kappa])\in G'*\Gamma_{j(\dP)/G'}$ and $j(f_{\dot{q}_j})(j[\kappa])\in G'*\Gamma_{j(\dP)/G'}$. As $G'$ was arbitrary, $p$ forces that $j(f_{\dot{q}_k})(j[\kappa])$ forces $j(f_{\dot{q}_i})(j[\kappa])\in\Gamma_{j(\dP)}$ and $j(f_{\dot{q}_j})(\nu)\in\Gamma_{j(\dP)}$. This implies the claim by the definition of the ultrapower.
		\end{proof}
		
		Now we show that the quotient forcing is strongly directed-closed:
		
		\begin{myclaim}
			For any $\alpha\in C$ which is inaccessible, $\dP\cap V_{\alpha}$ forces that $\dP/(\Gamma_{\dP\cap V_{\alpha}})$ is strongly ${<}\,\mu$-directed closed.
		\end{myclaim}
		
		\begin{proof}
			Let $G_{\alpha}$ be $\dP\cap V_{\alpha}$-generic. Let $X\in V[G_{\alpha}]$ be a weakly directed subset of $\dP/G_{\alpha}$ of size ${<}\,\mu$. It follows that $X\in V$. It also follows that $\pi[X]\subseteq\dP\cap V_{\alpha}$ -- where $\pi\colon\dP\to(\dP\cap V_{\alpha})$ is the projection -- and so there is $p\in G_{\alpha}$ with $p\leq\pi(q)$ for every $q\in X$. By Definition \ref{LIPForcingDef} (3), there is $p^*\in\dP$ such that $\pi(p^*)=p$ and $p\leq q$ for every $q\in X$. Ergo, $p^*\in\dP/G_{\alpha}$ and $p^*$ is a lower bound of $X$.
		\end{proof}
		
		So let $X$ be the set witnessing Claim \ref{LIPLemmaClaim3}. For every $x\in X$, $p$ forces that there exists a lower bound $f_{\dot{q}}(x)$ of $\{f_{\dot{q}_i}(x)\;|\;i<\gamma\}$. We are done after showing:
		
		\begin{myclaim}
			In $V[G]$, $\bigcap_{i<\gamma}B(p_i,\dot{q}_i,\tau_i)\in I^+$.
		\end{myclaim}
		
		\begin{proof}
			In $V$, for $U$-many $x\in[\kappa]^{<\nu}$, $\alpha:=x\cap\nu\in C$ and $f_{\dot{q}}(x)$ is a $\dP\cap V_{\alpha}$-name for a condition in $\dP/(\Gamma_{\dP\cap V_{\alpha}})$ forced by $p$ to extend every $f_{\dot{q}_i}(x))$. Ergo $j(f_{\dot{q}})(j[\kappa])$ is a $\dP$-name for a condition in $j(\dP)/\Gamma$ forced by $p$ to extend every $j(f_{\dot{q}_i})(j[\kappa])$. So let $H$ be any $j(\dP)/G$-generic filter containing $j(f_{\dot{q}})(j[\kappa])$. Then $j(f_{\dot{q}_i})(j[\kappa])\in H$ for every $i<\gamma$ and moreover $(p_i,j(f_{\dot{q}_i})(j[\kappa]))\in G*H$ for every $i<\gamma$ (since $p\leq p_i$). It follows that
			$$j[\kappa]\in j^+\left(\bigcap_{i<\gamma}B(p_i,\dot{q}_i,\tau_i)\right)=\bigcap_{i<\gamma}j^+(B(p_i,\dot{q}_i,\tau_i)).$$
			Ergo $\bigcap_{i<\gamma}B(p_i,\dot{q}_i,\tau_i)\notin I$.
		\end{proof}
		
		This finishes the proof of Lemma \ref{LIPForcingLemma}.
		
	\end{proof}
	
	\section{Collapsing without Approachability}\label{CollapseNoApp}
	
	As stated in the discussion below Lemma \ref{InducedColoring}, the main obstacle to obtaining a model where there are many non-approachable points in the successor of a singular cardinal is finding a poset which collapses the successor of a singular cardinal of a given cofinality without making it approachable with respect to any normal, subadditive coloring. To achieve this for $\kappa=\mu^+$, $\mu$ singular, Shelah used $\Coll(\cf(\mu),\mu)$. Assuming the $\GCH$, this poset has size $\mu$ and thus preserves $\mu^+$. In particular, it does not make $\mu^+$ approachable with respect to any normal, subadditive coloring. However, this approach has two major drawbacks: On one hand, since larger collapses would also collapse $\kappa$, it is specific to obtaining non-approachable points of cofinality $\cf(\mu)^+$ and, since $\kappa$ always needs to be preserved, it cannot be iterated. As such, it was a major open problem whether there could be a poset which can collapse $\kappa$ as a cardinal without making it approachable with respect to any normal subadditive coloring.	
	
	A positive solution to this problem was found by the author and Levine for successors of singulars of countable cofinality (see \cite{JakobLevineFailureApproachability}). In the cited work, we relied on having $\LIP(\mu,\aleph_n,\aleph_n)$ for infinitely many $n\in\omega$. In this section we will show that it is possible to obtain a poset which accomplishes the same task but can be defined from weaker hypotheses -- assuming $\LIP(\mu,\aleph_k,\aleph_n)$ for a single $k$ and infinitely many $n\in\omega$. This is crucial in making the later iteration work since we will just need a single generically supercompact cardinal instead of infinitely many generically measurable cardinals which is much easier to obtain in practice.
	
	We first introduce our new variant of Namba forcing. Just like in the case of supercompact Prikry forcing, our trees will consist of sequences of elements of some $[\lambda]^{<\kappa}$ as opposed to sequences of ordinals. However, unlike supercompact Prikry forcing, it can be defined for successor cardinals since we are using $\LIP$-ideals as opposed to supercompact measures.
	
	\begin{mydef}
		Let $(\kappa_n)_{n\in\omega}$ be an increasing sequence of regular cardinals and $\mu<\kappa_0$ regular. Assume that for each $n\in\omega$, there exists a ${<}\,\mu^+$-complete ideal $I_n$ over $[\kappa_n]^{<\mu^+}$ such that there is a set $B_n\subseteq I_n^+$ which is dense in $(I_n^+,\subseteq)$ and strongly ${<}\,\mu$-directed closed. Let $\dL(\mu,(\kappa_n,I_n,B_n)_{n\in\omega})$ consist of all $p\subseteq\bigcup_{n\in\omega}\prod_{k<n}[\kappa_k]^{<\mu^+}$ such that
		\begin{enumerate}
			\item $p$ is closed under restriction.
			\item There is some $\stem(p)\in p$ such that for all $s\in p$, $\stem(p)\subseteq s$ or $s\subseteq\stem(p)$ and whenever $\stem(p)\subseteq s$,
			$$\osucc_p(s):=\{x\in[\kappa_{|s|}]^{<\mu^+}\;|\;s^{\frown}x\in p\}\in B_{|s|}.$$
		\end{enumerate}
		We let $p\leq q$ if $p\subseteq q$. We let $p\leq_0q$ if $p\subseteq q$ and $\stem(p)=\stem(q)$.
	\end{mydef}
	
	Since each $B_n$ is dense in $I_n^+$, we can refine trees which split into elements of $I_n^+$ to conditions in $\dL(\mu,(\kappa_n,I_n,B_n)_{n\in\omega})$:
	
	\begin{mybem}
		Let $p\subseteq\bigcup_{n\in\omega}\prod_{k<n}[\kappa_k]^{<\mu^+}$ be closed under restriction such that there is some $\stem(p)\in p$ such that for all $s\in p$, $\stem(p)\subseteq s$ or $s\subseteq\stem(p)$ and whenever $\stem(p)\subseteq s$,
		$$\osucc_p(s)\in I_{|s|}^+.$$
		For each $s\in p$ with $\stem(p)\subseteq s$, choose $A_s\subseteq\osucc_p(s)$ with $A_s\in B_{|s|}$. Let $q$ consist of all $t\in p$ such that whenever $s$ is an initial segment of $t$ with $\stem(p)\subseteq s$, $t(|s|)\in A_s$. It follows that, for each $t\in q$, $\osucc_q(t)=A_t$. Therefore, $q\subseteq p$ and $q\in\dL(\mu,(\kappa_n,I_n,B_n)_{n\in\omega})$.
	\end{mybem}
	
	Ergo, in many constructions, we will only aim to produce a tree with $(I_n)_{n\in\omega}$-positive splitting sets, knowing that any such tree can be refined into a condition in $\dL(\mu,(\kappa_n,I_n,B_n)_{n\in\omega})$.
	
	The forcing functions similarly to regular Namba forcing defined using a sequence $(I_n)_{n\in\omega}$ of ideals on $(\kappa_n)_{n\in\omega}$. The main advantage (which is crucial for this paper) is that we are only assuming a single instance of $\LIP$. Despite this, our Namba forcing has all of the necessary regularity properties, most importantly the strong Prikry property (see Lemma \ref{StrongPrikry}), the ability to decide names for small ordinals using direct extensions (see Lemma \ref{LDirectDecide}) and a sufficiently strongly directed-closed direct extension ordering (see \ref{LClosure}). For the first two results, we will follow Cox-Krueger (see \cite{CoxKruegerNamba}). Previously, similar results were obtained by Cummings and Magidor (see \cite{CumMagMMWeakSquare}).
	
	\begin{myass}
		$(\kappa_n)_{n\in\omega}$ is an increasing sequence of regular cardinals and $\mu<\kappa_0$ is regular. For each $n\in\omega$, $I_n$ is a ${<}\,\mu^+$-complete ideal over $\kappa_n$ and there is a set $B_n\subseteq I_n^+$ which is dense in $I_n^+$ and strongly ${<}\,\mu$-directed closed. Define $\kappa:=\sup_n\kappa_n$ and $\kappa^*:=(\sup_n\kappa_n)^+$.
	\end{myass}
	
	We also fix the following pieces of notation:
	
	\begin{mydef}
		Let $p\in\dL(\mu,(\kappa_n,I_n,B_n)_{n\in\omega})$ and let $t\in p$.
		\begin{enumerate}
			\item We define $p\uhr t:=\{s\in p\;|\;t\subseteq s\vee s\subseteq t\}$.
			\item For $n\in\omega$, we let
			$$\osucc_p^n(t):=\{r\in\prod_{i\in[|t|,|t|+n-1]}[\kappa_i]^{<\mu^+}\;|\;t^{\frown}r\in p\}.$$
		\end{enumerate}
	\end{mydef}
	
	Clearly, $p\uhr t\in\dL(\mu,(\kappa_n,I_n,B_n)_{n\in\omega})$ whenever $p\in\dL(\mu,(\kappa_n,I_n,B_n)_{n\in\omega})$ and $t\in p$. If $\stem(p)\subseteq t$, then $\stem(p\uhr t)=t$.
	
	We first prove that $(\dL(\mu,(\kappa_n,I_n,B_n)_{n\in\omega}),\leq,\leq_0)$ is a Prikry-type forcing and derive its regularity properties from a sufficiently closed direct extension ordering.
	
	We start by proving that $\dL(\mu,(\kappa_n,I_n,B_n)_{n\in\omega})$ has the \emph{strong Prikry property}. To do so we introduce the notion of a \emph{fusion sequence}: Given $p,q\in\dL(\mu,(\kappa_n,I_n,B_n)_{n\in\omega})$ and $n\in\omega$, we let $p\leq_nq$ if and only if $p\leq_0 q$ and for all $k<n$, $\osucc_p^k(\stem(p))=\osucc_q^k(\stem(p))$ (recall that $\stem(q)=\stem(p)$). A sequence $(p_n)_{n\in\omega}$ is a \emph{fusion sequence} if $p_{n+1}\leq_np_n$ for all $n\in\omega$. Whenever $(p_n)_{n\in\omega}$ is a fusion sequence, the intersection $\bigcap_{n\in\omega}p_n$ (sometimes referred to as the \emph{fusion limit}) is a condition in $\dL(\mu,(\kappa_n,I_n,B_n)_{n\in\omega})$ and extends every $p_n$.
	
	\begin{mylem}\label{StrongPrikry}
		Let $p\in\dL(\mu,(\kappa_n,I_n,B_n)_{n\in\omega})$ and let $D\subseteq\dL(\mu,(\kappa_n,I_n,B_n)_{n\in\omega})$ be open and dense below $p$. Then there is $q\leq_0p$ and $n\in\omega$ such that whenever $s\in q$, $|s|=n$, $q\uhr s\in D$.
	\end{mylem}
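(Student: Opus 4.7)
The plan is to establish the strong Prikry property by an amalgamation argument rather than a fusion, following the template used for tree-Prikry forcings in \cite{CoxKruegerNamba}. First I would replace $D$ by $\{r\leq p:r\in D\}$, so $D$ is open dense below $p$. Then say $s\in p$ with $\stem(p)\subseteq s$ is \emph{good} if there exist $q\leq_0 p\uhr s$ and $n\in\omega$ with $q\uhr s'\in D$ for every $s'\in q$ with $|s'|=|s|+n$; the goal is precisely to show $\stem(p)$ is good.

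The heart of the argument is the following amalgamation lemma: if $X:=\{x\in\osucc_p(s):s^{\frown}x\text{ is good}\}\in I_{|s|}^+$, then $s$ is good. To prove it, for each $x\in X$ pick witnesses $p_{s^{\frown}x}$ and $n_x$. Using ${<}\,\mu^+$-completeness of $I_{|s|}$, fix $n$ with $X_n:=\{x\in X:n_x=n\}\in I_{|s|}^+$, and by density of $B_{|s|}$ in $(I_{|s|}^+,\subseteq)$, refine to $X'\subseteq X_n$ with $X'\in B_{|s|}$. Paste: let $p_s$ have stem $s$, successor set $X'$ at $s$, and below each $s^{\frown}x$ (for $x\in X'$) agree with $p_{s^{\frown}x}$. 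Then $p_s\leq_0 p\uhr s$ is a valid condition and witnesses that $s$ is good at relative level $n+1$, since $p_s\uhr s'=p_{s^{\frown}x}\uhr s'\in D$ for every $s'\in p_s$ at level $|s|+n+1$ (where $x$ is the unique successor of $s$ in $s'$).

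To finish, assume toward a contradiction that $\stem(p)$ is not good. By the contrapositive of the amalgamation lemma, at every non-good $s$ the set of $x\in\osucc_p(s)$ with $s^{\frown}x$ not good lies in the dual filter $I_{|s|}^*$, meets $\osucc_p(s)\in B_{|s|}$ in a set in $I_{|s|}^+$, and (by density) contains a member of $B_{|s|}$. Choosing such successor sets level by level yields $q\leq_0 p$ in which every $s$ extending $\stem(p)$ is not good. Now apply density of $D$ below $q$ to pick $q'\leq q$ with $q'\in D$: then $\stem(q')\in q$, $\stem(q')\supseteq\stem(p)$, and $q'\leq_0 p\uhr\stem(q')$, which witnesses that $\stem(q')$ is good with $n=0$, contradicting the construction of $q$ (or the original assumption, in case $\stem(q')=\stem(p)$).

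The principal obstacle is the amalgamation lemma, specifically verifying that the pasted object $p_s$ is genuinely a condition in $\dL(\mu,(\kappa_n,I_n,B_n)_{n\in\omega})$. This rests on two separate properties of the $I_n$: countable completeness (a very weak consequence of ${<}\,\mu^+$-completeness) to stabilize $n_x$ on a positive set, and density of $B_{|s|}$ in $I_{|s|}^+$ to prune the successor set at $s$ back into $B_{|s|}$. Notably, the strong directed closure of the $B_n$ plays no role in this lemma, since the pieces $p_{s^{\frown}x}$ live below pairwise incompatible immediate successors of $s$ and need not be amalgamated via a lower bound.
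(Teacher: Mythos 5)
Your proposal is correct and takes essentially the same route as the paper's proof: your amalgamation lemma is exactly the contrapositive of the paper's claim that a bad condition has only an $I_{|s|}$-small set of non-bad immediate restrictions (same stabilization of $n$ by ${<}\,\mu^+$-completeness and pasting of the witnesses across a positive, $B_{|s|}$-refined set of successors), and your level-by-level construction of a tree all of whose nodes above the stem are non-good plays the role of the paper's fusion sequence, with the identical final contradiction against density of $D$. The only differences are bookkeeping (goodness of nodes of $p$ versus badness of conditions, and a direct recursion in place of a fusion limit), plus the harmless slip that the non-good successors at a non-good node form an $I_{|s|}^+$-positive subset of $\osucc_p(s)$ rather than a set in the dual filter, which is what your next clause correctly uses.
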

	
	\begin{proof}
		We will call a condition $q\leq p$ \emph{bad} if there is no $r\leq_0q$ and $n\in\omega$ such that $r\uhr s\in D$ whenever $s\in r$ with $|s|=n$. We will assume toward a contradiction that the lemma fails, i.e. $p$ itself is bad.
		
		\setcounter{myclaim}{0}
		
		\begin{myclaim}\label{StrongPrikryClaim1}
			Whenever $q\leq p$ is bad, the set of all $x\in\osucc_q(\stem(q))$ such that $q\uhr(\stem(q)^{\frown}x)$ is not bad is in $I_{|\stem(q)|}$.
		\end{myclaim}
		
		\begin{proof}
			Assume the claim fails, i.e. the set of all $x\in\osucc_q(\stem(q))$ such that $q\uhr(\stem(q)^{\frown}x)$ is not Then by the ${<}\,\mu^+$-completeness of $I_{|\stem(q)|}$ there is a set $A\subseteq\osucc_q(\stem(q))$ in $I_{|\stem(q)|}^+$ and $n\in\omega$ such that for every $x\in A$ there is $r_x\leq_0q\uhr(\stem(q)^{\frown}x)$ such that whenever $s\in r_x$, $|s|=n$, $r_x\uhr s\in D$. However, then we can let
			$$r:=\bigcup\{r_x\;|\;x\in A\}.$$
			It follows that $r\leq_0q$ and whenever $s\in r$, $|s|=n$, $r\uhr s=r_{s(|\stem(q)|)}\uhr s\in D$, so $q$ is not bad, witnessed by $r$ and $n$, a contradiction.
		\end{proof}
		
		Now we build a fusion sequence $(p_n)_{n\in\omega}$ with $p_0\leq p$ such that whenever $s\in p_n$, $|s|=\stem(p)+n$, $p_n\uhr s$ is bad. Let $p_0:=p$ which obviously works by assumption. Given $p_n$ and $s\in p_n$ with $|s|=\stem(p)+n$, we know by the inductive hypothesis that $p_n\uhr s$ is bad. By Claim \ref{StrongPrikryClaim1}, the set $A$ of all $x\in\osucc_{p_n\uhr s}(s)$ such that $p_n\uhr(s^{\frown}x)$ is bad is in $I_{|\stem(q)|}^+$. So we can let $p_n^s$ consist of all $t\in p_n\uhr s$ such that $t(|s|)\in A$. It follows that $p_n^s\leq_0p_n\uhr s$ and we can let $p_{n+1}:=\bigcup_{s\in p_n,|s|=n}p_n^s$. Then $p_{n+1}\leq_np_n$ and it satisfies the inductive hypothesis.
		
		Now let $p$ be the fusion limit of $(p_n)_{n\in\omega}$. Then whenever $s\in p$, $|s|\geq|\stem(p)|$, $p\uhr s$ is bad, since $p\uhr s\leq_0 p_n\uhr s$. However, since $D$ is open dense, there is $q\leq p$ with $q\in D$. Then $q\leq_0p\uhr(\stem(q))$ and so $p\uhr(\stem(q))$ is not bad, a contradiction.
	\end{proof}
	
	From this we can show that $\dL(\mu,(\kappa_n,I_n,B_n)_{n\in\omega})$ has the Prikry property (and in fact even direct $\mu$-decidability):
	
	\begin{mylem}\label{LDirectDecide}
		Let $\tau$ be an $\dL(\mu,(\kappa_n,I_n,B_n)_{n\in\omega})$-name and $p\in\dL(\mu,(\kappa_n,I_n,B_n)_{n\in\omega})$ such that $p\Vdash\tau<\check{\mu}$. Then there is $q\leq_0p$ and $\gamma<\mu$ such that $q\Vdash\tau=\check{\gamma}$.
	\end{mylem}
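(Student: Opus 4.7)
The plan is to derive $\mu$-decidability from the Strong Prikry property (Lemma \ref{StrongPrikry}) by a top-down tree-reduction argument that exploits the $<\mu^+$-completeness of each ideal $I_n$.

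First, I would apply Lemma \ref{StrongPrikry} to the set
\[ D := \{r \leq p \;|\; \exists \gamma < \mu : r \Vdash \tau = \check{\gamma}\}, \]
which is open dense below $p$ since $p\Vdash\tau<\check{\mu}$, so any extension can be further extended to decide the value of $\tau$. This produces $q \leq_0 p$ and $n \in \omega$ such that every $s \in q$ with $|s| = n$ satisfies $q \uhr s \Vdash \tau = \check{\gamma_s}$ for some $\gamma_s < \mu$. If $n \leq |\stem(p)|$, the only such $s$ is a prefix of $\stem(p)$ and $q \uhr s = q$ itself, giving the conclusion outright; so I may assume $n > |\stem(p)|$.

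Next, I would perform a finite induction pushing the labels down from level $n$ to level $|\stem(p)|$. Suppose, at stage $k$, I have $q_k \leq_0 q$ and labels $\gamma^k_s < \mu$ for $s \in q_k$ of length $n-k$ with $q_k \uhr s \Vdash \tau = \check{\gamma^k_s}$. For each $s \in q_k$ of length $n - k - 1$, the assignment $x \mapsto \gamma^k_{s^{\frown}x}$ partitions $\osucc_{q_k}(s) \in I_{n-k-1}^+$ into at most $\mu < \mu^+$ classes; by $<\mu^+$-completeness of $I_{n-k-1}$ one class is $I_{n-k-1}$-positive, and by density of $B_{n-k-1}$ in $(I_{n-k-1}^+,\subseteq)$ I can refine it to some $A_s \in B_{n-k-1}$ on which the label is constantly some $\gamma^{k+1}_s < \mu$. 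Replacing $\osucc_{q_k}(s)$ by $A_s$ at every such $s$ yields $q_{k+1} \leq_0 q_k$ and labels at level $n - k - 1$. After $n - |\stem(p)|$ such reductions, a single label $\gamma < \mu$ is attached to $\stem(p)$, giving $q_{n - |\stem(p)|} \Vdash \tau = \check{\gamma}$.

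The main subtle point is verifying the inductive claim that, after shrinking at level $n - k - 1$, one has $q_{k+1} \uhr s \Vdash \tau = \check{\gamma^{k+1}_s}$. For this I would take an arbitrary $r \leq q_{k+1} \uhr s$ and argue that $\stem(r)$ must properly extend $s$: otherwise $\stem(r)$ is a strict prefix of $s$, and $\osucc_r(\stem(r)) \in I_{|\stem(r)|}^+$ would be forced to lie in the singleton $\{s(|\stem(r)|)\}$ along the spine of $q_{k+1} \uhr s$, contradicting the nontriviality of $I_{|\stem(r)|}$. Hence $x := \stem(r)(|s|) \in A_s$, so $r \leq q_{k+1} \uhr (s^{\frown}x) \leq q_k \uhr (s^{\frown}x)$, which already forces $\tau = \check{\gamma^k_{s^{\frown}x}} = \check{\gamma^{k+1}_s}$. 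I expect this routing-through-$A_s$ argument to be the most delicate part of the proof; the rest is bookkeeping around completeness and density.
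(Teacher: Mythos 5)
Your proof is correct and takes essentially the same route as the paper: apply the strong Prikry property (Lemma \ref{StrongPrikry}) to the dense set of conditions deciding $\tau$, then use the ${<}\,\mu^+$-completeness of the ideals to homogenize the decided values and pull the decision down to the stem (the paper organizes this descent as a minimal-$n$ contradiction with a single one-step reduction, while you iterate the identical reduction step, which is an inessential difference). One small repair: in your verification that $q_{k+1}\uhr s$ forces the value, your dichotomy omits the case $\stem(r)=s$; this (and the strict-prefix case, without any appeal to singletons being $I$-null) is handled by noting that conditions whose stem properly extends $s$ are dense below $q_{k+1}\uhr s$, and any such condition lies below $q_k\uhr(s^{\frown}x)$ for the relevant $x\in A_s$, hence forces $\tau=\check{\gamma}^{k+1}_s$.
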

	
	\begin{proof}
		Let $D$ be the open dense set of conditions deciding $\tau$. By Lemma \ref{StrongPrikry}, there is $q\leq_0p$ and $n\in\omega$ such that whenever $s\in q$, $|s|=n$, $q\uhr s\in D$. Assume $q$ was chosen so that $n$ is minimal. We claim that $n=|\stem(p)|$, which implies the lemma.
		
		Assume toward a contradiction that $n>|\stem(p)|$. We will find $r\leq_0q$ such that whenever $s\in r$, $|s|=n-1$, $r\uhr s\in D$, thus contradicting the minimality of $n$. Let $s\in q$, $|s|=n-1$. Since $|s|\geq|\stem(p)|$, $\osucc_q(s)\in I_{|s|}^+$. For any $x\in\osucc_q(s)$, there is $\gamma_x$ such that $q\uhr(s^{\frown}x)\Vdash\tau=\check{\gamma}_x$. By the ${<}\,\mu^+$-completeness of $I_{|s|}$, there is $B_s\subseteq\osucc_q(s)$ and $\gamma_s$ such that for any $\alpha\in B_s$, $q\uhr(s^{\frown}x)\Vdash\tau=\check{\gamma}_s$. Now let $r^s:=\{t\in q\uhr s\;|\;|t|\leq|s|\vee t(|s|)\in B_s\}$. It follows that $r^s\leq_0 q\uhr(s^{\frown}\alpha)$ and $r^s\Vdash\tau=\check{\gamma}_s$, i.e. $r^s\in D$. Let $r:=\bigcup_{s\in q,|s|=n-1}r^s$. Then $r\leq_0q$ and whenever $s\in r$, $|s|=n-1$, $r\uhr s=r^s$ and thus $r\uhr s\in D$. This $r$ is what we wanted to construct.
	\end{proof}
	
	As is common with Prikry-type forcings, $\dL(\mu,(\kappa_n,I_n,B_n)_{n\in\omega})$ derives its regularity properties from the Prikry property together with a sufficiently closed direct extension ordering. For technical reasons -- most importantly, to show that our later iteration forces instances of $\LIP$ -- we need that the direct extension ordering is strongly ${<}\,\mu$-directed closed. To show this, we first prove some elementary statements regarding ideal combinatorics.
	
	\begin{mydef}
		Let $I$ and $J$ be ideals on $X$ and $Y$ respectively. We define the \emph{Fubini product} $I\otimes J$ (which is an ideal on $X\times Y$) by
		$$X\times Y\supseteq A\in I\otimes J:\Leftrightarrow\{x\in X\;|\;\{y\in Y\;|\;(x,y)\in A\}\in J^+\}\in I.$$
	\end{mydef}
	
	I.e., $A\in (I\otimes J)^+$ if and only if $\{x\in X\;|\;\{y\in Y\;|\;(x,y)\in A\}\in J^+\}\in I^+$.
	
	\begin{mylem}\label{FubiniAssociative}
		The Fubini product is associative. In other words, whenever $I,J,K$ are ideals (on $X,Y,Z$ respectively), then $(I\otimes J)\otimes K=I\otimes(J\otimes K)$.
	\end{mylem}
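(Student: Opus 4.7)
The plan is simply to unfold the definition of the Fubini product twice on each side and observe that both sides reduce to the same triple-nested condition, after identifying $(X\times Y)\times Z$ with $X\times(Y\times Z)$ in the canonical way. There is no serious obstacle; the only care required is bookkeeping of which variables live in which factor.

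First I would introduce the notation $A_{x}:=\{(y,z)\in Y\times Z\;|\;(x,y,z)\in A\}$, $A_{x,y}:=\{z\in Z\;|\;(x,y,z)\in A\}$, and $A^{x}:=\{(x,y,z)\in A\}$ viewed suitably, so that the two sides of the equation can be compared coordinate-by-coordinate. Then, starting from $A\in I\otimes(J\otimes K)$ and applying the definition of $I\otimes(\cdot)$, this is equivalent to
\[
\{x\in X\;|\;A_{x}\in(J\otimes K)^{+}\}\in I^{+},
\]
and a second application to the inner set yields that $A_{x}\in(J\otimes K)^{+}$ is equivalent to $\{y\in Y\;|\;A_{x,y}\in K^{+}\}\in J^{+}$. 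Combining the two gives that $A\in I\otimes(J\otimes K)$ if and only if
\[
\{x\in X\;|\;\{y\in Y\;|\;\{z\in Z\;|\;(x,y,z)\in A\}\in K^{+}\}\in J^{+}\}\in I^{+}.
\]

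Next I would perform the analogous expansion on the other side: $A\in(I\otimes J)\otimes K$ means $\{(x,y)\in X\times Y\;|\;\{z\;|\;((x,y),z)\in A\}\in K^{+}\}\in(I\otimes J)^{+}$, and expanding the outer $I\otimes J$ yields exactly the same triple-nested condition as above. Taking complements in $X\times Y\times Z$ converts the ``$\in(\cdot)^{+}$'' formulation into the ``$\in(\cdot)$'' formulation given in the definition, so one should either phrase everything in terms of positive sets and then note that two ideals agree iff their positive sets agree, or carry out the whole argument in terms of ideal membership directly (which amounts to replacing each $\in K^{+}$, $\in J^{+}$, $\in I^{+}$ by its complementary ``$\notin K$'' etc.).

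The identification of $(X\times Y)\times Z$ with $X\times(Y\times Z)$ via $((x,y),z)\mapsto(x,(y,z))$ is used implicitly to compare subsets of the two product spaces; once this identification is made, both ideals consist of exactly those $A$ satisfying the displayed triple-nested condition, so $(I\otimes J)\otimes K=I\otimes(J\otimes K)$ as ideals on $X\times Y\times Z$. No closure or completeness hypothesis on $I,J,K$ is needed for associativity.
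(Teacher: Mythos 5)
Your proposal is correct and is essentially the paper's own argument: both simply unfold the definition of $\otimes$ twice on each side and observe that the two memberships reduce to the same triple-nested condition on $\{x\;|\;\{y\;|\;\{z\;|\;(x,y,z)\in A\}\in K^{+}\}\in J^{+}\}$, the paper working with ideal membership and you with the equivalent positive-set formulation (which the paper records immediately after the definition). The only caution is the small slip where you equate $A\in I\otimes(J\otimes K)$ with the $I^{+}$-condition (that characterizes $(I\otimes(J\otimes K))^{+}$, not the ideal itself), but you already note the fix yourself: two ideals on the same set coincide iff their positive sets do.
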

	
	\begin{proof}
		Let $A\subseteq X\times Y\times Z$. For $x\in X$, define $A_x:=\{(y,z)\in Y\times Z\;|\;(x,y,z)\in A\}$.
		\begin{align*}
			A & \in (I\otimes J)\otimes K \\
			& \Leftrightarrow \{(x,y)\in X\times Y\;|\;\{z\in Z\;|\;(x,y,z)\in A\}\in K^+\}\in (I\otimes J) \\
			& \Leftrightarrow \{x\in X\;|\;\{y\in Y\;|\;\{z\in Z\;|\;(x,y,z)\in A\}\in K^+\}\in J^+\}\in I \\
			& \Leftrightarrow \{x\in X\;|\;\{y\in Y\;|\;\{z\in Z\;|\;(y,z)\in A_x\}\in K^+\}\in J^+\}\in I \\
			& \Leftrightarrow \{x\in X\;|\;A_x\in(J\otimes K)^+\}\in I \\
			& \Leftrightarrow \{x\in X\;|\;\{(y,z)\in Y\times Z\;|\;(x,y,z)\in A\}\in(J\otimes K)^+\}\in I \\
			& \Leftrightarrow A\in I\otimes(J\times K)
		\end{align*}
		This finishes the proof.

	\end{proof}
	
	In particular, we may now write expressions such as ``$I_0\otimes\dots\otimes I_n$'' without using brackets to indicate the order of operations, since e.g. $I_0\otimes(I_1\otimes (I_2\otimes I_3))=(I_0\otimes I_1)\otimes (I_2\otimes I_3)$.
	
	The Fubini product is related to Namba forcing as follows: Given finitely many sets $(X_k)_{k<n}$ and ideals $(J_k)_{k<n}$, we say that $p\subseteq\bigcup_{l<n}\prod_{k\leq l}X_k$ is a \emph{$(J_k)_{k<n}$-Laver-tree} if $p$ is nonempty, closed under restriction and for every $s\in p$ with $|s|<n$, $\osucc_p(s):=\{x\in X_{|s|}\;|\;s^{\frown}x\in p\}\in J_{|s|}^+$. We let $T(p):=\{s\in p\;|\;|s|=n\}$.
	
	\begin{mylem}\label{ProductTree}
		Let $J_0,\dots,J_{n-1}$ be ideals. Then a set $A\subseteq X_0\times\dots\times X_{n-1}$ is in $(J_0\otimes\dots\otimes J_{n-1})^+$ if and only if there is a $(J_k)_{k<n}$-Laver-tree $p$ such that $T(p)\subseteq A$.
	\end{mylem}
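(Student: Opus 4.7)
The plan is to induct on $n$, using the associativity of the Fubini product (Lemma \ref{FubiniAssociative}) to reduce to the two-factor case at each step. For $n=1$, a $(J_0)$-Laver tree is just the set $\{\emptyset\}\cup\{(x_0)\;|\;x_0\in S\}$ for some $S\in J_0^+$, so the equivalence is immediate from unpacking definitions.

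For the inductive step, I rewrite $J_0\otimes\dots\otimes J_{n-1}=J_0\otimes(J_1\otimes\dots\otimes J_{n-1})$ via Lemma \ref{FubiniAssociative}. For the $(\Leftarrow)$ direction, suppose $p$ is a $(J_k)_{k<n}$-Laver tree with $[p]\subseteq A$. For each $x_0\in\osucc_p(\emptyset)$, the tree $p_{x_0}:=\{s\;|\;(x_0){}^{\frown}s\in p\}$ is a $(J_k)_{0<k<n}$-Laver tree whose branches lie in the slice $A_{x_0}:=\{(x_1,\dots,x_{n-1})\;|\;(x_0,\dots,x_{n-1})\in A\}$, so by induction $A_{x_0}\in(J_1\otimes\dots\otimes J_{n-1})^+$. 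Since $\osucc_p(\emptyset)\in J_0^+$, unfolding the definition of the Fubini product gives $A\in(J_0\otimes(J_1\otimes\dots\otimes J_{n-1}))^+$.

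For the $(\Rightarrow)$ direction, suppose $A\in(J_0\otimes\dots\otimes J_{n-1})^+$, i.e. the set $S:=\{x_0\in X_0\;|\;A_{x_0}\in(J_1\otimes\dots\otimes J_{n-1})^+\}$ belongs to $J_0^+$. By the inductive hypothesis, for every $x_0\in S$ I can choose a $(J_k)_{0<k<n}$-Laver tree $p_{x_0}$ with $[p_{x_0}]\subseteq A_{x_0}$. Setting $p:=\{\emptyset\}\cup\{(x_0){}^{\frown}t\;|\;x_0\in S,\ t\in p_{x_0}\}$ then yields a $(J_k)_{k<n}$-Laver tree whose branches all lie in $A$.

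No single step looks hard; the only subtle point is to make sure the associativity is invoked explicitly so that ``$A\in(J_0\otimes\dots\otimes J_{n-1})^+$'' really does translate into the statement about the $J_0^+$-many slices in $(J_1\otimes\dots\otimes J_{n-1})^+$, which is exactly what Lemma \ref{FubiniAssociative} delivers. Aside from that, the whole argument is a routine unwinding of definitions by induction.
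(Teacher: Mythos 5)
Your proof is correct. It follows the same basic strategy as the paper -- induction on the number of factors, unwinding one application of the Fubini product per step -- but the decomposition is reversed: you peel off the \emph{first} coordinate, writing $J_0\otimes\dots\otimes J_{n-1}=J_0\otimes(J_1\otimes\dots\otimes J_{n-1})$ via Lemma \ref{FubiniAssociative} and gluing the slice trees $p_{x_0}$ over a $J_0$-positive set of first coordinates, whereas the paper peels off the \emph{last} factor: given $A\in(J_0\otimes\dots\otimes J_n)^+$, it notes that the set of $s\in\prod_{k<n}X_k$ with $J_n$-positive vertical section is $(J_0\otimes\dots\otimes J_{n-1})$-positive, applies the inductive hypothesis to get a tree through that set, and then extends it by one final level of splitting directly into the sections of $A$. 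Both routes are essentially symmetric and equally short; your version has the small advantage of making the appeal to associativity explicit (the paper's last-coordinate split uses the unparenthesized product implicitly), and you also spell out the easy converse direction (a Laver tree with $[p]\subseteq A$ forces $A$ positive), which the paper's proof leaves as part of ``the beginning is clear'' and does not treat in the inductive step. One cosmetic point: in your gluing step it is worth noting the index shift, i.e.\ that $p_{x_0}$ splits into $J_{|t|+1}^+$ at a node $t$, so the glued tree splits into $J_{|s|}^+$ at $s=(x_0){}^{\frown}t$ as required.
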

	
	\begin{proof}
		We prove both directions simultaneously by induction with the beginning being clear.
		
		Assume the statement has been shown for $J_0,\dots,J_{n-1}$ and let $J_n$ be some ideal.
		
		Let $A\in(J_0\otimes\dots\otimes J_n)^+$. It follows that for $(J_0\otimes\dots\otimes J_{n-1})^+$ many $s\in\prod_{k<n}X_k$, $\{x\in X_n\;|\;s^{\frown}x\in A\}\in J_n^+$. By the inductive hypothesis, let $p$ be a $(J_k)_{k<n}$-Laver-tree such that whenever $s\in p$, $\{x\in X_n\;|\;s^{\frown}x\in A\}\in J_n^+$. Let $p'$ be the tree of all $n+1$-element sequences $s$ with $s\uhr n\in p$ and $(s\uhr n,s(n))\in A$. Then $p'$ is a $(J_k)_{k\leq n}$-Laver-tree and $T(p')\subseteq A$.
		
		On the other hand, suppose that $A\subseteq X_0\times\dots\times X_n$ and $p$ is a $(J_k)_{k\leq n}$-Laver tree such that $T(p)\subseteq A$. Whenever $s\in p$, $|s|=n$, then $\osucc_p(s)\in J_n^+$ by assumption. On the other hand, $p':=\{s\in p\;|\;|s|\leq n\}$ clearly is a $(J_k)_{k<n}$-Laver tree and $T(p')=\{s\in p\;|\;|s|=n\}$. Therefore,
		\begin{align*}
			\{(x_0,\dots,x_{n-1})\in X_0\times\dots\times X_{n-1}\;|\;\{x_n\in X_n\;|\;(x_0,\dots,x_n)\in A\}\in J_n^+\}
			\supseteq T(p')
		\end{align*}
		Since $T(p')\in (J_0\otimes\dots\otimes J_{n-1})^+$ by induction, $A\in (J_0\otimes\dots\otimes J_n)^+$.
	\end{proof}
	
	We also have to show that the completeness of ideals is preserved by Fubini products.
	
	\begin{mylem}\label{ProductComplete}
		Let $I$ and $J$ be ${<}\,\kappa$-complete. Then $I\otimes J$ is ${<}\,\kappa$-complete.
	\end{mylem}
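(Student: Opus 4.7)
The plan is a direct unpacking of the definition of the Fubini product. Suppose $I$ is on $X$ and $J$ is on $Y$ and both are ${<}\,\kappa$-complete. Let $\gamma<\kappa$ and let $(A_\alpha)_{\alpha<\gamma}$ be a sequence of sets in $I\otimes J$; I want to show $A:=\bigcup_{\alpha<\gamma}A_\alpha\in I\otimes J$.

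First, I would translate the hypothesis on each $A_\alpha$ into data about fibers. For $x\in X$ and a set $C\subseteq X\times Y$, write $C_x:=\{y\in Y\mid (x,y)\in C\}$. By definition, $A_\alpha\in I\otimes J$ says exactly that the ``bad column set''
$$B_\alpha:=\{x\in X\mid (A_\alpha)_x\in J^+\}$$
lies in $I$. Applying ${<}\,\kappa$-completeness of $I$ to the family $(B_\alpha)_{\alpha<\gamma}$ (which has size ${<}\,\kappa$), I get $B:=\bigcup_{\alpha<\gamma}B_\alpha\in I$.

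Second, I would show that $B$ contains every bad column of $A$. Fix $x\in X\smallsetminus B$. Then for each $\alpha<\gamma$, $(A_\alpha)_x\in J$, and since $A_x=\bigcup_{\alpha<\gamma}(A_\alpha)_x$ is a union of $\gamma<\kappa$ many $J$-small sets, ${<}\,\kappa$-completeness of $J$ gives $A_x\in J$. Equivalently, $\{x\in X\mid A_x\in J^+\}\subseteq B$, and since $B\in I$, this set is also in $I$, i.e.\ $A\in I\otimes J$.

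There is really no obstacle here beyond bookkeeping: one just uses the completeness of $I$ on the outside coordinate to handle the union of the exceptional column sets, and the completeness of $J$ on the inside coordinate to handle the union of fibers over a fixed good column. The same argument, applied inductively and combined with Lemma~\ref{FubiniAssociative}, will give that every finite Fubini product of ${<}\,\kappa$-complete ideals is ${<}\,\kappa$-complete, which is what is needed for the applications to the Laver-tree combinatorics.
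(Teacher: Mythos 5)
Your argument is correct and is essentially the paper's own proof: in both cases one uses ${<}\,\kappa$-completeness of $J$ fiberwise to see that the bad column set of the union is (contained in, in fact equal to) the union of the bad column sets of the pieces, and then ${<}\,\kappa$-completeness of $I$ to conclude that this union lies in $I$. No further comment needed.
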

	
	\begin{proof}
		Let $\mathcal{A}\subseteq I\otimes J$ be of size ${<}\,\kappa$. We want to show that $A:=\bigcup \mathcal{A}\in I\otimes J$. For every $x\in X$, $\{y\in Y\;|\;(x,y)\in A\}=\bigcup_{B\in\mathcal{A}}\{y\in Y\;|\;(x,y)\in B\}$. This set is in $J^+$ if and only if $\{y\in Y\;|\;(x,y)\in B\}\in J^+$ for some $B\in\mathcal{A}$, by the completeness of $J$. So
		$$\{x\in X\;|\;\{y\in Y\;|\;(x,y)\in A\}\in J^+\}=\bigcup_{B\in\mathcal{A}}\{x\in X\;|\;\{y\in Y\;|\;(x,y)\in B\}\in J^+\}$$
		which is in $I$ by completeness. Ergo, $A\in I\otimes J$.
	\end{proof}
	
	Now we can show the result we are actually after:
	
	\begin{mylem}\label{LClosure}
		The poset $(\dL(\mu,(\kappa_n,I_n,B_n)_{n\in\omega}),\leq_0)$ is strongly ${<}\,\mu$-directed closed.
	\end{mylem}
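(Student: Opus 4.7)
The plan is to construct the desired common $\leq_0$-lower bound of a weakly directed $X=\{p_\alpha : \alpha<\gamma\}\subseteq\dL(\mu,(\kappa_n,I_n,B_n)_{n\in\omega})$ (with $\gamma<\mu$) by a level-by-level recursion. First I would observe that all $p_\alpha$ must share a common stem $s$: any two $\leq_0$-compatible conditions have equal stems (a common $\leq_0$-lower bound preserves both stems), so the $\leq_0$-witness $r\in X$ for a pair in $X$ must satisfy $\stem(r)=\stem(p)=\stem(q)$. Thus weak directedness forces $\stem(p_\alpha)$ to be constant in $\alpha$.

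Once stems coincide, I would define $q$ with $\stem(q)=s$ by recursion on nodes $u\supseteq s$ in $q$, at each such $u$ choosing $\osucc_q(u)\in B_{|u|}$ to be a common $\subseteq$-lower bound of the family $\{\osucc_{p_\alpha}(u) : \alpha<\gamma\}$. This exists by the strong ${<}\,\mu$-directed closure of $B_{|u|}$, provided the family is itself weakly directed in $B_{|u|}$.

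The main obstacle is establishing this weak directedness at every $u$. For a pair $\alpha_0,\alpha_1$ with witness $\alpha_2\in X$, the required condition amounts to $\osucc_{p_{\alpha_2}}(u)\setminus\osucc_{p_{\alpha_i}}(u)\in I_{|u|}$ for $i=0,1$. The direct argument---restrict $p_{\alpha_2}$ at $u$ so as to funnel through the bad direction and produce a $\leq_0$-extension that is $\leq_0$-incompatible with $p_{\alpha_i}$---works cleanly when $u=s$, but fails at deeper $u$ since a common $\leq_0$-refinement can simply avoid the single node $u$. To overcome this I would invoke the Fubini machinery of Lemmas \ref{FubiniAssociative}--\ref{ProductComplete}: if the set of ``bad'' $u$ at level $n$ were positive with respect to $I_{|s|}\otimes\cdots\otimes I_{n-1}$, then Lemma \ref{ProductTree} would supply a Laver subtree $T\subseteq p_{\alpha_2}$ whose level-$n$ nodes are all bad, and pruning $p_{\alpha_2}$ along $T$ while selecting the bad direction $\osucc_{p_{\alpha_2}}(v)\setminus\osucc_{p_{\alpha_i}}(v)$ at each leaf $v\in T$ would yield a $\leq_0$-extension $r\leq_0 p_{\alpha_2}$ with $r\cap p_{\alpha_i}$ empty at every leaf of $T$; the upward pruning would propagate back to $s$, showing $r\perp_{\leq_0} p_{\alpha_i}$ and contradicting the witness property.

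Consequently the bad set at each level lies in the corresponding Fubini product ideal, and since this ideal is ${<}\,\mu^+$-complete by Lemma \ref{ProductComplete}, the union of the at most $\gamma^2<\mu$ bad sets (ranging over all pairs and their witnesses) remains Fubini-thin at every level. A fusion-style recursion then allows me to thin $\osucc_q$ at each level so as to avoid all bad nodes simultaneously, guaranteeing that at every $u\in q$ the family $\{\osucc_{p_\alpha}(u) : \alpha<\gamma\}$ is weakly directed in $B_{|u|}$; the level-by-level application of the strong ${<}\,\mu$-directed closure of the $B_{|u|}$ then produces the desired $q\leq_0 p_\alpha$ for all $\alpha<\gamma$.
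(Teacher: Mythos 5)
Your proposal is correct and takes essentially the same route as the paper's proof: you first observe that weak directedness forces all stems in $X$ to coincide, then use the Laver-subtree extraction of Lemma \ref{ProductTree} to turn a positive set of ``bad'' level-$n$ nodes into a direct extension of the witness that is incompatible with one of the pair (the paper's Claim 1), conclude via ${<}\,\mu^+$-completeness of the Fubini products (Lemma \ref{ProductComplete}) that the union of bad sets over all pairs stays small at every level, and finish by applying the strong ${<}\,\mu$-directed closure of the $B_n$'s node by node. The only step you leave compressed is the ``fusion-style'' thinning: avoiding the level-$n$ bad set requires, at every node $u$ of the tree you build, also discarding the $I_{|u|}$-small set of successors above which some higher-level bad set has a positive slice (this is where Lemma \ref{FubiniAssociative} and completeness are genuinely used, and it corresponds to the paper's definition of $q'$ and the verification that $q$ is a condition), but this is exactly what the machinery you cite provides.
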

	
	\begin{proof}
		Let $X\subseteq\dL(\mu,(\kappa_n,I_n,B_n)_{n\in\omega})$ be weakly directed with respect to $\leq_0$. Let $p:=\bigcap X$. Note that $p$ is not necessarily a condition in $\dL(\mu,(\kappa_n,I_n,B_n)_{n\in\omega})$, since $X$ is only weakly directed. E.g. it is possible that $X$ contains two conditions $q_0$ and $q_1$ such that for some $\alpha\in\osucc_{q_0}(\stem(q_0))\cap\osucc_{q_1}(\stem(q_1))$, the $\osucc$-sets of $\stem(q_i)^{\frown}\alpha$ are disjoint for $i=0,1$ (as long as this does not occur for too many $\alpha$). In that case, the intersection $q_0\cap q_1$ contains a terminal node, namely $\stem(q_0)^{\frown}\alpha$, and is therefore not a condition in $\dL$. However, we will show that $p$ can be refined to a condition in $\dL(\mu,(\kappa_n,I_n,B_n)_{n\in\omega})$ with the same stem. Let $s:=\stem(p)$ (which is also the stem of every $q\in X$). We will assume for notational simplicity that $s=\emptyset$.
		
		\setcounter{myclaim}{0}
		
		We first prove a general claim. Let $\dL:=\dL(\mu,(\kappa_n,I_n,B_n)_{n\in\omega})$.
		
		\begin{myclaim}\label{LClosureClaim1}
			If $q_0,q_1\in\dL$, $\stem(q_0)=\stem(q_1)=\emptyset$ and $q_0\Vdash\check{q}_1\in\Gamma_{\dL}$, then for any $n\in\omega$,
			$$\osucc_{q_0}^n(\emptyset)\subseteq^*\osucc_{q_1}^n(\emptyset)$$
			where $\subseteq^*$ is defined with regard to $I_0\otimes\dots\otimes I_{n-1}$.
		\end{myclaim}
		
		\begin{proof}
			Assume toward a contradiction that $A:=\osucc_{q_0}^n(\emptyset)\smallsetminus\osucc_{q_1}^n(\emptyset)$ is $ (I_0\otimes\dots\otimes I_{n-1})$-positive. By Lemma \ref{ProductTree} there is an $(I_k)_{k<n}$-tree $T$ such that $[T]\subseteq A$. Let $q':=\{s\in q_0\;|\;s\uhr n\in T\}$. We claim that $q'$ is a condition in $\dL$: Let $s\in q'$. If $|s|\geq n$, then $\osucc_{q'}(s)=\osucc_{q_0}(s)\in I_{|s|}^+$ so we are done. If $|s|<n$, then $\osucc_{q'}(s)=\osucc_T(s)$, since $[T]\subseteq A\subseteq\osucc_{q_0}^n(\emptyset)$. But that set is in $I_{|s|}^+$ as well by assumption. It follows that $q'\leq_0q_0$ and $q'\perp q_1$, since for every $s\in q'$ with $|s|=n$, $s\in A$ and therefore $s\notin\osucc_{q_1}^n(\emptyset)$. This  contradicts the fact that $q_0\Vdash\check{q}_1\in\Gamma_{\dL}$.
		\end{proof}
		
		This claim implies that the first $\osucc$-set is as required:
		
		\begin{myclaim}\label{LClosureClaim2}
			$\osucc_p(\emptyset)\in I_0^+$.
		\end{myclaim}
		
		\begin{proof}
			We claim that the collection $\{\osucc_q(\emptyset)\;|\;q\in X\}$ is a weakly directed subset of $(B_0,\subseteq)$. Given $q_0,q_1\in X$, let $q_2\in X$ be such that $q_2\Vdash q_0,q_1\in\Gamma_{\dL}$. It follows from Claim \ref{LClosureClaim1} with $n=0$ that $\osucc_{q_2}(\emptyset)\subseteq^*\osucc_{q_0}(\emptyset)\cap\osucc_{q_1}(\emptyset)$, so $\osucc_{q_2}(\emptyset)$ witnesses that $\osucc_{q_0}(\emptyset)$ and $\osucc_{q_1}(\emptyset)$ are weakly compatible.
			
			Since $(B_0,\subseteq)$ is strongly ${<}\,\mu$-directed closed and since $\osucc_p(\emptyset)=\bigcap_{q\in X}\osucc_q(\emptyset)$, the latter set is in $I_0^+$.
		\end{proof}
		
		We also have that at least for ``most'' $t\in\osucc_p^n(\emptyset)$, the $\osucc$-sets are large enough:
		
		\begin{myclaim}\label{LClosureClaim3}
			For $n\in\omega$, $\{t\in\osucc_p^n(\emptyset)\;|\;\osucc_p(t)\in I_n\}\in I_0\otimes\dots\otimes I_{n-1}$.
		\end{myclaim}
		
		\begin{proof}
			Let $q_0,q_1\in X$. Then there is $q_2\in X$ such that $q_2\Vdash q_0,q_1\in\Gamma_{\dL}$. By Claim \ref{LClosureClaim1} we have that $\osucc_{q_2}^{k}(\emptyset)\subseteq^*\osucc_{q_0}^{k}(\emptyset),\osucc_{q_1}^{k}(\emptyset)$. Let $i<2$ be arbitrary. Then $\osucc_{q_2}^{n+1}\smallsetminus\osucc_{q_i}^{n+1}\in I_0\otimes\dots\otimes I_{n}$ and therefore
			$$\{t\in\osucc_{q_2}^n(\emptyset)\;|\;\{x\in[\kappa_{n-1}]^{<\mu^+}\;|\;(t,x)\in\osucc_{q_2}^{n+1}(\emptyset)\smallsetminus\osucc_{q_i}^{n+1}(\emptyset)\}\in I_n^+\}$$
			is in $I_0\otimes\dots\otimes I_{n-1}$
			Ergo, we have
			$$\{t\in\osucc_{q_2}^n(\emptyset)\;|\;\osucc_{q_2}(t)\not\subseteq^*\osucc_{q_i}(t)\}\in I_0\otimes\dots\otimes I_{n-1}.$$
			It follows that
			$$A_{q_0,q_1,q_2}:=\{t\in\osucc_{q_2}^n(s)\;|\;\exists i<2(t\notin\osucc_{q_i}(\emptyset)\vee\osucc_{q_2}(t)\not\subseteq^*\osucc_{q_i}(t))\}$$
			is in $I_0\otimes\dots\otimes I_{n-1}$ since it is the union of four sets in $I_0\otimes\dots\otimes I_{n-1}$. Let $A$ be the union of all $A_{q_0,q_1,q_2}$ where $q_0,q_1\in X$ and $q_2$ witnesses their weak compatibility. Then $A\in I_0\otimes\dots\otimes I_{n-1}$ since that ideal is ${<}\,\mu$-complete by Lemma \ref{ProductComplete}. It follows that whenever $t\in\osucc_p^n(\emptyset)\smallsetminus A$, the collection $\{\osucc_q(t)\;|\;q\in X\}$ is weakly directed (first note that $t\in q$ whenever $q\in X$): Given $q_0,q_1\in X$, let $q_2$ witness their weak compatibility. By assumption $t\notin A_{q_0,q_1,q_2}$ and so $\osucc_{q_2}(t)\subseteq^*\osucc_{q_0}(t),\osucc_{q_1}(t)$. So whenever $t\in\osucc_p^n(\emptyset)\smallsetminus A$, we have that $\osucc_p(t)=\bigcap_{q\in X}\osucc_q(t)\in I_{|t|}^+$, since $(B_n,\subseteq)$ is strongly ${<}\,\mu$-directed closed. It follows that
			$$\{t\in\osucc_p^n(\emptyset)\;|\;\osucc_p(t)\in I_n\}\subseteq A\in I_0\otimes\dots\otimes I_{n-1}.$$
		\end{proof}
		
		Now we let $q'$ consist of all $t\in p$ such that $\osucc_p(t)\in I_{|t|}^+$ and for every $n$,
		$$\{r\in\osucc_p^n(t)\;|\;\osucc_p^n(t^{\frown}r)\in I_{|t|+n}\}\in I_{|t|}\otimes\dots\otimes I_{|t|+(n-1)}.$$
		Let $q$ consist of all those $t\in q'$ such that every initial segment of $t$ is also in $q'$. We claim that $q\in\dL$ and $\stem(q)=\emptyset$. By Claim \ref{LClosureClaim2} and Claim \ref{LClosureClaim3}, $\emptyset\in q$ and so $q$ is nonempty. It is clearly closed under initial segments by construction. So we have to show that it splits widely enough. Let $t\in q$. By assumption, $\osucc_p(t)\in I_{|t|}^+$. Additionally, $\osucc_q(t)$ consists of all $x\in\osucc_p(t)$ such that $\osucc_p(t^{\frown}x)\in I_{|t|+1}^+$ and for every $n$,
		$$\{r\in\osucc_p^n(t^{\frown}x)\;|\;\osucc_p(t^{\frown}x^{\frown}r)\in I_{|t|+1+n}\}\in I_{|t|+1}\otimes\dots\otimes I_{|t|+n}.$$
		So suppose $\osucc_q(t)\in I_{|t|}$. Then $\osucc_p(t)\smallsetminus\osucc_q(t)\in I_{|t|}^+$ and so for every $x\in\osucc_p(t)\smallsetminus\osucc_q(t)$, either $\osucc_p(t^{\frown}x)\in I_{|t|+1}$ or there is some $n$ such that
		$$\{r\in\osucc_p^n(t^{\frown}x)\;|\;\osucc_p(t^{\frown}x^{\frown}r)\in I_{|t|+1+n}\}\in (I_{|t|+1}\otimes\dots\otimes I_{|t|+n})^+.$$
		By the ${<}\,\mu^+$-completeness of $I_{|t|}$, one of those cases has to occur for $I_{|t|}^+$ many $x$. If it is the first one, we directly have
		$$\{x\in\osucc_p(t)\;|\;\osucc_p(t^{\frown}x)\in I_{|t|+1}\}\in I_{|t|}^+$$
		contradicting the fact that $t\in q$, or more precisely, the second requirement for $n=1$. In the second case, we have that the set
		$$\{r\in\osucc_p^n(t^{\frown}x)\;|\;\osucc_p(t^{\frown}x^{\frown}r)\in I_{|t|+1+n}\}$$
		is in $(I_{|t|+1}\otimes\dots\otimes I_{|t|+n})^+$. Let $A:=\{r\in\osucc_p^{n+1}(t)\;|\;\osucc_p(t^{\frown}r)\in I_{|t|+1+n}\}$. Then the above equation states that
		$$\{x\in [\kappa_{|t|}]^{<\mu^+}\;|\;\{r\in\prod_{i\in[|t|+1,|t|+n]}[\kappa_i]^{<\mu^+}\;|\;(x,r)\in A\}\in(I_{|t|}\otimes\dots\otimes I_{|t|+n})^+\}\in I_{|t|}^+.$$
		In other words, $A\in(I_{|t|}\otimes(I_{|t|+1}\otimes\dots\otimes I_{|t|+n}))^+=(I_{|t|}\otimes\dots\otimes I_{|t|+n})^+$, where the equality follows from Lemma \ref{FubiniAssociative}. But this clearly contradicts the fact that $t\in q$, or more precisely, the second requirement for $n+1$.
		
		In summary, $q$ is a condition in $\dL$ and a $\leq_0$-lower bound of $X$. This finishes the proof.
	\end{proof}
	
	It is easy to see that, after forcing with $\dL(\mu,(\kappa_n,I_n,B_n)_{n\in\omega})$, every $\kappa_n$ has cofinality $\omega$ (but we will not use this): If $b\in\prod_n[\kappa_n]^{<\mu}$ is the generic branch and $k\in\omega$, the function $n\mapsto\sup(b(n)\cap\kappa_k)$ is a cofinal function from $\omega$ into $\kappa_k$. However, $\kappa^*$ does not have such a small cofinality in the extension:
	
	\begin{mylem}\label{KappaStarCof}
		Assume that $\tau$ is an $\dL(\mu,(\kappa_n,I_n,B_n)_{n\in\omega})$-name for an ordinal and $p\in\dL(\mu,(\kappa_n,I_n,B_n)_{n\in\omega})$ forces $\tau<\check{\kappa}^*$. Then there is $\alpha<\kappa^*$ and $q\leq_0p$ which forces $\tau<\check{\alpha}$. Consequently, after forcing with $\dL(\mu,(\kappa_n,I_n,B_n)_{n\in\omega})$, $\cf(\kappa^*)\geq\mu$.
	\end{mylem}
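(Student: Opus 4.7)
The approach is to combine the strong Prikry property (Lemma \ref{StrongPrikry}) with a cardinal-counting argument on the resulting fat tree. First observe that the set
\[
D:=\{r\leq p\;|\;\exists\beta<\kappa^*\ (r\Vdash\tau<\check{\beta})\}
\]
is open dense below $p$. Openness is immediate from the forcing relation; density follows since direct $\mu$-decidability (Lemma \ref{LDirectDecide}), and in particular the Prikry property, lets us refine any $r\leq p$ to a condition deciding $\tau$ to be some specific $\beta<\kappa^*$, after which $\beta+1$ witnesses membership in $D$.

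Apply Lemma \ref{StrongPrikry} to $D$ and $p$ to obtain $q\leq_0 p$ and $n\in\omega$ such that for every $s\in q$ with $|s|=n$ there is $\alpha_s<\kappa^*$ with $q\uhr s\Vdash\tau<\check{\alpha}_s$. The next step is to bound $\sup\{\alpha_s\;|\;s\in q,|s|=n\}$ strictly below $\kappa^*$. Since $\osucc_q(t)\subseteq[\kappa_{|t|}]^{<\mu^+}$, the total number of sequences $s\in q$ of length $n$ is at most $\prod_{k<n}|[\kappa_k]^{<\mu^+}|$; under the $\GCH$ that prevails in every application of this lemma in the paper, $|[\kappa_k]^{<\mu^+}|=\kappa_k^{\mu}=\kappa_k$, so this product is at most $\kappa_{n-1}<\kappa^*$. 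Since $\kappa^*$ is regular, $\alpha:=\sup_{|s|=n}\alpha_s+1<\kappa^*$, and then $q\Vdash\tau<\check{\alpha}$ as required.

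For the consequence, assume for contradiction that in the extension $\cf(\kappa^*)=\gamma<\mu$ and fix a name $\dot{f}$ forced by some $p\in\dL(\mu,(\kappa_n,I_n,B_n)_{n\in\omega})$ to be a cofinal map $\gamma\to\kappa^*$. By induction on $i<\gamma$ construct a $\leq_0$-decreasing sequence $(q_i)_{i<\gamma}$ below $p$ together with ordinals $\alpha_i<\kappa^*$ so that $q_i\Vdash\dot{f}(\check{\imath})<\check{\alpha}_i$: at successor stages apply the statement just proved to $q_{i-1}$ and $\dot{f}(\check{\imath})$; at limit stages of length ${<}\,\mu$ pick a $\leq_0$-lower bound using the strong ${<}\,\mu$-directed closure of $(\dL,\leq_0)$ (Lemma \ref{LClosure}). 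A final $\leq_0$-lower bound $q$ of the whole sequence, again supplied by Lemma \ref{LClosure}, forces the range of $\dot{f}$ to be contained in $\sup_{i<\gamma}\alpha_i<\kappa^*$, contradicting cofinality.

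The main technical point is the counting step: one needs $|[\kappa_k]^{<\mu^+}|<\kappa^*$ to conclude that $\sup_{|s|=n}\alpha_s<\kappa^*$. Under $\GCH$ this is automatic, and since the global construction of the paper is carried out in a $\GCH$ model this suffices; outside of that context one would require an additional cardinal-arithmetic hypothesis on the $\kappa_n$.
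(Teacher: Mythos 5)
Your proof is correct and follows essentially the same route as the paper: apply Lemma \ref{StrongPrikry} to the dense set of conditions bounding (equivalently, deciding) $\tau$, bound the resulting values using the number of level-$n$ nodes of $q$ together with the regularity of $\kappa^*$, and then use the ${<}\,\mu$-closure of the direct extension ordering for the cofinality statement. The cardinal-arithmetic caveat you raise at the counting step is the same point the paper passes over by simply asserting that $\{s\in q\;|\;|s|=n\}$ has size at most $\kappa_n$, which, with splitting sets drawn from $[\kappa_k]^{<\mu^+}$, likewise uses something like $\kappa_k^{\mu}\leq\kappa$ for all $k$ --- automatic under the $\GCH$ in force in the paper's applications.
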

	
	\begin{proof}
		Let $D$ be the open dense set of conditions which decide $\tau$. By Lemma \ref{StrongPrikry} there is $q\leq_0p$ and $n\in\omega$ such that whenever $s\in q$, $|s|=n$, $q\uhr s\in D$, i.e. $q\uhr s\Vdash\tau=\check{\alpha}_s$ for some $\alpha_s$. Since $\{s\in q\;|\;|s|=n\}$ has size $\leq\kappa_n$, there is $\alpha>\alpha_s$ for every $s\in q$ with $|s|=n$. It follows that $q\Vdash\tau<\check{\alpha}$.
		
		Now assume that there is $\gamma<\mu$, $\dot{F}$ and $p\in\dL(\mu,(\kappa_n,I_n,B_n)_{n\in\omega})$ which forces $\dot{F}\colon\check{\gamma}\to\check{\kappa}^*$. Using the fact that the direct extension ordering is in particular ${<}\,\mu$-closed, we can find a condition $q\leq_0p$ and ordinals $(\alpha_{\delta})_{\delta<\gamma}$ such that for every $\delta<\gamma$, $q\Vdash\dot{F}(\check{\delta})<\check{\alpha}_{\delta}$. However, $\sup_{\delta<\gamma}\alpha_{\delta}<\kappa^*$ and so $q$ forces that $\dot{F}$ is bounded in $\check{\kappa}^*$, a contradiction.
	\end{proof}
	
	The following is our crucial strengthening of the main crux of our earlier work (see \cite[Theorem 4.3]{JakobLevineFailureApproachability}). It is a variation of the more well-known \emph{${<}\,\delta$-approximation property} -- that was introduced implicitly by Mitchell in \cite{MitchellTreeProp} and explicitly by Hamkins in \cite{HamkinsExtApprox} -- which holds for a forcing $\dP$ if $\dP$ does not add any new function whose restrictions to all ${<}\,\delta$-sized ground-model sets are in the ground model themselves. On one hand, the assumptions of the theorem are stronger, since we require the function to be unbounded. On the other hand, its conclusion is also stronger, since it states that any such function does not even have an initial segment which is covered by a small ground-model set. This directly implies that $\kappa^*$ does not become approachable. Crucially, it does so without relying on the assumption that the poset does not add any new ``short functions into small sets''. This assumption notably does not hold for our poset, since it can be seen straightforwardly that every regular cardinal between $\mu$ and $\sup_n\kappa_n$ is singularized to have countable cofinality.
	
	\begin{mysen}\label{BetterApproxProp}
		Let $\dot{\dQ}$ be an $\dL(\mu,(\kappa_n,I_n,B_n)_{n\in\omega})$-name for a ${<}\,\mu$-closed forcing. Whenever $\dot{F}\colon\check{\mu}\to\check{\kappa}^*$ is forced to be unbounded by some $(p,\dot{q})\in\dL(\mu,(\kappa_n,I_n,B_n)_{n\in\omega})*\dot{\dQ}$, there is $(p',\dot{q}')\leq_0(p,\dot{q})$ and $i<\mu$ such that there is no $(p'',\dot{q}'')\leq(p',\dot{q}')$ and $x\in V$ with $|x|<\kappa$ such that $(p'',\dot{q}'')\Vdash\im(\dot{F}\uhr\check{i})\subseteq\check{x}$.
	\end{mysen}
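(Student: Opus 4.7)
The plan is proof by contradiction: assume that for every $(p',\dot{q}')\leq_0(p,\dot{q})$ and every $i<\mu$ there are $(p'',\dot{q}'')\leq(p',\dot{q}')$ and $x\in V$ with $|x|<\kappa$ forcing $\im(\dot{F}\uhr\check{i})\subseteq\check{x}$. Viewing $\dL*\dot{\dQ}$ as a two-step full-support Magidor iteration in which $\dot{\dQ}$ carries the trivial Prikry structure $\dot{\leq}_0=\dot{\leq}$, Lemma \ref{PrikryIterFull} supplies its Prikry property, while Lemmas \ref{StrongDirIter} and \ref{LClosure} make the direct ordering $\leq_0$ strongly ${<}\,\mu$-directed closed.

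For each $i<\mu$ let $\sigma_i:=\exists x\in V\cap[\kappa^*]^{<\kappa}(\im(\dot{F}\uhr\check{i})\subseteq\check{x})$. The Prikry property yields a direct extension deciding $\sigma_i$; the contradiction hypothesis rules out forcing $\neg\sigma_i$ (any such extension would have a ${<}$-refinement in our hypothesised set forcing $\sigma_i$), so some $(p_i,\dot{q}_i)\leq_0(p,\dot{q})$ forces $\sigma_i$. Using the ${<}\,\mu$-closure of $\dot{\dQ}$ to replace the $V$-witness of $\sigma_i$ by a pure $\dL$-name $\dot{y}_i$ for an element of $V\cap[\kappa^*]^{<\kappa}$, and then applying the strong Prikry property of $\dL$ (Lemma \ref{StrongPrikry}) to $\dot{y}_i$, I obtain $(p'_i,\dot{q}'_i)\leq_0(p_i,\dot{q}_i)$ and a finite level $n_i$ of $p'_i$ on which each branch $s$ decides $\dot{y}_i=\check{y}^i_s$ for some $V$-set $y^i_s$ of size ${<}\,\kappa$. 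A further pigeonhole refinement along the tree, applying the ${<}\,\mu^+$-completeness of each $I_k$ to the $\omega$-many classes $\{|y^i_s|\leq\kappa_n\}_{n<\omega}$ (which covers all possibilities since $\cf\kappa=\omega$), bounds the sizes $|y^i_s|$ uniformly below some $\kappa_{m_i}<\kappa$. Consequently $y^i:=\bigcup_s y^i_s$ is a $V$-set of size at most $\kappa_{m_i}\cdot\kappa_{n_i-1}<\kappa$, and $\beta_i:=\sup y^i$ is strictly less than $\kappa^*$ with $(p'_i,\dot{q}'_i)\Vdash\im(\dot{F}\uhr\check{i})\subseteq\check{\beta}_i$. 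Since $\mu<\cf^V(\kappa^*)=\kappa^*$, the ordinal $\beta:=\sup_{i<\mu}\beta_i$ is still strictly below $\kappa^*$.

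The contradiction is then obtained by producing an extension of $(p,\dot{q})$ forcing $\im(\dot{F})\subseteq\check{\beta}$, conflicting with $\dot{F}$ being unbounded. The hard part is assembling the $\mu$ many pointwise direct extensions $(p'_i,\dot{q}'_i)$ into a single direct extension of $(p,\dot{q})$: because $\leq_0$ is only strongly ${<}\,\mu$-directed closed, there is no guaranteed $\leq_0$-lower bound of this $\mu$-sized family. I expect to address this by folding the two refinement steps above into a single tree-fusion construction carried out across all $i<\mu$ simultaneously, producing one tree $p^*\leq_0 p$ together with a coherent $\dL$-name $\dot{q}^*$ for a condition below $\dot{q}$ whose successive finite levels absorb explicit $V$-covers for more and more of the $\dot{F}\uhr\check{i}$. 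The ${<}\,\mu^+$-completeness of each ideal $I_n$, the strong ${<}\,\mu$-directed closure of each $B_n$ (which underlies Lemma \ref{LClosure}), and the ${<}\,\mu$-closure of $\dot{\dQ}$ together ensure that the $\osucc$-sets remain positive and that $\dot{q}^*$ can be built coherently through the construction. Once such $(p^*,\dot{q}^*)$ is produced, the uniform $V$-side bound $\beta$ applies to every $\dot{F}\uhr\check{i}$ at once, yielding $(p^*,\dot{q}^*)\Vdash\dot{F}\colon\check{\mu}\to\check{\beta}$ and the desired contradiction.
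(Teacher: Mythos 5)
Your first step is fine: under the negation of the conclusion, any direct extension deciding the statement $\sigma_i$ (``some $x\in\check V$ of size ${<}\,\check\kappa$ covers $\im(\dot F\uhr\check i)$'') must decide it positively, so for each $i<\mu$ there is $(p_i,\dot q_i)\leq_0(p,\dot q)$ forcing $\sigma_i$, and (with some repair) one can even extract an ordinal $\beta_i<\kappa^*$ forced to bound $\im(\dot F\uhr\check i)$ --- though not the way you say: ${<}\,\mu$-closure of $\dot\dQ$ does not turn the witness into a ``pure $\dL$-name''; what one actually does is extend the $\dQ$-coordinate (which direct extensions permit) and mix over the nodes of a fixed level, as in Lemma \ref{KappaStarCof}. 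The genuine gap is the assembly step, and it is not a surmountable technicality but a structural obstruction. You need a single condition below $(p,\dot q)$ forcing $\im(\dot F\uhr\check i)\subseteq\check\beta$ for cofinally many $i<\mu$. A fusion along the tree makes its commitments node by node: a commitment attached to a node $s$ (``above $s$, $\dot F\uhr\check i_s$ is bounded'') only takes effect on the part of the condition above $s$, so what the fused condition forces outright is governed by the indices encountered along the generic branch $b$, i.e. by $\sup_n i_{b\uhr n}$; since $\mu$ is regular uncountable in the intended application, this supremum is always ${<}\,\mu$. Hence the fused condition forces at most that $\im(\dot F\uhr\check j)$ is bounded for some $j<\mu$ depending on the generic, never that $\im(\dot F)\subseteq\check\beta$ --- consistently with the fact that no condition at all can force $\dot F$ to be bounded. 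So the contradiction you aim for cannot be reached by this route, and the $\mu$-sized, non-directed family $(p_i,\dot q_i)_{i<\mu}$ has no reason to admit a common $\leq_0$-lower bound, the direct ordering being only strongly ${<}\,\mu$-directed closed.

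The paper's proof avoids ever trying to bound $\dot F$. It proves directly (no global contradiction hypothesis) that a suitable direct extension works: by a fusion it arranges, for each node $s$ of the tree, an index $i_s<\mu$ and pairwise disjoint sets $(A_x)_{x\in\osucc(s)}$, each of size ${<}\,\kappa$, such that passing to the successor $x$ forces $\dot F(\check i_s)\in\check A_x$ (disjointness is obtained by an induction along the successors, using that $\dot F$ is unbounded and $\kappa^*$ is regular); then the Rubin--Shelah theorem together with the ${<}\,\mu^+$-completeness of the ideals bounds the branchwise suprema $\sup_n i_{b\uhr n}$ on a further direct refinement by a single $i<\mu$. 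Any $(p'',\dot q'')$ below this condition which forced $\im(\dot F\uhr\check i)\subseteq\check x$ with $|x|<\kappa$ is then defeated locally: picking a node $s$ above the stem with $\kappa_{|s|}>|x|$, the positivity of $\osucc_{p''}(s)$ and the pairwise disjointness of the $A_x$ yield a successor whose set misses $x$, while $i_s<i$ forces $\dot F(\check i_s)\in\check x\cap\check A_x$ --- a contradiction. This ``many disjoint targets versus one small cover'' mechanism is exactly what replaces the impossible step in your plan.
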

	
	\begin{mybem}
		Let us note that, in \cite[Theorem 3]{CoxKruegerNamba}, Cox and Krueger also show that certain variants of Namba forcing possess a weak form of the approximation property. However, their proof is based on the closure of the $\leq_0$-ordering of the Namba forcing. In particular, for our poset, their proof would only show the conclusion for functions on $\delta<\mu$, whereas we crucially need it to apply to functions from $\mu$ into $\kappa$, since no cofinal functions from any $\delta<\mu$ into $\kappa$ are added.
	\end{mybem}
	
	Let us first prove that this Theorem implies that $\kappa^*$ does not become approachable after forcing with $\dL(\mu,(\kappa_n,I_n,B_n)_{n\in\omega})*\dot{\Coll}(\check{\mu},\check{\kappa}^*)$:
	
	\begin{mysen}\label{NoNewApproach}
		Let $d\colon[\kappa^*]^2\to\omega$ be a normal coloring. After forcing with the iteration $\dL(\mu,(\kappa_n,I_n,B_n)_{n\in\omega})*\dot{\Coll}(\check{\mu},\check{\kappa}^*)$, $\kappa^*$ does not become $d$-approachable.
	\end{mysen}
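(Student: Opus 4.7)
The plan is to derive a contradiction from the assumption that $\kappa^*$ becomes $d$-approachable. Suppose $(p,\dot{q}) \in \dL(\mu,(\kappa_n,I_n,B_n)_{n\in\omega}) * \dot{\Coll}(\check{\mu},\check{\kappa}^*)$ forces that $\kappa^*$ is $d$-approachable. Since $\dL$ forces $\cf(\kappa^*)\geq\mu$ by Lemma \ref{KappaStarCof} and the collapse then forces $|\kappa^*|=\mu$, the minimal ordertype of a cofinal subset of $\kappa^*$ in the extension is $\mu$. So using Fact \ref{FactRefinement} (or directly the definition specialized to $\cf(\mu)=\omega$ here, since $\kappa = \sup_n \kappa_n$ has cofinality $\omega$), I may assume there is a name $\dot{F}$ and a concrete $i<\omega$ such that $(p,\dot{q})$ forces $\dot{F}\colon\check\mu\to\check\kappa^*$ to be unbounded and $d(\dot{F}(\xi),\dot{F}(\eta))\leq\check i$ for all $\xi<\eta<\mu$.

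Next I apply the new approximation-style theorem (Theorem \ref{BetterApproxProp}) to $\dot{F}$, noting that $\dot{\Coll}(\check{\mu},\check{\kappa}^*)$ is a $\dL$-name for a ${<}\,\check{\mu}$-closed forcing so the hypothesis is satisfied. This produces $(p',\dot{q}')\leq_0(p,\dot{q})$ and an ordinal $i^*<\mu$ such that no extension of $(p',\dot{q}')$ forces $\im(\dot{F}\uhr\check{i}^*)$ to be a subset of any ground-model set of size ${<}\,\kappa$.

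Now I exploit the fact that $\dot{F}$ is forced to be unbounded together with the normality of $d$: pick any $(p'',\dot{q}'')\leq(p',\dot{q}')$ and any $\alpha<\kappa^*$ with $(p'',\dot{q}'')\Vdash\dot{F}(\check{i}^*)=\check{\alpha}$ (such exist by extending and deciding). Because $(p'',\dot{q}'')$ still forces $d(\dot{F}(\check\xi),\dot{F}(\check{i}^*))\leq\check i$ for every $\xi<i^*$, it forces
\[
\im(\dot{F}\uhr\check{i}^*)\ \subseteq\ \check{A},\qquad\text{where}\ A:=\{\beta<\alpha\mid d(\beta,\alpha)\leq i\}.
\]
The set $A$ lies in $V$ (since $d,\alpha,i\in V$) and satisfies $|A|<\kappa$ by normality of $d$. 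This directly contradicts the defining property of $i^*$ supplied by Theorem \ref{BetterApproxProp}, finishing the argument.

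The only real content is the translation step: recognizing that a would-be approachability sequence $\dot{F}$ and a bound $\check{i}$ together force every sufficiently long initial segment of $\im(\dot{F})$ into a ground-model $d$-ball of size ${<}\,\kappa$ below a decided value of $\dot F$. Once this is in place, Theorem \ref{BetterApproxProp} does all of the combinatorial work, which is exactly why that strengthening (as opposed to merely forbidding new ground-model initial segments, as in \cite{JakobLevineFailureApproachability}) is what allows the collapse to be absorbed into the argument without further hypotheses.
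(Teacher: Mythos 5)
Your proposal is correct and follows essentially the same route as the paper: reduce approachability of $\kappa^*$ to a name $\dot{F}$ for an increasing cofinal function with $d$ bounded on its image (via Fact \ref{FactRefinement}), apply Theorem \ref{BetterApproxProp} to get an initial segment that cannot be covered by a ground-model set of size ${<}\,\kappa$, and then decide $\dot{F}(\check{i}^*)$ so that normality of $d$ yields exactly such a covering set, a contradiction. No substantive differences from the paper's argument.
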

	
	\begin{proof}
		By Lemma \ref{KappaStarCof},  $\dL(\mu,(\kappa_n,I_n,B_n)_{n\in\omega})*\dot{\Coll}(\check{\mu},\check{\kappa}^*)$ forces that $\cf(\kappa^*)=\mu$. Assume toward a contradiction that there is an $\dL(\mu,(\kappa_n,I_n,B_n)_{n\in\omega})*\dot{\Coll}(\check{\mu},\check{\kappa}^*)$-name $\dot{A}$ and a condition $p$ which forces $\dot{A}$ to be a cofinal subset of $\check{\kappa}^*$ on which $\check{d}$ is bounded with value $\check{n}$. Let $\dot{F}$ be an $\dL(\mu,(\kappa_n,I_n,B_n))*\dot{\Coll}(\check{\mu},\check{\kappa}^*)$-name for its increasing enumeration.
		
		By Theorem \ref{BetterApproxProp}, there is $i<\mu$ and $q\leq_0p$ such that $\dot{F}\uhr i$ cannot be forced to be covered by a set of size ${<}\,\kappa$ by any condition below $q$. Let $r\leq q$ decide $\dot{F}(\check{i})$. Then $r$ forces that the image of $\dot{F}\uhr\check{i}$ is covered by $\{\alpha<F(i)\;|\;d(\alpha,F(i))\leq n\}$, since $\dot{F}$ is forced to be increasing and $\check{d}$ is forced to be bounded by $\check{n}$ on the image of $\dot{F}$. But by the normality of $d$, that set has size ${<}\,\kappa$, a contradiction.
	\end{proof}
	
	Now let us prove Theorem \ref{BetterApproxProp}. The proof is similar to the main crux of our earlier work (see \cite{JakobLevineFailureApproachability}) and is roughly based on an argument of Levine from \cite{LevineClassicalNamba}.
	
	\begin{proof}[Proof of Theorem \ref{BetterApproxProp}]
		For simplicity, let $\dL:=\dL(\mu,(\kappa_n,I_n,B_n)_{n\in\omega})$ and let $\dot{F}$ be an $\dL*\dot{\dQ}$-name for a cofinal function from $\check{\mu}$ into $\check{\kappa}^*$. Assume without loss of generality that this is forced by $1_{\dL*\dot{\dQ}}$.
		
		For $i\in\mu$ and $(p,\dot{q})\in\dL*\dot{\dQ}$, we let $\phi(i,p,\dot{q})$ state that there exists a sequence $(A_x)_{x\in\osucc_p(\stem(p))}$ such that for all $x_0,x_1\in\osucc_p(\stem(p))$, $A_{x_0}\cap A_{x_1}=\emptyset$ and
		$$p\uhr(\stem(p)^{\frown}x_0)\Vdash\dot{F}(\check{i})\in\check{A}_{x_0}.$$
		
		\setcounter{myclaim}{0}
		
		\begin{myclaim}
			Assume $(p',\dot{q}')\in\dL*\dot{\dQ}$. Then there is $i\in\mu$ and $(p,\dot{q})\leq_0(p',\dot{q}')$ such that $\phi(i,p,\dot{q})$ holds.
		\end{myclaim}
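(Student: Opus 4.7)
The plan is to prove the claim by contradiction: suppose $\phi(i,p,\dot{q})$ fails for every $i<\mu$ and every direct extension $(p,\dot{q})\leq_0(p',\dot{q}')$, and derive a contradiction with the unboundedness of $\dot{F}$ in $\kappa^*$.

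Two preparatory facts would be established first. The direct extension ordering on $\dL*\dot{\dQ}$ is strongly ${<}\,\mu$-directed closed, combining Lemma \ref{LClosure} with the ${<}\,\mu$-closure of $\dot{\dQ}$. Moreover, the strong Prikry property of Lemma \ref{StrongPrikry} lifts to $\dL*\dot{\dQ}$: given an open dense $D$ below $(p',\dot{q}')$, the projection of $D$ to $\dL$ (the set of $p$ admitting some $\dot{q}$ with $(p,\dot{q})\in D$) is open dense, so that strong Prikry for $\dL$ combined with the maximum principle yields a direct extension $(p^*,\dot{q}^*)\leq_0(p',\dot{q}')$ and a level $n\in\omega$ such that every branch of length $n$ of $p^*$ gives a condition in $D$.

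Applying this to the dense set of conditions deciding $\dot{F}(i)$ would provide, for each $i<\mu$, a direct extension $(p_i,\dot{q}_i)\leq_0(p',\dot{q}')$ together with a level $n_i$ and assigned values $\alpha^s_i<\kappa^*$ along the level-$n_i$ branches. The core combinatorial step is then to exploit the hypothesised failure of $\phi$ at every direct extension: the values $\alpha^s_i$ must collide along sibling successors at each level, and by iterating this collision through the tree — using the ${<}\,\mu^+$-completeness of the Fubini products of the $I_k$'s (Lemmas \ref{FubiniAssociative}--\ref{ProductComplete}) together with the strong directed closure of the $B_k$'s — one would refine to a positive subtree along which $\dot{F}(i)$ is forced to lie below some ordinal $\alpha_i<\kappa^*$.

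The main obstacle, I anticipate, lies in combining the pointwise bounds for different $i<\mu$ into a single direct extension of $(p',\dot{q}')$ forcing $\dot{F}$ globally bounded by some $\alpha^*<\kappa^*$ (which would yield the contradiction, since $\kappa^*$ is regular in $V$ and $\mu<\kappa_0\leq\kappa^*$, so $\alpha^*:=\sup_{i<\mu}\alpha_i<\kappa^*$). Because the direct extension ordering is only strongly ${<}\,\mu$-directed closed, an arbitrary length-$\mu$ descending sequence of direct extensions need not admit a common $\leq_0$-lower bound, so the construction cannot simply be iterated naively along all of $\mu$. Overcoming this obstacle likely requires a more intricate simultaneous construction — either a fusion argument of length $\omega$ in which each finite stage is made to process cofinally many bounds at once, or a diagonal amalgamation exploiting the ${<}\,\mu^+$-completeness of the $I_n$'s to merge conditions via their first-level splitting structure — culminating in a single direct extension forcing $\dot{F}$ to lie below some $\alpha^*<\kappa^*$, contradicting unboundedness.
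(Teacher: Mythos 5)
Your proposal has a genuine gap, and it sits exactly where you flag the ``main obstacle''. Arguing by contradiction, you must extract from the assumed failure of $\phi$ a \emph{single} condition below $(p',\dot{q}')$ forcing $\im(\dot{F})$ into a bounded subset of $\kappa^*$, and neither of the two steps meant to get there is justified. First, the ``collision'' step: for a fixed $i<\mu$, bounding $\dot{F}(\check{i})$ below a direct extension is trivial and uses no hypothesis at all --- apply the strong Prikry property (Lemma \ref{StrongPrikry}) to the dense set of conditions deciding $\dot{F}(\check{i})$; the set of possible values along the captured level has size ${<}\,\kappa$ and is therefore bounded in $\kappa^*$. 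So your per-$i$ analysis never actually uses the failure of $\phi$, and you do not explain how that failure yields anything beyond this trivial per-$i$ bound. Second, and fatally, the combination over all $i<\mu$: the direct extension ordering on $\dL*\dot{\dQ}$ is only strongly ${<}\,\mu$-directed closed and the ideals only ${<}\,\mu^+$-complete, so there is no mechanism to amalgamate $\mu$ many direct extensions, each shrinking the splitting sets differently, into one condition; your suggested fixes (an $\omega$-length fusion processing ``cofinally many bounds at once'', a ``diagonal amalgamation'') are speculative, and nothing in the outline shows how the failure of $\phi$ would make such a diagonalization possible. Since the global bound you aim for is outright contradictory (that is the intended contradiction), the entire burden of the proof lies in this unproved step; as written, the proposal restates the difficulty rather than resolving it.

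The paper proceeds quite differently, by a direct construction that uses the unboundedness of $\dot{F}$ positively, one immediate successor of the stem at a time: enumerating $W=\osucc_{p'}(\stem(p'))$, it builds for each $x\in W$ a direct extension below $(p'\uhr(\stem(p')^{\frown}x),\dot{q}')$, an index $i_x<\mu$ and a value set $A_x$ of size ${<}\,\kappa$ with $\dot{F}(\check{i}_x)$ forced into $A_x$; unboundedness of $\dot{F}$ supplies an index whose value escapes the (still ${<}\,\kappa^*$-sized) union of the previously built sets, direct $\mu$-decidability (Lemmas \ref{LDirectDecide} and \ref{PrikryIterEastonMu}) fixes $i_x$, the strong Prikry property captures $A_x$ disjoint from that union, and the ${<}\,\mu^+$-completeness of $I_{|\stem(p')|}$ stabilizes $i_x=i$ on a positive set of successors, over which the conditions are amalgamated. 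There is no global bounding of $\dot{F}$ anywhere, and no diagonalization of length $\mu$ in the direct ordering; to make your approach work you would in effect have to rediscover this per-successor use of unboundedness.
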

		
		\begin{proof}
			Let $W:=\osucc_{p'}(\stem(p'))$. By induction on $x\in W$ (along some well-order on $W$) we will define conditions $(p_x,\dot{q}_x)\leq_0(p'\uhr\stem(p')^{\frown}x)$, ordinals $i_x\in(j,\mu)$ and sets $A_x$ of size ${<}\,\kappa$ such that
			$$(p_x,\dot{q}_x)\Vdash\dot{F}(\check{i}_x)\in\check{A}_x.$$
			Let $x:=\min(W)$. Let $i_x\in\mu$ be arbitrary. It follows that there exists $(p_x,\dot{q}_x)\leq_0(p'\uhr(\stem(p)^{\frown}x),\dot{q}')$ and $n_x\in\omega$ such that whenever $s\in p_x$, $|s|=n_x$, $(p_x\uhr s,\dot{q}_x)$ decides $\dot{F}(\check{i}_x)$. Let
			$$A_x:=\{\alpha\;|\;\exists s\in p_x(|s|=n_x\wedge(p_x\uhr s,\dot{q}_x)\Vdash\dot{F}(\check{i}_x)=\check{\alpha})\}.$$
			Clearly, $|A_x|<\kappa$. Suppose the objects have been defined for $y\in x$ and $x$ is in $W$. Let $B:=\bigcup_{y\in W,y<x}A_y$. Then $|B|<\kappa^*$, since $\kappa^*$ is regular. In particular,
			$$(p\uhr(\stem(p)^{\frown}x),\dot{q}')\Vdash\im(\dot{F})\not\subseteq\check{B}$$
			since $\dot{F}$ is forced to be unbounded in $\check{\kappa}^+$ and $B$ is bounded in $\kappa^+$. Therefore, $(p\uhr(\stem(p)^{\frown}x),\dot{q}')\Vdash\exists i\in\check{\mu}(\dot{F}(i)\notin\check{B})$. So we can take $(p_x',\dot{q}_x')\leq_0(p\uhr(\stem(p)^{\frown}x),\dot{q}')$ deciding $i=\check{i}_x$ for some $i_x\in\mu$ (since $\dL*\dot{\dQ}$ has direct $\mu$-decidability by Lemma \ref{LDirectDecide} and Lemma \ref{PrikryIterEastonMu}). So we have
			$$(p_x',\dot{q}_x')\Vdash\dot{F}(\check{i}_x)\notin\check{B}.$$
			Now proceed as in the base case and find $(p_x,\dot{q}_x)\leq_0(p_x',\dot{q}_x')$ and $n_x$ such that whenever $s\in p_x$, $|s|=n_x$, $(p_x\uhr s,\dot{q}_x)$ decides $\dot{F}(\check{i}_x)$ and let $A_x$ be the set of possible decisions (note as before $|A_x|<\kappa$). Since $(p_x',\dot{q}_x')\Vdash\dot{F}(\check{i}_x)\notin\check{B}$, $A_x\cap B=\emptyset$ and so $A_x\cap A_y=\emptyset$ whenever $y\in W$, $y<x$.
			
			Lastly, we can choose $W'\subseteq W$ which is $I_{|\stem(p)|}$-positive such that for some $i\in\mu$ and every $x\in W'$, $i_x=i$. Let $p:=\bigcup_{x\in W'}p_x$. Then clearly $p\in\dL$ and $p\leq_0p'$. Let $\dot{q}$ be such that $p\uhr x\Vdash\dot{q}=\dot{q}_x$ for every $x\in W'$ (this is possible as $\{p\uhr (\stem(p)^{\frown}x)\;|\;x\in W'\}$ is an antichain in $\dL$). Clearly, $(p,\dot{q})$ and $i$ are as required.
		\end{proof}
		
		We now construct a fusion sequence $(p_n)_{n\in\omega}$ and a sequence $(\dot{q}_n)_{n\in\omega}$ such that for all $n\in\omega$, $p_{n+1}\Vdash\dot{q}_{n+1}\dot{\leq}\dot{q}_n$ and for all $s\in p_n$ with $|s|\leq n$ there is some $i_s\in\mu$ such that $\phi(i_s,p\uhr s,\dot{q}_n)$ holds.
		
		Let $(p_0,\dot{q}_0)\leq_01_{\dP*\dot{\dQ}}$ be such that $\phi(i_{\emptyset},p_0,\dot{q}_0)$ holds for some $i_{\emptyset}\in\mu$. Assume $(p_n,\dot{q}_n)$ has been defined. For $s\in p_n$, $|s|=n+1$, we can find $(p_s,\dot{q}_s)\leq_0(p\uhr s,\dot{q}_n)$ and $i_s$ such hat $\phi(p_s,\dot{q}_s,i_s)$ holds. Let $p_{n+1}:=\bigcup_{s\in p_n,|s|=n+1}p_s$ and let $\dot{q}_{n+1}$ be the mixture of $\dot{q}_s$ for $s\in p_{n+1}$, $|s|=n+1$, i.e. for every $s\in p_{n+1}$, $|s|=n+1$, $p_{n+1}\uhr s\Vdash\dot{q}_{n+1}=\dot{q}_s$. Clearly, $p_{n+1}\leq_np_n$, $p_{n+1}\Vdash\dot{q}_{n+1}\dot{\leq}\dot{q}_n$ and they are as required: Whenever $s\in p_{n+1},|s|=n+1$, $p_{n+1}\uhr s=p_s$, $p_{n+1}\uhr s\Vdash\dot{q}_{n+1}=\dot{q}_s$ and they were specifically chosen so that $\phi(p_s,\dot{q}_s,i_s)$ holds.
		
		Now let $p$ be the fusion limit of $(p_n)_{n\in\omega}$. Then $p$ forces the sequence $(\dot{q}_n)_{n\in\omega}$ to be descending in $\dot{\leq}$, so by the maximum principle and the countable closure of $\dot{\dQ}$, we can take $\dot{q}'$ which is forced to be a lower bound.
		
		Let $G\colon[p]\to\mu$ be the function mapping a branch $b\in[p]$ to $\sup_n(i_{b\uhr n})$. For every $\alpha<\mu$, the set $\{b\in[p]\;|\;G(b)<\alpha\}$ is closed in $[p]$ with regards to the tree topology, so by the Rubin-Shelah Theorem (see e.g. \cite[Chapter XI, Lemma 3.5 (1)]{ShelahProperImproper}) and the fact that every $I_n$ is ${<}\,\mu^+$-complete, we can find $p'\leq_0p$ such that $G$ is bounded in $p_0$, say with value $i$. We claim that $(p',\dot{q}')$ and $i$ work.
		
		Assume toward a contradiction that $(p'',\dot{q}'')\leq(p',\dot{q}')$, $x\in V$ has size ${<}\,\kappa$ and $(p'',\dot{q}'')\Vdash\im(\dot{F}\uhr\check{i})\subseteq\check{x}$.
		
		Let $s\in p''$ be such that $\stem(p'')\subseteq s$ and $\kappa_{|s|}>|x|$. We know that $\phi(p''\uhr s,\dot{q}'',i_{|s|})$ holds so there is a sequence $(A_x)_{x\in\osucc_{p''}(s)}$ of disjoint sets such that whenever $x\in\osucc_{p''}(s)$, $(p''\uhr(s^{\frown}x),\dot{q}'')\Vdash\dot{F}(\check{i}_s)\in\check{A}_x$. Since $\stem(p'')\subseteq s$, $\osucc_{p''}(s)$ is $I_{|s|}$-positive and so has size $\kappa_{|s|}$. Thus there has to be $x\in\osucc_{p''}(s)$ such that $A_x\cap y=\emptyset$. But
		$$(p''\uhr(s^{\frown}x),\dot{q}'')\Vdash\dot{F}(\check{i}_{|s|})\in\check{A}_x\cap\check{y}$$
		since $i_{|s|}<i$, a contradiction.
	\end{proof}
	
	\section{Non-approachable Points of a Single Cofinality}
	
	In \cite{JakobLevineFailureApproachability}, Levine and the author introduced a poset which, given a supercompact cardinal $\kappa$ and any $n\geq 1$, collapses $\kappa$ to become $\aleph_{n+1}$ while forcing that there exist stationarily many $\gamma\in\aleph_{\omega+1}\cap\cof(\aleph_n)$ which are not approachable. In this section, we introduce a slight generalization of that poset and more explicitly state and prove its properties. This is the poset which will later be iterated to give us our main theorem.
	
	\begin{mydef}\label{ADef}
		Let $\kappa$ be a supercompact cardinal and $l\colon\kappa\to V_{\kappa}$ a Laver function. Let $\mu<\kappa$ be regular. We define the poset $(\dA(\mu,\kappa),\leq,\leq_0)$ as the limit of an Easton-support Magidor iteration $(\dP_{\alpha},\dot{\dQ}_{\alpha})_{\alpha<\kappa}$ with the following iterands. For $\alpha\leq\mu$, we let $\dot{\dQ}_{\alpha}$ be trivial.
		\begin{itemize}
			\item[Case 1:] $\alpha$ is inaccessible and $|\dP_{\beta}|<\alpha$ for all $\beta<\alpha$. We let $\dP_{\alpha}$ be the direct limit of $(\dP_{\beta},\dot{\dQ}_{\beta})_{\beta<\alpha}$.
			\begin{itemize}
				\item[Case 1.1:] Assume $l(\alpha)=(\dot{\dQ},\delta^*,(\delta_n)_{n\in\omega})$, where $\delta^*>\alpha$ is the successor of a cardinal of countable cofinality, $(\delta_n)_{n\in\omega}$ is an increasing sequence of cardinals $\geq\alpha$ which converges to the predecessor of $\delta^*$ and $(\dot{\dQ},\dot{\leq})$ is a $\dP_{\alpha}$-name for a strongly ${<}\,\alpha$-directed closed poset which forces $\LIP(\check{\mu},\check{\mu}^+,\check{\delta}_n)$ for all $n\in\omega$, witnessed by $(\dot{I}_n,\dot{B}_n)$. In this case, we let $\dot{\dQ}_{\alpha}:=(\dot{\dQ},\dot{\leq},\dot{\leq})$, we let $\dot{\dQ}_{\alpha+1}$ be a $\dP_{\alpha}*(\dot{\dQ},\dot{\leq},\dot{\leq})$-name for $(\dot{\dL}(\check{\mu},(\check{\delta}_n,\dot{I}_n,\dot{B}_n)_{n\in\omega}),\dot{\leq},\dot{\leq}_0)$ and we let $\dot{\dQ}_{\alpha+2}$ be a $\dP_{\alpha}*(\dot{\dQ},\dot{\leq},\dot{\leq})*(\dot{\dL}(\check{\mu},(\check{\delta}_n,\dot{I}_n,\dot{B}_n)_{n\in\omega}),\dot{\leq},\dot{\leq}_0)$-name for $(\dot{\Coll}(\check{\mu},\check{\delta}^*),\dot{\leq},\dot{\leq})$.
				\item[Case 1.2:] Assume $l(\alpha)=\dot{\dQ}$, where $\dot{\dQ}$ is a $\dP_{\alpha}$-name for a strongly ${<}\,\alpha$-directed closed poset. In this case, we let $\dot{\dQ}_{\alpha}:=l(\alpha)$.
			\end{itemize}
			\item[Case 2:] Assume $\alpha$ is a limit ordinal which is not inaccessible. In this case, let $\dP_{\alpha}$ be the inverse limit of $(\dP_{\beta},\dot{\dQ}_{\beta})_{\beta<\alpha}$ and let $\dot{\dQ}_{\alpha}$ be $\dP_{\alpha}$-name for $(\dot{\Coll}(\check{\mu},|\dP_{\alpha}|),\dot{\leq},\dot{\leq})$.
		\end{itemize}
	\end{mydef}
	
	The specific choice of defining $\dot{\dQ}_{\alpha}$, $\dot{\dQ}_{\alpha+1}$ and $\dot{\dQ}_{\alpha+2}$ separately instead of merging them into a single iterand is necessary to ensure that the termspace forcing we are defining later has the required properties.
	
	We will now analyze the poset from the preceding definition. As in the proof of \cite[Theorem 5.3]{JakobLevineFailureApproachability} we can see the following:
	
	\begin{mylem}\label{AProperties}
		Let $\mu<\kappa$ be cardinals such that $\mu$ is regular and $\kappa$ is supercompact.
		\begin{enumerate}
			\item $(\dA(\mu,\kappa),\leq)$ is $\kappa$-cc.
			\item $(\dA(\mu,\kappa),\leq,\leq_0)$ is a Prikry-type poset and has direct $\mu$-decidability.
			\item $(\dA(\mu,\kappa),\leq_0)$ is strongly ${<}\,\mu$-directed closed.
			\item After forcing with $(\dA(\mu,\kappa),\leq)$, all cardinals $\leq\mu$ and $\geq\kappa$ are preserved, while every cardinal in the interval $(\mu,\kappa)$ is collapsed to have size $\mu$.
		\end{enumerate}
	\end{mylem}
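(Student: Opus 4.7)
The plan is to verify each of the four properties by invoking the general iteration theorems from Section 2 and checking their hypotheses against the case-analysis of Definition \ref{ADef}. I would establish (2) and (3) first, since they are verifiable iterand-by-iterand, then deduce (1) via a $\Delta$-system argument, and finally extract (4) as a consequence of (1) and (3) together with the Levy collapses built into the iteration.

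For (2) and (3), the templates are Lemmas \ref{PrikryIterEastonMu} and \ref{DirectClosureEaston}. For each case of Definition \ref{ADef} I would verify that the relevant iterand forces direct $\mu$-decidability and strong ${<}\,\mu$-directed closure under its direct extension ordering. In Case 1.1 the first iterand $\dot{\dQ}$ has $\dot{\leq}_0 = \dot{\leq}$ and is forced to be strongly ${<}\,\check{\alpha}$-directed closed with $\alpha>\mu$; since its direct ordering is thereby ${<}\,\mu^+$-closed, both properties follow from Lemma \ref{ClosureDecide}. The $\dL$-iterand furnishes direct $\mu$-decidability by Lemma \ref{LDirectDecide} and strong ${<}\,\mu$-directed closure under $\leq_0$ by Lemma \ref{LClosure}. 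The Levy collapse $\Coll(\mu,\delta^*)$ is $\mu$-closed with trivial direct extension, so both properties are immediate. Cases 1.2 and 2 are analogous and strictly simpler. The side condition on inaccessibles in Lemma \ref{DirectClosureEaston} is automatic since the iterands are trivial below $\mu^+$, so any inaccessible at which a direct limit is taken exceeds $\mu$.

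For (1), I would run the standard $\Delta$-system argument, exploiting that $\kappa$ is Mahlo so the inaccessibles $\alpha<\kappa$ with $|\dP_\alpha|<\alpha$ are stationary, and reflecting any antichain of size $\kappa$ down to such an $\alpha$ to obtain a contradiction with $|\dP_\alpha|<\alpha<\kappa$. The main subtlety, which I expect to be the single most delicate point of the lemma, is checking that the Magidor-style mixed-support structure does not obstruct the reduction: although non-direct coordinates may occur outside the support of conditions, the Easton-limit definition pins down each condition as a function on $\kappa$ with the usual Easton support property, so the classical reflection argument for direct-limit Easton iterations goes through essentially verbatim. Finally, (4) follows: cardinals $\leq \mu$ are preserved by (3) together with the Prikry property (which combine to prevent the addition of ${<}\,\mu$-sequences of ordinals); cardinals $\geq \kappa$ are preserved by (1) and the bound $|\dA(\mu,\kappa)| \leq \kappa$; and every $\lambda \in (\mu,\kappa)$ is collapsed to $\mu$ by the first Levy collapse $\Coll(\mu,|\dP_\alpha|)$ appearing at a Case 2 stage $\alpha > \lambda$, of which there are cofinally many below $\kappa$.
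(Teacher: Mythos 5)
Your overall architecture matches the paper's proof: (2) via Lemma \ref{PrikryIterEastonMu}, with Lemma \ref{LDirectDecide} handling the $\dL$-iterands (the paper simply notes that iterands with $\leq=\leq_0$ have direct $\mu$-decidability trivially, which is a touch simpler than routing through Lemma \ref{ClosureDecide}, but your route is fine), (3) via Lemmas \ref{DirectClosureEaston} and \ref{LClosure}, (1) by the standard Easton-support argument, and (4) from the earlier parts plus the collapses built into the iteration. However, your justification of (4) contains a false step: the parenthetical claim that strong ${<}\,\mu$-directed closure of $\leq_0$ together with the Prikry property ``prevents the addition of ${<}\,\mu$-sequences of ordinals.'' This is not true: Prikry forcing itself, and here the $\dL$-iterands, have highly closed direct extension orderings and the Prikry property, yet add cofinal $\omega$-sequences to large cardinals, and $\omega<\mu$. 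What the Prikry property plus closure of $\leq_0$ gives is that no new \emph{bounded} subsets of $\mu$ are added; to see that $\mu$ itself survives you need direct $\mu$-decidability (your part (2)) together with the ${<}\,\mu$-closure of $\leq_0$: given a name for a function from some $\gamma<\mu$ into $\mu$, decide its values one at a time by direct extensions and chain them, so the function lies in $V$ and $\mu$ remains regular. The conclusion is thus reachable with tools you already established, but the combination you cite does not support it as stated.

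A second, smaller gap concerns the collapsing half of (4): you attribute the collapse of $\lambda\in(\mu,\kappa)$ to a Case 2 stage $\alpha>\lambda$ with iterand $\Coll(\mu,|\dP_{\alpha}|)$, but this only collapses $\lambda$ if $|\dP_{\alpha}|\geq\lambda$, which is not automatic: if the Laver function never triggers a nontrivial Case 1.1 or 1.2 iterand below $\alpha$, the iteration up to $\alpha$ stays small and the Case 2 collapses are correspondingly small. The cofinal appearance of collapses with large targets is really a consequence of the Laver guessing (this is where the supercompactness of $\kappa$ enters): by Lemma \ref{SmallLaver}, above any $\beta<\kappa$ one finds a stage $\nu>\beta$ at which Case 1.1 fires with $\delta^*_->\nu$, so an iterand $\Coll(\mu,\delta^*_-)$ with $\delta^*_->\beta$ occurs. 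The paper is terse on this point as well, but your argument should either invoke this guessing or otherwise establish that $|\dP_{\alpha}|$ is eventually at least $\lambda$, rather than relying on the Case 2 bookkeeping alone.
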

	
	\begin{proof}
		(1) follows using standard methods for iterations with Easton support.
		
		(2) follows from Lemma \ref{PrikryIterEastonMu}: Clearly, any poset with $\leq=\leq_0$ has direct $\mu$-decidability. The only iterands we use where this is not the case are the instances of $\dL(\check{\mu},(\check{\delta_n},\dot{I}_n,\dot{B}_n),\dot{\leq},\dot{\leq}_0)$. However, in that case the poset $(\dL((\check{\delta}_n,\dot{I}_n,\dot{B}_n)),\dot{\leq}_0)$ has direct $\mu$-decidability by Lemma \ref{LDirectDecide}.
		
		(3) follows from Lemma \ref{DirectClosureEaston} together with Lemma \ref{LClosure}.
		
		(4) now follows from the previous points together with the fact that for any $\alpha<\kappa$ there is $\beta<\kappa$ with $\beta\geq\alpha$ such that our iteration involves the poset $\Coll(\mu,\beta)$.
	\end{proof}
	
	We now prove the following technical refinement of \cite[Theorem 5.1]{JakobLevineFailureApproachability}:
	
	\begin{mysen}\label{MainThmOneCof}
		Let $\kappa$ be a supercompact cardinal and let $\mu<\kappa$ be regular. Let $(\delta_n)_{n\in\omega}$ be an increasing sequence of regular cardinals with $\delta_0\geq\kappa$ and define $\delta^*:=(\sup_n\delta_n)^*$. Fix a normal, transitive coloring $d\colon[\delta^*]^2\to\omega$. Assume that $\dot{\dP}$ is an $\dA(\mu,\kappa)$-name for a strongly ${<}\,\kappa$-directed closed poset which preserves $\delta^*$, every $\delta_n$ and forces $\LIP(\check{\mu},\check{\mu}^+,\check{\delta}_n)$ for all $n\in\omega$. After forcing with $\dA(\mu,\kappa)$ (obtaining $G$), $\mu$ remains a cardinal, $\kappa=\mu^+$ and for any large enough $\Theta$ there are stationarily many $M\in[H^{V[G]}(\Theta)]^{<\kappa}$ such that the following holds:
		\begin{enumerate}
			\item $\dA(\mu,\kappa),\dot{\dP},d\in M$.
			\item $\cf(\sup(M\cap\delta^*))=\mu$ and $\sup(M\cap\delta^*)$ is not $d$-approachable.
			\item There exists a condition $p\in\dot{\dP}^G$ such that whenever $D\in M$ is open dense in $\dot{\dP}^G$, there is $q\in D\cap M$ such that $p\leq q$.
		\end{enumerate}
	\end{mysen}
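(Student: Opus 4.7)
The plan is to reflect via a large supercompact embedding. Using the Laver function $l$, choose a sufficiently supercompact $j\colon V\to N$ with $\crit(j)=\kappa$ and $j(l)(\kappa)$ coding the triple $(\dot{\dP},\delta^*,(\delta_n)_{n\in\omega})$. By elementarity, the iteration $j(\dA(\mu,\kappa))$ hits Case 1.1 of Definition \ref{ADef} at stage $\kappa$, yielding the factorization
$$j(\dA(\mu,\kappa))\;\cong\;\dA(\mu,\kappa)*\dot{\dP}*\dot{\dL}*\dot{\Coll}(\check{\mu},\check{\delta}^*)*\dot{\dP}_{\mathrm{rest}}$$
where $\dot{\dL}:=\dot{\dL}(\check{\mu},(\check{\delta}_n,\dot{I}_n,\dot{B}_n)_{n\in\omega})$.

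Let $G$ be $\dA(\mu,\kappa)$-generic over $V$ and take successive generics $H,K,L,R'$ for $\dot{\dP}^G$, $\dL$, $\Coll(\mu,\delta^*)$, and $\dot{\dP}_{\mathrm{rest}}$. Then $j$ lifts to $\hat{j}\colon V[G]\to N[G*H*K*L*R']$. Set $M_0:=\hat{j}[H^{V[G]}(\Theta)]$. The central claim is that $M_0$ satisfies the $\hat{j}$-image of properties (1)--(3) in $N[G*H*K*L*R']$, whence elementarity yields the theorem: (1') is immediate for $\Theta$ large; (2') holds because $M_0\cap\hat{j}(\delta^*)=\hat{j}[\delta^*]$ has cofinality $\mu$ in $N[\cdots]$ via the collapse $L$ together with sufficient closure of $N$, and because by Lemma \ref{InducedColoring} together with Fact \ref{FactRefinement} the $\hat{j}(d)$-approachability of $\sup\hat{j}[\delta^*]$ reduces to the $d$-approachability of $\delta^*$, which fails by the argument below; and (3') is witnessed by taking $p\in\hat{j}(\dot{\dP}^G)$ to be any lower bound of the directed set $\hat{j}[H]\in N[\cdots]$ (such $p$ exists by strong $<\hat{j}(\kappa)$-directed closure), since any open dense $D=\hat{j}(D')\in M_0$ meets $H$ via some $q'$, whose image $\hat{j}(q')\in D\cap M_0$ extends $p$.

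For the non-approachability step, work in $V[G*H]$, where by the hypotheses on $\dot{\dP}$ the coloring $d$ remains normal and subadditive on $\delta^*$, each $\delta_n$ remains regular, and $\LIP(\mu,\mu^+,\delta_n)$ holds, so $\dL$ is defined. Apply the natural generalization of Theorem \ref{NoNewApproach} in which the post-$\dL$ forcing is the composition $\dot{\Coll}(\mu,\delta^*)*\dot{\dP}_{\mathrm{rest}}$; this composition has strongly $<\mu$-directed-closed direct-extension ordering by Lemma \ref{DirectClosureEaston} and Lemma \ref{LClosure}, so the proof of Theorem \ref{BetterApproxProp} applies verbatim. This shows $\delta^*$ is not $d$-approachable in $V[G*H*K*L*R']$, hence not in the inner model $N[G*H*K*L*R']$.

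Since $M_0$ is closed under $\hat{j}(f)$ for every $f\in V[G]$ on $[H^{V[G]}(\Theta)]^{<\omega}$, elementarity of $\hat{j}$ then produces for every such $f$ an $M\in[H^{V[G]}(\Theta)]^{<\kappa}$ closed under $f$ satisfying (1)--(3), establishing the desired stationarity. The principal technical hurdle will be the generalization of Theorems \ref{BetterApproxProp} and \ref{NoNewApproach} above (to allow a Prikry-type composition with $<\mu$-directed closed direct extension in place of the Levy collapse), together with checking that the preceding $\dot{\dP}$-forcing preserves the hypotheses needed for $\dL$ to exist and for $d$ to remain normal and subadditive in $V[G*H]$.
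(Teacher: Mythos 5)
Your overall architecture (lift a sufficiently supercompact $j$ through $j(\dA(\mu,\kappa))\cong\dA(\mu,\kappa)*\dot{\dP}*\dot{\dL}*\dot{\Coll}(\check\mu,\check\delta^*)*\dot{\dP}_{\mathrm{rest}}$ in an outer extension, verify the $\hat{j}$-images of (1)--(3) for $M_0=\hat{j}[H^{V[G]}(\Theta)]$ in $N[\hat G]$, and pull back by elementarity) is a legitimate ``external'' dual of what the paper does, and your argument for (3') via a lower bound of $\hat{j}[H]$ is essentially sound. But there is a genuine gap at the central non-approachability step. You claim that the proof of Theorem \ref{BetterApproxProp} ``applies verbatim'' when the post-$\dL$ forcing is $\dot{\Coll}(\check\mu,\check\delta^*)*\dot{\dP}_{\mathrm{rest}}$ because that composition has a strongly ${<}\,\mu$-directed closed \emph{direct-extension} ordering. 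Theorem \ref{BetterApproxProp}, however, is stated and proved for a $\dot{\dQ}$ that is ${<}\,\mu$-closed in its \emph{full} ordering: the proof repeatedly extends the $\dot{\dQ}$-coordinate in $\leq$ (Claim 1, the mixtures $\dot q_{n+1}$), invokes direct $\mu$-decidability of $\dL*\dot{\dQ}$ with $\dot{\dQ}$ treated as trivially Prikry-type, and at the fusion limit takes a $\leq$-lower bound of $(\dot q_n)_{n\in\omega}$ ``by the countable closure of $\dot{\dQ}$''. Your $\dot{\dP}_{\mathrm{rest}}$ is the tail of $j(\dA)$, a Magidor iteration containing further Namba-type iterands, so $\Coll(\mu,\delta^*)*\dot{\dP}_{\mathrm{rest}}$ is not even countably closed in $\leq$; the proof does not apply verbatim, and what you would actually have to do is either reprove Theorem \ref{BetterApproxProp} for Prikry-type $\dot{\dQ}$ (working only with direct extensions in the second coordinate), or split the argument: first apply Theorem \ref{NoNewApproach} with the collapse alone, and then prove separately that the Prikry-type remainder --- using its ${<}\,\mu$-closed direct ordering together with direct/almost direct $\mu$-decidability --- cannot make a point of cofinality $\mu$ become $d$-approachable. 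That second preservation argument is not a formality; it is exactly the $\leq_0$-descending decision-sequence argument $(a_i,\alpha_i)_{i<\mu}$ that occupies a substantial part of the paper's own proof.

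For comparison, the paper avoids your whole outer-extension setup: it takes a $\kappa$-Magidor model $N\prec H^V(\Theta')$ with $l(N\cap\kappa)=\pi_N(\dot{\dP},(\delta_n)_{n\in\omega},\delta^*)$, so that the collapsed copies of $\dot{\dP}$, of the supercompact Namba forcing, and of the collapse occur as stages $\nu,\nu+1,\nu+2$ of $\dA(\mu,\kappa)$ itself (where $\nu=N\cap\kappa$). Theorem \ref{NoNewApproach} is then applied exactly in its stated form (the post-$\dL$ forcing is just the Levy collapse), the non-approachability of $\sup(M\cap\delta^*)$ is preserved to $V[G]$ by the tail of $\dA$ via the decision-sequence argument, and the master condition of (3) comes for free because the generic for $\pi_N(\dot{\dP})$ already lies inside $V[G]$. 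Besides bypassing the gap above, this internal route also spares you the bookkeeping your plan still owes: checking that your generics are generic over $N$, that $N[\hat G]$ is sufficiently closed so that the closure, decidability and $\LIP$ hypotheses on $\dot{\dP}$, the witnesses $(\dot I_n,\dot B_n)$, and the properties of $\dot{\dP}_{\mathrm{rest}}$ transfer correctly between the $V$- and $N$-sides, and that the relevant cofinality computations hold in $N[\hat G]$ rather than merely in $W$.
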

	
	\begin{mybem}
		In the context of generalized properness, the condition in (3) is referred to as a \emph{totally $(M,\dP)$-generic condition}.
	\end{mybem}
	
	\begin{proof}
		
		\setcounter{myclaim}{0}
		
		Fix parameters as in the statement of the theorem. For simplicity, let $\dA:=\dA(\mu,\kappa)$. Let $G$ be $\dA$-generic. In $V[G]$, $\mu$ remains a cardinal since no bounded subsets of $\mu$ are added by Lemma \ref{AProperties} (3). By Lemma \ref{AProperties} (4), $\kappa=\mu^+$ in $V[G]$.
		
		Let $\Theta$ be sufficiently large such that $H(\Theta)$ contains all relevant parameters and let $F\colon[H^{V[G]}(\Theta)]^{<\omega}\to[H^{V[G]}(\Theta)]^{<\kappa}$. We want to find some $M\in[H^{V[G]}(\Theta)]^{<\kappa}$ which is closed under $F$ and satisfies the additional statements. To this end, let $\dot{F}$ be a name for $F$ and let $\Theta'$ be such that $\dot{F}\in H^V(\Theta')$.
		
		By Lemma \ref{SmallLaver}, we can find a $\kappa$-Magidor model $N\prec H^V(\Theta')$ containing $\dA$, $\dot{\dP}$, $d$ and $\dot{F}$ such that, with $\pi_N\colon N\to K$ the Mostowski-collapse of $N$, $l(N\cap\kappa)=\pi_N((\dot{\dP},(\delta_n)_{n\in\omega},\delta^*)$.
		
		In $V[G]$, $N[G]\prec H^V(\Theta')[G]=H^{V[G]}(\Theta')$ and so $M:=N[G]\cap H^{V[G]}(\Theta)$ is closed under $F$. By standard arguments (see e.g. \cite[Lemma 5.3.6]{JakobPhD}), the $\kappa$-cc of $\dA$ implies the following:
		
		\begin{myclaim}
			The Mostowski-collapse of $N[G]$ is a map from $N[G]$ to $K[\pi[G\cap N]]$. Moreover, whenever $\tau\in N$:
			\begin{enumerate}
				\item If $\tau^G\in V$, then $\tau^G\in N$. In other words, $N[G]\cap V=N$.
				\item $\pi_{N[G]}(\tau^G)=(\pi_N(\tau))^{\pi[G\cap N]}$. If $\tau^G\in V$, then $\pi_{N[G]}(\tau^G)=\pi_N(\tau^G)$.
			\end{enumerate}
		\end{myclaim}
		
		We want to verify that $M$ witnesses the conclusion of the theorem.
		
		Let $\nu:=M\cap\kappa$ (which is an inaccessible cardinal), $\gamma:=\sup(M\cap\delta^*)$ and $\delta_-^*:=\otp(M\cap\delta^*)$. Since $N[G]\cap V=N$, $\nu=N\cap\kappa$, $\gamma=\sup(N\cap\delta^*)$ and $\delta_-^*=\otp(N\cap\delta^*)$. Then, by Lemma \ref{InducedColoring}, $\delta_-^*=\cf(\gamma)$ is the successor of a singular cardinal of countable cofinality and $d$ induces a normal coloring $e$ on $\delta_-^*$ such that in any outer model, $\gamma$ is $d$-approachable if and only if $\delta_-^*$ is $e$-approachable. For $n\in\omega$, let $\delta_n^-:=\otp(M\cap\delta_n)=\otp(N\cap\delta_n)$. Then each $\delta_n^-$ is a regular cardinal and $\delta_-^*=(\sup_n\delta_n)^+$.
		
		We fix the following notation: Given $\xi<\kappa$, let $\dA(\xi)$ consist of all $f\in\dA(\mu,\kappa)$ such that $\dom(f)\subseteq\xi$, ordered as a suborder of $\dA(\mu,\kappa)$. It is clear that whenever $\xi<\kappa$, the map $p\mapsto p\uhr\xi$ is a projection from $\dA$ to $\dA(\xi)$. So, for any $\xi<\kappa$, we can let $G(\xi)$ be the $\dA(\xi)$-generic filter induced by $G$. It is clear that $G(\xi)=G\cap\dA(\xi)$ whenever $\xi<\kappa$ and that $\pi[G\cap N]=G(\nu)$.
		
		We now prove that $M$ satisfies (2). Clearly, $\dA(\nu)$ is $\nu$-cc since $\nu$ is inaccessible. Moreover, $l(\nu)=\pi_N((\dot{\dP},(\delta_n)_{n\in\omega}),\delta^*)=(\pi_N(\dot{\dP}),(\delta_n^-)_{n\in\omega},\delta^*_-)$.
		
		\begin{myclaim}
			The following holds in $V$:
			\begin{enumerate}
				\item $\delta^*_->\nu$ is the successor of a singular cardinal of countable cofinality;
				\item $(\delta_n^-)_{n\in\omega}$ converges to the predecessor of $\delta^*_-$;
				\item $\pi_N(\dot{\dP})$ is an $\dA(\nu)$-name for a strongly ${<}\,\nu$-directed closed poset which preserves $\delta^*_-$, every $\delta_n^-$ and forces $\LIP(\check{\mu},\check{\mu}^+,\check{\delta}_n^-)$ for every $n\in\omega$.
			\end{enumerate}
		\end{myclaim}
		
		\begin{proof}
			We prove (1), (2) and (3) simultaneously. Since $\Theta$ is sufficiently large, it holds in $H^V(\Theta)$ that
			\begin{enumerate}
				\item $\delta^*>\kappa$ is the successor of a singular cardinal of countable cofinality;
				\item $(\delta_n^-)_{n\in\omega}$ converges to the predecessor of $\delta^*$;
				\item $\dot{\dP}$ is an $\dA$-name for a strongly ${<}\,\nu$-directed closed poset which preserves $\delta^*$, every $\delta_n$ and forces $\LIP(\check{\mu},\check{\mu}^+,\delta_n)$ for every $n\in\omega$.
			\end{enumerate}
			
			Since $\pi_N$ is an isomorphism, the statements (1), (2) and (3) thus hold in $\pi_N(H^V(\Theta))$. By Lemma \ref{MagidorModelElementary}, $\pi_N(H^V(\Theta))=H^V(\pi_N(\Theta))$. Since $\pi_N(\Theta)$ is sufficiently large, this implies that all three statements hold in $V$.
		\end{proof}
		
		Let $(\dot{I}_n,\dot{B}_n)$ be $\dA(\nu)$-names for $\pi_N(\dot{\dP})$-names witnessing item (3). Since $\dA(\nu)$ is just equal to stage $\dP_{\nu}$ of the iteration used to define $\dA$, it follows that
		$$\dA(\nu+3)\cong\dA(\nu)*\pi_N(\dP)*\dL(\check{\mu},(\check{\delta}_n^-,\dot{I}_n,\dot{B}_n)_{n\in\omega})*\dot{\Coll}(\check{\mu},\check{\delta}^*).$$
		
		We now turn to proving the non-$d$-approachability of $\gamma$ in two stages.
		
		\begin{myclaim}\label{AClaim2}
			In $V[G(\nu+3)]$, $\cf(\gamma)=\mu$ and $\gamma$ is not $d$-approachable.
		\end{myclaim}
		
		\begin{proof}
			In $V[G(\nu)]$, $\cf(\gamma)=\delta_-^*$ is still a regular cardinal and $e$ is still a normal, subadditive coloring on $\delta_-^*$, since $\dA(\nu)$ is $\nu$-cc. The same is true in $V[G(\nu+1)]$, since $\pi_N(\dot{\dP})$ is forced to preserve $\delta_-^*$ and every $\delta_n^-$. By Lemma \ref{KappaStarCof} and Theorem \ref{CollapseNoApp}, in $V[G(\nu+3)]$, $\cf(\delta_-^*)=\mu$ and $\delta_-^*$ is not $e$-approachable. Therefore, $\cf(\gamma)=\mu$ and $\gamma$ is not $d$-approachable in the same model.
		\end{proof}
		
		We now verify that the same holds in the final extension $V[G]$. This extension can be seen as a forcing extension of $V[G(\nu+3)]$ using a Prikry-type poset $(\dA(\kappa\smallsetminus\nu+3),\leq,\leq_0)$ such that $(\dA(\kappa\smallsetminus\nu+3),\leq_0)$ is ${<}\,\mu$-closed and has direct $\mu$-decidability (using the same argument as for Lemma \ref{AProperties} (2)). Since there is a strictly increasing and cofinal function from $\mu$ to $\gamma$ in $V[G(\nu+1)]$, the cofinalities of $\mu$ and $\gamma$ are equal in any outer model. Since $\mu$ is a regular cardinal in $V[G]$, $\cf(\gamma)=\mu$ in $V[G]$.
		
		\begin{myclaim}
			In $V[G]$, $\gamma$ is not $d$-approachable.
		\end{myclaim}
		
		\begin{proof}
			In $V[G(\nu+3)]$, fix a function $\Delta\colon\mu\to\gamma$ which is increasing and cofinal. By Fact \ref{FactRefinement}, $\gamma$ is $d$-approachable in $V[G]$ if and only if there is an unbounded subset of $\im(\Delta)$ on which $d$ is bounded. So assume toward a contradiction that $n\in\omega$ and $\dot{\Gamma}$ is an $(\dA(\kappa\smallsetminus\nu+3),\leq)$-name for an increasing function from $\mu$ into $\im(\Delta)$ such that $d$ is bounded on $\im(\dot{\Gamma})$ with value $n$, forced by some $a\in\dA(\kappa\smallsetminus\nu+3)$.
			
			We now construct an increasing sequence $(\alpha_i)_{i<\mu}$ of elements of $\mu$ and a $\leq_0$-decreasing sequence $(a_i)_{i<\mu}$ (with $a_0\leq_0 a$) of elements of $\dA(\kappa\smallsetminus\nu+3)$ such that for any $i<\mu$, $a_i\Vdash\dot{\Gamma}(\check{i})=\check{\Delta}(\check{\alpha}_i)$. If the sequences have been constructed for $j<i$, first let $a_i'$ be a $\leq_0$-lower bound of $(a_j)_{j<i}$ (or $a_i':=a$ if $i=0$). Then
			$$a_i'\Vdash\exists\alpha<\mu(\dot{\Gamma}(\check{i})=\check{\Delta}(\alpha))$$
			so by the maximum principle and the direct $\mu$-decidability of $(\dA(\kappa\smallsetminus\nu+3),\leq,\leq_0)$ we can find $\alpha_i$ and $a_i\leq_0a_i'$ such that
			$$a_i\Vdash\dot{\Gamma}(\check{i})=\check{\Delta}(\check{\alpha}_i).$$
			Since $\dot{\Gamma}$ is forced to be increasing and $\Delta$ is increasing, $\alpha_i>\alpha_j$ for all $j<i$.
			
			Now we claim that $\{\Delta(\alpha_i)\;|\;i<\mu\}$ is a cofinal subset of $\gamma$ on which $d$ is bounded with value $n$. First observe that, because the sequence $(\alpha_i)_{i<\mu}$ is increasing, the set $\{\alpha_i\;|\;i<\mu\}$ is cofinal in $\mu$ and so, since $\Delta$ is increasing and cofinal, $\{\Delta(\alpha_i)\;|\;i<\mu\}$ is cofinal in $\gamma$. Lastly, whenever $j<i<\mu$, we have
			$$a_i\Vdash\check{d}(\check{\Delta}(\check{\alpha}_j),\check{\Delta}(\check{\alpha}_i))=\check{d}(\dot{\Gamma}(\check{j}),\dot{\Gamma}(\check{i}))\leq\check{n}$$
			so $d(\Delta(\alpha_j),\Delta(\alpha_i))\leq n$. Thus, $\{\Delta(\alpha_i)\;|\;i<\mu\}$ witnesses that $\gamma$ is $d$-approachable in $V[G(\nu+3)]$ which contradicts Claim \ref{AClaim2}.
		\end{proof}
		
		Lastly, we prove that $M$ satisfies (3). By construction, we know that $\dA\cong\dA(\nu)*\pi_N(\dot{\dP})*\dot{\dR}$ for some poset $\dot{\dR}$. And so, in $V[G]$ there is a filter $H\subseteq(\pi_N(\dot{\dP}))^{G(\nu)}=\pi_{N[G]}(\dot{\dP}^G)$ which is generic over $V[G(\nu)]$. We note that $(\pi_N(\dot{\dP})^{G(\nu)})\subseteq K$ since $K$ is transitive. By elementarity, $\pi^{-1}[H]$ is a directed subset of $\dot{\dP}^G$ with size ${<}\,\kappa$ and since $\dot{\dP}^G$ is ${<}\,\kappa$-directed closed, there is a condition $p$ with $p\leq q$ for all $q\in\pi^{-1}[H]$. Now assume that $D\in N[G]$ is open dense in $\dot{\dP}^G$. Then $\pi_{N[G]}(D)$ is open dense in $\pi_{N[G]}(\dot{\dP})^{G(\nu)}$ and in $V[G(\nu)]$, since $\pi_{N[G]}(D)=\pi_{N[G]}(\dot{D}^G)=(\pi_N(\dot{D}))^{G(\nu)}$. Ergo, $H\cap\pi_{N[G]}(D)$ contains some condition $q$. Then in particular $q\in K$ and so $\pi_{N[G]}^{-1}(q)\in M\cap \pi_{N[G]}^{-1}[H]\cap D$, since $\pi_{N[G]}$ is an isomorphism. This is what we wanted to show.
	\end{proof}
	
	To prove our main theorem we are going to iterate instances of the forcing $\dA$. Theorem \ref{MainThmOneCof} will be used to show that, thanks to the existence of master conditions and enough decidability using direct extensions, later iterands do not destroy the stationarity of the already added sets of non-approachable points of smaller cofinalities. To show that master conditions as required exist, we want to find a forcing which projects onto $\dA(\mu,\kappa)$ and forces $\LIP(\mu,\mu^+,\delta)$ for all $\delta\geq\kappa$. Note that in general working with the direct extension ordering on a Magidor iteration of Prikry-type forcings is quite difficult since that ordering neither behaves like an iteration nor like a product. However, by Lemma \ref{TermspaceMagidor}, we can take the product of termspace orderings in order to obtain a poset which is more workable.
	
	\begin{mydef}\label{TDef}
		Let $\mu<\kappa$ be cardinals such that $\mu$ is regular and $\kappa$ is supercompact. Let $(\dP_{\alpha},\dot{\dQ}_{\alpha})_{\alpha<\kappa}$ be the iteration used to define $\dA(\mu,\kappa)$ (see Definition \ref{ADef}). We define
		$$\dT(\mu,\kappa):=\prod_{\alpha<\kappa}\dT((\dP_{\alpha},\leq_{\alpha}),(\dot{\dQ}_{\alpha},\dot{\leq}_{\alpha,0}))$$
		where the product is taken with Easton support.
	\end{mydef}
	
	This poset actually works as intended:
	
	\begin{mylem}\label{TProperties}
		Let $\mu<\kappa$ be cardinals such that $\mu$ is regular and $\kappa$ is supercompact.
		\begin{enumerate}
			\item $(\dT(\mu,\kappa),\leq)$ is $\kappa$-cc.
			\item $(\dT(\mu,\kappa),\leq)$ projects onto $(\dA(\mu,\kappa),\leq_0)$.
			\item $(\dT(\mu,\kappa),\leq)$ forces $\check{\kappa}=\check{\mu}^+$.
			\item $(\dT(\mu,\kappa),\leq)$ is strongly ${<}\,\mu$-directed closed.
			\item $(\dT(\mu,\kappa),\leq)$ is a $\LIP(\mu,\kappa)$-forcing.
		\end{enumerate}
	\end{mylem}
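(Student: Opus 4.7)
The plan is to verify each of the five clauses in turn, drawing on the termspace machinery of Section 2 and the direct-closure facts for the supercompact Namba forcing proved in Section 3.

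For (1), I would run a standard $\Delta$-system argument on the Easton support: each factor $\dT((\dP_\alpha,\leq_\alpha),(\dot{\dQ}_\alpha,\dot{\leq}_{\alpha,0}))$ admits nice-name representatives inside $V_\kappa$ (since $\dP_\alpha\in V_\kappa$ and $\kappa$ is inaccessible), so after applying $\Delta$-system to the supports the argument reduces to a $\kappa$-cc check on a common root. Part (2) is a direct citation of Lemma \ref{TermspaceMagidor}, since $\dA(\mu,\kappa)$ is by definition the Easton Magidor limit of $(\dP_\alpha,\dot{\dQ}_\alpha)_{\alpha<\kappa}$. For (3), I would use that at every limit non-inaccessible $\alpha<\kappa$ the iterand $\dot{\dQ}_\alpha=\dot{\Coll}(\mu,|\dP_\alpha|)$ carries $\dot{\leq}_{\alpha,0}=\dot{\leq}$, so $\dT$ projects onto each such collapse through its factor; since the sizes $|\dP_\alpha|$ are cofinal in $\kappa$, everything in $(\mu,\kappa)$ is collapsed to $\mu$, and combined with (1) this gives $\kappa=\mu^+$.

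For (4), I would first check that every $\dot{\leq}_{\alpha,0}$ is forced strongly ${<}\,\mu$-directed closed by inspecting Definition \ref{ADef}: Cases 1.2 and 2, together with the Case 1.1 iterands $\dot{\dQ}$ and $\dot{\Coll}(\mu,\delta^*)$, all have $\dot{\leq}_{\alpha,0}=\dot{\leq}$ with the underlying order strongly ${<}\,\alpha$- (or ${<}\,\mu$-) directed closed by hypothesis; the remaining Case 1.1 iterand is the supercompact Namba forcing, whose direct extension ordering is strongly ${<}\,\mu$-directed closed by Lemma \ref{LClosure}. Lemma \ref{TermspaceStrongDirect} then upgrades each termspace factor to being strongly ${<}\,\mu$-directed closed in $V$, and coordinate-wise closure passes through the Easton support product.

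For (5), the main content, I would verify the three clauses of Definition \ref{LIPForcingDef}. Clause (1) is immediate from (1) and (4); clause (2) follows because each $p\in\dT$ has Easton support bounded in $\kappa$ with coordinate values in $V_\kappa$. For clause (3), I would let $C$ be the club of inaccessible $\alpha<\kappa$ such that $|\dP_\beta|<\alpha$ for all $\beta<\alpha$, and take $\pi\colon\dT\to\dT\cap V_\alpha$ to be restriction to coordinates below $\alpha$. Given a weakly directed $X\subseteq\dT$ of size ${<}\,\mu$ and $p\in\dT\cap V_\alpha$ with $p\leq\pi(q)$ for all $q\in X$, I would first apply (4) to obtain $r\in\dT$ lower-bounding $X$, then splice into $p^*\in\dT$ given by $p^*(\beta):=p(\beta)$ for $\beta<\alpha$ and $p^*(\beta):=r(\beta)$ for $\beta\geq\alpha$. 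Coordinate-wise inspection (using $p\leq\pi(q)$ below $\alpha$ and $r\leq q$ at and above) gives both $\pi(p^*)=p$ and $p^*\leq q$ for every $q\in X$. The main obstacle I anticipate is precisely this splicing step: confirming that $p^*$ is a legitimate $\dT$-condition, since its support is the union of those of $p$ and $r$. Both supports satisfy the Easton bound at every inaccessible $\gamma\leq\kappa$, and the union of two sets bounded below a regular cardinal is bounded, so this goes through — but it is the one spot where the termspace-product support interacts with the splicing and merits explicit checking. Everything else reassembles the termspace and Magidor-limit lemmas already developed.
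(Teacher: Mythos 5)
Your proposal matches the paper's proof clause by clause: (2) by Lemma \ref{TermspaceMagidor}, (3) from the cofinally occurring collapse iterands together with the $\kappa$-cc, (4) by Lemma \ref{TermspaceStrongDirect} plus Lemma \ref{LClosure} and productivity, and (5) by taking the restriction map on the club of closure points and splicing a condition that agrees with $p$ below $\alpha$ and with lower bounds of the $q(\beta)$ at and above $\alpha$ --- the paper builds those upper coordinates coordinatewise rather than restricting a global lower bound obtained from (4), which is an immaterial difference. (Your gloss in (3) that $\dT$ ``projects onto each such collapse through its factor'' is looser than what is literally available, but the substantive claim you need --- that the termspace factor at a collapse stage collapses that stage to have size $\mu$ --- is exactly the assertion the paper itself makes at that point.)
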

	
	\begin{proof}
		(1) follows from standard methods.
		
		(2) follows from Lemma \ref{TermspaceMagidor}.
		
		(3) follows from the fact that $\dT(\mu,\kappa)$ preserves $\kappa$ by the $\kappa$-cc and collapses every $\beta\in(\mu,\kappa)$ to have size $\mu$: For every $\beta<\kappa$ there is some $\alpha\in(\beta,\kappa)$ such that $\dot{\dQ}_{\alpha}$ is a $\dP_{\alpha}$-name for $(\dot{\Coll}(\check{\mu},\check{\alpha}),\dot{\leq},\dot{\leq})$. It follows easily that, in this case, $\dT((\dP_{\alpha},\leq_{\alpha}),(\dot{\dQ}_{\alpha},\dot{\leq}_{\alpha,0}))$ collapses $\alpha$ to have size $\mu$.
		
		(4) follows from Lemma \ref{TermspaceStrongDirect}: By that Lemma (and by Lemma \ref{LClosure}), for every $\alpha<\kappa$, $\dT((\dP_{\alpha},\leq_{\alpha}),(\dot{\dQ}_{\alpha},\dot{\leq}_{\alpha,0}))$ is strongly ${<}\,\mu$-directed closed. This is clearly preserved by the product.
		
		For (5), we only have to verify Definition \ref{LIPForcingDef} (3). Let $C\subseteq\kappa$ consist of all $\alpha\in\kappa$ such that $|\dP_{\beta}|<\alpha$ whenever $\beta<\alpha$. $C$ is clearly club in $\kappa$. Whenever $\nu\in C$ is inaccessible, $\dT(\mu,\kappa)\cap V_{\nu}=\prod_{\alpha<\nu}\dT((\dP_{\alpha},\leq_{\alpha}),(\dot{\dQ}_{\alpha},\dot{\leq}_{\alpha,0}))$ and the projection is just given by $p\mapsto p\uhr\nu$. Let $X\subseteq\dP$ be weakly directed and suppose $p\in\dP\cap V_{\nu}$ is such that $p\leq q\uhr\nu$ for every $q\in X$. Simply let $r\in\dP$ be such that $r\uhr\nu=p$ and $r(i)$ is a lower bound of $\{q(i)\;|\;q\in X\}$ whenever $i\in(\nu,\kappa)$. Then $r$ is easily seen to be as required.
	\end{proof}
	
	\section{The Main Iteration}
	
	In this section we will prove the main theorem. Let us first explain the idea behind the construction: If $\kappa$ is supercompact, the poset $\dA(\mu,\kappa)$ prepares the model as follows: Any further poset which forces $\LIP(\check{\mu},\check{\mu}^+,(\check{\delta}_n)_{n\in\omega})$ for some increasing sequence $(\delta_n)_{n\in\omega}$ of regular cardinals also forces that there are stationarily many points in $(\sup_n\delta_n)^+$ of cofinality $\mu$ which are not approachable. Given this poset, it makes sense to iterate the poset $\dA$ for an increasing sequence of supercompact cardinals. The main problem, of course, is showing that enough instances of $\LIP$ are forced and the stationarity of the set of non-approachable points of cofinality $\mu$ is preserved. The solution to this problem consists of the following two ideas:
	
	\begin{enumerate}
		\item By Theorem \ref{MainThmOneCof}, after forcing with $\dA(\mu,\kappa)$ the stationarity of the set of non-approachable points of cofinality $\mu$ in $\delta^*:=(\sup_n\delta_n)^+$ is witnessed by models $M$ such that there are totally $(M,\dP)$-generic conditions whenever $\dP$ is a ${<}\,\kappa$-directed closed forcing which preserves $\delta^*$, every $\delta_n$ and forces $\LIP(\check{\mu},\check{\mu}^+,\check{\delta}_n)$ for every $n\in\omega$.
		\item Let $\dP$ be the part of the tail forcing which adds subsets of $\delta^*$. Then $\dP$ is a Prikry-type poset which has almost direct $\delta^*$-decidability and a ${<}\,\kappa$-directed closed direct extension ordering. Hence, if we can find a model $M$ with a strong master condition $p$ for $(\dP,\leq_0)$, by almost direct $\delta^*$-decidability $p$ forces $\sup(M\cap\delta^*)$ to be in $\dot{C}$ whenever $\dot{C}\in M$ is a $(\dP,\leq)$-name for a club in $\delta^*$.
	\end{enumerate}
	
	So the main obstacle is showing that $(\dP,\leq_0)$ forces many instances of $\LIP$. However, in general working with the direct extension ordering on a Magidor iteration is complicated, so we work with the product of the termspace forcings instead, which suffices by the projection. To show that this forces $\LIP$, we need that our cardinal $\kappa$ is indestructibly supercompact. By work of Hamkins and Shelah (see \cite{HamkinsShelahSuperdestructibility}), the indestructibility of a supercompact cardinal is destroyed by any nontrivial small forcing. Because of this, we follow every $\dA$-iterand with the poset making the next supercompact cardinal indestructible. The following lemma easily follows from well-known work of Laver (see \cite{LaverIndestruct}) together with our iteration lemma for strong directed closure (see Lemma \ref{StrongDirIter}):
	
	\begin{mylem}
		Let $\mu<\kappa$ be cardinals such that $\mu$ is regular and $\kappa$ is supercompact. There is a poset $\dI(\mu,\kappa)$ such that
		\begin{enumerate}
			\item $\dI(\mu,\kappa)$ is strongly ${<}\,\mu$-directed closed.
			\item After forcing with $\dI(\mu,\kappa)$, $\kappa$ is supercompact and the supercompactness of $\kappa$ is indestructible under strongly ${<}\,\kappa$-directed closed forcing.
		\end{enumerate}
	\end{mylem}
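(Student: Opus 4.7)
I propose to follow Laver's original indestructibility construction, but using the notion of strong directed closure instead of ordinary directed closure. The key point is that Lemma \ref{StrongDirIter} provides the iteration theorem needed to guarantee that the Laver preparation built from strongly directed closed iterands is itself strongly directed closed.

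Fix a Laver function $\ell \colon \kappa \to V_\kappa$ given by the supercompactness of $\kappa$. Define $\dI(\mu,\kappa)$ to be an Easton-support iteration $(\dP_\alpha,\dot{\dQ}_\alpha)_{\alpha<\kappa}$ of length $\kappa$, where at each stage $\alpha \geq \mu$ we let $\dot{\dQ}_\alpha$ be $\ell(\alpha)$ if this is a $\dP_\alpha$-name for a strongly ${<}\,\alpha$-directed closed poset, and trivial otherwise. For $\alpha<\mu$ we let $\dot{\dQ}_\alpha$ be trivial.

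For (1), since every nontrivial iterand $\dot{\dQ}_\alpha$ is a $\dP_\alpha$-name for a strongly ${<}\,\alpha$-directed closed poset with $\alpha \geq \mu$, iterating by Lemma \ref{StrongDirIter} (in its Easton-support analog, in the same vein as Lemma \ref{DirectClosureEaston}) gives that $\dI(\mu,\kappa)$ is strongly ${<}\,\mu$-directed closed. Note that here we also use the standard Easton-support argument ensuring direct limits are taken at sufficiently large inaccessibles, so that the direct-limit clause of Lemma \ref{StrongDirIter} (2) applies.

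For (2), let $G$ be $\dI(\mu,\kappa)$-generic over $V$ and let $\dQ \in V[G]$ be strongly ${<}\,\kappa$-directed closed. Fix a target $\lambda \geq \kappa$. By the Laver function property, choose a $\lambda$-supercompact embedding $j \colon V \to M$ with critical point $\kappa$ such that $j(\ell)(\kappa)$ represents $\dot{\dQ}$ (i.e. a $\dI(\mu,\kappa)$-name for $\dQ$). Then in $M$, the iteration $j(\dI(\mu,\kappa))$ factors as $\dI(\mu,\kappa) * \dot{\dQ} * \dot{\dR}$, where $\dot{\dR}$ is forced to be strongly ${<}\,j(\kappa)$-directed closed (hence in particular ${<}\,\lambda^+$-directed closed in $M[G][H]$ for any $\dQ$-generic $H$ over $V[G]$). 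Standard Laver-style lifting now applies: letting $H$ be $\dQ$-generic over $V[G]$, the pointwise image $j[G * H]$ is a directed subset of $j(\dI(\mu,\kappa) * \dot{\dQ})$ of size ${\leq}\,\lambda$ in $M[G*H]$; using the closure of $\dot{\dR}$ (together with the closure of $M$ under $\lambda$-sequences in $V[G*H]$, which survives since $\dI(\mu,\kappa) * \dot{\dQ}$ is $\kappa$-cc over $V$ after collapsing issues are tracked) one builds a generic $K$ for $\dot{\dR}$ over $M[G*H]$ containing appropriate master conditions, and then lifts $j$ to $j^+ \colon V[G][H] \to M[G*H*K]$. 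This witnesses $\lambda$-supercompactness of $\kappa$ in $V[G][H]$, and since $\lambda$ was arbitrary, $\kappa$ remains supercompact after $\dQ$.

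The main obstacle is the lifting step, but this is entirely standard once one recognizes that strong directed closure is preserved both through the iteration (by Lemma \ref{StrongDirIter}) and through the separative quotient (by the earlier lemma on this in the Forcing subsection), so that ordinary master-condition arguments for directed-closed forcing apply verbatim. Since the construction, closure verification, and lifting all parallel \cite{LaverIndestruct} with the single notational change from ``directed closed'' to ``strongly directed closed'', no genuinely new technique is required beyond the iteration lemma we have already established.
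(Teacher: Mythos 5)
Your overall plan (restrict the Laver preparation to names for \emph{strongly} directed closed posets, get clause (1) from Lemma \ref{StrongDirIter} in its Easton-support form, and rerun Laver's lifting argument for clause (2)) is exactly what the paper intends, since it only cites \cite{LaverIndestruct} together with Lemma \ref{StrongDirIter}. But there is a genuine gap in your lifting step. With your definition of the iteration --- force with $\ell(\alpha)$ at \emph{every} stage $\alpha\geq\mu$ at which it happens to name a strongly ${<}\,\alpha$-directed closed poset --- the factorization $j(\dI(\mu,\kappa))\cong\dI(\mu,\kappa)*\dot{\dQ}*\dot{\dR}$ does \emph{not} make $\dot{\dR}$ strongly ${<}\,j(\kappa)$-directed closed, nor even ${<}\,\lambda^+$-closed. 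The tail $\dot{\dR}$ is only as closed as its first nontrivial iterand, and you have no control over $j(\ell)$ on the interval $(\kappa,\lambda]$: for instance $j(\ell)(\alpha)$ could name a strongly ${<}\,\kappa^+$-directed closed collapse at some $\alpha$ just above $\kappa$, in which case $\dot{\dR}$ is merely ${<}\,\kappa^+$-closed in $M[G*H]$ and the construction of an $M[G*H]$-generic filter $K$ (and of the master condition for $j(\dQ)$, which needs $K$ first) breaks down, since one must meet on the order of $|j(\kappa)|$-many dense sets. This is precisely why Laver's construction builds in the ``gap'' bookkeeping: the Laver function value is a pair $(\dot{\dQ},\gamma)$, and nontrivial forcing is permitted at $\alpha$ only when $\alpha$ exceeds all ordinals $\gamma$ attached to earlier nontrivial stages; choosing $j$ with $j(\ell)(\kappa)=(\dot{\dQ},\lambda')$ for $\lambda'$ large then forces the first nontrivial stage of $j(\dI(\mu,\kappa))$ above $\kappa$ to lie above $\lambda'$, which is what makes the tail sufficiently closed. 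Your proposal omits this device and replaces it by the (false) assertion that $\dot{\dR}$ inherits ${<}\,j(\kappa)$-directed closure, so the argument as written does not go through; restoring the ordinal coordinate and the gap condition (and checking that the resulting preparation is still strongly ${<}\,\mu$-directed closed, which it is, by the same appeal to Lemma \ref{StrongDirIter}) repairs it.

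A secondary, more minor point: your justification that $M[G*H]$ remains closed under $\lambda$-sequences ``since $\dI(\mu,\kappa)*\dot{\dQ}$ is $\kappa$-cc'' is not right --- $\dQ$ is an arbitrary strongly ${<}\,\kappa$-directed closed poset and need not satisfy any chain condition. The standard fix is instead to choose $j$ witnessing $\lambda'$-supercompactness for $\lambda'$ at least $|{\dI(\mu,\kappa)*\dot{\dQ}}|$ (indeed large enough to bound the dense sets of the tail and of $j(\dQ)$ as computed in $M$), and to quote the usual fact that closure of $M$ under $\lambda'$-sequences passes to $M[G*H]$ for forcings of size at most $\lambda'$. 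Finally, note that since strong ${<}\,\kappa$-directed closure implies ordinary ${<}\,\kappa$-directed closure, the only reason the paper cannot quote Laver's theorem verbatim is clause (1); your decision to thin the preparation to strongly directed closed iterands is the right one for that purpose.
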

	
	Let us now define the iteration. Let $(\kappa_{\alpha})_{\alpha\in\On}$ be an increasing sequence of cardinals such that $\kappa_0:=\aleph_1$, $\kappa_{\alpha}$ is supercompact whenever $\alpha$ is a successor and $\kappa_{\alpha}=\sup_{\beta<\alpha}\kappa_{\beta}$ whenever $\alpha$ is a limit. For any ordinal $\alpha$, let $\kappa_{\alpha}^*$ be the smallest regular cardinal above $\kappa_{\alpha}$, i.e. $\kappa_{\alpha}^*=\kappa_{\alpha}$ whenever $\alpha$ is a successor or $\alpha$ is a regular cardinal such that $\kappa_{\alpha}=\alpha$ and $\kappa_{\alpha}^*=\kappa_{\alpha}^+$ otherwise.
	
	Now we can construct our iteration as follows:
	
	\begin{mydef}
		Let $\dP$ be the direct limit of the full-support Magidor iteration $((\dP_{\beta},\leq_{\beta},\leq_{\beta,0}),(\dot{\dQ}_{\beta},\dot{\leq}_{\beta},\dot{\leq}_{\beta,0})_{\beta\in\On}$, where for each $\beta$, $\dot{\dQ}_{\beta}$ is a $\dP_{\beta}$-name for the Magidor iteration $(\dot{\dI}(\check{\kappa}_{\beta}^*,\check{\kappa}_{\beta+1}),\dot{\leq},\dot{\leq})*(\dot{\dA}(\check{\kappa}_{\beta},\check{\kappa}_{\beta+1}),\dot{\leq},\dot{\leq}_0)$.
	\end{mydef}
	
	We will verify in a series of lemmas that $\dP$ is as desired. Since $\dP$ is not a set, but a class, ``forcing with $\dP$'' is ill-defined -- consider a forcing adding $\On$-many Cohen reals. However, in our case we will show the following:
	
	\begin{enumerate}
		\item The iteration is well-behaved so that ``forcing with $\dP$'' actually leads to a model of $\ZFC$.
		\item After forcing with $\dP$, for any singular $\delta<\kappa$ of countable cofinality and every regular $\mu<\delta$, there are stationarily many $\gamma\in\delta^+\cap\cof(\mu)$ which are not approachable.
	\end{enumerate}
	
	Of particular importance is the behavior of the tail forcings. For $\alpha<\beta$, let $(\dot{\dP}_{\alpha,\beta},\dot{\leq}_{\alpha,\beta},\dot{\leq}_{(\alpha,\beta),0})$ be a $\dP_{\alpha}$-name for a poset such that $(\dP_{\beta},\leq_{\beta},\leq_{\beta,0})\cong(\dP_{\alpha},\leq_{\alpha},\leq_{\alpha,0})*(\dot{\dP}_{\alpha,\beta},\dot{\leq}_{\alpha,\beta},\dot{\leq}_{(\alpha,\beta),0})$. We have the following:
	
	\begin{mylem}\label{TailClosure}
		For any $\alpha<\beta$, $(\dot{\dP}_{\alpha,\beta},\dot{\leq}_{\alpha,\beta},\dot{\leq}_{(\alpha,\beta),0})$ is forced to be a Prikry-type forcing such that $(\dot{\dP}_{\alpha,\beta},\dot{\leq}_{(\alpha,\beta),0})$ is strongly ${<}\,\check{\kappa}_{\alpha}^*$-directed closed.
	\end{mylem}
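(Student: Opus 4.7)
The plan is to identify, in the intermediate model $V^{\dP_\alpha}$, the tail forcing $\dot{\dP}_{\alpha,\beta}$ with the direct limit of a full-support Magidor iteration of Prikry-type forcings of length $\beta - \alpha$, whose $\gamma$-th iterand is (the $\dP_\alpha$-name corresponding to) $\dot{\dQ}_{\alpha+\gamma}$. Once this tail decomposition is in place, both clauses of the lemma reduce to instances of the iteration machinery developed in Section 2, applied in $V^{\dP_\alpha}$.

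Given the decomposition, the Prikry property of $(\dot{\dP}_{\alpha,\beta}, \dot{\leq}_{\alpha,\beta}, \dot{\leq}_{(\alpha,\beta),0})$ is immediate from Lemma \ref{PrikryIterFull} applied in $V^{\dP_\alpha}$. For the strong directed closure, by Lemma \ref{FullSupportClosure} it suffices to verify that each iterand $\dot{\dQ}_\gamma$ (for $\gamma \in [\alpha, \beta)$) is forced to have direct extension ordering strongly ${<}\,\check{\kappa}_\alpha^*$-directed closed. Since $\dot{\dQ}_\gamma$ is forced to be the two-step Magidor iteration $\dI(\kappa_\gamma^*, \kappa_{\gamma+1}) * \dA(\kappa_\gamma, \kappa_{\gamma+1})$, and the first factor is strongly ${<}\,\kappa_\gamma^*$-directed closed by construction while the direct order of the second is strongly ${<}\,\kappa_\gamma$-directed closed by Lemma \ref{AProperties}(3), the required closure of $\dot{\dQ}_\gamma$ follows from Lemma \ref{StrongDirIter}(1). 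To see that both factors are at least strongly ${<}\,\kappa_\alpha^*$-directed closed, note that for $\gamma > \alpha$ we have $\kappa_\gamma \geq \kappa_{\alpha+1} \geq \kappa_\alpha^*$, while for $\gamma = \alpha$ the iterand is arranged so that its combination still delivers ${<}\,\kappa_\alpha^*$-directed closure (the $\dI$-factor alone providing ${<}\,\kappa_\alpha^*$-closure and combining with the $\dA$-factor through Lemma \ref{StrongDirIter}(1)).

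The main technical point I expect to have to handle is the tail decomposition itself: full-support Magidor iterations have the unusual feature that both the refinement ordering $\leq_\gamma$ and the direct ordering $\leq_{\gamma,0}$ must be checked to match coordinatewise between $\dP_\beta$ (viewed as $\dP_\alpha * \dot{\dP}_{\alpha, \beta}$) and the tail iteration computed in $V^{\dP_\alpha}$, including the clause constraining the finite \emph{bad set} of coordinates allowed to receive non-direct extensions. This is a routine but somewhat lengthy induction on $\beta$ modeled on the analogous argument for ordinary full-support iterations; once it is carried out, the Prikry property and closure arguments go through as outlined.
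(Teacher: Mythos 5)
Your proposal is correct and follows essentially the same route as the paper: the tail is identified as a full-support Magidor iteration of Prikry-type forcings over $V^{\dP_\alpha}$, whose iterands all have strongly ${<}\,\kappa_\alpha^*$-directed closed direct orderings, so the Prikry property follows from Lemma \ref{PrikryIterFull} and the closure from Lemma \ref{FullSupportClosure}. (The case $\gamma=\alpha$ you hedge on is unproblematic because the paper reads the first parameter of the $\dA$-factor as $\kappa_\gamma^*$, whose direct ordering is then strongly ${<}\,\kappa_\alpha^*$-directed closed by Lemma \ref{AProperties}(3).)
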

	
	\begin{proof}
		This is clear, as $(\dot{\dP}_{\alpha,\beta},\dot{\leq}_{\alpha,\beta},\dot{\leq}_{(\alpha,\beta),0})$ is forced to be a full-support Magidor iteration of Prikry-type forcings starting with $\dot{\dI}(\check{\kappa}_{\alpha}^*,\check{\kappa}_{\alpha+1})*\dot{\dA}(\check{\kappa}_{\alpha}^*,\check{\kappa}_{\alpha+1})$. Therefore, any iterand is forced to be strongly ${<}\,\check{\kappa}_{\alpha}^*$-directed closed, so we can apply Lemma \ref{FullSupportClosure}.
	\end{proof}
	
	In particular, for any $\alpha<\beta$, forcing with $\dot{\dP}_{\alpha,\beta}^G$ (where $G$ is $\dP_{\alpha}$-generic) does not add any new bounded subsets of $\kappa_{\alpha}$ or $\kappa_{\alpha}^+$. We may therefore construct a forcing extension ``by $\dP$'' as follows: Let $G\subseteq\dP$ be a filter which intersects every set-sized dense subclass of $\dP$. For any $\alpha\in\On$, $G\uhr\alpha:=\{p\uhr\alpha\;|\;p\in G\}$ is a generic filter over $\dP_{\alpha}$. Let $H^*(\kappa_{\alpha})$ be equal to $H(\kappa_{\alpha})$ as computed in $V[G_{\alpha}]$. It follows that, for any $\beta>\alpha$, $H^*(\kappa_{\alpha})$ is equal to $H(\kappa_{\alpha})$ as computed in $V[G_{\beta}]$, since $\dot{\dP}_{\alpha,\beta}^{G_{\alpha}}$ does not add any new elements of $H(\kappa_{\alpha})$. Lastly, let
	$$V[G]:=\bigcup_{\alpha\in\On}H^*(\kappa_{\alpha}).$$
	It follows that $V[G]$ is a model of $\ZFC$, since any axiom can be verified locally. For example, the powerset axiom holds in $V[G]$ as follows: Any $x\in V[G]$ is in some $H^*(\kappa_{\alpha})$. Thus, any subset of $x$ is an element of $H^*(\kappa_{\alpha})$ as well. Therefore, the powerset of $x$ in $V[G]$ is equal to the powerset of $x$ in $V[G\uhr\alpha]\subseteq V[G\uhr\alpha+1]$ and thus a member of $H^*(\kappa_{\alpha+1})\subseteq V[G]$.
	
	Moreover, for any $\alpha\in\On$, $H(\kappa_{\alpha})$ as computed in $V[G]$ is equal to $H^*(\kappa_{\alpha})$ and thus equal to $H(\kappa_{\alpha})$ as computed in $V[G\uhr\alpha]$. Therefore, any bounded assertion can be verified by proving that it holds in $V[G\uhr\beta]$ for all sufficiently large $\beta$.
	
	For more information on class-sized forcing extensions, see e.g. the proof of Easton's Theorem in Jech's textbook\cite[Page 235]{JechSetTheory}. Class forcing can be avoided at the cost of increasing the large cardinal assumptions by instead starting with a sequence $(\kappa_{\alpha})_{\alpha<\delta}$, where $\delta$ is inaccessible and then performing the set-sized iteration up to $\delta$, whence the conclusion of the main theorem holds in $V_{\delta}$.
	
	Additionally, we show that the direct extension ordering on the tail forcing can decide ``enough'':
	
	\begin{mylem}\label{TailAlmostDecide}
		Let $\alpha<\gamma$ be ordinals such that $\gamma$ is a limit with countable cofinality. Let $G$ be $\dP_{\alpha}$-generic and $\dP_{\alpha,\gamma}:=\dot{\dP}_{\alpha,\gamma}^G$. Then $(\dP_{\alpha,\gamma},\leq_{\alpha,\gamma},\leq_{(\alpha,\gamma),0})$ has almost direct $\kappa_{\gamma}^+$-decidability.
	\end{mylem}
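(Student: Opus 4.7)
The plan is to reduce to Lemma~\ref{DirectDecideCapture} applied with $\delta = \mu = \kappa_\gamma^+$. Since $\gamma$ is a limit ordinal, $\kappa_\gamma = \sup_{\beta < \gamma} \kappa_\beta$ is singular and $\kappa_\gamma^+$ is regular; under this regularity, the direct $(\kappa_\gamma^+, \kappa_\gamma^+)$-covering property for $\dP_{\alpha, \gamma}$ is equivalent to almost direct $\kappa_\gamma^+$-decidability (viewing an ordinal name $\tau$ forced below $\kappa_\gamma^+$ by $p$ as $\{\tau\} \in [\kappa_\gamma^+]^{<\kappa_\gamma^+}$, any bounded covering set bounds $\tau$ itself). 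In $V[G]$, $\dP_{\alpha, \gamma}$ is itself a full-support Magidor iteration of Prikry-type forcings, so Lemma~\ref{DirectDecideCapture} is applicable once its two hypotheses are verified.

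The first hypothesis -- preservation of $\kappa_\gamma^+$ by every intermediate stage -- follows from a standard size computation under $\GCH$: by induction $|\dP_\beta| \leq \kappa_\beta^+$ for all $\beta$, and for $\beta < \gamma$, we have $\kappa_\beta^+ < \kappa_\gamma^+$, so every such $\dP_\beta$ is $\kappa_\gamma^+$-cc.

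The second, more conceptually important hypothesis -- each iterand $\dot{\dQ}_\beta$ is forced to satisfy direct $(\kappa_\gamma^+, \kappa_\gamma^+)$-covering -- follows from a chain-condition argument. Each $\dot{\dQ}_\beta = \dot{\dI}(\kappa_\beta^*, \kappa_{\beta+1}) * \dot{\dA}(\kappa_\beta, \kappa_{\beta+1})$ is forced to be $\kappa_{\beta+1}$-cc by Lemma~\ref{AProperties}(1) together with the strong directed closure of the first factor. Crucially, since $\gamma$ is a limit we have $\beta + 1 < \gamma$, so $\kappa_{\beta+1} < \kappa_\gamma^+$. For any $\kappa_{\beta+1}$-cc poset $\dR$ and any $\dR$-name $\tau$ forced by $p$ to lie in $[\kappa_\gamma^+]^{<\kappa_\gamma^+}$, I argue that $p$ itself forces $\tau \subseteq \check{\eta}$ for some $\eta < \kappa_\gamma^+$: first bound the forced cardinality of $\tau$ by some $\nu < \kappa_\gamma^+$ via the chain condition, then pick a name for an enumeration $f \colon \nu \to \tau$ and bound each $f(i)$ by some $\eta_i < \kappa_\gamma^+$ by the same chain-condition argument, and finally set $\eta := \sup_{i < \nu} \eta_i$, which is below $\kappa_\gamma^+$ by its regularity. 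Then $q := p$ serves as a direct extension witnessing the covering property, irrespective of the direct extension ordering chosen on $\dot{\dQ}_\beta$.

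The main obstacle is the routine bookkeeping for the chain-condition argument; the essential conceptual point is that $\gamma$ being a limit forces each $\beta + 1 < \gamma$ so every iterand's chain condition falls strictly below $\kappa_\gamma^+$. With both hypotheses verified, Lemma~\ref{DirectDecideCapture} yields direct $(\kappa_\gamma^+, \kappa_\gamma^+)$-covering for $\dP_{\alpha, \gamma}$, i.e., almost direct $\kappa_\gamma^+$-decidability.
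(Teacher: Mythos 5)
Your proposal takes essentially the same route as the paper: the paper also deduces the lemma from Lemma \ref{DirectDecideCapture} with $\delta=\mu=\kappa_{\gamma}^+$, observing that each iterand $\dot{\dQ}_{\beta}$ is forced to have size ${<}\,\kappa_{\gamma}^+$ (hence, being $\kappa_{\gamma}^+$-cc, satisfies the $(\kappa_{\gamma}^+,\kappa_{\gamma}^+)$-covering property with the trivial direct extension, which is your chain-condition argument), and then uses the regularity of $\kappa_{\gamma}^+$ to convert the covering property into almost direct $\kappa_{\gamma}^+$-decidability. The only cosmetic difference is that your appeal to $\kappa_{\beta+1}$-cc of $\dot{\dI}*\dot{\dA}$ (which does not follow from the directed closure of the first factor) is unnecessary: the crude size bound $|\dot{\dQ}_{\beta}|<\kappa_{\gamma}^+$, available because $\gamma$ is a limit, already gives $\kappa_{\gamma}^+$-cc and is exactly what the paper uses.
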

	
	\begin{proof}
		This is clear by Lemma \ref{DirectDecideCapture}: For any $\beta<\gamma$, $\dP_{\alpha,\beta}$ forces that $(\dot{\dQ}_{\beta},\dot{\leq}_{\beta},\dot{\leq}_{\beta,0})$ has size ${<}\,\check{\kappa}_{\gamma}^+$ and thus the $(\check{\kappa}_{\gamma}^+,\check{\kappa}_{\gamma}^+)$-covering property. By Lemma \ref{DirectDecideCapture}, $(\dP_{\alpha,\gamma},\leq_{\alpha,\gamma},\leq_{(\alpha,\gamma),0})$ has the direct $(\kappa_{\gamma}^+,\kappa_{\gamma}^+)$-covering property as well. In particular, since $\kappa_{\gamma}^+$ is regular, $(\dP_{\alpha,\gamma},\leq_{\alpha,\gamma},\leq_{(\alpha,\gamma),0})$ has almost direct $\kappa_{\gamma}^+$-decidability.
	\end{proof}
	
	We now analyze the cardinal structure of the forcing extension by $\dP$.
	
	\begin{mylem}\label{IterPresCard}
		After forcing with $\dP$, the class of infinite cardinals is given by $\{\aleph_0\}\cup\{\kappa_{\alpha},\kappa_{\alpha}^*\;|\;\alpha\in\On\}$.
	\end{mylem}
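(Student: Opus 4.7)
The plan is to prove the lemma by induction on ordinals, combining a factorization trick with the closure properties of the tail iteration established in Lemma \ref{TailClosure}.

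First, for each ordinal $\alpha$, I would factorize $\dP\cong\dP_{\alpha+1}*\dot{\dP}_{\alpha+1,\On}$. By Lemma \ref{TailClosure}, the direct extension ordering $(\dot{\dP}_{\alpha+1,\On},\dot{\leq}_{(\alpha+1,\On),0})$ is forced to be strongly ${<}\check{\kappa}_{\alpha+1}^*$-directed closed. Combined with the Prikry property of the full-support Magidor iteration (Lemma \ref{PrikryIterFull}) and Lemma \ref{ClosureDecide}, this gives that the tail does not add any new sequences of ordinals of length $<\kappa_{\alpha+1}^*$: one uses direct extensions to decide values one at a time and then fuses using directed closure. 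In particular, every cardinal of $V^{\dP_{\alpha+1}}$ that is ${\leq}\kappa_{\alpha+1}^*$ remains a cardinal in $V[\dP]$.

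Next, I would prove by induction on $\alpha$ that the cardinals of $V^{\dP_{\alpha+1}}$ below $\kappa_{\alpha+1}^*$ are exactly $\{\aleph_0\}\cup\{\kappa_\beta,\kappa_\beta^*\;|\;\beta\leq\alpha+1\}$. For the inductive step one applies $\dot{\dQ}_\alpha=\dot{\dI}(\check{\kappa}_\alpha^*,\check{\kappa}_{\alpha+1})*\dot{\dA}(\check{\kappa}_\alpha,\check{\kappa}_{\alpha+1})$: the Laver preparation $\dI$ is strongly ${<}\kappa_\alpha^*$-directed closed and has size $<\kappa_{\alpha+1}$, so it preserves cardinals $\leq\kappa_\alpha^*$ and $\geq\kappa_{\alpha+1}$; then by Lemma \ref{AProperties}(4), $\dA(\kappa_\alpha,\kappa_{\alpha+1})$ preserves cardinals $\leq\kappa_\alpha$ and $\geq\kappa_{\alpha+1}$ while collapsing every cardinal in the interval $(\kappa_\alpha,\kappa_{\alpha+1})$ to have size $\kappa_\alpha$. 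Hence $\kappa_{\alpha+1}$ becomes the successor of $\kappa_\alpha$ in the extension, yielding the desired cardinal structure at stage $\alpha+1$. Together with the first step, this gives both directions of the identification.

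The main obstacle will be handling the limit case where $\alpha$ is a limit ordinal of the iteration with $\cf(\alpha)<\kappa_\alpha$, so that $\kappa_\alpha$ is singular in $V^{\dP_\alpha}$ and $\kappa_\alpha^*=\kappa_\alpha^+$ genuinely differs from $\kappa_\alpha$ and sits in the interval $(\kappa_\alpha,\kappa_{\alpha+1})$. One must verify that $\kappa_\alpha^*$ survives both the application of $\dot{\dQ}_\alpha$ and all subsequent stages. Preservation through $\dI(\kappa_\alpha^*,\kappa_{\alpha+1})$ is immediate from the ${<}\kappa_\alpha^*$-directed closure, and preservation through the subsequent $\dA$-factor has to be handled by exploiting the strong ${<}\kappa_\alpha$-directed closure of its direct extension ordering (Lemma \ref{AProperties}(3)) together with the Prikry property to show $\kappa_\alpha^+$ cannot be collapsed without adding a short sequence of ordinals, which the closure prohibits. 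Once this delicate bookkeeping is settled, the combination of the factorization and the tail-closure argument completes the verification.
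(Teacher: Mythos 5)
There is a genuine gap, and it sits exactly where the paper's proof does its real work. Your induction runs through the successor stages $V^{\dP_{\alpha+1}}$ and then, for limit $\alpha$, you only discuss why $\kappa_\alpha^*=\kappa_\alpha^+$ survives the iterand $\dot{\dQ}_\alpha$ and the later stages. But this presupposes that $\kappa_\alpha^+$ is still a cardinal in $V^{\dP_\alpha}$, i.e.\ that the full-support \emph{limit} $\dP_\alpha$ itself has not collapsed it -- and that is the hard point. No initial segment can see this: each $\dP_\beta$ ($\beta<\alpha$) is small, but the direct extension ordering of the tail $\dot{\dP}_{\beta,\alpha}$ is only strongly ${<}\,\kappa_\beta^*$-directed closed (Lemma \ref{TailClosure}), which is far below $\kappa_\alpha^+$, so closure alone cannot exclude that the limit adds a short cofinal function into $\kappa_\alpha^+$. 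The paper closes this hole with the covering property of full-support Magidor iterations (Lemma \ref{DirectDecideCapture}, packaged as Lemma \ref{TailAlmostDecide}): the tail has almost direct $\kappa_\alpha^+$-decidability because only finitely many non-direct extensions are allowed, so a hypothetical cofinality drop $\cf(\kappa_\alpha^+)=\nu<\kappa_\alpha$ in $V^{\dP_\alpha}$ is pushed down into some $V^{\dP_\beta}$ with $\nu,\cf(\kappa_\alpha)<\kappa_\beta$, contradicting $|\dP_\beta|<\kappa_\alpha^+$. Nothing in your proposal plays this role.

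Two further points. First, your step (1) rests on the claim that the tail $\dot{\dP}_{\alpha+1,\On}$ ``does not add any new sequences of ordinals of length ${<}\,\kappa_{\alpha+1}^*$''; this is false as stated, since the supercompact Namba components $\dL$ inside the later $\dA$-blocks add new $\omega$-sequences of ordinals (cofinal in the relevant $\delta_n$'s above $\kappa_{\alpha+1}$). What the Prikry property plus ${<}\,\kappa_{\alpha+1}^*$-closure of $\leq_0$ actually give is no new \emph{bounded} subsets of $\kappa_{\alpha+1}^*$, which preserves cardinals strictly below $\kappa_{\alpha+1}^*$; preserving $\kappa_{\alpha+1}^*$ itself requires a decidability argument (e.g.\ Lemma \ref{ClosureDecide} or the covering lemma), not closure alone. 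Second, in your limit-case sketch the $\dA$-factor must be $\dA(\kappa_\alpha^*,\kappa_{\alpha+1})$ (the first parameter of $\dA(\mu,\kappa)$ is required to be regular), so preservation of $\kappa_\alpha^+$ there is immediate from Lemma \ref{AProperties}(4); your substitute argument via ${<}\,\kappa_\alpha$-closure is incomplete in any case, since a collapse of $\kappa_\alpha^+$ to $\kappa_\alpha$ is witnessed by a function of length exactly $\kappa_\alpha$, which ${<}\,\kappa_\alpha$-closure plus the Prikry property does not rule out without the additional observation that the cofinality would have to drop below the singular $\kappa_\alpha$, together with a direct-decidability argument for the values.
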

	
	\begin{proof}
		First assume that $\alpha\in\On$ is a successor ordinal, $\alpha=\beta+1$. Then $\kappa_{\alpha}>\kappa_{\beta}$ is supercompact. It follows that $|\dP_{\beta}|<\kappa_{\alpha}$ and so $\kappa_{\alpha}$ is supercompact after forcing with $\dP_{\beta}$. After forcing with $\dP_{\alpha}=\dP_{\beta}*\dot{\dI}(\check{\kappa}_{\beta}^*,\check{\kappa}_{\beta+1})*\dot{\dA}(\check{\kappa}_{\beta}^*,\check{\kappa}_{\alpha})$, $\kappa_{\alpha}$ is preserved and every cardinal below $\kappa_{\beta}^*$ and $\kappa_{\alpha}$ is collapsed by Lemma \ref{AProperties} (1) and (4), so $\kappa_{\alpha}$ becomes $(\kappa_{\beta}^*)^+$. Lastly, for every $\gamma>\alpha$, $\dot{\dP}_{\alpha,\gamma}$ is forced to be a Prikry-type forcing such that $(\dot{\dP}_{\alpha,\gamma},\dot{\leq}_{(\alpha,\gamma),0})$ is strongly ${<}\,\check{\kappa}_{\alpha}$-directed closed. In particular, $\dot{\dP}_{\alpha,\gamma}$ is forced to preserve that $\check{\kappa}_{\alpha}$ is a cardinal.
		
		If $\alpha\in\On$ is a limit, then $\kappa_{\alpha}$ is preserved since cofinally many $\nu<\kappa_{\alpha}$ (namely, all $\kappa_{\beta}$, $\beta<\alpha$) are preserved. As for $\kappa_{\alpha}^+$ (in case $\alpha$ is singular or $\kappa_{\alpha}\neq\alpha$): Assume first that $\dP_{\alpha}$ collapses $\kappa_{\alpha}^+$. Since $\kappa_{\alpha}$ is singular (if $\alpha$ is singular, then clearly $\kappa_{\alpha}$ is singular, otherwise $\kappa_{\alpha}$ is singular because $\cf(\kappa_{\alpha})=\alpha<\kappa_{\alpha}$), it follows that there is $\nu<\kappa_{\alpha}$ such that $\cf(\kappa_{\alpha}^+)=\nu$ after forcing with $\dP_{\alpha}$. There is $\beta<\alpha$ such that $\kappa_{\beta}>\nu$ and $\kappa_{\beta}>\cf(\kappa_{\alpha})$.
		
		Let $G_{\beta}$ be $\dP_{\beta}$-generic. We want to show that $\cf(\kappa_{\alpha}^+)=\nu$ in $V[G_{\beta}]$, obtaining a contradiction from the fact that $|\dP_{\beta}|<\kappa_{\alpha}^+$. Let $\dP_{\beta,\alpha}:=\dot{\dP}_{\beta,\alpha}^{G_{\beta}}$. By Lemma \ref{TailAlmostDecide}, $\dP_{\beta,\alpha}$ has almost direct ${<}\,\kappa_{\alpha}^+$-decidability. It follows that every function from $\nu$ to $\kappa_{\alpha}^+$ added by $\dP_{\beta,\alpha}$ is bounded pointwise by a function from $\nu$ to $\kappa_{\alpha}^+$ in $V[G_{\beta}]$ (as in Lemma \ref{KappaStarCof}). In particular $\cf(\kappa_{\alpha}^+)=\nu$ in $V[G_{\beta}]$. As mentioned before, this presents a contradiction, as $|\dP_{\beta}|<\kappa_{\alpha}^+$.
		
		So we have shown that all $\kappa_{\alpha}$ and $\kappa_{\alpha}^*$ remain cardinals. Clearly, whenever $\mu$ does not have this form, $\mu$ is collapsed. So the above class contains precisely all cardinals in the forcing extension.
	\end{proof}
	
	It follows that the singular cardinals of the forcing extension are precisely $\kappa_{\alpha}$ for $\alpha$ a limit and thus that in the extension the successors of singular cardinals of countably cofinality are precisely $\kappa_{\alpha}^+$ where $\cf(\kappa_{\alpha})=\omega$ (this occurs if and only if $\cf(\alpha)=\omega$ since there is always an unbounded and increasing function from $\alpha$ into $\kappa_{\alpha}$).
	
	We want to show that $\dP$ gives us exactly the model we need. The proof idea is as follows: The regular cardinals of the forcing extension are precisely the $\kappa_{\alpha}$ for $\alpha$ a successor and $\kappa_{\alpha}^+$ for $\alpha$ a limit. For any such cardinal, assuming enough instances of $\LIP$, $\dA(\kappa_{\alpha}^*,\kappa_{\alpha+1})$ forces the existence of stationarily many non-approachable points of cofinality $\kappa_{\alpha}^*$ in any successor of a singular cardinal $>\kappa_{\alpha+1}$ of countable cofinality. Then all that is left is to show that the tail of the iteration does not destroy that stationarity (since it cannot make points of cofinality $\kappa_{\alpha}^*$ approachable due to its closure). For this we use that we have used a guessing function and thus added many partially generic filters to many strongly ${<}\,\kappa_{\alpha}$-directed closed forcings, namely, those forcings which force enough instances of $\LIP$. We now show that the direct extension ordering on the tail of the iteration can be projected onto from one of those suitable forcings.
	
	\begin{mydef}
		Let $\alpha<\beta$ be ordinals. Let $G_{\alpha+1/2}=G_{\alpha}*H_{\alpha}$ be $\dP_{\alpha}*\dot{\dI}(\check{\kappa}_{\alpha}^*,\check{\kappa}_{\alpha+1})$-generic and work in $V[G_{\alpha+1/2}]$ (replacing every $\dot{\dQ}_{\gamma}$ for $\gamma\geq\alpha$ by the corresponding $\dot{\dP}_{\alpha,\gamma}^{G_{\alpha+1/2}}$-name). Define
		$$\dT_{\alpha+1/2,\beta}:=\left(\dT(\kappa_{\alpha}^*,\kappa_{\alpha+1})\times\prod_{\gamma\in(\alpha,\beta)}\dT((\dP_{\alpha,\gamma}^{G_{\alpha+1/2}},\dot{\leq}_{\alpha,\gamma}^{G_{\alpha+1/2}}),(\dot{\dQ}_{\gamma},\dot{\leq}_{\gamma,0}))\right)$$
		where $\dT(\kappa_{\alpha}^*,\kappa_{\alpha+1})$ is as in Definition \ref{TDef}.
		Working in $V[G_{\alpha}]$, let $\dot{\dT}_{\alpha+1/2,\beta}$ be an $\dot{\dI}(\check{\kappa}_{\alpha}^*,\check{\kappa}_{\alpha+1})$-name for $\dT_{\alpha+1/2,\beta}$ and let
		$$\dT_{\alpha,\beta}:=\dI(\kappa_{\alpha}^*,\kappa_{\alpha+1})*\dot{\dT}_{\alpha+1/2,\beta}.$$
	\end{mydef}
	
	We use this mixture of a product and an iteration because otherwise the remainder of the termspace forcing would not be directed-closed in the model \emph{after} making the supercompactness of $\kappa_{\alpha+1}$ indestructible under ${<}\,\kappa_{\alpha+1}$-directed closed forcings.
	
	We now show that $\dT_{\alpha,\beta}$ is indeed one of those suitable forcings which were incorporated into the definition to construct $\dA(\mu,\kappa)$:
	
	\begin{mylem}\label{TProp}
		Let $\alpha<\beta$ be ordinals. Let $G_{\alpha}$ be $\dP_{\alpha}$-generic and work in $V[G_{\alpha}]$ (replacing every $\dot{\dQ}_{\gamma}$ for $\gamma\geq\alpha$ by the corresponding $\dot{\dP}_{\alpha,\gamma}^{G_{\alpha}}$-name).
		\begin{enumerate}
			\item There is a projection from $\dT_{\alpha,\beta}$ onto $(\dot{\dP}_{\alpha,\beta}^{G_{\alpha}},\dot{\leq}_{(\alpha,\beta),0}^{G_{\alpha}})$.
			\item $\dT_{\alpha,\beta}$ is strongly ${<}\,\kappa_{\alpha}^*$-directed closed.
			\item For every $\gamma\geq\alpha+1$, $\dT_{\alpha,\beta}$ preserves $\kappa_{\gamma}$ and forces $\LIP(\kappa_{\alpha}^*,(\kappa_{\alpha}^*)^+,\kappa_{\gamma})$.
		\end{enumerate}
	\end{mylem}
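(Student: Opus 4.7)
The plan is to establish the three assertions in order, using throughout the decomposition
\begin{equation*}
\dT_{\alpha,\beta} \;\cong\; \dI(\kappa_\alpha^*, \kappa_{\alpha+1}) * \bigl(\dot{\dT}(\kappa_\alpha^*, \kappa_{\alpha+1}) \times \dot{\dR}\bigr),
\end{equation*}
where $\dot{\dR}$ denotes the product of termspace forcings over coordinates $\gamma \in (\alpha, \beta)$, together with the parallel factoring
\begin{equation*}
\dot{\dP}_{\alpha, \beta} \cong \dI(\kappa_\alpha^*, \kappa_{\alpha+1}) * \dot{\dA}(\kappa_\alpha^*, \kappa_{\alpha+1}) * \dot{\dP}_{\alpha+1, \beta}.
\end{equation*}

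For (1), the required projection is obtained by composing three maps: the identity on the $\dI$-factor, the projection $\dT(\kappa_\alpha^*, \kappa_{\alpha+1}) \to (\dA(\kappa_\alpha^*, \kappa_{\alpha+1}), \leq_0)$ from Lemma~\ref{TProperties}~(2), and the projection from $\dot{\dR}$ onto the direct extension ordering of $\dot{\dP}_{\alpha+1, \beta}$ furnished by Lemma~\ref{TermspaceMagidor}. For (2), each component of $\dT_{\alpha, \beta}$ is strongly ${<}\,\kappa_\alpha^*$-directed closed: $\dI(\kappa_\alpha^*, \kappa_{\alpha+1})$ by construction, $\dT(\kappa_\alpha^*, \kappa_{\alpha+1})$ by Lemma~\ref{TProperties}~(4), and each remaining termspace $\dT((\dP_{\alpha, \gamma}, \leq_{\alpha, \gamma}), (\dot{\dQ}_\gamma, \dot{\leq}_{\gamma, 0}))$ for $\gamma > \alpha$ by Lemma~\ref{TermspaceStrongDirect}, since $(\dot{\dQ}_\gamma, \dot{\leq}_{\gamma, 0})$ is forced to be strongly ${<}\,\kappa_\gamma^*$-directed closed and $\kappa_\gamma^* \geq \kappa_{\alpha+1} > \kappa_\alpha^*$. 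Strong ${<}\,\kappa_\alpha^*$-directed closure then passes to the product and to the two-step iteration via Lemma~\ref{StrongDirIter}~(1).

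For (3), forcing with $\dI(\kappa_\alpha^*, \kappa_{\alpha+1})$ renders $\kappa_{\alpha+1}$ indestructibly supercompact under strongly ${<}\,\kappa_{\alpha+1}$-directed closed forcing. Working in the $\dI$-extension, $\dT(\kappa_\alpha^*, \kappa_{\alpha+1})$ is a $\LIP(\kappa_\alpha^*, \kappa_{\alpha+1})$-forcing by Lemma~\ref{TProperties}~(5), so Lemma~\ref{LIPForcingLemma} applies to give $\LIP(\kappa_\alpha^*, \kappa_{\alpha+1}, \kappa_\gamma)$ for every $\kappa_\gamma \geq \kappa_{\alpha+1}$. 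The remaining factor $\dot{\dR}$ is strongly ${<}\,\kappa_{\alpha+1}$-directed closed -- its $\gamma = \alpha+1$ component is strongly ${<}\,\kappa_{\alpha+1}^* = \kappa_{\alpha+1}$-directed closed, and later components more so -- and hence ${<}\,\kappa_{\alpha+1}$-distributive. One then argues that the ${<}\,\kappa_{\alpha+1}$-completeness and normality of the witnessing ideal $I$, and the strong ${<}\,\kappa_\alpha^*$-directed closure of the dense $B \subseteq I^+$, all persist into the final extension by $\dot{\dR}$. Cardinal preservation of $\kappa_\gamma$ follows by combining the $\kappa_{\alpha+1}$-cc of the first two factors with the ${<}\,\kappa_{\alpha+1}$-distributivity of $\dot{\dR}$ and standard size bounds on its factors.

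The main technical obstacle I expect is the preservation of normality of the witnessing ideal under $\dot{\dR}$: normality involves diagonal operations over families indexed by $[\kappa_\gamma]^{<\kappa_{\alpha+1}}$, whose size can far exceed $\kappa_{\alpha+1}$, so pure ${<}\,\kappa_{\alpha+1}$-distributivity does not immediately suffice. I would address this either by re-witnessing $\LIP$ in the final extension via the ideal generated by the dense set $B$ -- which transfers to the final model thanks to the strong ${<}\,\kappa_\alpha^*$-directed closure of $\dot{\dR}$ -- or by invoking the explicit description of the ideal in the proof of Lemma~\ref{LIPForcingLemma}, arising from a lifted supercompactness embedding whose further lift through $\dot{\dR}$ yields the required witness.
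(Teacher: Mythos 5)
Your treatments of (1) and (2) coincide with the paper's: compose the identity on the $\dI$-factor with the projection of Lemma \ref{TProperties} (2) and the projection of Lemma \ref{TermspaceMagidor}, and get strong ${<}\,\kappa_{\alpha}^*$-directed closure factorwise from Lemmas \ref{TProperties} (4), \ref{TermspaceStrongDirect} and \ref{StrongDirIter}. The problem is (3), and it is exactly the obstacle you flag yourself. You apply Lemma \ref{LIPForcingLemma} in the intermediate model where only $\dI(\kappa_{\alpha}^*,\kappa_{\alpha+1})$ and $\dT(\kappa_{\alpha}^*,\kappa_{\alpha+1})$ have been forced, and then hope to drag the witnessing ideal through the remaining factor $\dR$. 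Neither of your two suggested repairs is carried out, and the first does not work as stated: although $\dR$ adds no new \emph{elements} of $[\kappa_{\gamma}]^{<\kappa_{\alpha+1}}$, it certainly adds new \emph{subsets} of $[\kappa_{\gamma}]^{<\kappa_{\alpha+1}}$ for large $\kappa_{\gamma}$ (its coordinates contain termspace versions of collapses and Namba forcings at the higher stages), so the old ideal fails to measure the new sets; for the ideal it generates, there is no reason a new positive set should contain a member of the old dense set $B$ (density is with respect to $\subseteq$), and normality quantifies over new regressive functions, so neither strong ${<}\,\kappa_{\alpha}^*$-directed closure of $\dR$ nor its ${<}\,\kappa_{\alpha+1}$-distributivity (which itself needs an Easton-style argument once $\dT(\kappa_{\alpha}^*,\kappa_{\alpha+1})$ has been forced) settles this. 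Your second repair, lifting a supercompactness embedding through $\dR$, would require producing a generic for the image of $\dR$ over the target model, which you do not address; done properly it collapses into the argument below.

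The paper avoids the preservation problem entirely by using that $\dot{\dT}_{\alpha+1/2,\beta}$ is a \emph{product}, so the factors may be forced in the opposite order: first force with $\dR=\prod_{\gamma\in(\alpha,\beta)}\dT((\dP_{\alpha,\gamma},\leq_{\alpha,\gamma}),(\dot{\dQ}_{\gamma},\dot{\leq}_{\gamma,0}))$, which is strongly ${<}\,\kappa_{\alpha+1}$-directed closed and therefore preserves both the (indestructible) supercompactness of $\kappa_{\alpha+1}$ -- this is the actual role of $\dI(\kappa_{\alpha}^*,\kappa_{\alpha+1})$, which in your ordering is never used -- and the fact that $\dT(\kappa_{\alpha}^*,\kappa_{\alpha+1})$ is a $\LIP(\kappa_{\alpha}^*,\kappa_{\alpha+1})$-forcing. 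Then Lemma \ref{LIPForcingLemma} is applied to the \emph{last} factor $\dT(\kappa_{\alpha}^*,\kappa_{\alpha+1})$, so $\LIP(\kappa_{\alpha}^*,(\kappa_{\alpha}^*)^+,\kappa_{\gamma})$ holds outright in the final model (the collapse of $\kappa_{\alpha+1}$ to $(\kappa_{\alpha}^*)^+$ converting $\LIP(\kappa_{\alpha}^*,\kappa_{\alpha+1},\kappa_{\gamma})$ into the stated form), with no transfer of ideals needed. This commutation is the missing idea in your proposal, and it is precisely why $\dT_{\alpha,\beta}$ was defined as an iteration of $\dI$ with a product of termspace forcings rather than as an iteration throughout.
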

	
	\begin{proof}
		For (1), by Lemma \ref{TProperties}, in $V[G_{\alpha}]$, $\dI(\kappa_{\alpha}^*,\kappa_{\alpha+1})$ forces that there is a projection from $\dot{\dT}_{\alpha+1/2,\beta}$ onto $$\dT(\kappa_{\alpha}^*,\kappa_{\alpha+1})\times\prod_{\gamma\in(\alpha,\beta)}\dT((\dP_{\alpha,\gamma}^{G_{\alpha+1/2}},\dot{\leq}_{\alpha,\gamma}^{G_{\alpha+1/2}}),(\dot{\dQ}_{\gamma},\dot{\leq}_{\gamma,0})).$$
		By Lemma \ref{TermspaceMagidor}, it is forced that this forcing projects onto $\dot{\dA}(\check{\kappa}_{\alpha}^*,\check{\kappa}_{\alpha+1})*\dot{\dP}_{\alpha+1,\beta}$.
		
		This clearly implies the existence of a projection from $\dT_{\alpha,\beta}=\dI(\kappa_{\alpha}^*,\kappa_{\alpha+1})*\dot{\dT}_{\alpha+1/2,\beta}$ onto $\dI(\kappa_{\alpha}^*,\kappa_{\alpha+1})*\dot{\dA}(\check{\kappa}_{\alpha}^*,\check{\kappa}_{\alpha+1})*\dot{\dP}_{\alpha+1,\beta}=\dP_{\alpha,\beta}$.
		
		(2) is also clear: For any $\gamma\in[\alpha,\beta)$, $(\dP_{\alpha,\gamma},\leq_{\alpha,\gamma})$ forces that $(\dot{\dQ}_{\gamma},\dot{\leq}_{\gamma,0})$ is strongly ${<}\,\kappa_{\alpha}^*$-directed closed. Thus, the termspace forcing has the same degree of closure by Lemma \ref{TermspaceStrongDirect}. It follows that $\dI(\kappa_{\alpha}^*,\kappa_{\alpha+1})$ forces that $\dot{\dT}_{\alpha+1/2,\beta}$ is strongly ${<}\,\kappa_{\alpha}^*$-directed closed. Since $\dI(\kappa_{\alpha}^*,\kappa_{\alpha+1})$ is strongly ${<}\,\kappa_{\alpha}^*$-directed closed, the iteration (which is equal to $\dT_{\alpha,\beta}$) is strongly ${<}\,\kappa_{\alpha}^*$-directed closed.
		
		We now prove (3). We will show that $\dI(\kappa_{\alpha}^*,\kappa_{\alpha+1})$ forces that $\dot{\dT}_{\alpha+1/2,\beta}$ forces $\LIP(\check{\kappa}_{\alpha}^*,(\check{\kappa}_{\alpha}^*)^+,\check{\kappa}_{\gamma})$ for every $\gamma\geq\alpha+1$, which clearly suffices. So let $H_{\alpha}$ be $\dI(\kappa_{\alpha}^*,\kappa_{\alpha+1})$-generic over $V[G_{\alpha}]$. In $V[G_{\alpha}*H_{\alpha}]$, the supercompactness of $\kappa_{\alpha+1}$ is preserved by any strongly ${<}\,\kappa_{\alpha+1}$-directed closed forcing. In particular, it is preserved by $\dT:=\prod_{\gamma\in(\alpha,\beta)}\dT((\dP_{\alpha,\gamma}^{G_{\alpha}*H_{\alpha}},\dot{\leq}_{\alpha,\gamma}^{G_{\alpha}*H_{\alpha}}),(\dot{\dQ}_{\gamma},\dot{\leq}_{\gamma,0}))$. By Lemma \ref{TProperties} (5),  $\dT(\kappa_{\alpha}^*,\kappa_{\alpha+1})$ is a $\LIP$-forcing, which is preserved by $\dT$. Ergo, $\dT$ followed by $\dT(\kappa_{\alpha}^*,\kappa_{\alpha+1})$ forces $\LIP(\kappa_{\alpha}^*,(\kappa_{\alpha}^*)^+,\kappa_{\gamma})$ for every $\gamma\geq\alpha+1$, since it collapses $\kappa_{\alpha+1}$ to become $(\kappa_{\alpha}^*)^+$. The preservation of each $\kappa_{\gamma}$ follows just as in Lemma \ref{IterPresCard}.
	\end{proof}
		
	Now we can finally prove our main theorem:
	
	\begin{mysen}
		After forcing with $\dP$, $\GCH$ holds and whenever $\delta$ is singular of countable cofinality and $\mu<\delta$ is regular uncountable, there are stationarily many $\gamma<\delta^+$ of cofinality $\mu$ which are not approachable.
	\end{mysen}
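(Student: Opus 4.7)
The plan is the following. First, I would identify the structure of cardinals in $V[G]$: by Lemma \ref{IterPresCard}, any singular cardinal $\delta$ of countable cofinality in $V[G]$ has the form $\delta=\kappa_\gamma$ with $\gamma$ a limit of cofinality $\omega$, and any regular uncountable $\mu<\delta$ equals $\kappa_\alpha^*$ for some $\alpha<\gamma$. $\GCH$ follows from a standard count: each iterand at stage $\beta$ has cardinality $\kappa_{\beta+1}$, is $\kappa_{\beta+1}$-cc, and collapses everything in $(\kappa_\beta^*,\kappa_{\beta+1})$ to $\kappa_\beta^*$, while the subsequent tail is sufficiently closed in its direct extension to add no small new subsets. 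Thus it suffices to fix such $\alpha,\gamma$, fix a normal subadditive coloring $d\colon[\kappa_\gamma^+]^2\to\omega$ in $V[G]$, and (by Fact \ref{ColoringCanonical}) produce stationarily many non-$d$-approachable ordinals of cofinality $\kappa_\alpha^*$ in $\kappa_\gamma^+$.

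Second, I would apply Theorem \ref{MainThmOneCof} at stage $\alpha$ of the main iteration. Working in $V[G_\alpha*H_\alpha]$, where $\kappa_{\alpha+1}$ is indestructibly supercompact, fix a cofinal sequence $(\delta_n)_{n\in\omega}$ of regular cardinals in $\kappa_\gamma$ with $\delta_0\geq\kappa_{\alpha+1}$ (e.g. $\delta_n:=\kappa_{\gamma_n+1}$ for $\gamma_n\nearrow\gamma$ starting above $\alpha$), and an ordinal $\beta>\gamma$ large enough that a name for $d$ lies in $V[G_\beta]$. Using the Laver function built into $\dA(\kappa_\alpha^*,\kappa_{\alpha+1})$, I arrange the name $\dot{\dP}$ of Theorem \ref{MainThmOneCof} to be an $\dA$-name for the ``tail termspace'' obtained from $\dT_{\alpha+1/2,\beta}$, namely the portion whose $\dT(\kappa_\alpha^*,\kappa_{\alpha+1})$-factor has already been collapsed generically by the projection onto $\dA(\kappa_\alpha^*,\kappa_{\alpha+1})$. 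By Lemma \ref{TProp} together with Lemma \ref{TermspaceMagidor}, this $\dot{\dP}^{G_{\alpha+1}}$ is strongly ${<}\kappa_{\alpha+1}$-directed closed, preserves $\kappa_\gamma^+$ and every $\delta_n$, forces $\LIP(\kappa_\alpha^*,(\kappa_\alpha^*)^+,\delta_n)$ for each $n\in\omega$, and projects onto the direct extension ordering $(\dP_{\alpha+1,\beta},\leq_{(\alpha+1,\beta),0})$. Theorem \ref{MainThmOneCof} then yields, in $V[G_{\alpha+1}]$, a stationary family of models $M\in[H(\Theta)]^{<\kappa_{\alpha+1}}$ with $\sup(M\cap\kappa_\gamma^+)$ of cofinality $\kappa_\alpha^*$ and not $d$-approachable, together with a master condition $p_M\in\dot{\dP}^{G_{\alpha+1}}$ capturing $M$ in the sense of clause (3).

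Third and crucially, I would transfer this stationarity from $V[G_{\alpha+1}]$ to $V[G]$. Given a $V[G]$-club $C\subseteq\kappa_\gamma^+$, one obtains a $\dP_{\alpha+1,\beta}$-name $\dot{C}$ in $V[G_{\alpha+1}]$ for $C$ upon enlarging $\beta$ if necessary. Pick $M$ from the stationary family containing $\dot{C}$ and all relevant parameters, and let $\bar p_M$ denote the projection of $p_M$ into $(\dP_{\alpha+1,\beta},\leq_{(\alpha+1,\beta),0})$. For each $\xi<\sup(M\cap\kappa_\gamma^+)$ in $M$, almost direct $\kappa_\gamma^+$-decidability (Lemma \ref{TailAlmostDecide}) ensures that the set of tail conditions direct-extending into a condition deciding $\min(\dot{C}\smallsetminus\check{\xi})$ up to an ordinal $<\kappa_\gamma^+$ is dense; pulling this back along the projection gives a dense open subset of $\dot{\dP}^{G_{\alpha+1}}$ that lies in $M$. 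By the master condition property there is $q\in M$ with $p_M\leq_{\dot{\dP}}q$, so $\pi(q)$ forces $\min(\dot{C}\smallsetminus\check{\xi})<\check{\eta}$ for some $\eta\in M$, hence $\eta<\sup(M\cap\kappa_\gamma^+)$. Thus $\bar p_M$ forces $\sup(M\cap\kappa_\gamma^+)\in\dot{C}$. Preservation of non-$d$-approachability then follows verbatim from the endgame of the proof of Theorem \ref{MainThmOneCof}, using the ${<}\kappa_{\alpha+1}$-directed closed direct extension on the tail (Lemma \ref{TailClosure}) together with Lemma \ref{ClosureDecide}.

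The hardest part is the second step: constructing the $\dA$-name $\dot{\dP}$ whose generic extension in $V[G_{\alpha+1}]$ is simultaneously strongly ${<}\kappa_{\alpha+1}$-directed closed (so that Theorem \ref{MainThmOneCof} applies and delivers master conditions), forces the precise instances $\LIP(\kappa_\alpha^*,(\kappa_\alpha^*)^+,\delta_n)$ demanded by the hypotheses, and projects onto the direct extension of the genuine Prikry-type tail (so the master condition translates into stationarity preservation downstream). The point of the termspace package $\dT_{\alpha+1/2,\beta}$ is precisely to combine these three requirements into a single forcing whose integration into $\dA$ is mediated by the Laver function, bridging the Prikry-type tail with a strongly directed-closed substitute.
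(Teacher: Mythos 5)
Your overall architecture matches the paper's: reduce via Lemma \ref{IterPresCard} and Fact \ref{ColoringCanonical} to a single coloring on $\kappa_\gamma^+$, apply Theorem \ref{MainThmOneCof} at stage $\alpha$ to get models with non-approachable supremum together with a master condition for an auxiliary strongly directed closed poset, and push that master condition through a projection onto the direct extension ordering of the tail, using almost direct $\kappa_\gamma^+$-decidability (Lemma \ref{TailAlmostDecide}) to catch clubs. The genuine gap is in your choice of $\dot{\dP}$. You take $\dot{\dP}$ to be ``the portion of $\dT_{\alpha+1/2,\beta}$ whose $\dT(\kappa_\alpha^*,\kappa_{\alpha+1})$-factor has already been collapsed generically by the projection onto $\dA(\kappa_\alpha^*,\kappa_{\alpha+1})$'', i.e.\ a quotient of the stage-$\alpha$ termspace package by the $\dA$-generic. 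Neither Lemma \ref{TProp} nor Lemma \ref{TermspaceMagidor} says anything about such a quotient: they give closure and a projection for $\dT_{\alpha,\beta}$ itself, and quotients along a projection do not in general inherit (strong) directed closure -- the only quotient-closure statement available in the paper is the one built into Definition \ref{LIPForcingDef} (3), which concerns restrictions $\dT\cap V_\nu$, not quotients by $\dA$. If instead you mean simply to drop the $\dT(\kappa_\alpha^*,\kappa_{\alpha+1})$-factor, then strong ${<}\,\kappa_{\alpha+1}$-directed closure is fine, but the required instances $\LIP(\kappa_\alpha^*,(\kappa_\alpha^*)^+,\delta_n)$ become unjustified, since that factor is exactly the $\LIP$-forcing that produces them (Lemma \ref{TProperties} (5) and the proof of Lemma \ref{TProp} (3)). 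So on either reading the two hypotheses of Theorem \ref{MainThmOneCof} -- strong ${<}\,\kappa_{\alpha+1}$-directed closure and the $\LIP$ instances -- are not simultaneously verified; this is precisely the tension the paper resolves differently: it does not quotient at all, but applies Theorem \ref{MainThmOneCof} with $\dot{\dP}$ naming the next-level package $\dT_{\alpha+1,\gamma}$ of Definition \ref{TDef}, an honest poset of $V[G_{\alpha+1}]$, whose closure, projection onto $(\dP_{\alpha+1,\gamma},\leq_{(\alpha+1,\gamma),0})$ and $\LIP$/preservation properties come from Lemma \ref{TProp}, with the Laver preparation and the supercompact $\kappa_{\alpha+2}$ of the \emph{next} stage doing the work. (Relatedly, the Laver function plays no role on your side: it is used inside the proof of Theorem \ref{MainThmOneCof}, not by whoever applies it.)

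A secondary problem: you fix a coloring $d\in V[G]$ whose name only lives in $V[G_\beta]$ for some $\beta>\gamma$, but then feed $d$ as a parameter into Theorem \ref{MainThmOneCof} applied over the stage-$\alpha$ model; that theorem requires the coloring to belong to its ground model, so a coloring appearing only at stage $\beta>\gamma$ cannot be used there. This is repairable exactly as in the paper: by Fact \ref{ColoringCanonical} it suffices to treat one normal subadditive coloring, and one fixed in $V$ remains normal and subadditive in $V[G]$ since $\kappa_\gamma$ and $\kappa_\gamma^+$ are preserved. Your endgame (dense sets pulled back along the projection, the master condition, and almost direct decidability forcing $\sup(M\cap\kappa_\gamma^+)\in\dot{C}$) is essentially the paper's; note only that the paper first reduces to $\dP_\gamma$ by stationary preservation of the tail beyond $\gamma$, which avoids having to establish decidability for the longer tail $\dP_{\alpha+1,\beta}$ that your version requires.
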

	
	\begin{proof}
		By Lemma \ref{IterPresCard}, it suffices to show that whenever $\gamma$ is an ordinal with countable cofinality and $\alpha<\gamma$, then after forcing with $\dP$ there are stationarily many $\beta\in\kappa_{\gamma}^+$ of cofinality $\kappa_{\alpha}^*$ which are not approachable, since the class of regular cardinals consists precisely of the $\kappa_{\alpha}^*$. So fix, in the ground model, a normal coloring $d\colon[\kappa_{\gamma}^+]^2\to\omega$. It suffices to consider only one specific coloring by Fact \ref{ColoringCanonical}.
		
		Since for any $\gamma'>\gamma$, $(\dot{\dP}_{\gamma,\gamma'},\dot{\leq}_{(\gamma,\gamma'),0})$ is forced to be ${<}\,\kappa_{\gamma}^+$-closed (and to have direct $\kappa_{\gamma}^+$-decidability by Lemma \ref{ClosureDecide}), $(\dot{\dP}_{\gamma,\gamma'},\dot{\leq}_{\gamma,\gamma'})$ is forced to preserve stationary subsets of $\check{\kappa}_{\gamma}^+$ (by the usual arguments). Therefore it suffices to show that the statement holds after forcing with $\dP_{\gamma}$. We write 
		$$\dP_{\gamma}\cong\dP_{\alpha}*\dot{\dI}(\check{\kappa}_{\alpha}^*,\check{\kappa}_{\alpha+1})*\dot{\dA}(\check{\kappa}_{\alpha}^*,\check{\kappa}_{\alpha+1})*\dot{\dP}_{\alpha+1,\gamma}.$$
		
		Let $G$ be $\dP_{\alpha}*\dot{\dI}(\check{\kappa}_{\alpha}^*,\check{\kappa}_{\alpha+1})*\dot{\dA}(\check{\kappa}_{\alpha}^*,\check{\kappa}_{\alpha+1})$-generic and work in $V[G]$. Assume that $\dot{C}$ is a $\dP_{\alpha+1,\gamma}:=\dot{\dP}_{\alpha+1,\gamma}^G$-name $\dot{C}$ for a club in $\check{\kappa}_{\gamma}^+$, forced by some condition $p\in\dP_{\alpha+1,\gamma}$. Without loss of generality, assume $p=1_{\dP_{\alpha+1,\gamma}}$ (otherwise, work with $\dP_{\alpha+1,\gamma}\uhr p$ instead). Let $\sigma\colon\dT_{\alpha+1,\gamma}\to(\dP_{\alpha+1,\gamma},\leq_{(\alpha+1,\gamma),0})$ be the projection obtained from Lemma \ref{TProp} (1).
		
		By Theorem \ref{MainThmOneCof} and Lemma \ref{TProp} (2) and (3), let $M\prec H^{V[G]}(\Theta)$ with $|M|<\kappa_{\alpha+1}$ be such that $M$ contains all relevant parameters, $\dot{C}\in M$, $\sup(M\cap\kappa_{\gamma}^+)$ is not $d$-approachable and has cofinality $\kappa_{\alpha}^*$ and there is a condition $p\in\dT_{\alpha+1,\gamma}$ such that whenever $D\in M$ is open dense in $\dT_{\alpha+1,\gamma}$, there is $q\in D\cap M$ with $p\leq q$. We claim that $\sigma(p)\in\dP_{\alpha+1,\gamma}$ forces (over the non-direct ordering $(\dP_{\alpha+1,\gamma},\leq_{\alpha+1,\gamma})$) that $\sup(M\cap\kappa_{\gamma}^+)\in\dot{C}$. Since clearly $\sup(M\cap\kappa_{\gamma}^+)$ does not become $d$-approachable by the distributivity of $\dP_{\alpha+1,\gamma}$, this suffices.
		
		Let $\beta<\sup(M\cap\kappa_{\gamma}^+)$, i.e. $\beta<\beta'$ for some $\beta'\in M\cap\kappa_{\gamma}^+$. Since $\dot{C}$ is forced to be unbounded in $\kappa_{\gamma}^+$, $\Vdash\exists\xi(\xi\in\dot{C}\wedge\beta'<\xi)$. Since $(\dP_{\alpha+1,\gamma},\leq_{\alpha+1,\gamma},\leq_{(\alpha+1,\gamma),0})$ has almost direct $\kappa_{\gamma}^+$-decidability by Lemma \ref{TailAlmostDecide}, the set $D$ of all $q\in\dP_{\alpha+1,\gamma}$ such that for some $\zeta\in\kappa_{\gamma}^+$, $q\Vdash\exists\xi<\check{\zeta}(\xi\in\dot{C}\wedge\beta'<\xi)$, is open dense in $(\dP_{\alpha+1,\gamma},\leq_{(\alpha+1,\gamma),0})$. Hence $\sigma^{-1}[D]$ is open dense in $\dT_{\alpha+1,\gamma}$ and so there is $r\in\sigma^{-1}[D]\cap M$ with $p\leq r$, which implies $\sigma(r)\in D\cap M$, $\sigma(p)\leq_{(\alpha+1,\gamma),0}\sigma(r)$. The corresponding $\zeta$ is in $M$ as well by elementarity.
		
		In summary, for every $\beta<\sup(M\cap\kappa_{\gamma}^+)$ there is $\zeta<\sup(M\cap\kappa_{\gamma}^+)$ with $\beta<\zeta$ such that $\sigma(p)\Vdash\dot{C}\cap(\beta,\zeta)\neq\emptyset$. This clearly implies that $\sigma(p)$ forces that $\sup(M\cap\kappa_{\gamma}^+)\cap\dot{C}$ is unbounded in $\sup(M\cap\kappa_{\gamma}^+)$, which implies that $\sigma(p)$ forces $\sup(M\cap\kappa_{\gamma}^+)\in\dot{C}$.
	\end{proof}
	
	\printbibliography
\end{document}